\documentclass{article}
\usepackage{graphicx, amsthm,amssymb,amsmath,amsfonts,tikz,mathrsfs}
\usepackage[colorinlistoftodos]{todonotes}
\usepackage{stackrel}
\usepackage{biblatex}
 \usetikzlibrary{positioning}
\usepackage{tikz-cd}
\usepackage{lipsum}
\usepackage{adjustbox}
\usepackage{enumitem}
\usepackage{float}

\newcommand{\qv}{\mathcal{M}}
\newcommand{\dv}{\mathsf{v}}
\newcommand{\dw}{\mathsf{w}}

\newcommand{\bT}{\mathsf{T}}
\newcommand{\stackqv}{\mathfrak{X}}
\newcommand{\ver}[1]{V^{(#1)}}
\newcommand{\signver}[1]{\mathsf{V}^{(#1)}}
\newcommand{\nver}[1]{\widetilde{V}^{(#1)}}
\newcommand{\cpver}[1]{\widehat{V}^{(#1)}}
\newcommand{\cp}{\Psi}
\newcommand{\fundwt}{\omega}
\newcommand{\qm}{\mathsf{QM}}
\newcommand{\ns}{\text{ns} \,}
\newcommand{\rel}{\text{rel} \,}
\newcommand{\tb}{\mathcal{V}}
\newcommand{\tbw}{\mathcal{W}}
\newcommand{\qmtb}{\mathscr{V}}
\newcommand{\qmtbw}{\mathscr{W}}
\newcommand{\qmpol}{\mathcal{T}^{1/2}}
\newcommand{\vrs}{[\mathsf{QM}^{d}]^{\text{vir}}}
\newcommand{\qmtan}{T_{\text{vir}}}
\newcommand{\qcup}{\star}
\newcommand{\ggamma}{\boldsymbol{\gamma}}
\newcommand{\diffop}{\mathcal{D}}

\newcommand{\ev}{\operatorname{ev}}

\newcommand{\eff}{\operatorname{Eff}}

\DeclareMathOperator{\Ima}{Im}
\DeclareMathOperator{\rpp}{rpp}
\newcommand{\loc}{\text{loc}}

\newcommand{\h}{\mathfrak h}

\newcommand{\Z}{\mathbb Z}

\newcommand{\la}{\lambda}

\newcommand{\GZ}{\mathscr{H}}

\theoremstyle{definition}

\newtheorem{theorem}{Theorem}[section]
\newtheorem{lemma}[theorem]{Lemma}
\newtheorem{proposition}[theorem]{Proposition}
\newtheorem{definition}[theorem]{Definition}
\newtheorem{remark}[theorem]{Remark}
\newtheorem{corollary}[theorem]{Corollary}
\newtheorem{conjecture}[theorem]{Conjecture}
\newtheorem{example}[theorem]{Example}
\newtheorem{assumption}[theorem]{Assumption}
\newtheorem{warning}[theorem]{Warning}
\newtheorem{maintheorem}{Theorem}

\newif\ifShowLabels
\ShowLabelsfalse

\newcommand{\on}{\operatorname}

\newcommand{\BC}{{\mathbb{C}}}

\newcommand{\sF}{{\mathsf{F}}}

\newcommand{\sff}{{\mathsf{f}}}

\newcommand{\ol}{\overline}

\newcommand{\brho}{{\boldsymbol{\rho}}}

\newcommand{\Hom}{{\mathop{\operatorname{Hom}}}}

\newcommand{\ul}{\underline}

\newcommand{\La}{\Lambda}

\newcommand{\tGh}{\widetilde{G}_{\dv}}

 \usepackage{hyperref}

\newcommand\iso{\,\vphantom{j^{X^2}}\smash{\overset{\sim}{\vphantom{\rule{0pt}{0.20em}}\smash{\longrightarrow}}}\,}

\title{The quantum Hikita conjecture via quasimaps}
\author{Hunter Dinkins, Ivan Karpov, and Vasily Krylov}
\date{}

\begin{document}

\maketitle

\begin{abstract}
  We propose a refinement of the quantum Hikita conjecture of Kamnitzer, McBreen, and Proudfoot that bridges the representation theory of Coulomb branches with the enumerative geometry of Higgs branches. We also introduce a general framework for proving it, which we carry out for ADE quiver gauge theories with minuscule framings and for the gauge theory corresponding to the Jordan quiver. As an application, we use the resulting quantum Hikita isomorphisms to give a geometric description of graded traces on quantized Coulomb branches.
\end{abstract}

\setcounter{tocdepth}{2}
\tableofcontents

\section{Introduction}\label{sec:introduction}

In this paper, we will propose and, in some cases, prove a correspondence between the enumerative geometry of Higgs branches of quiver gauge theories and the representation theory of the associated quantized Coulomb branches. Our conjectures and results are refinements of the so-called quantum Hikita conjecture of Kamnitzer, McBreen, and Proudfoot \cite{KMBP}.

\subsection{Quiver varieties and vertex functions}\label{intro:subsec quiver var and vertex functions}

Fix a finite quiver $\Gamma=(I,E)$ and dimension vectors $\dv,\dw \in \mathbb{N}^{I}$.  We write
$$
X=\widetilde{\mathcal M}_H=\mathcal M(\dv,\dw)
$$
for the corresponding Nakajima quiver variety.  It parametrizes stable
framed representations of the doubled quiver satisfying the moment-map
relation.  In particular, $X$ is a smooth symplectic variety, usually
non-compact, equipped with tautological bundles $\mathcal V_i$, $i\in I$.
It also carries an action of the flavor torus $\mathsf F$ and of
$\mathbb C^\times_\hbar$, which scales the symplectic form. We denote by
$\hbar$ the inverse of the standard character of $\mathbb C^\times_\hbar$
and set $\mathsf T=\mathsf F\times\mathbb C^\times_\hbar$. Thus, the
Calabi--Yau specialization used below is $q=2\hbar$. We refer to
\cite{GinzburgLectures,NakALE,NakQv} and Section \ref{ssec:Nakajima quiver var defin} for this construction.

The curve-counting theory used in the paper is the theory of quasimaps to
$X$, see \cite{qm}, \cite[Section 4.3]{OkLec}, and Section \ref{quasimaps}.  A quasimap from $\mathbb P^1$ is allowed to land in the unstable locus at
finitely many points; equivalently, it is a map to the stacky quotient
$\mathfrak X$ which generically lands in $X$.  Virtual integration over the
spaces of such maps gives, for every descendant
$\tau\in H^*_{\mathsf T_q}(\mathfrak X)$, a series
$$
V^{(\tau)}=\sum_d z^d V_d^{(\tau)},
\qquad \mathsf T_q=\mathsf F\times\mathbb C^\times_\hbar \times \mathbb{C}^\times_q,
$$
called the \emph{vertex function}.  Its coefficients are localized equivariant
cohomology classes on $X$ obtained by pushing forward virtual classes of
quasimap spaces.

The vertex $V^{(1)}$ is the analogue of Givental's $I$-function
\cite{Oko15,OkLec}.  Such functions are useful to symplectic geometers because
they package genus-zero curve counts into an explicit series, often computable
by localization, and provide distinguished solutions of the quantum
differential equation.  In the present paper the vertex has a further role:
at the Calabi--Yau specialization $q=2\hbar$, the same series becomes a
character of a module on the $3d$ mirror dual side.

Let us briefly recall the quantum differential equation.  The Pushkar--Smirnov--Zeitlin (or simply PSZ) quantum $D$-module is
the equivariant cohomology of $X$, spread over the formal K\"ahler torus and
equipped with the connection
$$
\nabla_i^{\mathrm{PSZ}}
=qz_i\frac{\partial}{\partial z_i}+M_i(z),
$$
where $M_i(z)$ is quantum multiplication by the quantum tautological class
associated with $c_1(\mathcal V_i)$, and $q$ is the parameter of $\mathbb{C}^{\times}_{q}$ rotating the
source $\mathbb P^1$. A basic result of quasimap theory says that, for every
fixed point $p\in X^{\mathsf T}$, with inclusion $\iota_p\colon p\hookrightarrow X$, the assignment
$\tau\mapsto\iota_p^*V^{(\tau)}$ factors through $Q^{\mathrm{PSZ}}$ and
defines a morphism from the PSZ $D$-module to the rank-one $D$-module
$H^{*,  \on{loc.}}_{\mathsf T_q}(p)[[{\boldsymbol{z}}]]$ endowed with its classical
$D$-module structure. We call such a morphism a solution; see
\cite{KoroteevPushkarSmirnovZeitlin2021QuantumK,OkLec,PSZ} and
Theorem~\ref{thm: Dmodhomom}.

\begin{remark}
Thus, ``solution'' refers here to a morphism of $D$-modules, rather than to
a scalar-valued function annihilated by each operator
$\nabla_i^{\mathrm{PSZ}}$. The $D$-module structure on the target is
nontrivial: $D$-linearity expresses the compatibility between the PSZ
action on the source and the classical action on the target.
\end{remark}

The {\emph{capped}} vertex is a surjective deformation of the usual restriction from the quotient
stack $\mathfrak X$ to its stable locus $X$: 
\begin{equation}\label{eq:intro_capped}
H^*_{\mathsf T_q}(\mathfrak X)[[{\boldsymbol{z}}]] \ni \tau\longmapsto \widehat{V}^{(\tau)} \in H^*_{\mathsf{T}_q}(X)[[{\boldsymbol{z}}]].
\end{equation}
Its $q=0$ specialization is the  \emph{quantum Kirwan map}.  
In practice, it allows one to
work with descendants in the explicit $D$-module
$H^*_{\mathsf T_q}(\mathfrak X)[[{\boldsymbol{z}}]]$ and then pass to the quantum $D$-module.  Compare
\cite{kirv,xu} and Section~\ref{sec: vertex}.

\subsection{Quantized Coulomb branches and graded traces}\label{ssec:intro Coulomb twisted and graded}

There is a second space naturally associated with the same quiver data: the
Coulomb branch $\mathcal M_C$ of Braverman, Finkelberg, and Nakajima \cite{BFN1, Nakajima2016CoulombI}.  From the
viewpoint of $3d$ mirror symmetry, $X=\widetilde{\mathcal{M}}_H$ is the resolved Higgs branch and
$\mathcal M_C$ is its {\emph{symplectic dual}}, see \cite{BLPW}.  This point of view predicts an
exchange of equivariant and K\"ahler data.  The flavor parameters for the
$\mathsf F$-action on $\widetilde{\mathcal{M}}_H$ become deformation parameters on the Coulomb side,
while the K\"ahler torus of $\widetilde{\mathcal{M}}_H$, determined by $\operatorname{Pic}(\mathfrak{X})$,
corresponds to a torus $\mathsf A$ acting on $\mathcal M_C$.

We denote by $\mathcal A$ the Braverman--Finkelberg--Nakajima quantization
of the coordinate ring $\mathbb C[\mathcal M_C]$; see Section
\ref{sec:overview} for its construction.

Fix $t\in\mathsf A$ and regard it as an automorphism of $\mathcal A$. A {\emph{$t$-twisted trace}} is a functional
\begin{equation*}
\operatorname{Tr}_t\colon \mathcal{A} \rightarrow \mathbb{C}
\end{equation*}
such that
\begin{equation}\label{eq:def relation for twisted traces intro}
\on{Tr}_t(a \cdot b) = \on{Tr}_t(b \cdot t(a))\qquad\text{for all }a,b \in \mathcal{A}.
\end{equation}

This notion was first introduced and studied by Etingof and Stryker in \cite{etingof-stryker}. Following an idea of Kontsevich, they used twisted traces to parametrize so-called
nondegenerate 
{\emph{short star-products}}  introduced by Beem, Peelaers and Rastelli \cite{BPR} and appearing naturally in $3d$ $\mathcal{N}=4$ superconformal field theories.

Twisted traces on quantized Higgs and Coulomb branches of $3d$ $\mathcal{N}=4$ supersymmetric gauge theories have also recently attracted the attention of Gaiotto, Okazaki and collaborators, see \cite{bullimore-crew-zhang, gaiotto-sphere-quantization, GO, GHRYWZ}. 
For them, twisted traces provide the correct mathematical language to package the {\emph{correlation functions}} of the corresponding theories.

The same structure naturally arises when one studies the representation theory of the algebra $\mathcal{A}$. Namely, consider a suitable $\mathsf{a}$-weight module $M=\bigoplus_\eta M_\eta$ over
$\mathcal A$.  Its character, and more generally the trace
of an $\mathsf{A}$-invariant element $a \in \mathcal{A}^{\boldsymbol{0}}$, may be recorded as a functional
\begin{equation}\label{eq:trace_module_formula_intro}
\mathcal{A}^{\boldsymbol{0}} \rightarrow \mathbb{C}[[{\boldsymbol{z}}]],\quad a \mapsto 
\operatorname{tr}_M(a)=\sum_\eta z^\eta
\operatorname{tr}\bigl(a|_{M_\eta}\bigr).
\end{equation}
Here the same variables $z$ which record curve classes on $X$ are, under the
duality above, characters of the torus $\mathsf A$, see \cite[Section 3.6]{KMBP} and Section \ref{sec:graded traces} for the details.

\begin{warning}\label{intro:warning target tr}
We are not careful with specifying the exact target of (\ref{eq:trace_module_formula_intro}), compare with the spaces $\mathcal{Z}_\xi$ and $\mathcal{Z}_{\Xi}$ appearing in Section \ref{ssec: D mod of graded traces}. It may vary depending on the context.
\end{warning}

For a character $\eta\colon \mathsf{A} \rightarrow \mathbb{C}^\times$ let $\mathcal{A}^\eta$ be the corresponding $\mathsf{A}$-weight component.
A twisted trace vanishes on $\mathcal{A}^\eta$ for $\eta \neq 0$ (see Lemma \ref{lem:twisted trace determined by A 0}), so it is determined by its restriction to $\mathcal{A}^{\boldsymbol{0}}$. Whenever the power series (\ref{eq:trace_module_formula_intro}) admits a well-defined evaluation at $z=t \in \mathsf{A}$, for example through a rational function represented by this series, we obtain the actual $t$-twisted trace by specializing $z=t$.

An important class of modules $M$ as above consists of those lying in category $\mathcal{O}$ for $\mathcal{A}$ (\cite[Section 3]{BLPW}, \cite[Section 3.4]{catO_coulomb}, and Section \ref{ssec:cat O for CB}) which may be regarded as a broad generalization of the usual BGG category $\mathcal{O}$.  
In this case, $\operatorname{tr}_M(a)$ is a linear combination of traces of {\emph{Verma modules}}, which by \cite[Theorem 4.9]{etingof-stryker} are, up to normalization, {\emph{rational}} functions of $z$. Thus,  $\operatorname{tr}_M(a)$ admits a well-defined evaluation at sufficiently generic $t\in\mathsf A$, and these evaluations determine a family of twisted traces.

The algebras $\mathcal{A}$ are isomorphic to quotients of so-called {\emph{shifted Yangians}} (\cite[Appendix B]{BFN19}, \cite[Section 4]{cat_O_slices_and_categor}, \cite{BT26,JindalNegut2026LoopNilpotent,MuthiahWeekes2026CoulombZastava}). If $\Gamma$ is of type ADE,  
their categories $\mathcal{O}$ are known to have a rich representation theory and have been  studied extensively from several perspectives, see \cite{BrundanKleshchev2008ShiftedYangians, hernandez_zhang, KLLPW, KTWWY, cat_O_slices_and_categor} 
and references therein. 
Moreover, the graded traces of these modules package the same information as so-called Frenkel--Reshetikhin $q$-characters and are known to determine the class of the module in the Grothendieck group.

In \cite{KMBP} Kamnitzer, McBreen, and Proudfoot proposed an object $\operatorname{GrTr}(\mathcal{A})$ over $\mathbb{C}[[{\boldsymbol{z}}]]$ which
is the universal receptacle for functionals as in (\ref{eq:trace_module_formula_intro}): its defining relations encode the
rule that, when two homogeneous elements are interchanged inside a trace, the
result is multiplied by the corresponding monomial in $z$; this is precisely the ``formal'' version of (\ref{eq:def relation for twisted traces intro}).  Continuous linear
functionals on $\operatorname{GrTr}(\mathcal{A})$ will be called {\emph{graded}} traces. The functional (\ref{eq:trace_module_formula_intro}) is an example of a graded trace.

Thus graded traces arise naturally from several directions: from representation theory of category $\mathcal{O}$, from twisted traces and correlation functions in $3d$ gauge theories, and, at least in the ADE setting, from $q$-characters of shifted Yangian modules. This leads to the basic question motivating the paper:

\medskip
\noindent\textbf{Question.}
\emph{Is it possible to describe graded traces on a quantized Coulomb branch in terms of the geometry of the symplectic-dual Higgs branch?}
\medskip

The refined quantum Hikita conjecture answers  this question.

\subsection{The quantum Hikita conjecture}

Kamnitzer, McBreen and Proudfoot conjectured that, if a theory $(\Gamma,{\mathsf{v}},{\mathsf{w}})$ is ``good'' in physics terminology, then the $D$-module $\operatorname{GrTr}(\mathcal{A})$ controlling graded traces on the quantized Coulomb branch is {\emph{isomorphic}} to the $q=2\hbar$
specialization  of the {\emph{Gromov--Witten}} quantum $D$-module of the resolved  Higgs branch $X$:

\begin{equation*}
Q^{\mathrm{GW}}_{q=2\hbar} \simeq \operatorname{GrTr}(\mathcal{A}).
\end{equation*}

We propose
a refinement in which $Q^{\mathrm{GW}}_{q=2\hbar}$ is replaced by the Pushkar--Smirnov--Zeitlin $D$-module $Q^{\mathrm{PSZ}}_{q=2\hbar}$. We expect this refined conjecture to hold without any goodness assumption. 
Our refinement includes additional structure: both sides are canonically quotients of the same explicit $D$-module. On the geometric side, the quotient map is given by the $q=2\hbar$-specialized capped vertex (\ref{eq:intro_capped}) and this explicit $D$-module is simply the space 
\begin{equation*}
\mathscr{H}_\hbar[[{\boldsymbol{z}}]]=H^*_{\mathsf{T}}(\mathfrak{X})[[{\boldsymbol{z}}]]
\end{equation*}
of all possible descendants $\tau$. The same space is also very natural from the Coulomb branch perspective:  $\mathscr{H}_\hbar \subset \mathcal{A}^{\boldsymbol{0}}$ is a commutative subalgebra of $\mathcal{A}$ whose classical limit determines the integrable system on $\mathcal{M}_C$.

So, we make the following conjecture.
\begin{conjecture}[Refined quantum Hikita]\label{intro:refined quantum Hikita}
There exists a commutative diagram
\begin{equation}\label{eq:intro_or_comm_diagram}
\begin{tikzcd}
    & {\GZ_{\hbar}[[{\boldsymbol{z}}]]} & \\
    {Q_{q=2\hbar}^{\mathrm{PSZ}}} && {\operatorname{GrTr}(\mathcal{A}),}
    \arrow["\tau \mapsto \widehat{V}^{(\tau)}_{q=2\hbar}"',
        two heads, from=1-2, to=2-1]
    \arrow["\tau \mapsto \tau",
        two heads, from=1-2, to=2-3]
    \arrow["\simeq", from=2-1, to=2-3]
\end{tikzcd}    
\end{equation}
which {\emph{uniquely}} determines the desired horizontal isomorphism. 
\end{conjecture}

In particular, solutions on the left-hand side should match those on the right-hand side, considered as functionals on the space of descendants $\tau \in \mathscr{H}_\hbar[[{\boldsymbol{z}}]]$. 

As discussed in Section \ref{intro:subsec quiver var and vertex functions}, solutions of $Q^{\mathrm{PSZ}}$ are given by $\tau \mapsto \iota_p^*V^{(\tau)}$, where $\iota_p\colon p \hookrightarrow X^{\mathsf{T}}$ is a ${\mathsf{T}}$-fixed point, and the resulting restriction $\iota_p^*V^{(\tau)}$ depends only on the connected component $Z_p\subset X^{\mathsf T}$ of the point $p$ by Proposition~\ref{nonlocvertex}.

The usual (non-quantum) Hikita conjecture \cite[Conjecture 8.9]{KTWWY} predicts a bijection between the connected components of $X^{\mathsf{T}}$ and Verma modules over $\mathcal{A}$.

 This, together with the discussions of \cite[Remark 1.10]{HKW23}, \cite{Liu1},  \cite{BDGHK}, leads us to the following conjecture claiming that the isomorphism (\ref{eq:intro_or_comm_diagram}) {\emph{intertwines natural solutions}}: 
\begin{conjecture}[Conjecture \ref{eq:extended_conj_tr_Verma}]\label{conj:intro_verma_is_vertex}
\begin{equation}\label{eq:intro_verma_is_vertex}
\Big(\iota_p^*V^{(\tau)}\Big)_{q=2\hbar} = \widetilde{\operatorname{tr}}_{\Delta(Z_p)}(\tau).  
\end{equation}
Here $\Delta(Z_p)$ is the Verma module corresponding to $Z_p$, and $\widetilde{\operatorname{tr}}_{\Delta(Z_p)}(\tau)$ is its normalized trace (see Section \ref{sec: our approach} for the details).
\end{conjecture}

This is the form of the
trace–vertex correspondence which we expect to be (at least partially) explained by the forthcoming work of Botta and Tamagni \cite{BT26}. Namely, in \cite{BT26}, the authors construct {\emph{some}} modules over the Coulomb branch algebra whose normalized characters are given by  $\iota^*_pV^{(1)}_{q=2\hbar}$. In some cases it is not hard to see that they indeed must coincide with Verma modules (see Remark \ref{rem: BT construction}).

\subsection{Approach}

We are now ready to explain our approach for proving Conjecture \ref{intro:refined quantum Hikita} whenever the points of $X^{\mathsf{F}}$ are isolated. This is an adaptation of \cite{krylov_shykov} to the quantum setting.
Our point of view is that one should not try to compare the two $D$-modules directly. In particular, it is difficult to explicitly compute the kernels of the maps in (\ref{eq:intro_or_comm_diagram}) which we want to identify. Instead of doing so, we first identify solutions (compare with (\ref{eq:intro_verma_is_vertex})) and then conclude that Conjecture \ref{intro:refined quantum Hikita} holds. Let $\mathcal{M}_C^{\mathsf{A}}$ denote the {\emph{schematic}} fixed points of $\mathsf{A} \curvearrowright \mathcal{M}_C$.
\begin{maintheorem}[Theorem \ref{prop: numerical implies Hikita}]\label{mainthm:reduction}
Assume $\widetilde{\mathcal{M}}_H^{\mathsf{F}}$ is {\emph{finite}}. Assume moreover that:
\begin{enumerate}
    \item We have $\operatorname{dim}\mathbb{C}[\mathcal{M}_C^\mathsf{A}] \leqslant |X^{\mathsf{F}}|$.
    \item There exists {\emph{some}} collection of $\mathcal{A}$-modules $M_p$ labeled by $p \in X^{\mathsf{F}}$ such that $\Big(\iota^*_pV^{(\tau)}\Big)_{q=2\hbar} = \widetilde{\operatorname{tr}}_{M_p}(\tau).$
\end{enumerate}
Then Conjecture \ref{intro:refined quantum Hikita} holds.
\end{maintheorem}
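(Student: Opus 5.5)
We identify the kernels of the two surjections out of $\GZ_\hbar[[{\boldsymbol{z}}]]$ in (\ref{eq:intro_or_comm_diagram}). Once both kernels are shown to be the same submodule $K$, both $Q^{\mathrm{PSZ}}_{q=2\hbar}$ and $\operatorname{GrTr}(\mathcal{A})$ are canonically $\GZ_\hbar[[{\boldsymbol{z}}]]/K$. The horizontal isomorphism then exists, and it is unique because the vertical maps are surjective. We argue via solutions, i.e.\ via the dual spaces of continuous $D$-module maps to the rank-one module. Since the problem is linear over the base, we may work over the field obtained by localizing in the equivariant parameters. Passing to the generic fiber is harmless, because $Q^{\mathrm{PSZ}}$ is free of rank $|X^{\mathsf F}|$.

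\emph{Geometric side.} Let $N=|X^{\mathsf F}|$. Localization shows that the restrictions $\iota_p^*$, $p\in X^{\mathsf F}$, jointly give an isomorphism of $H^*_{\mathsf T}(X)_{\mathrm{loc}}$ onto the product of the rings $H_{\mathsf T}(p)_{\mathrm{loc}}$. Applying this coefficientwise in ${\boldsymbol{z}}$, an element $\tau$ lies in $\ker(\tau\mapsto \widehat V^{(\tau)})$ if and only if $\iota_p^*\widehat V^{(\tau)}=0$ for all $p$. By Theorem \ref{thm: Dmodhomom} and the relation between capped and bare vertices (the capping operator is invertible), this holds if and only if $\iota_p^*V^{(\tau)}_{q=2\hbar}=0$ for all $p$. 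Hence $\ker = \bigcap_p \ker(\tau\mapsto \iota_p^* V^{(\tau)}_{q=2\hbar})$. Moreover, the $N$ solutions $\tau\mapsto\iota_p^*V^{(\tau)}$ are linearly independent over the ring of horizontal constants. This follows from independence of the $z^0$-terms, which are the classical restrictions $\iota_p^*\tau$ of generic descendants, and these separate fixed points.

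\emph{Coulomb side.} By hypothesis (2), $\iota_p^*V^{(\tau)}_{q=2\hbar}=\widetilde{\operatorname{tr}}_{M_p}(\tau)$. Each such functional is a graded trace, so it factors through $\operatorname{GrTr}(\mathcal{A})$. Therefore $\ker(\GZ_\hbar[[{\boldsymbol{z}}]]\to\operatorname{GrTr}(\mathcal{A}))\subset\ker(\tau\mapsto\widehat V^{(\tau)}_{q=2\hbar})$. This yields a surjection $\operatorname{GrTr}(\mathcal{A})\twoheadrightarrow Q^{\mathrm{PSZ}}_{q=2\hbar}$ compatible with the diagram. It remains to show that this surjection is injective.

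\emph{Rank bound, the main obstacle.} We need $\operatorname{GrTr}(\mathcal{A})$ to be generated over the base by at most $N$ elements, or at least to have generic rank at most $N$. The plan is to reduce modulo ${\boldsymbol{z}}$ and the quantization parameter. At ${\boldsymbol{z}}=0$, the defining relations of $\operatorname{GrTr}$ kill the commutators $[\mathcal{A}^\eta,\mathcal{A}^{-\eta}]$ inside $\mathcal{A}^{\boldsymbol 0}$. The quotient should therefore be the Cartan-type quotient $\mathcal{A}^{\boldsymbol 0}/\sum_{\eta\neq0}\mathcal{A}^{\eta}\mathcal{A}^{-\eta}$. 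Its associated graded is a quotient of $\mathbb{C}[\mathcal{M}_C^{\mathsf A}]$, which has dimension at most $N$ by hypothesis (1). By a graded Nakayama argument, using completeness in ${\boldsymbol{z}}$ and the grading that makes the filtration exhaustive, $\operatorname{GrTr}(\mathcal{A})$ is generated by at most $N$ elements. A surjection from a module generated by at most $N$ elements onto a free module of rank $N$ is an isomorphism after localization. We then need torsion-freeness, or we run the argument directly over the localized base, to conclude injectivity. The delicate points here are controlling the filtrations in the completed setting and checking that the ${\boldsymbol{z}}=0$ specialization of $\operatorname{GrTr}$ is indeed governed by $\mathbb{C}[\mathcal{M}_C^{\mathsf A}]$. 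We would handle them following the non-quantum argument of \cite{krylov_shykov}.
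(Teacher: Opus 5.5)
Your strategy works and differs from the paper's. Moreover, the obstacle you flag at the end is not real. Let $R:=\mathfrak C[[\boldsymbol z]]$. Suppose you have the surjection $\operatorname{GrTr}(\mathcal A)\twoheadrightarrow Q^{\mathrm{PSZ}}_{q=2\hbar}$ of $R$-modules, and you know that $\operatorname{GrTr}(\mathcal A)$ is generated by at most $N$ elements. The generator bound follows from graded Nakayama as in Lemma \ref{lem: GrTr fin gen}, using $\operatorname{GrTr}(\mathcal A)/(\mathsf f,\hbar,\mathfrak m)\cong\mathbb C[\mathcal M_C^{\theta}]$. Then you are already done over $R$ itself. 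Pad with zero generators if needed, and consider the composite $R^N\twoheadrightarrow\operatorname{GrTr}(\mathcal A)\twoheadrightarrow Q^{\mathrm{PSZ}}_{q=2\hbar}\cong R^N$. It is a surjective endomorphism of $R^N$, so it has a right inverse; its determinant is therefore a unit and it is bijective. This forces both factors to be isomorphisms. No torsion-freeness and no passage to the generic fiber on the Coulomb side are needed; your opening remark that localizing is harmless is only justified on the geometric side.

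The paper proceeds differently. It first extracts Assumption \ref{ass:our_main_ass_modules_dim_est}(1) from the vertex--trace identity. It then proves the generic trace isomorphism (Proposition \ref{prop: tr-iso}) and freeness of each $\operatorname{GrTr}(\mathcal A)_n$ (Lemma \ref{lem: freequot}), and deduces that $\bigoplus_p\widetilde{\operatorname{tr}}_{M_p}$ embeds $\operatorname{GrTr}(\mathcal A)$ (Lemma \ref{lem: when traces verma determine GrTr}). Finally it identifies both sides with the image of the same map $\tau\mapsto(V^{(\tau)}_{p,q=2\hbar})_p$ via Corollary \ref{cor:fixed-locus-realization-PSZ}. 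Your argument bypasses these intermediate results and recovers freeness of $\operatorname{GrTr}(\mathcal A)$ and the trace embedding as consequences. The paper's route has the advantage that these structural facts about $\operatorname{GrTr}(\mathcal A)$ follow from the purely representation-theoretic Assumption \ref{ass:our_main_ass_modules_dim_est}, with no quasimap input.

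Two steps need better justification. The one that carries real weight is the implication that $\iota_p^*V^{(\tau)}_{q=2\hbar}=0$ for all $p$ forces $\widehat V^{(\tau)}_{q=2\hbar}=0$. This is what makes your kernel inclusion produce a map to $Q^{\mathrm{PSZ}}_{q=2\hbar}$. Invertibility of the capping operator does not give it, because $\Psi$ and $\Psi^{-1}$ have localized coefficients and nothing you say controls them at $q=2\hbar$. The correct argument is Propositions \ref{prop:fixed-locus-vertex-map} and \ref{prop:PSZ-to-fixed-locus-classical-injective}:
\begin{itemize}
\item Proposition \ref{nonlocvertex} makes the specialized pointwise vertex regular, so it factors through $Q^{\mathrm{PSZ}}_{q=2\hbar}$.
\item The induced map reduces at $\boldsymbol z=0$ to restriction to $X^{\mathsf T}$, which is an isomorphism over $\mathfrak K$.
\item Nakayama modulo $\mathfrak m^n$ and freeness of $H^*_{\mathsf T}(X)$ then give injectivity.
\end{itemize}
The linear independence of solutions over horizontal constants plays no role.

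Second, your description of the $\boldsymbol z=0$ quotient is inaccurate. There the relations kill the products $ab$ with $a\in\mathcal A^{\xi}$, $b\in\mathcal A^{-\xi}$ and $0\neq\xi\in R^-$, together with commutators in $\mathcal A^{\boldsymbol 0}$. This gives $\mathsf C'_\theta(\mathcal A)\cong\mathsf C_\theta(\mathcal A)$ (Lemma \ref{lem:specialization}(2)). Quotienting instead by all $\mathcal A^\eta\mathcal A^{-\eta}$ with $\eta\neq0$ can give something strictly smaller when $\hbar\neq0$. This does not affect your count: modulo $(\mathsf f,\hbar)$ the correct quotient is $\mathbb C[\mathcal M_C^{\theta}]$, which coincides with $\mathbb C[\mathcal M_C^{\mathsf A}]$ by the PBW argument of Lemma \ref{lem: two cartan subquotients are iso}.
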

\begin{remark}\label{rem: what need for approach}
 The first assumption has nothing to do with quantum cohomology and, for example, holds whenever the {\emph{usual}} Hikita conjecture \cite{H17}
 \begin{equation*}
 \mathbb{C}[\mathcal{M}_C^{\mathsf{A}}] \simeq H^*(X)
 \end{equation*}
 is known, in particular, for ADE and Jordan quivers.
 The second assumption does {\emph{not}} require $M_p$ to be Verma modules, so, once established, the results of \cite{BT26} would provide us with the collection of modules with normalized characters given by $\Big(\iota^*_pV^{(1)}\Big)_{q=2\hbar}$. Authors of \cite{BT26} expect that the normalized graded traces of their modules are indeed given by $\Big(\iota^*_pV^{(\tau)}\Big)_{q=2\hbar}$.
\end{remark}

For quivers of ADE type, we verify these assumptions by studying a certain family of $\mathcal{A}$-modules obtained via the comultiplication from so-called ``chamber'' or ``extremal'' modules (see Section \ref{ssec: modules obtained via comult}). Using the factorization property of specialized vertex functions (Proposition~\ref{prop: limver}), as well as the computations of Section \ref{sec:vertpoint} combined with \cite{min_chamber_mod, krylov-klyuev-minu}, we prove:
\begin{maintheorem}[Section \ref{sec:ADE-Hilbert}]\label{maintheorem: ADE}
The assumptions of Theorem \ref{mainthm:reduction} are satisfied for $\Gamma$ of type ADE with $\mathsf{w}$ supported at minuscule nodes; for example, for an arbitrary type $A$ quiver theory. They are also satisfied for the quiver theories corresponding to the Jordan quiver.
Hence, the refined quantum Hikita conjecture is true for these theories.
\end{maintheorem}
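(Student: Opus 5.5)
We have to verify the two hypotheses of Theorem~\ref{mainthm:reduction} for each family. Throughout, $\widetilde{\mathcal M}_H^{\mathsf F}$ is finite. For ADE quivers with minuscule framing this follows from the fixed points of $\mathcal M(\dv,\dw)$ under the framing torus. These are products of fixed points for single minuscule framings, and those are isolated and indexed by weights of the minuscule representation. For the Jordan quiver the fixed points are $\dw$-tuples of partitions.

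\emph{Hypothesis (1).} We will not prove anything new here. By Remark~\ref{rem: what need for approach}, the bound $\dim\mathbb C[\mathcal M_C^{\mathsf A}]\leqslant|X^{\mathsf F}|$ follows from the ordinary Hikita isomorphism $\mathbb C[\mathcal M_C^{\mathsf A}]\simeq H^*(X)$, together with $\dim H^*(X)=|X^{\mathsf F}|$. The latter holds because $X$ has vanishing odd cohomology and finitely many isolated fixed points. We will cite the known Hikita results for ADE quivers with minuscule framing and for the Jordan quiver. It suffices to have a surjection onto a space of the right dimension, or even just the dimension count for the fixed-point scheme.

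\emph{Hypothesis (2).} This is the main work. For each fixed point $p$ we need a module $M_p$ whose normalized graded trace equals $\iota_p^*V^{(\tau)}$ at $q=2\hbar$.

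\begin{enumerate}
\item Reduce to the smallest framings. Decompose $\dw=\sum_j\fundwt_{i_j}$ into minuscule fundamental weights, or into single framings for the Jordan quiver. A fixed point $p$ is then a tuple $(p_1,\dots,p_n)$ of fixed points in the quiver varieties with framing $\fundwt_{i_j}$. By the factorization property of specialized vertex functions (Proposition~\ref{prop: limver}), $(\iota_p^*V^{(\tau)})_{q=2\hbar}$ factors in a way compatible with the coproduct of $\tau$.
\item On the Coulomb side, use the comultiplication of Section~\ref{ssec: modules obtained via comult}. This embeds $\mathcal A_{\dw}$ into a tensor product of the algebras $\mathcal A_{\fundwt_{i_j}}$, up to shifts of the flavor parameters. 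Set $M_p=M_{p_1}\otimes\cdots\otimes M_{p_n}$, pulled back along this map. Traces of $\mathsf A$-invariant elements of $\GZ_\hbar$ on such tensor products factor as products of the factor traces. Matching the shifts with those in Proposition~\ref{prop: limver} reduces the identity to the single-framing case.
\item In the single-framing case, take $M_{p}$ to be the chamber (extremal) modules of \cite{min_chamber_mod, krylov-klyuev-minu} for minuscule $\fundwt_i$. For the Jordan quiver, take the corresponding rank-one modules. These modules have explicit weight bases with one-dimensional weight spaces, on which $\GZ_\hbar$ acts by explicit scalars. Their normalized graded trace is therefore an explicit sum over these weights.
\item Match this sum with the explicit fixed-point computation of $\iota_p^*V^{(\tau)}$ from Section~\ref{sec:vertpoint}. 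There, the vertex is a localization sum over quasimap fixed loci. Specializing $q=2\hbar$ should make the $\hbar$-weights collapse to the same combinatorial terms. The descendant $\tau$ evaluates to the scalar by which $\tau$ acts on the corresponding weight vector.
\end{enumerate}

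I expect the main obstacle to be step (2). The difficulty is making the normalizations and shifted flavor parameters in the coproduct agree exactly with those in the vertex factorization. In particular, the relation between the $\mathsf A$-grading and the curve-class variable $z$ must be compatible under tensor product. Step (4) is also delicate, but it is a finite explicit check. I would first test the whole argument on $\Gamma=A_1$ with $\dw=2$, and only then proceed to the general case.
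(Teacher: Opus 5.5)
Your ADE argument follows the paper's route. You reduce to a single minuscule framing, where $X$ is a point, using the modules $M_{\mathsf f}(\underline\mu)$, Proposition~\ref{tensorprod} and Proposition~\ref{prop: limver}. You then compare the reverse-plane-partition formula for chamber modules from \cite{min_chamber_mod,krylov-klyuev-minu} with Theorem~\ref{thm: pointvertex}. Three small corrections are needed there.
\begin{enumerate}
\item The $\mathsf a$-weight spaces of chamber modules are not one-dimensional in general. For $\nu=(2,2)$ in type $A_3$, the two content-zero boxes already give a two-dimensional weight space. So the rpp formula for the trace is a genuine cited theorem, not a consequence of a multiplicity-free weight basis.
\item What is multiplicative under tensor product is the joint spectrum of the series $\mathbf A_i(u)$, not the trace of an individual $\tau$. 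The trace of $\tau$ on the tensor product is $\tau$ evaluated on the union of the two alphabets. This is what matches the concatenated Chern roots on the product of point quiver varieties.
\item Since conicity is not assumed, $\mu$ need not be dominant. Your hypothesis (1) therefore needs the Hikita isomorphism of \cite[Theorem~A.7]{dk}, not only \cite[Theorem~8.1]{KTWWY}.
\end{enumerate}

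The genuine gap is the Jordan quiver, where your steps 2--3 have nothing to stand on.
\begin{itemize}
\item The comultiplication of Section~\ref{ssec: modules obtained via comult} and the multiplicativity of Proposition~\ref{tensorprod} are statements about truncated shifted Yangians of $\mathfrak g_\Gamma$ (\cite{KLLPW,hernandez_zhang}). They exist only in type ADE. No map from the Jordan-quiver Coulomb branch with framing $r$ to a tensor product of framing-one Coulomb branches is available.
\item Even granting such a map, framing one does not give a point. $\operatorname{Hilb}^n(\mathbb A^2)$ has $p(n)$ fixed points under the loop-scaling torus, and the Coulomb algebra is the spherical rational Cherednik algebra of $S_n$. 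The modules you need are its $p(n)$ spherical Verma modules, whose graded pieces (here $\mathsf A=\mathbb C^\times$) are far from one-dimensional. There are no ``rank-one modules'' to take.
\item On the Higgs side, the correct reduction uses the full flavor torus and lands on products of $A_\infty$ point varieties (Example~\ref{fixpointsGieseker}). Theorem~\ref{thm: pointvertex} then gives sums over $r$-tuples of reverse plane partitions on the $\mathsf Y^{(s)}$. But this reduction has no Coulomb-side counterpart.
\end{itemize}
The paper avoids comultiplication entirely in this case. It identifies the quantized Coulomb branch with the spherical cyclotomic rational Cherednik algebra of $G(r,1,n)$ \cite{KN18}, and takes invariants of the standard modules $\Delta(\ul{\mathsf Y})$. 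It then uses the generalized Jack basis of \cite{DG10}, which is indexed by exactly these tuples of reverse plane partitions and has explicit eigenvalues for the Cartan generators. Comparing those eigenvalues with Theorem~\ref{thm: pointvertex} term by term gives the vertex--trace identity. The dimension bound in this case comes from \cite[Corollary~A.9]{krylov_shykov}, not from an ADE-type Hikita statement.
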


Theorem \ref{maintheorem: ADE} gives a geometric description of graded traces on quantized Coulomb branches. Recall that $\mathcal{A}$ is an algebra over $\mathbb{C}[\mathsf{t}]$, where $\mathsf{t}=\mathsf{f} \oplus \mathbb{C}\hbar$ is the Lie algebra for the torus $\mathsf{T}$. The subalgebra $\mathbb{C}[\mathsf{t}] \subset \mathcal{A}$ is {\emph{central}} and its generators should be thought of as ``Casimirs''. We can then specialize these parameters to numbers. Namely, every cocharacter $\gamma\colon \mathbb{C}^\times \rightarrow \mathsf{F}$ determines an element $f \in \mathsf{f}$ and we denote by $\mathcal{A}_{f}$ the central quotient corresponding to $\hbar=1$ and $f \in \mathsf{f}$.

The representation theory of the algebras $\mathcal{A}_f$ changes substantially as  $f$ crosses the singular hyperplanes. As for the usual BGG category $\mathcal{O}$, if $f$ is generic enough, then the number of Verma modules in $\mathcal{O}(\mathcal{A}_f)$ is equal to $|X^{\mathsf{F}}|$, assuming $X^{\mathsf{F}}$ are isolated. If $f$ is {\emph{singular}}, then the number of Verma modules in $\mathcal{O}(\mathcal{A}_f)$ drops while the rank of $\operatorname{GrTr}(\mathcal{A}_f)$ over $\mathbb{C}[[{\boldsymbol{z}}]]$ remains the same.
So, the Verma traces no longer span all graded traces.

Theorem \ref{maintheorem: ADE} nevertheless allows us to describe graded traces on $\mathcal{A}_f$ for an arbitrary parameter $f$ as above. After specializing to $(f,1) \in \mathsf{t}$ and applying the localization theorem, we obtain the following commutative diagram: 
\begin{equation*}
\begin{tikzcd}
	& {\GZ_{\hbar}[[{\boldsymbol{z}}]]} & \\
	H^*(\widetilde{\mathcal{M}}_H^{\tilde{\gamma}(\mathbb{C}^\times)})[[{\boldsymbol{z}}]] && {\operatorname{GrTr}(\mathcal{A}_f),}
	\arrow["\tau \mapsto \widehat{V}^{(\tau)}_{q=2\hbar}"', from=1-2, to=2-1]
	\arrow["\tau \mapsto \tau", from=1-2, to=2-3]
	\arrow["\simeq", from=2-1, to=2-3]
    \end{tikzcd}    
\end{equation*}
where $\tilde{\gamma}\colon \mathbb{C}^\times \rightarrow \mathsf{F} \times \mathbb{C}^\times_\hbar$ is the cocharacter $t \mapsto (\gamma(t),t)$.

The parameter $f$ is singular precisely if $\widetilde{\mathcal{M}}_H^{\tilde{\gamma}(\mathbb{C}^\times)} \neq \widetilde{\mathcal{M}}_H^{\mathsf{T}}$. Then, graded traces on $\mathcal{A}_f$ are precisely $\mathbb{C}[[{\boldsymbol{z}}]]$-linear functionals on $H^*(\widetilde{\mathcal{M}}_H^{\tilde{\gamma}(\mathbb{C}^\times)})[[{\boldsymbol{z}}]]$. By \cite[Theorem 7.3.5]{Nak_quiver_and_fd_reps}, every such functional is given by the {\emph{integration pairing}} with a class in the homology of the $\tilde{\gamma}$-fixed part of the  {\emph{Lagrangian core}} $\mathfrak{L} \subset \widetilde{\mathcal{M}}_H$; by  definition, $\mathfrak{L}$ is the fiber over zero of the natural morphism $\widetilde{\mathcal{M}}_H \rightarrow \mathcal{M}_H$. It is a projective, generally singular, Lagrangian subvariety.

We obtain the following corollary. 
\begin{corollary}[Corollary \ref{cor: description of graded traces}]\label{intro:corollary descr grtr}
Under the assumptions of Theorem \ref{maintheorem: ADE} or, more generally, whenever Conjecture \ref{intro:refined quantum Hikita} holds, graded traces on $\mathcal{A}_f$ are given by:
\begin{equation}\label{eq:intro_graded_via_capped}
\tau \mapsto \int_{\widetilde{\mathcal{M}}_H^{\tilde{\gamma}(\mathbb{C}^\times)}} (\widehat{V}^{(\tau)}_{q=2\hbar}) \cap a, \quad   a \in H_*(\mathfrak{L}^{\tilde{\gamma}(\mathbb{C}^\times)})[[{\boldsymbol{z}}]].
\end{equation}
In particular, every graded trace has a natural $q$-deformation given by (\ref{eq:intro_graded_via_capped}) without imposing the $q=2\hbar$ specialization.
\end{corollary}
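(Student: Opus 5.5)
The plan is to specialize the isomorphism of Conjecture \ref{intro:refined quantum Hikita} along the cocharacter $\tilde{\gamma}$ and then identify $\mathbb{C}[[{\boldsymbol{z}}]]$-linear functionals on the specialized quantum $D$-module with homology of the fixed Lagrangian core. First, take the commutative diagram (\ref{eq:intro_or_comm_diagram}) and base change along the map $\mathbb{C}[\mathsf{t}]\to\mathbb{C}$, $(f,\hbar)\mapsto(f,1)$. On the Coulomb side, $\operatorname{GrTr}(\mathcal{A})$ is built from $\mathcal{A}$ by quotienting by the graded-commutator relations, and those relations are $\mathbb{C}[\mathsf{t}]$-linear since $\mathbb{C}[\mathsf{t}]$ is central. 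Tensoring with $\mathbb{C}_{(f,1)}$ is right exact, so $\operatorname{GrTr}(\mathcal{A})\otimes_{\mathbb{C}[\mathsf{t}]}\mathbb{C}_{(f,1)}\simeq\operatorname{GrTr}(\mathcal{A}_f)$. The surjection $\GZ_\hbar[[{\boldsymbol{z}}]]\twoheadrightarrow\operatorname{GrTr}(\mathcal{A})$ specializes to a surjection onto $\operatorname{GrTr}(\mathcal{A}_f)$.

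On the geometric side, $Q^{\mathrm{PSZ}}_{q=2\hbar}$ is $H^*_{\mathsf{T}}(X)[[{\boldsymbol{z}}]]$ as a module. I would use that $H^*_{\mathsf{T}}(X)$ is free over $H^*_{\mathsf{T}}(\mathrm{pt})$, since $X$ is equivariantly formal with cohomology concentrated in even degrees. The localization theorem for the one-parameter subgroup $\tilde{\gamma}(\mathbb{C}^\times)$ then gives
\begin{equation*}
H^*_{\mathsf{T}}(X)\otimes_{\mathbb{C}[\mathsf{t}]}\mathbb{C}_{(f,1)}\simeq H^*\bigl(X^{\tilde{\gamma}(\mathbb{C}^\times)}\bigr).
\end{equation*}
The argument is to localize at the point $(f,1)$, at which only the $\tilde{\gamma}$-weights of $\mathsf{T}$ vanish, and to use that the fixed locus of the subtorus $\tilde{\gamma}(\mathbb{C}^\times)$ is $X^{\tilde{\gamma}(\mathbb{C}^\times)}$. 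This step is cleanest if the restriction map to $X^{\tilde{\gamma}}$ is realized by pulling back along the inclusion, followed by evaluation of the equivariant parameters of the centralizer torus. The diagonal map becomes $\tau\mapsto\widehat{V}^{(\tau)}_{q=2\hbar}|_{X^{\tilde{\gamma}}}$ evaluated at $(f,1)$. Since the horizontal isomorphism is $\mathbb{C}[\mathsf{t}][[{\boldsymbol{z}}]]$-linear, it specializes to an isomorphism. This yields the specialized commutative diagram stated before the corollary.

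Graded traces are continuous $\mathbb{C}[[{\boldsymbol{z}}]]$-linear functionals on $\operatorname{GrTr}(\mathcal{A}_f)$. By the specialized isomorphism, these are exactly the functionals $\phi$ on $H^*(X^{\tilde{\gamma}})[[{\boldsymbol{z}}]]$ precomposed with the capped vertex. Because $H^*(X^{\tilde{\gamma}})$ is finite dimensional, every such $\phi$ is the base change of a $\mathbb{C}$-linear functional on $H^*(X^{\tilde{\gamma}})$ with coefficients in $\mathbb{C}[[{\boldsymbol{z}}]]$. Next, I would apply \cite[Theorem 7.3.5]{Nak_quiver_and_fd_reps} to each component of $X^{\tilde{\gamma}}$; each component is itself a quiver variety, or a product of them, and its core is $\mathfrak{L}^{\tilde{\gamma}(\mathbb{C}^\times)}$. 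That theorem says the inclusion of the core is a homotopy equivalence, so $H_*(\mathfrak{L}^{\tilde{\gamma}})\simeq H_*(X^{\tilde{\gamma}})$, which is dual to $H^*(X^{\tilde{\gamma}})$. Hence every functional is $c\mapsto\int c\cap a$ for a class $a\in H_*(\mathfrak{L}^{\tilde{\gamma}})[[{\boldsymbol{z}}]]$, which gives (\ref{eq:intro_graded_via_capped}). The $q$-deformation claim follows by taking the same formula with the unspecialized capped vertex $\widehat{V}^{(\tau)}$, which is defined for all $q$.

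The main obstacle is the specialization step. The capped vertex and the $D$-module are defined over localized or completed coefficient rings, possibly with denominators in $\hbar$ and in the flavor weights. So I must check that $\widehat{V}^{(\tau)}$ actually lies in $H^*_{\mathsf{T}_q}(X)[[{\boldsymbol{z}}]]$ without denominators, which is the integrality of capping. I must also check that specializing at a singular $f$ commutes with taking the quotient, and that the isomorphism remains surjective and injective. I would argue this through freeness, and hence flatness, of both sides over $\mathbb{C}[\mathsf{t}][[{\boldsymbol{z}}]]$: on the geometric side freeness holds by formality, and it then transfers to $\operatorname{GrTr}(\mathcal{A})$ through the isomorphism.
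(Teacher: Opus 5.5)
Your proposal is correct and follows essentially the paper's route: specialize the commutative triangle at $(f,1)$, identify the geometric side with $H^*(\widetilde{\mathcal M}_H^{\tilde\gamma(\mathbb C^\times)})[[\boldsymbol z]]$ by the localization theorem, and represent $\mathbb C[[\boldsymbol z]]$-linear functionals through the duality with $H_*(\mathfrak L^{\tilde\gamma(\mathbb C^\times)})$ from \cite[Theorem 7.3.5]{Nak_quiver_and_fd_reps}. Your transfer of freeness from $H^*_{\mathsf T}(\widetilde{\mathcal M}_H)[[\boldsymbol z]]$ through the isomorphism is a clean way to supply the flatness that Lemma~\ref{lem:specialization}(3) needs.

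One minor correction: because $\tilde\gamma$ involves $\mathbb C^\times_\hbar$, which scales the symplectic form, the components of $\widetilde{\mathcal M}_H^{\tilde\gamma(\mathbb C^\times)}$ are graded (non-symplectic) quiver varieties rather than quiver varieties. This does not affect the argument, since Nakajima's theorem applies directly to such fixed loci, as the paper uses it. Alternatively, the scalar contraction commutes with $\tilde\gamma$ and retracts $\widetilde{\mathcal M}_H^{\tilde\gamma(\mathbb C^\times)}$ onto $\mathfrak L^{\tilde\gamma(\mathbb C^\times)}$.
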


\subsection{Computation of vertex functions}
Corollary \ref{intro:corollary descr grtr} provides a geometric description of graded traces via the {\emph{capped}} vertex. It would still be desirable to have a similar description in terms of the {\emph{bare}} vertex $V^{(\tau)}$, as it is much simpler to compute. A complication is that the $q=2\hbar$ specialization of $V^{(\tau)}$ is {\emph{not}} well-defined in general; see Example \ref{ex:vertex has pole at hbar q} and the discussion at the end of Section \ref{sec:application of our results}.

On the other hand, we prove that the fixed-point restrictions $\iota_pV^{(\tau)}$ {\emph{do}} have a well-defined specialization to $q=2\hbar$ (see Proposition \ref{nonlocvertex})
and thus define {\emph{distinguished}} graded traces 
\begin{equation}\label{eq:intro vertex graded traces}
\tau \mapsto (\iota^*_pV^{(\tau)})_{q=2\hbar}.
\end{equation}
As explained in Warning \ref{war: specializations do not commute}, the functionals (\ref{eq:intro vertex graded traces}) {\emph{depend}} on the choice of the
torus $\mathsf{T}$ we start with. This reflects the fact that Verma modules for singular $f$ {\emph{do not}} have the same normalized character as those for regular $f$.

The upshot is that, assuming Conjecture \ref{conj:intro_verma_is_vertex}, the computation of traces of Verma modules for different choices of parameter $f$ reduces to the computation of $\iota_p^{*} V^{(\tau)}$ for various 
tori $\mathsf{T}$.

For a class of $\mathsf{T}$-fixed points $p \in \widetilde{\mathcal{M}}_H$
arising from dominant-minuscule skew heaps, we compute the specialized vertex function (\ref{eq:intro vertex graded traces}) with arbitrary
descendant as a sum over reverse plane partitions over the corresponding skew heap. See Section \ref{sec:vertpoint} for the details.
\begin{maintheorem}[Theorem \ref{thm: pointvertex}]
In the notations of Section \ref{sec:vertpoint}, if $U$ is a dominant-minuscule skew heap and $p_U \in \widetilde{\mathcal{M}}_H^{\mathsf{T}}$ is the corresponding $\mathsf{T}$-fixed point then:
\begin{equation}\label{intro:rpp formula}
(\iota^*_{p_U}V^{(\tau)})_{q=2\hbar} = \sum_{\phi \in \rpp(U)} \tau(\phi) \prod_{x \in U} z^{\phi(x)}_{{\boldsymbol{\pi}(x)}},
\end{equation}
where the sum runs over the set of {\emph{reverse plane partitions}} over $U$.
\end{maintheorem}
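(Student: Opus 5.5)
We compute $\iota^*_{p_U}V^{(\tau)}$ by equivariant localization on the quasimap moduli space, then take the limit $q\to 2\hbar$ term by term.

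\textbf{Step 1: fixed loci.} Localize with respect to $\mathsf T_q$. Since $p_U$ is an isolated $\mathsf T$-fixed point, the quasimaps contributing to $\iota^*_{p_U}V^{(\tau)}$ are $\mathsf T_q$-fixed quasimaps $f$ with $f(\infty)=p_U$. For these, the tautological bundles decompose as $\mathscr V_i=\bigoplus_{x\in U,\ \boldsymbol\pi(x)=i}\mathcal O(d_x)$. The fixed quasimaps are therefore indexed by functions $\phi\colon U\to\mathbb Z_{\ge0}$, with $\phi(x)=d_x$. Stability on the generic point imposes monotonicity along the heap order, where the heap edges are exactly the arrows of the quiver data realizing $p_U$. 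So the admissible $\phi$ should be precisely the reverse plane partitions, $\rpp(U)$. The curve degree is then $\sum_x \phi(x)\,\mathbf e_{\boldsymbol\pi(x)}$, which produces the monomial $\prod_x z_{\boldsymbol\pi(x)}^{\phi(x)}$. The descendant insertion $\tau$, a symmetric function in the Chern roots at $0$, evaluates to $\tau(\phi)$, meaning $\tau$ evaluated at the shifted weights $w_x+\phi(x)q$.

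\textbf{Step 2: localization weights.} Write the contribution of $\phi$ as
\[
\frac{\hat e\bigl(T^{\mathrm{vir}}\text{-moving part}\bigr)^{-1}}{\cdots},
\]
that is, a ratio of products over pairs $(x,y)$ of $q$-shifted factorials of the form
\[
\frac{\prod_{j}\bigl(w_y-w_x+\hbar+jq\bigr)}{\prod_j \bigl(w_y-w_x+jq\bigr)}.
\]
These factorials come from the terms $\mathrm{Hom}(\mathscr V_i,\mathscr V_j)\otimes(\text{edges})$, $\mathrm{Hom}(\mathscr W,\mathscr V)$, $\hbar$-twisted duals, and $-\mathrm{End}(\mathscr V)(1+\hbar^{-1})$. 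For a dominant-minuscule heap the weights of $p_U$ are $w_x=$ (framing weight) $-\mathrm{ht}(x)\hbar$, with the height read from the heap. This is the key input from the structure of $p_U$ recalled in Section~\ref{sec:vertpoint}. Hence every difference $w_y-w_x$ is an integer multiple of $\hbar$ plus, possibly, a generic flavor weight.

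\textbf{Step 3: specialization $q=2\hbar$.} Substitute $q=2\hbar$ and show that the product collapses to $1$ for every $\phi\in\rpp(U)$. I expect this to be the main obstacle. Proposition~\ref{nonlocvertex} guarantees the limit exists, but we must show the factors telescope exactly. My plan is to group the factors by pairs $(x,y)$ that are adjacent in the heap, or that share a color at distance $2$ in the heap. Minusculeness implies that each color's boxes interleave with the neighbouring colors' boxes in the pattern of a minuscule weight poset. Pairs with a generic flavor difference then cancel between numerator and denominator, because at $q=2\hbar$ the shift $\hbar$ plus an odd number of $\hbar$-steps matches $2j\hbar$ after reindexing. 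For the remaining pairs, each color-$i$ box $x$ with covering boxes $y,y'$ of adjacent colors produces a ratio of factorials
\[
\frac{(\,\cdot\,)_{\phi(y)-\phi(x)}\,(\,\cdot\,)_{\phi(y')-\phi(x)}}{(\,\cdot\,)_{\phi(x')-\phi(x)}\cdots}.
\]
Here $x'$ is the next box of color $i$. At $q=2\hbar$ these arguments coincide pairwise, which gives cancellation, using the heap axiom that between consecutive boxes of the same color there are exactly two boxes of adjacent colors (simply laced). Any leftover factors belong to boundary boxes. These cancel with the framing terms $\mathrm{Hom}(\mathscr W,\mathscr V)$, because minuscule framing places the source of the heap at the framed node.

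\textbf{Step 4: excluded functions.} Finally, check that a non-monotone $\phi$ either is not a fixed quasimap or acquires a zero factor at $q=2\hbar$. In the latter case the sum is over exactly $\rpp(U)$, as claimed. A sanity check is the case $U$ a single box, $\mathbb P$-type: here the sum reduces to $\sum_d \tau(d)z^d$. It can also be checked on $T^*\mathbb P^{n}$, where the known formula $\prod_i (\hbar+a_i)_d/(q+a_i)_d$ type products collapse at $q=2\hbar$ with the appropriate weights.
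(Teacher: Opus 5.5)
Your Step~1 is essentially Theorem~\ref{thm: fixedqm}. State the reasons precisely, though. Each quiver map $\mathcal O(\phi(x))\to\mathcal O(\phi(y))$ is an equivariant section of $\mathcal O(\phi(y)-\phi(x))$, and it is nonzero because it restricts at $p_2$ to the heap map; this is what forces monotonicity. And it is the simple spectrum of the rank-one torus $\mathsf T=\tilde\gamma(\mathbb C^\times)$ on each $V_i$ that makes a fixed quasimap determined by $\phi$. Non-monotone $\phi$ are then simply not fixed quasimaps, so Step~4 is moot.

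The genuine gap is Step~3, which carries the entire content of the statement beyond this count. You only sketch a plan, and the plan targets the wrong cancellation. Your Step~2 pairs edge and framing factors against $\operatorname{End}$ factors over pairs of heap boxes. Those ratios do not cancel termwise at $q=2\hbar$, which is why you are pushed toward a heap-combinatorial telescoping with boundary corrections. There are also no generic flavor weights to exploit, since all weights at $p_U$ are multiples of $\hbar$.

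The missing idea is that the obstruction theory of the derived evaluation fiber $\qm^d_{p_2\mapsto p_U}$ is self-dual. Its virtual tangent space is $E_d=\mathcal A_d-\rho\,\mathcal A_d^\vee$, with $\mathcal A_d=R\pi_*\bigl(\qmpol(-p_2)\bigr)$ and $\rho=q\hbar^{-2}$. This follows from $TX=T^{1/2}+\hbar^{-2}(T^{1/2})^\vee$ together with Serre duality, using $\omega_{\mathbb P^1}(p_2)\cong q^{-1}\mathcal O(-p_2)$. Hence the localization factor at an isolated fixed quasimap $f$ is $e(\rho\mathcal A_d^\vee)/e(\mathcal A_d)$. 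At $\rho=1$, i.e. $q=2\hbar$, each weight cancels against its own dual, leaving $(-1)^{\operatorname{rk}\mathcal A_d|_f}$, provided no weight of $\mathcal A_d|_f$ is trivial on $\ker\rho$. This is Proposition~\ref{prop:CY-isolated-localization}. Concretely, in the notation of Section~\ref{sssec_kleinian_example}, each factor $R_{\hbar,q}(a,D)=[a]_D[-2\hbar-a]_{-D}$ already specializes by itself to $(-1)^D$. No interaction between different boxes, and no minuscule or heap axiom, enters at this stage.

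The same computation shows that your claim that the product ``collapses to $1$'' is false as stated. By Riemann--Roch, $\operatorname{rk}\mathcal A_d=(T^{1/2}X,d)$. So in the geometric Novikov variables the contribution of $\phi$ is $(-1)^{(T^{1/2}X,d)}\tau(\phi)\,z^d$. The formula holds only after this sign is absorbed, in one of two equivalent ways:
\begin{itemize}
\item pass to the signed vertex $\signver{\tau}$, as in Theorem~\ref{thm: pointvertex};
\item use the sign-modified variables $z_i^{\mathrm{tr}}=(-1)^{a_i}z_i^{\mathrm{geom}}$ of Section~\ref{sec:graded traces}, which is the convention in force in the introduction.
\end{itemize}
Your proposal never accounts for this sign. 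With these two points supplied, the statement is exactly Proposition~\ref{prop:CY-isolated-localization} combined with Theorem~\ref{thm: fixedqm}, which is how the paper proves it.
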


The computation is via the localization theorem on quasimap spaces. In Section \ref{sssec_more_general} we provide  examples where explicit formulas such as (\ref{intro:rpp formula}) hold but {\emph{do not}} come from the dominant-minuscule skew heap; they nevertheless correspond to other heaps.
We then pose the question of describing these heaps combinatorially.

Finally, in Section \ref{sec:ADE-point-vertices} we restrict to types $A$ and $D$ and $\mathsf{w}$ supported at one minuscule node. We define differential operators which reconstruct all descendant vertex functions from the vertex with trivial descendant, see Theorems \ref{thm: diffop vertex}, \ref{thm: D rationality}. The explicit form of these operators immediately constrains the possible poles of the descendant vertex functions. Representation-theoretically, these operators recover the $q$-character of the corresponding module from its ordinary character. We speculate that similar operators should exist for arbitrary dominant-minuscule heaps and perhaps more generally.

\begin{maintheorem}[Theorems \ref{thm: diffop vertex}, \ref{thm: D rationality}]\label{mainthm diff operators}
Let $\Gamma$ be of type $A$ or $D$ and $\mathsf{w}$ be a delta-function of one minuscule node. Then:
\begin{itemize}
    \item The specialized vertex function $V^{(\tau)}_{q=2\hbar}$ can be recovered from $V^{(1)}_{q=2\hbar}$ by applying a certain differential operator (depending on $\tau$) in $z_i$.
    \item $V^{(\tau)}_{q=2\hbar}$ is a {\emph{rational}} function in $z_i$'s with poles only at $z^\alpha=1$ for some root $\alpha$.
\end{itemize}
\end{maintheorem}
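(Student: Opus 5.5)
Since the vertex with trivial descendant already has a known character interpretation, I would reduce both claims to it by an explicit descendant-insertion mechanism. The starting point is the rpp formula (\ref{intro:rpp formula}), or more generally the factorized fixed-point expression of the specialized vertex from Proposition~\ref{prop: limver}. For $\mathsf{w}$ a delta-function at one minuscule node in type $A$ or $D$, the $\mathsf{T}$-fixed points are labelled by dominant-minuscule heaps $U$: order ideals in the heap of the minuscule weight, which for type $A$ are Young diagrams in a rectangle and for type $D$ are shifted-shape analogues. So every $\iota^*_{p_U}V^{(\tau)}_{q=2\hbar}$ is a sum over $\rpp(U)$ of $\tau(\phi)\prod_x z_{\boldsymbol{\pi}(x)}^{\phi(x)}$. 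A descendant $\tau\in\GZ_\hbar$ is a symmetric polynomial in the Chern roots of the tautological bundles $\mathcal{V}_i$. After localization to the fixed point, the Chern roots become the weights $\phi(x)$-shifted equivariant weights $w_x+\phi(x)\hbar$, indexed by boxes $x\in U$ with $\boldsymbol{\pi}(x)=i$.

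\textbf{Step 1 (differential operators).} The basic observation is that $z_i\partial_{z_i}$ applied to the monomial $\prod_x z_{\boldsymbol{\pi}(x)}^{\phi(x)}$ multiplies it by $\sum_{\boldsymbol{\pi}(x)=i}\phi(x)$. This only gives power sums of the $\phi(x)$ over a whole color, not individual boxes, so a naive substitution $\phi(x)\mapsto z\partial_z$ is not enough. My plan is to exploit the shape of minuscule heaps in types $A$ and $D$. There, the boxes of a fixed color in any order ideal form a chain totally ordered by the heap order, and an rpp is weakly monotone along this chain. Rewriting $\phi$ via its increments $\delta_x=\phi(x)-\phi(\text{predecessor})\ge 0$ turns the sum over rpp's into a sum over independent nonnegative increments subject to linear constraints. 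The increments are counted by the variables $z^{\alpha}$ for $\alpha$ the root given by the "hook" or upper ideal generated by $x$. Each increment is then detected by an Euler operator in the multiplicative coordinates $z^\alpha$, which is a $\mathbb{Z}$-linear combination of the $z_i\partial_{z_i}$. A symmetric polynomial in the weights $w_x+\phi(x)\hbar$ thus becomes a polynomial in commuting operators $\diffop_\alpha$ acting on the generating function.

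\textbf{Step 1 (independence from $U$).} The key check is that the resulting operator $\diffop_\tau$ depends only on $\tau$ and not on $U$, that is, it commutes with the sum over fixed points. I would use the structure of type $A$/$D$ minuscule posets, where color $i$ boxes in $U$ are an initial segment of the color $i$ boxes in the full heap, together with the symmetry of $\tau$. Combined with nonlocalized vertices via Proposition~\ref{nonlocvertex}, this gives $V^{(\tau)}_{q=2\hbar}=\diffop_\tau V^{(1)}_{q=2\hbar}$.

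\textbf{Step 2 (rationality).} $V^{(1)}_{q=2\hbar}$ at $p_U$ is a sum over rpp's with $\tau=1$, which is the generating function of reverse plane partitions. By the hook-type product formulas (Proserpio-style / Garver–Patrias–Thomas for minuscule posets), it equals $\prod_{x\in U}(1-z^{h(x)})^{-1}$, with $h(x)$ a positive root. This is rational with poles only at $z^\alpha=1$. Applying the polynomial differential operator $\diffop_\tau$ in $z_i\partial_{z_i}$ preserves rationality and raises pole orders only along the same divisors. Summing over fixed points, or using the capped/nonlocalized version, yields the second claim.

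I expect the main obstacle to be Step 1's independence of $U$, together with handling colors whose boxes do not form a chain. The latter occurs in type $D$ near the fork, which is presumably why only $A$ and $D$ with a single minuscule node are treated. There I would first try a case-by-case check for the spin and vector nodes of $D_n$.
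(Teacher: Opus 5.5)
\textbf{Step 1 has a structural gap.} Your operators $\mathcal{D}_\alpha$ are, by your own description, $\mathbb{Z}$-linear combinations of the $z_i\partial_{z_i}$. So any polynomial $P$ in them acts on $\sum_d a_d z^d$ by $a_d\mapsto P(d)\,a_d$. It can therefore only produce series whose $z^d$-coefficient is $P(d)\cdot\#\rpp^d(U)$. In other words, it sees only the color totals $d_i$, and up to scalars it gives nothing beyond inserting polynomials in the $c_1(\mathcal{V}_i)$, which Proposition~\ref{prop: derivative vertex} already provides.

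This fails as soon as some color carries two boxes. Take type $A_3$ with $k=2$, $\nu=(2,2)$, $\mathsf{v}=(1,2,1)$. The rpp's are $a\le b,c\le e$ with $d=(b,a+e,c)$, and $\tilde c_2(\mathcal{V}_2)$ takes the value $q^2a(1+e)$. Along $d=(m,s,m)$ the fiber is $0\le a\le \min(m,s-m)$. The fiber average of $q^2a(1+s-a)$ equals $q^2\bigl(\tfrac{(1+s)m}{2}-\tfrac{m(2m+1)}{6}\bigr)$ for $s\ge 2m$, but is a polynomial quadratic in $s$ for $m\le s\le 2m$. So no polynomial $P$ exists.

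The ``increments'' heuristic is exactly what breaks. The hook product $\prod_x(1-z^{h(x)})^{-1}$ is not induced by a linear change of variables in $\phi$: there are $|U|$ hooks but only $|I|$ variables $z_i$, and the Hillman--Grassl correspondence is nonlinear. Euler operators therefore cannot see hook multiplicities, let alone individual entries $\phi(x)$. Your Step~2 inherits this problem.

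The obstacles you anticipate are not the real ones. $\mathcal{M}(\mathsf{v},\delta_k)$ is a single point, so there is no sum over $U$, and the operator is allowed to depend on $\mathsf{v}$ (the paper's does). Also, for $\lambda$-minuscule $w$ every color fiber $H(w)_i$ is a chain, including near the fork in type $D$.

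\textbf{What is missing is an operator with non-constant rational coefficients that separates the individual parts of a color chain.} The paper proceeds as follows.
\begin{enumerate}
\item It reads $\phi$ color by color as partitions $\gamma^{(i)}$ that interlace according to the boundary of $\nu$.
\item It introduces auxiliary variables $x_1,\dots,x_{n+1}$, with each $z_i$ a monomial in $x_i,x_{i+1}$, and writes the specialized vertex as a Schur process (Proposition~\ref{prop: A schur vertex}).
\item Branching and the skew Cauchy identity collapse all slices on the far side of color $i$ into $\Delta_i\,s_{\gamma^{(i)}}(X^{(i)})$, a Schur polynomial in $\mathsf{v}_i$ variables.
\item The Vandermonde-conjugated operator $D^r=2^r\Delta^{-1}e_r(x\partial_x)\Delta$ has $s_{\gamma}$ as an eigenfunction with eigenvalue $e_r\bigl((2(\mathsf{v}_i+\gamma_j-j))_{j=1}^{\mathsf{v}_i}\bigr)$. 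After the factor $(q/2)^r$, this is exactly $e_r$ of the shifted Chern roots $q(\mathsf{v}_i+\gamma^{(i)}_j-j)$ at $q=2\hbar$.
\item Hence $\mathcal{D}^r_i=\Delta_i(q/2)^rD^r(X^{(i)})\Delta_i^{-1}$ inserts $\tilde c_r(\mathcal{V}_i)$, provided $\tau$ involves only colors $j$ with $|j-k|\le|i-k|$. So descendants are inserted from the framing node outward.
\end{enumerate}
The coefficients of $\mathcal{D}^r_i$ have poles only along $x_a=x_b$ and $x_lx_m=1$, i.e.\ along $z^\alpha=1$ for roots $\alpha$. Induction starting from the hook formula for $V^{(1)}$ (Proposition~\ref{prop: typeAvertex}) then gives rationality with the stated pole locus. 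Type $D$ is handled the same way, starting from the Schur-type expression of Dinkins--Jang.
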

As we discuss in Section \ref{ssec:deformation operators}, Theorem \ref{mainthm diff operators} admits a deformation outside the $q=2\hbar$ specialization and also a version in $K$-theory.

\subsection{Further directions}
We hope that the present paper opens several directions for further research. 

Let us list some of them.

\subsubsection{More general oriented cohomology theories}
The classical Hikita conjecture admits a $K$-theoretic generalization. This was studied by Dumanski and the third author in \cite{dk}. A quantum version in the hypertoric setting is discussed in \cite[Appendix B]{Zhou}; see also \cite[Section 6]{BaiLee2025QuantumAdams}.

There is a natural prospect of extending the results of the present paper to $K$-theory. All of the conjectures discussed above make sense in the $K$-theoretic setting, and some of the proofs go through as well.

The main point that becomes more delicate is the dimension assumption in Theorem \ref{mainthm:reduction} above.

It would also be interesting to go beyond $K$-theory and consider more general oriented cohomology theories. Given such a theory $\mathbb{E}$, one may ask both for $\mathbb{E}$-theoretic vertex functions and quantum $D$-modules, constructed from $\mathbb{E}$-theoretic virtual fundamental classes on moduli spaces of quasimaps, and for a corresponding notion of an $\mathbb{E}$-theoretic Coulomb branch.

One possible approach to the problem 
is to generalize the results of Teleman to $\mathbb E$; see \cite{T21,T22}.

\subsubsection{The Kähler setting}
As we already said, our main result is a quantum upgrade of the classical Hikita conjecture.

The classical Hikita conjecture is fundamentally a hyperKähler statement. Interestingly, there is a similar story in Kähler context.

Nakajima quiver variety $\widetilde{\mathcal M}_H$ admits a  Kähler analogue $\mathcal M_H^{\mathrm{Kah}}$,
in which no complex moment map equation is imposed. If $\mathcal M_C^{\mathrm{trHilb}}$ denotes the transverse $W$-Hilbert scheme (it is known, see \cite{BF23} and \cite{Web26}, that Coulomb branch is the underlying reduced subscheme of one of the irreducible components of $\mathcal M_C^{\mathrm{trHilb}}$) with its natural $\mathbb C^{\times}$-action, then Kifung Chan and Conan Leung \footnote{Unpublished work and private communication.} proved that there is an isomorphism of graded $\mathbb C$-algebras
$$
H^*\left(\mathcal M_H^{\mathrm{Kah}},\mathbb C\right)
\simeq
\mathbb C\left[
\left(\mathcal M_C^{\mathrm{trHilb}}\right)^{\mathbb C^{\times}}
\right].
$$

It would be interesting to understand the place of this in our quantum Hikita picture.

One may also mention a theory developed by Hausel and Rychlewicz that resembles the Hikita conjecture; see \cite{HR25}. In this setting, given a \textit{projective} variety $X$ with an action of an algebraic group $G$, they construct an isomorphism 
\begin{equation*}
\mathbb C[\mathcal Z_S] = H_G^*(X, \mathbb C)
\end{equation*}
where $\mathcal Z_S$ is a certain version of the fixed-point scheme \textit{on the same variety $X$} (i.e., not on a symplectically dual one).

When $X$ is additionally equipped with a proper $G$-equivariant map to a $G$-representation $\mathbf N$, the authors of \cite{CCL25}  construct an  action of the BFN quantization $\mathcal{A}=\mathcal A_{G,\mathbf N}$ on  the equivariant quantum $D$-module $QDM_G(X)$ which \textit{commutes with the quantum connection.} The operators coming from this action are known as \textit{shift operators.}

Note the contrast with the statement in our note: we work with a \textit{pair} of symplectically dual varieties, and do \textit{not} have an action of the BFN quantization for the Coulomb side on the quantum $D$-module for the Higgs one.

Since the authors of \cite{CCL25} put their work into a larger physical framework, it would be interesting to understand the precise connection between the two constructions (maybe using \cite{CL24a}).

Let us finally add that, ideologically, our paper produces a correspondence between a certain Givental $I$-function and a certain graded trace on the $3d$-mirror side. In the $2d$-mirror context, the shift operators appearing in \cite{CCL25} were related by Iritani to period integrals (see \cite[Prop.~3.20 and Thm.~3.21]{I17a}), and, further, to the $I$-functions (see \cite[Rem.~11]{I20}). This coincidence may deserve some further discussion.

\subsubsection{Characteristic $p$ version}
There is also a characteristic $p$ version of the quantum Hikita conjecture; see \cite{BaiLee2026MirrorPositiveCharacteristic}. 
As Lee explained to us, the refined quantum Hikita conjecture is expected to be compatible with the positive characteristic version of the quantum Hikita conjecture as studied in \cite{BaiLee2026MirrorPositiveCharacteristic}. Namely, one expects that the canonical map (resp. capped vertex map) from the upstairs $D$-module to the $D$-module of graded traces (resp. PSZ quantum $D$-module) should intertwine the classical Steenrod operations with the action of Lonergan's Frobenius center (resp. quantum Steenrod operations). The corresponding statement for Gromov--Witten quantum $D$-module for monotone GIT quotients is shown by \cite{xu}. To realize this claim, one either needs to first develop a theory of $\mathbb{F}_p$-valued quasimap quantum cohomology, or work entirely within the $K$-theoretic setting as in \cite{BaiLee2025QuantumAdams} where the relevant quasimap quantum power operations are already developed.

\subsubsection{Twisted cocenter}

The space $\operatorname{GrTr}(\mathcal{A})$ is a module over $\mathbb{C}[[{\boldsymbol{z}}]]$ classifying graded traces. It is obtained by completing a module over the partial compactification $\ol{\mathsf{A}}=\operatorname{Spec}(\mathbb{C}R^-)$ of the torus $\mathsf{A}$; in the paper, we denote this module by $\operatorname{GrTr}^{\mathrm{rat}}(\mathcal{A})$ (see \cite[Section 3.3]{KMBP} and Section \ref{ssec: D mod of graded traces}). 
The completion is taken at the point $0 \in \ol{\mathsf{A}}$ corresponding to the augmentation ideal of $\mathbb{C}R^-$.
The fiber of $\operatorname{GrTr}^{\mathrm{rat}}(\mathcal{A}_{\hbar=1})$ over $1 \in \mathsf{A}$ is the usual {\emph{cocenter}} of $\mathcal{A}_{\hbar=1}$; see \cite[Proposition 3.12]{KMBP}. Recall also that $\mathcal{A}$ is the quantized Coulomb branch algebra, whose multiplication is defined by convolution.

In \cite[Section 3]{BenZviNadlerPreygel2017Spectral}, the  authors develop a general theory describing cocenters of convolution algebras, as well as categorical cocenters of the corresponding categories. It would be very interesting to understand whether  their methods can be adapted to our ``twisted'' setting  ``over'' $\overline{\mathsf{A}}$. Such an adaptation might provide a general approach to the refined quantum Hikita conjecture and perhaps also to its categorification. A related question is to determine the correct categorical notion corresponding to a twisted trace.

Another interesting observation, communicated to us by Andrei Ionov, is that $\operatorname{GrTr}^{\mathrm{rat}}(\mathcal{A}_{\hbar=1})$ can be viewed as a quantized, multi-parameter analogue of the ``trace'' of the Drinfeld--Gaitsgory interpolation \cite[Section 2.2]{DrinfeldGaitsgoryBraden}. Namely, in the notation of loc. cit., starting with an affine space $Z$ equipped with an action of $\mathbb{C}^\times$, the authors construct a space $\widetilde{Z} \subset Z \times Z \times \mathbb{A}^1$. From this perspective, $\operatorname{GrTr}^{\mathrm{rat}}(\mathcal{A}_{\hbar=1})$ should be thought of as an analogue of the schematic relative intersection:
\begin{equation}\label{eq:int DG diagonal}
\widetilde{Z} \cap (\Delta_Z \times \mathbb{A}^1) := \widetilde{Z} \times_{Z \times Z \times \mathbb{A}^1} (Z \times \mathbb{A}^1).
\end{equation}

 Let us finally mention that there is a natural candidate for the ``quantized Drinfeld--Gaitsgory interpolation'' in our setting. Let us make $\hbar$ invertible.
 In the notations of Section \ref{ssec: D mod of graded traces}, consider a cyclic $\mathcal{A}[\hbar^{-1}]$-bimodule $\mathscr{B}[\hbar^{-1}]$ over $\mathbb{C}R^-$, generated by a symbol $e$, with relations, for $\xi \in R^-$:
\begin{equation}\label{eq:quantized DG}
ea=z^\xi ae, \quad a \in \mathcal{A}^\xi[\hbar^{-1}];  \quad be=z^{\xi} eb, \quad b \in \mathcal{A}^{-\xi}[\hbar^{-1}].
\end{equation}
Then 
\begin{equation*}
HH_0(\mathcal{A}[\hbar^{-1}],\mathscr{B}[\hbar^{-1}]) \simeq \operatorname{GrTr}^{\mathrm{rat}}(\mathcal{A}[\hbar^{-1}]).
\end{equation*}
So, in particular, $\mathscr{B}[\hbar^{-1}]$ is the noncommutative analogue of the Drinfeld--Gaitsgory interpolation and $\operatorname{GrTr}^{\mathrm{rat}}(\mathcal{A}[\hbar^{-1}])$ is nothing else but the cocenter of this bimodule.

It would be interesting to understand whether the full Hochschild homology $HH_\bullet(\mathcal{A}[\hbar^{-1}],\mathscr{B}[\hbar^{-1}])$ 
provides the appropriate derived enhancement of $\operatorname{GrTr}^{\mathrm{rat}}(\mathcal{A}[\hbar^{-1}])$, and whether the appearance of the quantized Drinfeld--Gaitsgory interpolation has a  conceptual explanation in the quantum Hikita story.

Note that the definition (\ref{eq:quantized DG}) makes sense without inverting $\hbar$. The only difference is that then:
\begin{equation*}
\operatorname{GrTr}^{\mathrm{rat}}(\mathcal{A}) \simeq HH_0(\mathcal{A},\mathscr{B})^{\boldsymbol{0}}
\end{equation*}
so we have to {\emph{manually}} pass to the degree zero component. 

On the other hand, if we work with the {\emph{$\mathbb{C}[[{\boldsymbol{z}}]]$-completed}} versions $\operatorname{GrTr}(\mathcal{A})$, $\widehat{\mathscr{B}}$, then it is still true that 
\begin{equation*}
\operatorname{GrTr}(\mathcal{A}) \simeq HH_0^{\operatorname{cont.}}(\mathcal{A},\widehat{\mathscr{B}}).
\end{equation*}

In general, we expect that $\operatorname{GrTr}^{\mathrm{rat}}(\mathcal{A}[\hbar^{-1}])$ is a better-behaved object than $\operatorname{GrTr}^{\mathrm{rat}}(\mathcal{A})$, and the fact that the quantum Hikita conjecture holds for $\hbar=0$ is a phenomenon specific for quiver gauge theories: indeed, in \cite[Section 6]{HKM}, it's explained that the classical ($\hbar=0$) Hikita conjecture may fail outside of the quiver gauge theories.

\subsubsection{Sphere trace}

Another interesting direction is to understand, in our framework, the sphere trace functional predicted by Gaiotto--Okazaki \cite{gaiotto-sphere-quantization}; see also \cite{GHRYWZ}. It was rigorously constructed by 
Zhang \cite{zhang-analytical-traces-coulomb} in type A, and by Klyuev \cite{Klyuevres} in general, under the assumption that the theory is good or ugly. See Section \ref{sec:graded traces vs twisted traces} for further discussion.

\subsection{Acknowledgments}

We gratefully acknowledge helpful conversations with Shaoyun Bai, David Ben-Zvi, Roman Bezrukavnikov, Alexei Borodin,  Tommaso Maria Botta, Alexander Braverman, Ivan Danilenko, Ilya Dumanski, Pavel Etingof, Michael Finkelberg, Dan Freed, Victor Ginzburg, Andrei Ionov, David Kazhdan, Daniil Klyuev, Henry Liu, Jae Hee Lee, Ivan Losev, Davesh Maulik, Michael McBreen, Dinakar Muthiah, Sujay Nair, Nikita Nekrasov, Andrei Okounkov, Andrei Smirnov, Yan Soibelman, Spencer Tamagni, Richard Thomas, Keke Zhang, and  Tianqing Zhu. 

We would like to especially thank Artem Kalmykov, Joel Kamnitzer, Alexis Leroux-Lapierre, Th\'eo Pin\'et, and Alex Weekes for very helpful conversations, as well as for generously sharing their unpublished results, some of which we use in the proofs. 

We thank Kifung Chan, Tommaso Maria Botta, Joel Kamnitzer, Spencer Tamagni, and Richard Thomas for useful and kind comments on the present text.

H. Dinkins was supported by NSF grant DMS-2303286 at MIT and the NSF RTG grant Algebraic Geometry and  Representation Theory at Northeastern University DMS–1645877. 

I. Karpov was supported by D. Maulik’s steering weekly inquiries as to whether this work would ever see the light of day before his 60th birthday conference (that is, in the next 15 years), and by the third author's kind permission to spend a night on the grass in his Paris courtyard.

V. Krylov was supported by the Simons Foundation Award
888988 as part of the Simons Collaboration on Global Categorical Symmetries. 

V. Krylov apologizes for making an incorrect claim in a couple of talks he gave on this work; see footnote \ref{vasya_correction} below.

We used ChatGPT 5.6 Sol (High and Pro versions) to assist with editing the manuscript, providing examples, references, and proofreading.

\section{Hikita Conjectures, Graded Traces, and Quasimaps}\label{sec:overview}

\subsection{Quiver gauge theories}\label{ssec: quiver gauge theories}
Let $\Gamma=(I,E)$ be a finite quiver with vertex set $I$ and arrow set $E$. Set $r:=|I|$ and label the elements of $I$ by $1,\ldots,r$. For an arrow $e \in E$, we denote by $t(e)$ and $h(e)$ the vertices at the tail and head of $e$, i.e. $t(e) \xrightarrow{e} h(e)$. Let $\dv \in \mathbb{Z}_{\geqslant 0}^{r}$ and $\dw \in \mathbb{Z}_{\geqslant 0}^{r}$. Choose vector spaces $V_i,W_i$ with $\dim V_i=\mathsf{v}_i$ and $\dim W_i=\mathsf{w}_i$. Set
$$
{\bf{N}}={\bf{N}}(\dv,\dw):=\bigoplus_{e \in E} \Hom(V_{t(e)},V_{h(e)}) \oplus \bigoplus_{i \in I} \Hom(W_{i},V_{i}).
$$
The group $G_{\dv}=\prod_{i \in I} GL(V_{i})$ acts on ${\bf{N}}$. It is expected that a pair $(G_{\dv},{\bf{N}})$ defines a $3d$ $\mathcal{N}=4$ {\emph{gauge theory}}. There is no mathematical definition of this theory, but it is known how to define its Higgs and Coulomb branches $\mathcal{M}_H$, $\mathcal{M}_C$, which are (usually singular) affine Poisson varieties.

\subsubsection{Higgs branch}\label{ssec:Nakajima quiver var defin}
The variety $\mathcal{M}_H$ is the Hamiltonian reduction 
\begin{equation*}
\mathcal{M}_H := T^*{\bf{N}}/\!\!/\!\!/ G_{\dv} = \operatorname{Spec}\bigl(\mathbb{C}[\mu^{-1}(0)]^{G_{\dv}}\bigr),
\end{equation*}
where $\mu\colon T^*{\bf{N}} \rightarrow \operatorname{Lie}(G_{\dv})^*$ is the moment map for $G_{\dv} \curvearrowright T^*{\bf{N}}$. It was introduced by Nakajima and will be called an {\emph{affine Nakajima quiver variety}}.

For $\theta \in \mathbb{Z}^{r}$, we define a character
\begin{align}\label{eq: def of chi theta}
\chi_{\theta}\colon & G_{\dv} \to \mathbb{C}^{\times} \\
&(g_{i}) \mapsto \prod_{i = 1}^{r} \det(g_{i})^{\theta_{i}}.
\end{align}

Fixing a {\emph{generic}} character $\chi_\theta\colon G_{\dv} \rightarrow \mathbb{C}^\times$, we can consider the open subset $\mu^{-1}(0)^{\theta-\mathrm{st}} \subset \mu^{-1}(0)$ of $\theta$-stable points. It is known that the action of $G_{\dv}$ on $\mu^{-1}(0)^{\theta-\mathrm{st}}$ is free, and we can define the ``GIT version'' of the Hamiltonian reduction:
\begin{equation*}
\widetilde{\mathcal{M}}_{H,\theta} = T^*{\bf{N}}/\!\!/\!\!/_\theta G_{\dv} := \mu^{-1}(0)^{\theta-\mathrm{st}}/G_{\dv}.
\end{equation*}
The variety $\widetilde{\mathcal{M}}_{H,\theta}$ was also introduced by Nakajima and will be called a {\emph{Nakajima quiver variety}}. It is a smooth symplectic variety equipped with a natural map $\widetilde{\mathcal{M}}_{H,\theta} \rightarrow \mathcal{M}_H$ inducing a resolution of singularities of its image.

One natural choice for $\theta$, which is always generic, is $\theta=(1,1,\ldots,1)\in \mathbb{Z}^{r}$. So $\chi_\theta$ is the product of determinants. In the main body of the paper, we will always assume $\theta=(1,1,\ldots,1)$ and will frequently omit it from the notation. 

Let $G_{\sF}$ be an arbitrary connected reductive group acting on ${\bf{N}}$ and commuting with $G_{\dv}$. It acts naturally on $T^*{\bf{N}}$, and this action descends to
\begin{equation*}
G_{\sF} \curvearrowright \mathcal{M}_H,\, \widetilde{\mathcal{M}}_H
\end{equation*}
by Poisson (resp. symplectic) automorphisms. 

\subsubsection{Coulomb branch}
The Coulomb branch $\mathcal{M}_C=\mathcal{M}_C(G_{\dv},{\bf{N}})$ corresponding to a pair $(G_{\dv},{\bf{N}})$ was rigorously introduced by Braverman, Finkelberg and Nakajima in \cite{BFN2}. Let us recall their definition as well as some objects we will be using throughout the paper.

Let $\operatorname{Gr}_{G_{\dv}}:=G_{\dv}((z))/G_{\dv}[[z]]$ be the affine Grassmannian of $G_{\dv}$. 
Let $\La_{\dv}:=\operatorname{Hom}(\mathbb{C}^\times,T_{\dv})$ be the cocharacter lattice of $G_{\dv}$ and let $\La_{\dv}^+ \subset \La_{\dv}$ be the submonoid of dominant cocharacters. For ${\boldsymbol{\eta}} \in \La_{\dv}^+$ let $z^{\boldsymbol{\eta}}$ be the corresponding point of $\operatorname{Gr}_{G_{\dv}}$. Set $\operatorname{Gr}^{\boldsymbol{\eta}} := G_{\dv}[[z]] \cdot z^{\boldsymbol{\eta}}$ and let $\operatorname{Gr}^{\leqslant {\boldsymbol{\eta}}}=\ol{\operatorname{Gr}^{\boldsymbol{\eta}}} \subset \operatorname{Gr}_{G_{\dv}}$ be its closure. It is a projective variety of dimension $\langle 2{\boldsymbol{\rho}},{\boldsymbol{\eta}}\rangle$ containing $\operatorname{Gr}^{\boldsymbol{\eta}}$ as an open nonsingular subvariety. Here $2{\boldsymbol{\rho}} \in \La^\vee_{\dv}$ is the sum of positive roots for $G_{\dv}$, and $\La^\vee_{\dv}=\operatorname{Hom}(T_{\dv},\mathbb{C}^\times)$ is the character lattice of $G_{\dv}$.

Set
\begin{equation*}
\mathcal{T} = \mathcal{T}_{G_{\mathsf{v}},{\bf{N}}} := G_{\dv}((z)) \times^{G_{\dv}[[z]]} {\bf{N}}[[z]],~ \mathcal{R}=\mathcal{R}_{G_{\mathsf{v}},{\bf{N}}} := \{[(g,n)] \in \mathcal{T}\,|\, g \cdot n \in {\bf{N}}[[z]]\}.
\end{equation*}
Both $\operatorname{Gr}_{G_\dv}$ and $\mathcal{R}$ are equipped with the actions of $G_{\dv}[[z]]$ induced by the left multiplication. 
The natural morphism $\mathcal{R} \rightarrow \operatorname{Gr}_{G_{\dv}}$ is $G_{\dv}[[z]]$-equivariant.
Let $\mathcal{R}_{\boldsymbol{\eta}}$, $\mathcal{R}_{\leqslant \boldsymbol{\eta}}$ be the preimages of $\operatorname{Gr}^{\boldsymbol{\eta}}$, $\operatorname{Gr}^{\leqslant \boldsymbol{\eta}}$, respectively. The morphism $\mathcal{R}_{\boldsymbol{\eta}} \rightarrow \operatorname{Gr}^{\boldsymbol{\eta}}$ is an affine fibration (with infinite-dimensional fibers).

In \cite{BFN2}, the authors define the Coulomb branch $\mathcal{M}_C$ as
\begin{equation*}
\mathcal{M}_C = \operatorname{Spec}\Big(H_*^{G_{\dv}[[z]]}(\mathcal{R}),\star\Big).
\end{equation*}
The vector space $H_*^{G_{\dv}[[z]]}(\mathcal{R})$ has a commutative algebra structure $\star$ arising from a convolution diagram for $\mathcal{R}$, and the Coulomb branch $\mathcal{M}_C$ is the spectrum of this commutative algebra. 

\begin{remark}
Here, as well as everywhere else in this paper (unless explicitly stated otherwise), the notation $H_*$ stands for the \textit{Borel--Moore} homology (see \cite[2(ii)]{BFN1} for the details).
\end{remark}

One can similarly define a {\emph{quantization}} $\mathcal{A}_C$ of the Poisson algebra $\mathbb{C}[\mathcal{M}_C]$:
\begin{equation*}
\mathcal{A}_C = \Big(H_*^{G_{\dv}[[z]] \rtimes \mathbb{C}^\times}(\mathcal{R}),\star\Big),
\end{equation*}
Here the semidirect product is formed using the loop-rotation action
\begin{equation*}
(t\cdot g)(z)=g(t^2z), \qquad t\in\mathbb{C}^\times,\quad g(z)\in G_{\dv}[[z]],
\end{equation*}
which also induces the loop-rotation action on $\mathcal{R}$. We denote by $\hbar$ the inverse of the standard character of this $\mathbb C^\times$. Thus, in additive notation, the quantization parameter associated with the above loop rotation is $2\hbar$. In this paper, we will only consider quantizations of Coulomb branches, so we will omit ``$C$'' from the notation. The algebra $\mathcal{A}$ is an associative (noncommutative) algebra over $\mathbb{C}[\hbar]=H^*_{\mathbb{C}^\times}(\operatorname{pt})$ such that $\mathcal{A}/(\hbar) \simeq \mathbb{C}[\mathcal{M}_C]$.

Recall the group $G_{\sF}$ acting on ${\bf{N}}$. Let $\sF \subset G_{\sF}$ be a maximal torus and let $\sff:=\operatorname{Lie}\sF$, $W_{\sff}:=N_{G_{\sF}}(\sF)/\sF$. The group $G_{\sF}$ acts naturally on the space $\mathcal{R}$, allowing one to introduce {\emph{deformations}} of $\mathcal{A}$, $\mathcal{M}_C$ over $H^*_{G_{\sF}}(\operatorname{pt})=\mathbb{C}[\sff]^{W_{\sff}}$, $\operatorname{Spec}H^*_{G_{\sF}}(\operatorname{pt})=\sff/W_{\sff}$:
\begin{equation}\label{eq: deformations of Coulomb}
\mathcal{A}_{\sff/W_{\sff}} = H^{(G_{\dv}[[z]] \rtimes \mathbb{C}^\times)\times G_{\sF}}_*(\mathcal{R}),~\mathcal{M}_{C,\sff/W_{\sff}}=\operatorname{Spec}\Big(H_*^{G_{\dv}[[z]] \times G_{\sF}}(\mathcal{R})\Big).
\end{equation}

\begin{remark}
Note that the deformations (\ref{eq: deformations of Coulomb}) corresponding to $G_{\sF}$ can be recovered from the deformations corresponding to $\sF$ by taking $W_{\sff}$-invariants. 
\end{remark}

Let $\mathcal{A}_{\sff}$ and $\mathcal{M}_{C,\sff}$ denote the pullbacks of these deformations along $\sff\to\sff/W_{\sff}$. From now on, abusing notation, we will abbreviate them to $\mathcal{A}$ and $\mathcal{M}_C$. We also set $\tGh := (G_{\dv}[[z]] \rtimes \mathbb{C}^\times) \times \sF$.

The algebra $\mathcal{A}$ contains a certain {\emph{commutative subalgebra}} to be denoted $\mathscr{H}_{\hbar}$. It is defined as follows. Let $T_{\dv} \subset G_{\dv}$ be the maximal torus and let $\mathfrak{t}_{\dv}$ be its Lie algebra. Set $W_{\dv}:=N_{G_{\dv}}(T_{\dv})/T_{\dv}$. Then
\begin{equation*}
\GZ_{\hbar} = H^*_{\tGh}(\operatorname{pt})=\mathbb{C}[\mathfrak{t}_{\dv}]^{W_{\dv}} \otimes \mathbb{C}[\hbar] \otimes \mathbb{C}[\mathsf{f}],
\end{equation*}
and the embedding $\GZ_\hbar \hookrightarrow \mathcal{A}$ is given by 
\begin{equation*}
\GZ_\hbar \ni x \mapsto x \cap [\mathcal{R}_{\boldsymbol{0}}] \in H_*^{\tGh}(\mathcal{R}_{\boldsymbol{0}})\subset \mathcal{A},
\end{equation*}
where $[\mathcal{R}_{\boldsymbol{0}}]$ is the fundamental class of $\mathcal{R}_{\boldsymbol{0}}$ (representing the identity element in $\mathcal{A}$).
Specializing $\hbar=0$ we obtain the embedding $\GZ:=\GZ_{\hbar=0} \hookrightarrow \mathbb{C}[\mathcal{M}_C]$.

Both $\mathcal{A}$ and $\mathbb{C}[\mathcal{M}_C]$ admit a natural $\mathbb{Z}$-grading (see \cite[Sections 2(ii) and 3(v)]{BFN2}). We normalize this grading in such a way that upon restriction to $\GZ_\hbar \subset \mathcal{A}$ it becomes the grading on $H^*_{\tGh}(\operatorname{pt})$ by the cohomological degree. Because of this, we will call the aforementioned grading on $\mathcal{A},\,\mathbb{C}[\mathcal{M}_C]$ the {\emph{cohomological}} grading. The cohomological grading induces the action $\mathbb{C}^\times \curvearrowright \mathcal{M}_C$. 

For ${\boldsymbol{\eta}} \in \Lambda_{\dv}^+$ and after the identification $H_*^{\tGh}(\mathcal{R}_{\boldsymbol{\eta}}) \simeq H^*_{\tGh}(\operatorname{Gr}^{\boldsymbol{\eta}})$, the degree on the LHS is always {\emph{even}} and is equal to the $-\langle 4\boldsymbol{\rho},{\boldsymbol{\eta}}\rangle+2d_{\boldsymbol{\eta}}$-shifted cohomological degree on the RHS, where $d_{\boldsymbol{\eta}}=\operatorname{rank}(\mathcal{T}_{\boldsymbol{\eta}}/\mathcal{R}_{\boldsymbol{\eta}})$ is given by the formula in \cite[end of Section 2(i)]{BFN1} for $\mathcal T_{\boldsymbol{\eta}} = \mathcal T \times_{\operatorname{Gr}_{G_{\mathsf{v}}}} \operatorname{Gr}^{\boldsymbol{\eta}}$.

Following physics terminology, one often separates quiver gauge theories into
good, ugly and bad theories. In the present language, this trichotomy can be
read from the so-called ``twisted cohomological grading'', see \cite[Remark 2.8 and (2.10)]{BFN1}:
$$
\mathbb C[\mathcal M_C]=\bigoplus_{d\in \mathbb Z}\mathbb C[\mathcal M_C]_d .
$$
We will say that the theory is non-bad if
$$
\mathbb C[\mathcal M_C]_d=0 \quad \text{for } d<0,
\qquad
\mathbb C[\mathcal M_C]_0=\mathbb C .
$$
Equivalently, the induced action $\mathbb C^\times\curvearrowright \mathcal M_C$
is conical. A non-bad theory is called good if, moreover,
$$
\mathbb C[\mathcal M_C]_1=0,
$$
and ugly otherwise. Thus ugly theories are still conical, but have degree one
functions. Finally, we will call the theory bad if the
non-bad condition above fails. In what follows we will use the word conical
whenever only non-negativity of the grading and $\mathbb C[\mathcal M_C]_0=\mathbb C$
are needed, and reserve the word good for the stronger condition excluding
degree one functions.

It is important that the resulting criterion for goodness in terms of the twisted cohomological grading is \textit{equivalent} to the conicity of the usual cohomological grading. 

Set $\mathsf{A}:=\pi_1(G_{\dv})^{\vee}$, where by $\bullet^{\vee}$ we mean the Pontryagin dual. The torus $\mathsf{A}$ canonically identifies with the {\emph{dual}} torus to $G_{\mathsf{v}}/[G_{\mathsf{v}},G_{\mathsf{v}}]$.    
Recall that $\theta \in \mathbb{Z}^I$ defines a character $\chi_\theta\colon G_{\dv} \rightarrow \mathbb{C}^\times$ and hence also defines a cocharacter $\theta\colon \mathbb{C}^\times \rightarrow \mathsf{A}$, which we denote by the same symbol.

Recall that $\pi_0(\operatorname{Gr}_{G_{\dv}})=\pi_1(G_{\mathsf{v}})$ and $\mathcal{R}$ maps to $\operatorname{Gr}_{G_{\dv}}$. It follows that the algebras $\mathbb{C}[\mathcal{M}_C]$, $\mathcal{A}$ are $\pi_1(G_{\dv})$-graded, so the torus $\mathsf{A}$ acts naturally on $\mathcal{M}_C$ (see \cite[Section 3(v)]{BFN2} for the details).

We now give an explicit description of the identification 
\begin{equation*}
\pi_0(\operatorname{Gr}_{G_{\mathsf{v}}})= \pi_1(G_{\mathsf{v}}) \simeq \mathbb{Z}^r 
\end{equation*} 
inducing the isomorphism $\mathsf{A} \simeq (\mathbb{C}^\times)^r$.
Pick any cocharacter ${\boldsymbol{\eta}} \in \La_{\dv}$. We have ${\boldsymbol{\eta}}=(\eta_i)_{i=1,\ldots,r}$, where $\eta_i$ is a  cocharacter of $GL_{\mathsf{v}_i}$. Let $|\eta_i|$ denote the sum of coordinates of $\eta_i$. Then, the identification $\pi_0(\operatorname{Gr}_{G_{\mathsf{v}}}) \simeq \mathbb{Z}^r$ is given by:
\begin{equation}\label{eq: ident pi 0 lattice}
[z^{\boldsymbol{\eta}}] \mapsto (|\eta_i|)_{i=1,\ldots,r},
\end{equation}
where $[z^{\boldsymbol{\eta}}]$ is the connected component of $z^{\boldsymbol{\eta}} \in \operatorname{Gr}_{G_{\mathsf{v}}}$.

The algebra $\mathcal{A}$ is generated over $\GZ_\hbar$ by so-called {\emph{dressed minuscule monopole operators}} $M_{\boldsymbol{\eta},f}$. Let us recall their definition.

Let $\boldsymbol{\eta} \in \La^+_{\dv}$ be a dominant minuscule coweight for $G_{\mathsf{v}}$ and $f \in H^*_{L_{\boldsymbol{\eta}} \times \mathbb{C}^\times\times\sF}(\operatorname{pt})$, where $L_{\boldsymbol{\eta}} \subset G_{\mathsf{v}}$ is the centralizer of ${\boldsymbol{\eta}}$ in $G_{\mathsf{v}}$. Recall $\operatorname{Gr}^{\boldsymbol{\eta}} \subset \operatorname{Gr}_{G_{\mathsf{v}}}$. It is a closed $G_{\mathsf{v}}[[z]] \rtimes \mathbb{C}^\times$-invariant subvariety of $\operatorname{Gr}_{G_{\mathsf{v}}}$ isomorphic to $G_{\dv}/P_{\boldsymbol{\eta}}$, where $P_{\boldsymbol{\eta}} \subset G_{\mathsf{v}}$ is a certain parabolic subgroup corresponding to the Levi $L_{\boldsymbol{\eta}}$ (see \cite[Lemma 2.1.13]{zhu_aff_gr}). 
The natural morphism $\mathcal{R}_{\boldsymbol{\eta}} \rightarrow \operatorname{Gr}^{\boldsymbol{\eta}}$ is an affine fibration, 
so $H^*_{L_{\boldsymbol{\eta}} \times \mathbb{C}^\times\times\sF}(\operatorname{pt}) = H_*^{(G_{\mathsf{v}}[[z]] \rtimes \mathbb{C}^\times)\times\sF}(\mathcal{R}_{\boldsymbol{\eta}})$. Taking the pushforward of 
\begin{equation*}
f \in H^*_{L_{\boldsymbol{\eta}} \times \mathbb{C}^\times\times\sF}(\operatorname{pt}) = H_*^{(G_{\mathsf{v}}[[z]] \rtimes \mathbb{C}^\times)\times\sF}(\mathcal{R}_{\boldsymbol{\eta}}) \longrightarrow H_*^{(G_{\mathsf{v}}[[z]] \rtimes \mathbb{C}^\times) \times \sF}(\mathcal{R}) = \mathcal{A}
\end{equation*}
we obtain the desired monopole operator $M_{\boldsymbol{\eta},f} \in \mathcal{A}$.

\begin{lemma}\label{lem: degrees minuscule}
For $f \in H^m_{L_{\boldsymbol{\eta}} \times \mathbb{C}^\times\times\sF}(\operatorname{pt})$ we have: 
\begin{equation*}
\operatorname{deg}_{\mathbb{Z}^r \times \mathbb{Z}}(M_{\boldsymbol{\eta},f}) = ((|\eta_i|)_{i=1,\ldots,r}, m + 2d_{\boldsymbol{\eta}}-\langle 4\boldsymbol{\rho}, \boldsymbol{\eta}\rangle).
\end{equation*}
\end{lemma}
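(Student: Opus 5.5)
The bidegree has two components, and I will treat them separately.

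\textbf{The $\mathbb{Z}^r$-component.} By construction, $M_{\boldsymbol{\eta},f}$ is the pushforward of a class supported on $\mathcal{R}_{\boldsymbol{\eta}}$. The space $\mathcal{R}_{\boldsymbol{\eta}}$ lies over $\operatorname{Gr}^{\boldsymbol{\eta}}$, and $\operatorname{Gr}^{\boldsymbol{\eta}} = G_{\dv}[[z]]\cdot z^{\boldsymbol{\eta}}$ is connected. So $\mathcal{R}_{\boldsymbol{\eta}}$ lies over the single connected component $[z^{\boldsymbol{\eta}}]$ of $\operatorname{Gr}_{G_{\dv}}$. The $\pi_1(G_{\dv})$-grading on $\mathcal{A}$ is, by definition, the decomposition of $H_*^{\tGh}(\mathcal{R})$ according to the components of $\operatorname{Gr}_{G_{\dv}}$ over which the classes are supported. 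Hence the $\mathbb{Z}^r$-degree of $M_{\boldsymbol{\eta},f}$ is the image of $[z^{\boldsymbol{\eta}}]$ under (\ref{eq: ident pi 0 lattice}), namely $(|\eta_i|)_i$.

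\textbf{The cohomological component.} Here I use the stated rule: after identifying $H_*^{\tGh}(\mathcal{R}_{\boldsymbol{\eta}}) \simeq H^*_{\tGh}(\operatorname{Gr}^{\boldsymbol{\eta}})$, the grading on $\mathcal{A}$ equals the cohomological degree on the right-hand side shifted by $2d_{\boldsymbol{\eta}}-\langle 4\boldsymbol{\rho},\boldsymbol{\eta}\rangle$. I need two further facts.

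First, pushforward along the closed embedding $\mathcal{R}_{\boldsymbol{\eta}}\hookrightarrow \mathcal{R}$ preserves this grading. This is the point I expect to need the most care. The grading on $\mathcal{A}$ is defined in \cite{BFN2} stratum by stratum, precisely so that it is compatible with pushforward from each $\mathcal{R}_{\boldsymbol{\eta}}$ (equivalently, with the filtration by $\mathcal{R}_{\leqslant\boldsymbol{\eta}}$). Since $\boldsymbol{\eta}$ is minuscule, $\operatorname{Gr}^{\boldsymbol{\eta}}$ is closed, so $\mathcal{R}_{\boldsymbol{\eta}}=\mathcal{R}_{\leqslant\boldsymbol{\eta}}$ and no boundary strata can interfere.

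Second, the identification $H^*_{\tGh}(\operatorname{Gr}^{\boldsymbol{\eta}})\simeq H^*_{L_{\boldsymbol{\eta}}\times\mathbb{C}^\times\times\sF}(\operatorname{pt})$ preserves cohomological degree. This holds because $\operatorname{Gr}^{\boldsymbol{\eta}}\simeq G_{\dv}/P_{\boldsymbol{\eta}}$ is a single orbit of $G_{\dv}[[z]]\rtimes\mathbb{C}^\times$ whose stabilizer is homotopy equivalent to $L_{\boldsymbol{\eta}}\times\mathbb{C}^\times$. Indeed, loop rotation fixes $z^{\boldsymbol{\eta}}$ up to the torus action, and the stabilizer contracts onto $P_{\boldsymbol{\eta}}\times\mathbb{C}^\times$ and then onto $L_{\boldsymbol{\eta}}\times\mathbb{C}^\times$. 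So equivariant cohomology of the orbit is cohomology of the classifying space of the stabilizer, with no degree shift.

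Putting these together, a class $f$ of degree $m$ gives $M_{\boldsymbol{\eta},f}$ of degree $m+2d_{\boldsymbol{\eta}}-\langle 4\boldsymbol{\rho},\boldsymbol{\eta}\rangle$.

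Finally, I will run a consistency check on the shift. The Borel--Moore fundamental class of $\mathcal{R}_{\boldsymbol{\eta}}$, renormalized as in \cite{BFN1}, should sit in degree $2d_{\boldsymbol{\eta}}-2\dim\operatorname{Gr}^{\boldsymbol{\eta}}$, and $\dim\operatorname{Gr}^{\boldsymbol{\eta}}=\langle 2\boldsymbol{\rho},\boldsymbol{\eta}\rangle$. This gives $2d_{\boldsymbol{\eta}}-\langle 4\boldsymbol{\rho},\boldsymbol{\eta}\rangle$ for $f=1$, matching the formula.
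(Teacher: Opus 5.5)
Your proposal is correct and follows essentially the paper's route: the paper reads off the $\mathbb{Z}^r$-component directly from the identification (\ref{eq: ident pi 0 lattice}), and takes the cohomological component from the stated shift rule for $H_*^{\tGh}(\mathcal{R}_{\boldsymbol{\eta}})\simeq H^*_{\tGh}(\operatorname{Gr}^{\boldsymbol{\eta}})$, exactly as you do. Your extra steps (connectedness and closedness of $\operatorname{Gr}^{\boldsymbol{\eta}}$, and the induction isomorphism via a stabilizer homotopy equivalent to $L_{\boldsymbol{\eta}}\times\mathbb{C}^\times$) only make explicit what the paper leaves implicit; your closing consistency check reuses that same shift rule, so it is circular, but it does no harm.
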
 
\begin{proof}
The lemma follows from the explicit formula (\ref{eq: ident pi 0 lattice}) for the identification $\pi_0(\operatorname{Gr}_{G_{\mathsf{v}}})=\pi_1(G_{\dv}) \simeq \mathbb{Z}^r$.
\end{proof}

For $i \in I$, $1 \leqslant k \leqslant \mathsf{v}_i$, let $\omega_{i,k}$, $\omega_{i,k}^*$ be the following coweights of $GL_{\mathsf{v}_i}$, considered as coweights of $G_{\mathsf{v}}$ via the natural inclusion $GL(V_i) \hookrightarrow G_{\dv}$:
\begin{equation*}
\omega_{i,k}=(\underbrace{1,1,\ldots,1}_{k},0,\ldots,0),~\omega_{i,k}^*=(0,\ldots,0,\underbrace{-1,-1,\ldots,-1}_{k}).
\end{equation*}

The following proposition goes back to \cite[Section 4.3]{BDG}, \cite[Proposition 6.2 and Remark 6.7]{BFN1}, and \cite[Theorem 4.29]{FT}. It describes the generators of $\mathcal{A}$ considered as an algebra over $\GZ_{\hbar}$.
\begin{proposition}\label{prop: generators Coulomb}
The algebra $\mathcal{A}$ is generated over $\GZ_\hbar$ by the operators $M_{{\boldsymbol{\lambda}},f}$, where $\boldsymbol{\lambda}$ is of the form $\sum_{i \in J} \omega_{i,k_i}$ or $\sum_{i \in J}\omega_{i,k_i}^*$, $J \subset I$ is any nonempty subset and $1 \leqslant k_i \leqslant {\mathsf{v}}_i$. 
\end{proposition}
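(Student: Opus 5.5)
The plan is to reduce to the known generation statement of \cite[Proposition 6.2 and Remark 6.7]{BFN1} and \cite[Theorem 4.29]{FT}, via an associated graded argument with respect to the filtration of $\mathcal{A}$ by the closures $\mathcal{R}_{\leqslant\boldsymbol{\eta}}$. Since $\mathcal{A}$ is graded and $\GZ_\hbar$ sits in nonnegative degree, it is enough to prove generation after passing to the associated graded (or modulo $\hbar$), and then lift by induction on $\boldsymbol{\eta}$ with respect to the dominance order.

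First I would make the associated graded explicit. The filtration by $\operatorname{Gr}^{\leqslant\boldsymbol{\eta}}$ has graded pieces
\begin{equation*}
H_*^{\tGh}(\mathcal{R}_{\boldsymbol{\eta}})\simeq H^*_{\tGh}(\operatorname{Gr}^{\boldsymbol{\eta}}),
\end{equation*}
which are free modules. Localizing to $T_{\dv}$-fixed points, the associated graded becomes a subring of $\bigoplus_{\boldsymbol{\eta}\in\Lambda_{\dv}} \mathbb{C}[\mathfrak{t}_{\dv},\hbar,\mathsf{f}]\, r_{\boldsymbol{\eta}}$. Here the $r_{\boldsymbol{\eta}}$ multiply up to Euler-class factors coming from ${\bf N}$ (the abelianized Coulomb branch description, \cite[Section 4, 5]{BFN1}). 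The key multiplicativity is that $\operatorname{gr}$ of $M_{\boldsymbol{\eta},f}\star M_{\boldsymbol{\eta}',f'}$ is the product of the fixed-point classes in degree $\boldsymbol{\eta}+\boldsymbol{\eta}'$, up to a nonzero Euler factor.

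Second, I would show that the dressed minuscule classes generate the associated graded. Every dominant $\boldsymbol{\eta}$ decomposes as a sum of coweights of the form $\sum_{i\in J}\omega_{i,k_i}$ and $\sum_{i\in J}\omega^*_{i,k_i}$: on each $GL_{\mathsf{v}_i}$ factor, split the positive and negative parts into fundamental pieces, and group the factors across vertices so that the Euler-class factors from the edge terms $\Hom(V_{t(e)},V_{h(e)})$ appear with minimal exponent. This grouping is exactly why subsets $J$ are needed, rather than single vertices, when $\eta_i,\eta_j$ for adjacent $i,j$ have the same sign. After that, one shows that the dressings $f\in H^*_{L_{\boldsymbol{\eta}}\times\mathbb{C}^\times\times\sF}(\operatorname{pt})$ of products span all of $H^*_{\tGh}(\operatorname{Gr}^{\boldsymbol{\eta}})$ over $\GZ_\hbar$. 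The lower-order terms are then absorbed by the induction on $\boldsymbol{\eta}$.

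The main obstacle is this second step. The products of minuscule classes are not just fixed-point products: their Euler-factor discrepancy must be divisible appropriately, so that the leading term is exactly the required class and not a multiple of it by a polynomial. The mixed-sign case across an edge is the dangerous one, because there $\operatorname{Hom}$ terms contribute factors that do not cancel. I would handle it by following \cite[Theorem 4.29]{FT}, working in the GKLO-type difference-operator presentation of the abelianized algebra and comparing leading terms explicitly, with the grouping by subsets $J$ chosen to match sign patterns.
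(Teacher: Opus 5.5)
This is essentially the paper's route. The paper's proof only points to the proof of \cite[Theorem 4.29]{FT} (see also \cite[Proposition 3.1]{weekes_generators}), and the same mechanism is written out in its proof of Proposition~\ref{PBW}: filter by $\mathcal R_{\leqslant\boldsymbol{\eta}}$, induct on $\boldsymbol{\eta}$, and peel off a piece $\sum_i\omega_{i,k_i}$ or $\sum_i\omega^*_{i,k_i}$ lying in the same chamber as $\boldsymbol{\eta}$.

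The obstacle you single out does not actually arise. Take all pieces in one chamber $\zeta_{i_1,s_1}\geqslant\cdots\geqslant\zeta_{i_p,s_p}\geqslant 0\geqslant\cdots$ containing $\boldsymbol{\eta}$; mixed signs across an edge are automatically compatible with this chamber. Then \cite[Proposition 6.2]{BFN1} gives the product in $\operatorname{gr}\mathcal A$ with no Euler factor at all, only an $\hbar$-shift, so no GKLO leading-term comparison is needed. Also, if you reduce modulo $\hbar$, do it on each $\operatorname{gr}_{\boldsymbol{\eta}}\mathcal A$ rather than on $\mathcal A$ itself, since without conicity the cohomological grading of $\mathcal A$ need not be bounded below and graded Nakayama fails there.
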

\begin{proof}
This follows from the proof of \cite[Theorem 4.29]{FT}; see also the proof of \cite[Proposition 3.1]{weekes_generators}.   
\end{proof}

We will need a slightly stronger statement of the Poincar\'e--Birkhoff--Witt type. Namely, Proposition \ref{prop: generators Coulomb} claims that $\mathcal A$ is generated by monomials in elements of $\GZ_{\hbar}$ and monopole operators. The same actually holds for \textit{ordered} monomials.

More precisely, let $\mathcal P^{\mathrm{pos}},\mathcal P^{\mathrm{neg}}\subset \Lambda_{\dv}^+$ be the sets of dominant coweights
of the form
$$
 \mathcal P^{\mathrm{pos}}=\left\{\sum_{i\in J}\omega_{i,k_i}\right\},
 \quad
 \mathcal P^{\mathrm{neg}}=\left\{\sum_{i\in J}\omega^{*}_{i,k_i}\right\},
 \qquad
 J\subset I,
 \quad J\neq\varnothing,
 \quad 1\leqslant k_i\leqslant \dv_i .
$$
Let $\mathcal{A}^{\mathrm{pos}} \subset \mathcal{A}$ be the subalgebra generated by $M_{\boldsymbol{\eta},f}$ with $\boldsymbol{\eta} \in \mathcal{P}^{\mathrm{pos}}$. Similarly define $\mathcal{A}^{\mathrm{neg}} \subset \mathcal{A}$.

\begin{proposition}\label{PBW}
The  multiplication morphisms 
\begin{equation}\label{eq:mult_morphism_+_0_-}
\mathcal{A}^{\mathrm{pos}} \otimes \mathscr{H}_{\hbar} \otimes \mathcal{A}^{\mathrm{neg}} \rightarrow \mathcal{A},
\end{equation}
\begin{equation}\label{eq:mult_morphism_-_0_+}
    \mathcal{A}^{\mathrm{neg}} \otimes \mathscr{H}_{\hbar} \otimes \mathcal{A}^{\mathrm{pos}} \rightarrow \mathcal{A}
\end{equation}
are {\emph{surjective}}.
\end{proposition}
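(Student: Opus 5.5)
We will prove surjectivity of \eqref{eq:mult_morphism_+_0_-}; the statement for \eqref{eq:mult_morphism_-_0_+} follows by the same argument with the roles of $\mathcal P^{\mathrm{pos}}$ and $\mathcal P^{\mathrm{neg}}$ exchanged, or alternatively by applying the anti-involution of $\mathcal{A}$ induced by $g(z)\mapsto g(z)^{-1}$-type symmetry (inversion on $\operatorname{Gr}_{G_\dv}$), which swaps $\omega_{i,k}$ and $\omega_{i,k}^*$ and fixes $\GZ_\hbar$. Denote by $\mathcal{B}$ the image of \eqref{eq:mult_morphism_+_0_-}. It contains $\GZ_\hbar$, $\mathcal A^{\mathrm{pos}}$ and $\mathcal A^{\mathrm{neg}}$, hence all generators from Proposition \ref{prop: generators Coulomb}. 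So it suffices to show that $\mathcal{B}$ is closed under multiplication, and the only nontrivial point is to rewrite a product $b\cdot a$ with $b\in \mathcal A^{\mathrm{neg}}$ and $a\in\mathcal A^{\mathrm{pos}}$ as an element of $\mathcal{B}$. The moves $\mathcal{A}^{\mathrm{neg}}\cdot\GZ_\hbar\subset \GZ_\hbar\cdot \mathcal A^{\mathrm{neg}}$ and $\GZ_\hbar\cdot\mathcal{A}^{\mathrm{pos}}\subset\mathcal{A}^{\mathrm{pos}}\cdot \GZ_\hbar$ are standard. They hold because $x\,M_{\boldsymbol\eta,f}=M_{\boldsymbol\eta,(\text{shifted }x)f}$, i.e. $\GZ_\hbar$ acts on dressed monopoles through the localization formula with the argument shifted by $\boldsymbol\eta\hbar$, and the result is again a dressed monopole for the same $\boldsymbol\eta$.

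The argument will be an induction on a filtration. We use the filtration of $\mathcal A$ by closed subvarieties $\mathcal R_{\leqslant\boldsymbol\eta}$. For this we fix a total order refining the dominance order, measured for instance by $\sum_i\|\eta_i\|_1$ on $W$-orbits of coweights. The associated graded of this filtration is $\bigoplus_{\boldsymbol\eta}H_*^{\tGh}(\mathcal R_{\boldsymbol\eta})$. In the associated graded, $M_{\boldsymbol\lambda,f}\star M_{\boldsymbol\mu,g}$ has top component supported on $\mathcal R_{\boldsymbol\lambda+\boldsymbol\mu}$ when $\boldsymbol\lambda,\boldsymbol\mu$ are both dominant in a common chamber, and lower-order corrections otherwise. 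The key step is the commutation relation between a minuscule negative and a minuscule positive monopole:
\begin{equation*}
M_{\boldsymbol\mu,g}\,M_{\boldsymbol\lambda,f}\in \sum M_{\boldsymbol\lambda',f'}\,\GZ_\hbar\, M_{\boldsymbol\mu',g'} \quad\text{with }\boldsymbol\lambda'\in\mathcal P^{\mathrm{pos}}\cup\{0\},\ \boldsymbol\mu'\in\mathcal P^{\mathrm{neg}}\cup\{0\}.
\end{equation*}
We will compute such products via the abelianization embedding $\mathcal A\hookrightarrow$ (difference operators on $\mathfrak t_\dv$ with rational coefficients) of \cite{BFN1}, which becomes an honest homomorphism after localization. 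There $M_{\omega_{i,k},f}$ is a symmetrized sum over $k$-subsets of shift operators $u_{i,r}$, and $M_{\omega^*_{i,k},g}$ is the analogous sum over $u_{i,r}^{-1}$. Concretely, the plan is to mimic the shifted-Yangian computation of \cite[Theorem 4.29]{FT} and \cite{weekes_generators}. In that language, $\mathcal A^{\mathrm{pos}},\mathcal A^{\mathrm{neg}},\GZ_\hbar$ are the images of $Y^{>0}$, $Y^{<0}$, $Y^0$, so for ADE quivers the claim reduces to the triangular decomposition of the shifted Yangian together with surjectivity $Y_\mu\to\mathcal A$. For a general quiver (including loops, as in the Jordan case) we will instead use the GKLO-type formulas directly. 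In the product $u_{i,S}^{-1}u_{i,S'}$, the shifts cancel on $S\cap S'$. The coefficient factor is then a rational function that, after symmetrization, becomes a polynomial in $\GZ_\hbar$ times shifts indexed by $S'\setminus S$ and $S\setminus S'$. This is a strictly smaller configuration, so induction on $|S|+|S'|$ closes.

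The main obstacle is to make the last step rigorous and integral. We have to show that the symmetrized rational coefficients arising from the cancellation are genuinely polynomial, i.e. that they lie in the image of $\mathcal A^{\mathrm{pos}}\GZ_\hbar\mathcal A^{\mathrm{neg}}$, and not merely in its localization. We would handle this by arguing entirely inside $\mathcal A$. We would rather use the filtration argument than an explicit formula: the difference $M_{\boldsymbol\mu,g}M_{\boldsymbol\lambda,f}-\sum(\text{leading term written as an ordered product})$ lies in a strictly lower filtration piece of $\mathcal A$. In that lower piece, the associated graded classes on each $\mathcal R_{\boldsymbol\nu}$ can be matched by ordered products $M_{\boldsymbol\nu^+,\cdot}\,\GZ_\hbar\, M_{\boldsymbol\nu^-,\cdot}$, writing $\boldsymbol\nu=\boldsymbol\nu^++\boldsymbol\nu^-$ as a sum of elements of the positive and negative monoids. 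Surjectivity onto each graded piece $H^*_{L_{\boldsymbol\nu}}(\operatorname{pt})$ is then checked via the dressing freedom together with the fact that $H^*_{L}$ is generated over $H^*_{G}$-type subrings by products of the dressings. Noetherian induction on $\boldsymbol\nu$ finishes the proof.
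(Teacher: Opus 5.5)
Your proof works through the filtration argument at the end of your proposal, but it is organized differently from the paper's. You invoke Proposition~\ref{prop: generators Coulomb}, which reduces the claim to closure of $\mathcal B=\mathcal A^{\mathrm{pos}}\GZ_\hbar\mathcal A^{\mathrm{neg}}$ under multiplication. You then prove a straightening relation for $M_{\boldsymbol\mu,g}M_{\boldsymbol\lambda,f}$ by downward induction over the finitely many dominant $\boldsymbol\nu\leqslant\boldsymbol\lambda+\boldsymbol\mu$, and finally sort words.

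You should say explicitly why the straightening relation produces \emph{single} generators. Every such $\boldsymbol\nu$ has all coordinates in $\{-1,0,1\}$, so $\boldsymbol\nu=\boldsymbol\nu^++\boldsymbol\nu^-$ with $\boldsymbol\nu^+\in\mathcal P^{\mathrm{pos}}\cup\{0\}$ and $\boldsymbol\nu^-\in\mathcal P^{\mathrm{neg}}\cup\{0\}$. This is what makes the sorting terminate: each rewrite $ba\mapsto a'hb'$, with $h$ absorbed into an adjacent monopole, strictly lowers the number of negative-before-positive pairs.

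The paper needs neither the generation theorem nor any sorting. It runs the graded induction directly on all of $\mathcal A$. For an arbitrary $\boldsymbol\eta$ with a positive coordinate it peels $\boldsymbol\omega=\sum_i\boldsymbol\omega_{i,k_i^+}$ off the left; otherwise it peels $\boldsymbol\omega^*$ off the right. It uses only that $\mathcal B$ is a left $\mathcal A^{\mathrm{pos}}$-module and a right $\mathcal A^{\mathrm{neg}}$-module, and inducts on $(\langle 2\brho,\boldsymbol\eta\rangle,\sum|\eta_{i,s}|)$. That proof is shorter, self-contained, and reproves Proposition~\ref{prop: generators Coulomb}. Your version additionally yields an explicit straightening relation between a negative and a positive generator.

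Both proofs rest on the same input, which you only gesture at: surjectivity of the graded product $H_*^{\tGh}(\mathcal R_{\boldsymbol\nu^+})\otimes H_*^{\tGh}(\mathcal R_{\boldsymbol\nu^-})\to H_*^{\tGh}(\mathcal R_{\boldsymbol\nu})$. This requires two things. First, $\boldsymbol\nu^{\pm}$ must lie in a common generalized chamber; they do, since they have disjoint supports and opposite signs. Then \cite[Proposition 6.2]{BFN1} gives the shifted product of the dressings with no Euler factor. Second, one needs surjectivity onto $W_{\boldsymbol\nu}$-invariants, which the paper gets from the convolution diagram being an isomorphism over $\operatorname{Gr}^{\boldsymbol\nu}$.

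Two of your side remarks need correcting. First, outside a common chamber the top term of a product still lies on $\mathcal R_{\boldsymbol\lambda+\boldsymbol\mu}$; it does not drop to lower order, but it acquires a non-unit Euler factor. This is why $\mathcal P^{\mathrm{pos}}$ must contain multi-vertex coweights.

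Second, this makes your identification of $\mathcal A^{\mathrm{pos}}$ with the image of a half of the shifted Yangian false. Take $\Gamma=A_2$, $\dv=(1,1)$, so the gauge group is abelian with equivariant parameters $\xi_1,\xi_2$, and write $r_{\boldsymbol\eta}=[\mathcal R_{\boldsymbol\eta}]$. At $\hbar=0$ one has $r_{(1,0)}r_{(0,1)}=\pm(\xi_2-\xi_1)\,r_{(1,1)}$. Hence the $(1,1)$-graded part of the subalgebra generated by $\GZ_\hbar$ and the single-vertex monopoles reduces mod $\hbar$ into $(\xi_2-\xi_1)\,r_{(1,1)}$, and $r_{(1,1)}=M_{\boldsymbol\omega_{1,1}+\boldsymbol\omega_{2,1},1}$ is not in it. That subalgebra contains the image of $Y^{>0}$ (or $Y^{<0}$).

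For your ADE reduction only the inclusion of images is needed. But that route then imports surjectivity of $Y_\mu\to\mathcal A$ over $\mathbb C[\mathsf f,\hbar]$, and it does not cover the Jordan quiver. The GKLO computation becomes unnecessary once the filtration argument is in place; as written, its integrality step is unproved.
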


\begin{proof}
We prove the claim for the morphism (\ref{eq:mult_morphism_+_0_-}).
Let $\mathcal B$ be its image.

As in \cite[Section 6]{BFN1}, consider the following $\mathbb Z_{\geqslant 0}$-filtration on $\mathcal A$:
$$
 F^k\mathcal A=
 H^{\widetilde{G}_{\dv}}_{\ast}\left(
 \bigcup_{\boldsymbol{\eta}\in\Lambda_{\dv}^+,\;\langle 2\brho,\boldsymbol{\eta}\rangle\leqslant k}\mathcal R_{\boldsymbol{\eta}}
 \right).
$$
The associated graded algebra identifies with
$$
 \operatorname{gr}\mathcal A=\bigoplus_{\boldsymbol{\eta}\in\Lambda_{\dv}^+}
 H^{\widetilde{G}_{\dv}}_{\ast}(\mathcal R_{\boldsymbol{\eta}}).
$$
It is enough to check that every
summand $H^{\widetilde{G}_{\dv}}_{\ast}(\mathcal R_{\boldsymbol{\eta}})\subset \operatorname{gr}\mathcal A$ belongs to
$\operatorname{gr}\mathcal{B}$. We prove this by induction on the pair 
\begin{equation}\label{eq:pair_order_to_eta}
(\langle 2\boldsymbol{\rho}, \boldsymbol{\eta}\rangle, \sum_{i,s}|\eta_{i,s}|),
\end{equation} 
ordered lexicographically.

The case $\boldsymbol{\eta}={\boldsymbol{0}}$ is clear as $H_*^{\widetilde{G}_{\mathsf{v}}}(\mathcal{R}_{{\boldsymbol{0}}})=\mathscr{H}_\hbar$.

Pick, then, $0\neq\boldsymbol{\eta}\in\Lambda_{\mathsf v}^+$, and let $C\subset\Lambda_{\dv}^+$ be a (closed) generalized chamber 
containing $\boldsymbol{\eta}$ and consisting of elements $(\zeta_{i,s}) \in \mathfrak{t}_{\mathsf{v},\mathbb{R}}=\prod_{j \in I}\mathfrak{t}_{v_j,\mathbb{R}}$ of the form
$$
 \zeta_{i_1,s_1}\geqslant \zeta_{i_2,s_2} \geqslant \cdots \geqslant
 \zeta_{i_p,s_p} \geqslant 0 \geqslant
 \zeta_{i_{p+1},s_{p+1}} \geqslant \cdots \geqslant
 \zeta_{i_{|\mathsf v|},s_{|\mathsf v|}},
$$
where $|\mathsf{v}|=\sum_{i \in I}v_i$, $i_k \in I$, $s_k \in \{1,\ldots,v_{i_k}\}$.

First assume that $\boldsymbol{\eta}$ has a positive coordinate. For each $i\in I$, set
$$
 k_i^+=\#\{s = 1,\ldots v_i\mid \eta_{i,s}>0\},
 \qquad
 \boldsymbol\omega=\sum_{i:\,k_i^+>0}\boldsymbol\omega_{i,k_i^+}.
$$
Then $\boldsymbol\omega\in\mathcal P^{\mathrm{pos}}$, both $\boldsymbol\omega$ and $\boldsymbol\eta-\boldsymbol\omega$ lie
in $C$, and the pair (\ref{eq:pair_order_to_eta}) associated with $\boldsymbol\eta-\boldsymbol\omega$ is lexicographically smaller than the pair associated with $\boldsymbol\eta$.

By \cite[Proposition 6.2]{BFN1}, the multiplication map 
\begin{equation}\label{eq: mult in chamber}
H_*^{\widetilde{G}_{\dv}}(\mathcal{R}_{{\boldsymbol{\omega}}}) 
\otimes
H_*^{\widetilde{G}_{\dv}}(\mathcal{R}_{\boldsymbol{\eta}-{\boldsymbol{\omega}}})   \rightarrow H_*^{\widetilde{G}_{\dv}}(\mathcal{R}_{{\boldsymbol{\eta}}}) 
\end{equation}
is given by 
\begin{equation}\label{eq:expl formula mult in chamber}
f[\mathcal R_{\boldsymbol{\omega}}] * g[\mathcal R_{\boldsymbol{\eta}-\boldsymbol{\omega}}]
=
fg(\,\cdot+2\hbar\boldsymbol{\omega}\,) [\mathcal R_{\boldsymbol{\eta}}],
\end{equation}
where\footnote{There is a typo in the version of this formula in \cite[Proposition 6.2]{BFN1} involving the $\hbar$-shift.}
\begin{equation*}
f \in \mathbb{C}[\mathfrak{t}_{\dv}]^{W_{\boldsymbol{\omega}}} \otimes \mathbb{C}[\sff,\hbar],~g \in \mathbb{C}[\mathfrak{t}_{\dv}]^{W_{\boldsymbol{\eta}-\boldsymbol{\omega}}} \otimes \mathbb{C}[\sff,\hbar],~fg(\,\cdot+2\hbar\boldsymbol{\omega}\,) \in \mathbb{C}[\mathfrak{t}_{\dv}]^{W_{\boldsymbol{\eta}}} \otimes \mathbb{C}[\sff,\hbar].
\end{equation*}
Let $p\colon \widetilde{\operatorname{Gr}}^{\boldsymbol{\omega},\boldsymbol{\eta}-\boldsymbol{\omega}} \rightarrow \operatorname{Gr}^{\leqslant \boldsymbol{\eta}}$ be the convolution diagram over $\operatorname{Gr}^{\leqslant \boldsymbol{\eta}}$ as in \cite[Section 4]{MirVil}. 
The map $p$ is an isomorphism over an open subset $\operatorname{Gr}^{\boldsymbol{\eta}} \subset \operatorname{Gr}^{\leqslant \boldsymbol{\eta}}$ and so the restriction map
\begin{equation}\label{eq: restriction convolution}
H_*^{\widetilde{G}_{\mathsf{v}}}(\operatorname{Gr}^{\boldsymbol{\omega}}) \otimes_{H^*_{\widetilde{G}_{\mathsf{v}}}(\operatorname{pt})} H_*^{\widetilde{G}_{\mathsf{v}}}(\operatorname{Gr}^{\boldsymbol{\eta-\omega}}) = H_*^{\widetilde{G}_{\dv}}(\widetilde{\operatorname{Gr}}^{\boldsymbol{\omega},\boldsymbol{\eta}-\boldsymbol{\omega}}) \twoheadrightarrow H_*^{\widetilde{G}_{\dv}}(p^{-1}(\operatorname{Gr}^{\boldsymbol{\eta}}))
\end{equation}
is surjective. 
It then follows from (\ref{eq:expl formula mult in chamber}) and (\ref{eq: restriction convolution})  that the map
(\ref{eq: mult in chamber}) is also surjective. 

By induction, $H^{\widetilde{G}_{\dv}}_{\ast}(\mathcal R_{\boldsymbol{\eta}-\boldsymbol{\omega}})$ belongs to
$\operatorname{gr}\mathcal{B}$, while $H^{\widetilde{G}_{\dv}}_{\ast}(\mathcal R_{\boldsymbol{\omega}})$ belongs to
$\operatorname{gr}\mathcal{A}^{\mathrm{pos}}$ by the definition of $\mathcal{A}^{\mathrm{pos}}$. Note now that $\mathcal{B}$ is stable under the left multiplication by $\mathcal{A}^{\mathrm{pos}}$. It follows that $\operatorname{gr}\mathcal{B}$ is stable under the left multiplication by $\operatorname{gr}\mathcal{A}^{\mathrm{pos}}$.
The surjectivity of the
 map (\ref{eq: mult in chamber}) therefore implies that
$H^{\widetilde{G}_{\dv}}_{\ast}(\mathcal{R}_{\boldsymbol{\eta}})\subset \operatorname{gr}\mathcal{B}$.

This finishes the case in which $\boldsymbol{\eta}$ has a positive coordinate. Now suppose that $\boldsymbol{\eta}$ has a negative coordinate.

For each $i\in I$, set
$$
 k_i^- =\#\{s\mid \eta_{i,s}<0\},
 \qquad
 \boldsymbol\omega =\sum_{i:\,k_i^->0}\boldsymbol\omega^{*}_{i,k_i^-}.
$$

The argument is similar, now using the map $$H_*^{\widetilde{G}_{\dv}}(\mathcal{R}_{\boldsymbol{\eta}-{\boldsymbol{\omega}}}) 
\otimes 
H_*^{\widetilde{G}_{\dv}}(\mathcal{R}_{\boldsymbol{\omega}})  \rightarrow H_*^{\widetilde{G}_{\dv}}(\mathcal{R}_{{\boldsymbol{\eta}}}).$$
\end{proof}

\subsection{Hikita conjecture}
The Hikita conjecture predicts an isomorphism of $\mathbb{Z}_{\geqslant 0}$-graded algebras over $\GZ$ compatible with the natural action of $W_{\sff}$ on both sides:
\begin{equation}\label{eq: Hikita conj}
H^*_{\sF}(\widetilde{\mathcal{M}}_H) \simeq \mathbb{C}[\mathcal{M}_{C,\sff}^{\theta}],
\end{equation}
where on the RHS we consider the algebra of functions on {\emph{schematic fixed points}} (see \cite{FO}, \cite[Section 1.2]{Drinfeld} for the definition) of $\theta(\mathbb{C}^\times)$ acting on $\mathcal{M}_C$. The $\mathbb{Z}_{\geqslant 0}$-gradings on both sides of (\ref{eq: Hikita conj}) are the cohomological gradings.

It is important to mention that the action of $\GZ$ on both the left- and right-hand sides of (\ref{eq: Hikita conj}) is {\emph{cyclic}} and is induced by the natural surjections:
\begin{multline}\label{eq: kirwan quiver}
\GZ = H^*_{G_{\dv} \times \sF}(\mu^{-1}(0))=\\
=H^*_{\sF}(\mu^{-1}(0)/G_{\dv}) \twoheadrightarrow H^*_{\sF}(\mu^{-1}(0)^{\theta-\mathrm{st}}/G_{\dv})=H^*_{\sF}(\widetilde{\mathcal{M}}_H),
\end{multline}
\begin{equation}\label{eq: kirwan on coulomb}
\GZ \hookrightarrow \mathbb{C}[\mathcal{M}_C]^{\theta(\mathbb{C}^\times)} \twoheadrightarrow \mathbb{C}[\mathcal{M}_C^{\theta}],
\end{equation}
where (\ref{eq: kirwan quiver}) is surjective by \cite{kirv} and (\ref{eq: kirwan on coulomb}) is surjective by Proposition \ref{prop: generators Coulomb} combined with Lemma \ref{lem: degrees minuscule} (see \cite[Proposition 8.7]{krylov_shykov} and references therein). In particular, it follows that the isomorphism (\ref{eq: Hikita conj}) with the properties above is {\emph{unique}} if it exists. Moreover, since both (\ref{eq: kirwan quiver}) and (\ref{eq: kirwan on coulomb}) are $\mathbb{Z}_{\geqslant 0}$-graded and $W_{\sff}$-equivariant, these additional properties of the isomorphism (\ref{eq: Hikita conj}) would follow whenever one checks that it is $\GZ$-equivariant.

\begin{remark}
If the action $\mathbb{C}^\times \curvearrowright \mathcal{M}_C$ is not conical, then it is not obvious that the cohomological grading on $\mathbb{C}[\mathcal{M}_C^{\theta}]$ is nonnegative. This follows from the surjectivity of (\ref{eq: kirwan on coulomb}). 
\end{remark}

\begin{proposition} Passing to $W_{\sff}$-invariants in (\ref{eq: Hikita conj}), one would recover the following isomorphism of $\mathbb{Z}_{\geqslant 0}$-graded algebras over $H^*_{G_{\sF} \times G_{\dv}}(\operatorname{pt})$:
\begin{equation}\label{eq: hikita deformed general}
H^*_{G_{\sF}}(\widetilde{\mathcal{M}}_H) \simeq \mathbb{C}[\mathcal{M}_{C,\sff/W_{\sff}}^{\theta}].
\end{equation}
\end{proposition}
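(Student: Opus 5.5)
The plan is to take $W_{\sff}$-invariants on both sides of (\ref{eq: Hikita conj}) and then identify each side of the result with the corresponding side of (\ref{eq: hikita deformed general}). Taking invariants is legitimate because the isomorphism (\ref{eq: Hikita conj}) is $W_{\sff}$-equivariant, so it restricts to an isomorphism
\begin{equation*}
H^*_{\sF}(\widetilde{\mathcal{M}}_H)^{W_{\sff}} \simeq \mathbb{C}[\mathcal{M}_{C,\sff}^{\theta}]^{W_{\sff}}.
\end{equation*}
This is an isomorphism of $\mathbb{Z}_{\geqslant 0}$-graded algebras over $\GZ^{W_{\sff}} = H^*_{G_{\dv}\times G_{\sF}}(\operatorname{pt})$. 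The invariant functor is exact because we are in characteristic zero and $W_{\sff}$ is finite.

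\textbf{Higgs side.} I would use the standard fact that for a connected reductive group $G_{\sF}$ with maximal torus $\sF$ and any $G_{\sF}$-variety $Y$,
\begin{equation*}
H^*_{G_{\sF}}(Y;\mathbb{C}) \simeq H^*_{\sF}(Y;\mathbb{C})^{W_{\sff}}.
\end{equation*}
This follows from the fibration $G_{\sF}/\sF \to Y\times_{\sF} E \to Y \times_{G_{\sF}} E$ and the Leray--Hirsch argument, or equivalently from $H^*_{\sF}(Y) = H^*_{G_{\sF}}(Y)\otimes_{\mathbb{C}[\sff]^{W_{\sff}}}\mathbb{C}[\sff]$ together with $\mathbb{C}[\sff]^{W_{\sff}}$-flatness. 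Applying it to $Y=\widetilde{\mathcal{M}}_H$ gives the left-hand side of (\ref{eq: hikita deformed general}).

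\textbf{Coulomb side.} I need
\begin{equation*}
\mathbb{C}[\mathcal{M}_{C,\sff}^{\theta}]^{W_{\sff}} \simeq \mathbb{C}[\mathcal{M}_{C,\sff/W_{\sff}}^{\theta}].
\end{equation*}
First, by the Remark above, $\mathcal{M}_{C,\sff}=\mathcal{M}_{C,\sff/W_{\sff}}\times_{\sff/W_{\sff}}\sff$. I expect this base change to hold because the BFN homology with respect to $G_{\sF}$ and to $\sF$ are related by the same Leray--Hirsch identity applied to $\mathcal{R}$: the equivariant homology is free over the base, as in \cite{BFN2}, so $H_*^{\widetilde{G}\times\sF}=H_*^{\widetilde{G}\times G_{\sF}}\otimes_{\mathbb{C}[\sff]^{W}}\mathbb{C}[\sff]$. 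Second, schematic fixed points commute with base change, since the fixed-point ideal is generated by the nonzero-weight components for $\theta(\mathbb{C}^\times)$. Hence
\begin{equation*}
\mathbb{C}[\mathcal{M}_{C,\sff}^{\theta}] = \mathbb{C}[\mathcal{M}^{\theta}_{C,\sff/W_{\sff}}]\otimes_{\mathbb{C}[\sff]^{W_{\sff}}}\mathbb{C}[\sff].
\end{equation*}
Taking $W_{\sff}$-invariants, which act only on the second factor, and using exactness of invariants, gives $\mathbb{C}[\mathcal{M}^{\theta}_{C,\sff/W_{\sff}}]\otimes_{\mathbb{C}[\sff]^{W}}\mathbb{C}[\sff]^{W}$. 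This is the right-hand side of (\ref{eq: hikita deformed general}).

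Finally I would check that the algebra structure over $H^*_{G_{\sF}\times G_{\dv}}(\operatorname{pt})$ and the grading are preserved. This is immediate, since all the identifications above are compatible with the cyclic maps (\ref{eq: kirwan quiver}) and (\ref{eq: kirwan on coulomb}) restricted to invariants.

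The main obstacle is the Coulomb-side base change. One must justify that the fixed-point scheme commutes with the non-flat-looking base change $\sff\to\sff/W_{\sff}$. I would argue directly on ideals: the ideal generated by the nonzero $\theta$-weight part commutes with the tensor product by $\mathbb{C}[\sff]$ over $\mathbb{C}[\sff]^{W}$, because that tensor product is exact on these modules. The exactness holds since $\mathbb{C}[\sff]$ is free over $\mathbb{C}[\sff]^{W_{\sff}}$ by Chevalley's theorem.
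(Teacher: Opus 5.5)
Your proposal is correct and follows the paper's argument: Borel's identification $H^*_{G_{\sF}}(\widetilde{\mathcal{M}}_H)=H^*_{\sF}(\widetilde{\mathcal{M}}_H)^{W_{\sff}}$ on the Higgs side, and on the Coulomb side the facts that $\mathcal{M}_{C,\sff}$ is the pullback of $\mathcal{M}_{C,\sff/W_{\sff}}$ along $\sff\to\sff/W_{\sff}$ and that schematic $\theta$-fixed points commute with this base change, followed by taking $W_{\sff}$-invariants. One small remark: because $\theta(\mathbb{C}^\times)$ acts trivially on the base, the base-change compatibility of fixed points follows from right exactness of $\otimes$ alone, so Chevalley freeness is not actually needed at that step, though invoking it does no harm.
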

\begin{proof}
By the standard result of Borel, $H^*_{G_{\sF}}(\widetilde{\mathcal{M}}_H)=H^*_{{\sF}}(\widetilde{\mathcal{M}}_H)^{W_{\mathsf{f}}}$. 

On the Coulomb-branch side,  $\mathcal M_{C,\mathsf f}$  identifies with the pullback of $\mathcal M_{C,\mathsf f/W_{\mathsf{f}}}$ along the quotient map
$
\mathsf f \longrightarrow \mathsf{f}/W_\mathsf{f}.
$
Since the action of $\theta(\mathbb C^\times)$ commutes with the $W_\mathsf{f}$-action and is trivial on the deformation base $\mathsf{f}$, the formation of the schematic $\theta(\mathbb{C}^\times)$-fixed-point subscheme commutes with this base change. 
\end{proof}

The following proposition and its proof will appear in a forthcoming work of Kamnitzer and Weekes. 
\begin{proposition}[Kamnitzer-Weekes]\label{propos_fixed_Coulomb_fin_dim}
The set $\mathcal{M}_C^{\theta}(\mathbb{C})$ consists of {\emph{at most}} one point. In other words, $\mathbb{C}[\mathcal{M}_C^{\theta}]$ is either zero or a finite-dimensional local  algebra. 
\end{proposition}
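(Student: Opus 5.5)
We need to show that $\mathcal{M}_C^{\theta}(\mathbb{C})$ has at most one point. Since $\mathbb{C}[\mathcal{M}_C^{\theta}]$ is a quotient of a finitely generated algebra, the "in other words'' reformulation follows once we know the reduced fixed locus is at most one point. Indeed, a finitely generated algebra with at most one maximal ideal, which is moreover a quotient of a polynomial ring with that ideal, has finite dimension provided the scheme is proper over a point. Here we can use the surjection (\ref{eq: kirwan on coulomb}) from $\GZ$ together with the grading.

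\textbf{Key input.} The plan rests on that surjection
\[
\GZ \twoheadrightarrow \mathbb{C}[\mathcal{M}_C^{\theta}].
\]
It holds by Proposition \ref{prop: generators Coulomb} and Lemma \ref{lem: degrees minuscule}, and it is compatible with the cohomological $\mathbb{Z}$-grading. Since $\GZ=H^*_{G_{\dv}\times\sF}(\operatorname{pt})$ (specialized at $\hbar=0$, with $\mathsf{f}$ set to $0$ in the undeformed case) is nonnegatively graded with degree-zero part $\mathbb{C}$, the quotient $\mathbb{C}[\mathcal{M}_C^{\theta}]$ is also nonnegatively graded, with degree-zero part either $\mathbb{C}$ or $0$.

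\textbf{Geometric consequence.} A $\mathbb{Z}_{\geqslant 0}$-graded finitely generated algebra $R$ with $R_0\in\{0,\mathbb{C}\}$ corresponds to a $\mathbb{C}^\times$-stable affine scheme on which the $\mathbb{C}^\times$-action contracts everything to the point defined by the irrelevant ideal $R_{>0}$. Every closed $\mathbb{C}^\times$-orbit closure then contains this point. This alone does not force a single point.

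\textbf{Main step.} I would therefore show that $\mathbb{C}^\times$ acts trivially on the points of $\mathcal{M}_C^{\theta}$. The plan is to use the $\mathsf{A}$-weights. Every monopole generator $M_{\boldsymbol{\eta},f}$ has nonzero $\theta$-weight $\sum_i|\eta_i|$, since all its coordinates have the same sign. So in $\mathbb{C}[\mathcal{M}_C^{\theta}]$ — the quotient by the ideal generated by functions of nonzero $\theta$-weight — each such generator vanishes. Hence $\mathcal{M}_C^{\theta}$ is cut out inside $\operatorname{Spec}$ of the $\theta$-invariants, and its coordinate ring is generated by the image of $\GZ$.

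I would then show that the image of every positive-degree element of $\GZ$ is nilpotent, i.e. that $\GZ_{>0}$ maps into the nilradical. For this I would use relations from the abelianized (Gelfand–Tsetlin) description of $\mathcal{M}_C$. After localization, the product $M_{\omega_{i,1}}M_{\omega^*_{i,1}}$ is a $W$-symmetrization of the product of the $\mathbf{N}$-Euler classes divided by root factors. Because such products have weight $0$ yet are expressed as polynomials in $\GZ$, setting the monopoles to $0$ forces these Euler-class expressions — essentially $\prod(\text{weights of } \mathbf{N})$ — to vanish. Taking all such products over $J$ and $k$ should give relations whose common zero locus in $\operatorname{Spec}\GZ=\mathfrak{t}_{\dv}/W_{\dv}$ is the origin.

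\textbf{Expected obstacle.} The main difficulty is exactly this last claim: the common zeros of these Euler-class relations are only $0$. It is plausible via an inductive argument along the quiver, starting from framed vertices where the factors $\prod_s (x_{i,s})^{\mathsf{w}_i}$ force $x_{i,s}=0$ and propagating through edges. It may fail at vertices not connected to the framing. In that case I would instead argue through the grading as above: if the set of points is a cone contracted to the origin, a direct check that any fixed point must have all coordinates $x_{i,s}$ equal to zero would finish the proof.
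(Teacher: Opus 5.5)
Your reduction is the same as the paper's. Via the surjection (\ref{eq: kirwan on coulomb}), the map $\mathcal{M}_C^{\theta}\to\operatorname{Spec}\GZ=\mathfrak{t}_{\dv}/W_{\dv}$ is a closed embedding. So it suffices to show that every $\theta$-fixed point lies over $0$, equivalently that $\GZ_{>0}$ maps into the nilradical; the ``finite-dimensional local'' reformulation then follows from the Nullstellensatz.

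That is the easy half, and the key step is not established. The mechanism you propose fails as soon as some $\dv_i\geqslant 2$, because the product $M_{\omega_{i,1}}M_{\omega^*_{i,1}}$ is not an element of $\GZ$. Both coweights are dominant, so by \cite[Proposition 6.2]{BFN1} (the chamber formula used in the proof of Proposition~\ref{PBW}) the product has a nonzero leading term on the stratum $\mathcal{R}_{\omega_{i,1}+\omega^*_{i,1}}$, and $\omega_{i,1}+\omega^*_{i,1}\neq 0$. In the abelianization this shows up as the cross terms $r^{e_{i,s}-e_{i,t}}$ with $s\neq t$. These are weight-zero functions that need not vanish on $\mathcal{M}_C^{\theta}$. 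Killing the monopoles therefore only gives ``Euler-class expression $+$ uncontrolled weight-zero terms $=0$'', not $\prod(\text{weights of }\mathbf{N})=0$. Moreover, the abelianized formulas carry root denominators, so they say nothing directly about fibers over the non-generic points $x$ you need to exclude. Your fallback (``a direct check that any fixed point must have all $x_{i,s}=0$'') is just the statement itself.

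The missing ingredient, which the paper's sketch points to, is a local analysis of the fibers of (\ref{eq: int system map}) using \cite[Section 5]{BFN1}. Near a point $x$, the Coulomb branch can be compared, compatibly with the $\pi_1$-gradings, with that of the centralizer theory $(Z_{G_{\dv}}(x),\mathbf{N}^x)$. Here $\mathbf{N}^x$ is the sum of the weight spaces of $\mathbf{N}$ whose weights vanish at $x$. Roughly, the argument runs as follows.
\begin{enumerate}
\item Group the coordinates $x_{i,s}$ into blocks by value. With $\mathsf{f}=0$ the framing weights are the $x_{i,s}$ themselves, so a block with value $c\neq 0$ carries no framing in $\mathbf{N}^x$.
\item Hence the central $\mathbb{C}^\times$ of $\prod_i GL(V_{i,c})$ acts trivially on that block's part of $\mathbf{N}^x$.
\item The corresponding monopole operator $M_{\lambda_c}$ of the centralizer theory is therefore invertible ($M_{\lambda_c}M_{-\lambda_c}=1$), and it has $\theta$-weight $\sum_i\dim V_{i,c}>0$.
\item An invertible function of nonzero $\theta$-weight rules out $\theta$-fixed points over $x\neq 0$.
\end{enumerate}

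So your worry about unframed vertices is backwards: it is exactly the absence of framing on blocks with nonzero value that excludes fixed points. When all $\dv_i\leqslant 1$, your relations do implement this. Take $J$ to be a block of value $c\neq 0$; then $r^{\omega_J}r^{-\omega_J}$ is a product of weights that are nonzero at $x$.
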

The idea of the proof is to show that $\mathcal{M}_C^\theta(\mathbb{C}) \subset \mathcal{M}_C(\mathbb{C})$ lies in the fiber over zero of the map
\begin{equation}\label{eq: int system map}
\mathcal{M}_C \rightarrow \operatorname{Spec}\GZ
\end{equation}
induced by the embedding $\GZ \subset \mathbb{C}[\mathcal{M}_C]$.
They show this by analyzing the fibers of (\ref{eq: int system map}) via the results of \cite[Section 5]{BFN1}. The claim then follows from the fact that the composition $\mathcal{M}_C^\theta \hookrightarrow \mathcal{M}_C \rightarrow \operatorname{Spec}\GZ$ is a closed embedding (use the surjectivity of (\ref{eq: kirwan on coulomb})).

\begin{remark}
Using \cite[Theorem 9.14]{KWWY_categorical_actions}, one can show that $\mathcal{M}_C^{\theta}(\mathbb{C})$ consists of {\emph{one}} point if $I$ has no self-loops and the corresponding Higgs branch $\widetilde{\mathcal{M}}_H$ is nonempty. We are grateful to Joel Kamnitzer for explaining this to us.  Conjecturally, $\mathcal{M}_C^{\theta}(\mathbb{C})$ is a point iff $\widetilde{\mathcal{M}}_H$ is nonempty.
\end{remark}

\subsection{Quantized Hikita conjecture}
The quantized Hikita conjecture was proposed by Nakajima and predicts an isomorphism: 
\begin{equation}\label{eq: quantized Hikita}
H^*_{\sF \times \mathbb{C}^\times}(\widetilde{\mathcal{M}}_H) \simeq \mathsf{C}_{\theta}(\mathcal{A}),
\end{equation}
where $\mathsf{C}_{\theta}(\bullet)$ is the so-called {\emph{Cartan subquotient}}, also known as the $B$-algebra (see \cite[Section 5.1]{BLPW}), and is defined as follows:
\begin{equation*}
\mathsf{C}_{\theta}(\mathcal{A}) := \mathcal{A}^0/\sum_{k>0}\mathcal{A}^{-k}\mathcal{A}^k,
\end{equation*}
Here $\mathcal{A}=\bigoplus_{k \in \mathbb{Z}}\mathcal{A}^k$ is the $\mathbb{Z}$-grading induced by $\theta\colon \mathbb{C}^\times \rightarrow \mathsf{A}$.

\begin{lemma}\label{prop: surj onto cartan subquot}
The natural homomorphisms 
\begin{equation*}
\GZ \rightarrow \mathbb{C}[\mathcal{M}_{C,\mathsf{f}}^\theta], \quad\GZ_{\hbar} \rightarrow \mathsf{C}_{\theta}(\mathcal{A})
\end{equation*}
are surjective. 
\end{lemma}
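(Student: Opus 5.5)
We derive both surjectivities from Proposition \ref{PBW}, using Lemma \ref{lem: degrees minuscule} to track $\theta$-degrees. Since $\theta=(1,\ldots,1)$, the $\theta$-grading of $M_{\boldsymbol{\eta},f}$ is $\sum_i|\eta_i|$. Every $\boldsymbol{\eta}\in\mathcal{P}^{\mathrm{pos}}$ has $\sum_i|\eta_i|=\sum_{i\in J}k_i>0$, and every $\boldsymbol{\eta}\in\mathcal{P}^{\mathrm{neg}}$ has $\sum_i|\eta_i|<0$. Hence $\mathcal{A}^{\mathrm{pos}}=\mathbb{C}[\hbar,\sff]\oplus\mathcal{A}^{\mathrm{pos}}_{>0}$ and $\mathcal{A}^{\mathrm{neg}}=\mathbb{C}[\hbar,\sff]\oplus\mathcal{A}^{\mathrm{neg}}_{<0}$, where the positive (resp. negative) parts are spanned by nonempty products of generators. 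Here $\mathbb{C}[\hbar,\sff]\subset\GZ_\hbar$ is central, or we simply take the subalgebras to be unital over this base.

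For the quantum statement, we use the surjection (\ref{eq:mult_morphism_-_0_+}) $\mathcal{A}^{\mathrm{neg}}\otimes\GZ_\hbar\otimes\mathcal{A}^{\mathrm{pos}}\to\mathcal{A}$. It is compatible with $\theta$-degrees, so passing to degree zero shows that $\mathcal{A}^0$ is spanned by $\GZ_\hbar$ together with products $a\,h\,b$, where $a\in\mathcal{A}^{\mathrm{neg}}$ is homogeneous of degree $-k<0$, $h\in\GZ_\hbar$, and $b\in\mathcal{A}^{\mathrm{pos}}$ is homogeneous of degree $k$. Indeed, if the degree is zero and one factor is a scalar from the base, the other factor has degree zero and is therefore also a scalar. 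In the remaining case $ah\in\mathcal{A}^{-k}$ (since $\GZ_\hbar\subset\mathcal{A}^0$) and $b\in\mathcal{A}^k$, so $ahb\in\mathcal{A}^{-k}\mathcal{A}^k$, which vanishes in $\mathsf{C}_\theta(\mathcal{A})$. Therefore $\GZ_\hbar\to\mathsf{C}_\theta(\mathcal{A})$ is surjective.

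For the classical statement, we specialize $\hbar=0$. Proposition \ref{PBW} was proved over $\mathbb{C}[\hbar]$, and its image after specialization shows that $\mathbb{C}[\mathcal{M}_{C,\sff}]=\mathcal{A}/\hbar$ is generated over $\GZ$ by the images of the $M_{\boldsymbol{\eta},f}$ with $\boldsymbol{\eta}\in\mathcal{P}^{\mathrm{pos}}\cup\mathcal{P}^{\mathrm{neg}}$. All of these generators have nonzero $\theta$-weight. The schematic fixed locus is defined by
\begin{equation*}
\mathbb{C}[\mathcal{M}^\theta_{C,\sff}]=\mathbb{C}[\mathcal{M}_{C,\sff}]\big/\bigl(\mathbb{C}[\mathcal{M}_{C,\sff}]^{k},\ k\neq 0\bigr).
\end{equation*}
Any monomial in the generators and elements of $\GZ$ that contains at least one generator lies in the ideal generated by nonzero-weight elements. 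Hence the quotient is spanned by the image of $\GZ$, and surjectivity follows.

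The main subtlety is justifying that the $\theta$-weight is read off correctly: the $\pi_1$-grading of $M_{\boldsymbol{\eta},f}$ is $(|\eta_i|)_i$ by Lemma \ref{lem: degrees minuscule}, and pairing with $\theta$ gives $\sum_i\theta_i|\eta_i|$. A second point is that, in the quantum case, one must use the ordering in which the negative factor stands on the left. This is exactly the form $\mathcal{A}^{-k}\mathcal{A}^{k}$ with $k>0$ appearing in the definition of $\mathsf{C}_\theta$, which is why we invoke (\ref{eq:mult_morphism_-_0_+}) rather than (\ref{eq:mult_morphism_+_0_-}).
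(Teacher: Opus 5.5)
Your proposal is correct and follows the same route as the paper, which proves this lemma simply by citing Proposition~\ref{PBW}. You supply the $\theta$-degree bookkeeping that citation leaves implicit: you use the ordering \eqref{eq:mult_morphism_-_0_+} so that the degree-zero products land in $\mathcal{A}^{-k}\mathcal{A}^{k}$, and for the classical statement you use generation by nonzero-weight monopole operators (Proposition~\ref{prop: generators Coulomb} would already suffice there).
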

\begin{proof}
This follows from Proposition \ref{PBW}.
\end{proof}

\begin{remark}
For more general Coulomb branches (namely outside of quiver gauge theories and possibly of non-cotangent type as in 
\cite{coulomb_noncotangent}), we expect that the $\hbar=0$-version of Lemma \ref{prop: surj onto cartan subquot} may fail (compare with \cite[Section 6.4]{HKM}).
\end{remark}

\begin{corollary}\label{cor: basic properties of Cartan subquot}
The algebra $\mathsf{C}_{\theta}(\mathcal{A})$ is commutative and nonnegatively graded.
\end{corollary}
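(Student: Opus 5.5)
Both properties follow from the surjectivity $\GZ_{\hbar} \twoheadrightarrow \mathsf{C}_{\theta}(\mathcal{A})$ of Lemma \ref{prop: surj onto cartan subquot}, together with the fact that $\GZ_\hbar$ is commutative and nonnegatively graded. The first step is to check that $\mathsf{C}_{\theta}(\mathcal{A})$ really is an algebra. The subspace $J:=\sum_{k>0}\mathcal{A}^{-k}\mathcal{A}^k$ is a two-sided ideal of $\mathcal{A}^0$: for $a\in\mathcal{A}^0$ we have $a\mathcal{A}^{-k}\subset\mathcal{A}^{-k}$ and $\mathcal{A}^{k}a\subset\mathcal{A}^k$, so both $aJ\subset J$ and $Ja\subset J$. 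Hence $\mathsf{C}_\theta(\mathcal{A})=\mathcal{A}^0/J$ is an associative algebra, and the map $\GZ_\hbar\to\mathsf{C}_\theta(\mathcal{A})$ is an algebra homomorphism. This uses that $\GZ_\hbar\subset\mathcal{A}^0$, which holds because $\GZ_\hbar=H_*^{\tGh}(\mathcal{R}_{\boldsymbol 0})$ lies in the $\pi_1(G_\dv)$-degree zero component.

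Commutativity is then immediate. By construction $\GZ_\hbar=H^*_{\tGh}(\operatorname{pt})$ is commutative, and it is a commutative subalgebra of $\mathcal{A}$ via $x\mapsto x\cap[\mathcal{R}_{\boldsymbol 0}]$. A surjective algebra homomorphism from a commutative algebra has commutative image, so $\mathsf{C}_\theta(\mathcal{A})$ is commutative.

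For the grading, the cohomological $\mathbb{Z}$-grading on $\mathcal{A}$ commutes with the $\pi_1(G_\dv)$-grading, since the $\mathsf{A}$-action and the cohomological $\mathbb{C}^\times$-action commute. Therefore $\mathcal{A}^0$ and each $\mathcal{A}^{\pm k}$ are graded subspaces, $J$ is a graded ideal, and $\mathsf{C}_\theta(\mathcal{A})$ inherits a $\mathbb{Z}$-grading for which $\GZ_\hbar\to\mathsf{C}_\theta(\mathcal{A})$ is degree-preserving. On $\GZ_\hbar$ the grading was normalized to be the cohomological degree on $H^*_{\tGh}(\operatorname{pt})$, which is nonnegative. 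A graded surjection from a nonnegatively graded space has nonnegatively graded image, so $\mathsf{C}_\theta(\mathcal{A})$ is nonnegatively graded.

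There is no real obstacle, since everything reduces to Lemma \ref{prop: surj onto cartan subquot}. The only point needing care is the compatibility of the two gradings, needed for $J$ to be homogeneous. This is built into the BFN construction: the cohomological grading is assigned componentwise over $\mathcal{R}_{\boldsymbol\eta}$, via the shift in Lemma \ref{lem: degrees minuscule}, while the $\mathsf{A}$-grading records the connected component of $\operatorname{Gr}_{G_\dv}$ containing $\mathcal{R}_{\boldsymbol\eta}$.
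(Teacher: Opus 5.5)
Your proposal is correct and follows essentially the same route as the paper: both properties are deduced from the surjection $\GZ_\hbar \twoheadrightarrow \mathsf{C}_\theta(\mathcal{A})$ of Lemma~\ref{prop: surj onto cartan subquot}, using that $\GZ_\hbar$ is commutative and nonnegatively graded. Your extra checks are fine and fill in details the paper leaves implicit: that $\sum_{k>0}\mathcal{A}^{-k}\mathcal{A}^k$ is a two-sided ideal of $\mathcal{A}^0$, that it is homogeneous for the cohomological grading, and that $\GZ_\hbar\subset\mathcal{A}^0$.
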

\begin{proof}
Both properties follow from Lemma \ref{prop: surj onto cartan subquot} combined with the fact that $\GZ_\hbar$ is commutative and nonnegatively graded. 
\end{proof}

There is another version of a Cartan subquotient that will be useful when we start discussing the $D$-module of graded traces. 
Recall that $\mathcal{A}$ is graded by $\pi_1(G_{\mathsf{v}})$. To simplify notation, we set $R:=\pi_1(G_{\mathsf{v}})$. 
Recall the identification $R \simeq \mathbb{Z}^I$ given by (\ref{eq: ident pi 0 lattice}).
For $i\in I$, let $\alpha_i\in R\simeq\mathbb{Z}^I$ denote the $i$th standard basis vector. Under the identification $R \simeq \pi_0(\operatorname{Gr}_{G_{\mathsf{v}}})$ it corresponds to the connected component of $z^{\omega_{i,1}}$. 
Via the identification $R \simeq \operatorname{Hom}(\mathsf{A},\mathbb{C}^\times)$ we will treat $\alpha_i$ as {\emph{characters}} of the torus $\mathsf{A}$.
Then, $\langle \theta,\alpha_i\rangle=1>0$.

\begin{example}
Assume that $\Gamma$ is of type ADE and let $G_{\Gamma}$ be the adjoint group with the Lie algebra $\mathfrak{g}_{\Gamma}$. Let $T_{\Gamma}\subset G_{\Gamma}$ be a maximal torus.
Under the  identification  $\mathsf{A} \simeq T_{\Gamma}$ sending $\alpha_i$ to the corresponding simple root of $\mathfrak{g}_{\Gamma}$, the cocharacter $\theta$ corresponds to $\rho^\vee_{\Gamma}$ (the sum of the fundamental coweights of $G_{\Gamma}$).  
\end{example}

Set
\begin{equation*}
R^- := \operatorname{Span}_{\mathbb{Z}_{\geqslant 0}}(\{-\alpha_i\mid i\in I\}).
\end{equation*}

Accordingly, we have the direct-sum decomposition
\begin{equation*}
\mathcal{A} = \bigoplus_{\xi \in R}\mathcal{A}^{\xi}.
\end{equation*}
We then define
\begin{equation*}
\mathsf{C}_\theta'(\mathcal{A}) := \mathcal{A}^{\boldsymbol{0}}/\sum_{0\neq\xi \in R^-}\mathcal{A}^{\xi}\mathcal{A}^{-\xi}.
\end{equation*}

The following lemma states that the algebras $\mathsf{C}_\theta(\mathcal{A})$ and $\mathsf{C}_\theta'(\mathcal{A})$ are actually isomorphic. 
\begin{lemma}\label{lem: two cartan subquotients are iso}
The natural map 
\begin{equation}\label{eq: map between cartans}
\mathsf{C}_\theta'(\mathcal{A}) \rightarrow \mathsf{C}_\theta(\mathcal{A})
\end{equation}
is an isomorphism.    
\end{lemma}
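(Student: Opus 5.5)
The plan is to compare both quotients with their common image of $\GZ_\hbar$, using the PBW-type surjectivity of Proposition \ref{PBW} and the $R$-grading. Since $\theta=(1,\ldots,1)$ and $\langle\theta,\alpha_i\rangle=1$, the $\theta$-degree of $\mathcal{A}^\xi$ is the sum of the coordinates of $\xi\in R\simeq\mathbb{Z}^I$. Write $J':=\sum_{0\neq\xi\in R^-}\mathcal{A}^{\xi}\mathcal{A}^{-\xi}$ and $J:=\sum_{k>0}\mathcal{A}^{-k}\mathcal{A}^{k}$.

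\emph{Well-definedness.} We have $\mathcal{A}^{\boldsymbol 0}\subset\mathcal{A}^0$. For $0\neq\xi\in R^-$ we have $\langle\theta,\xi\rangle<0$, so $J'\subset J$. Hence the inclusion $\mathcal{A}^{\boldsymbol 0}\hookrightarrow\mathcal{A}^0$ descends to the map (\ref{eq: map between cartans}).

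\emph{Surjectivity.} By Lemma \ref{prop: surj onto cartan subquot}, $\GZ_\hbar$ already surjects onto $\mathsf{C}_\theta(\mathcal{A})$. Since $\GZ_\hbar\subset\mathcal{A}^{\boldsymbol 0}$, the map (\ref{eq: map between cartans}) is surjective.

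\emph{Key observation on degrees.} The generators $M_{\boldsymbol\eta,f}$ of $\mathcal{A}^{\mathrm{pos}}$ have $\boldsymbol\eta\in\mathcal P^{\mathrm{pos}}$. By Lemma \ref{lem: degrees minuscule}, their $R$-degree is $(|\eta_i|)_i=(k_i)_{i\in J}$, a nonzero element of $-R^-$. Likewise the generators of $\mathcal{A}^{\mathrm{neg}}$ have nonzero $R$-degrees in $R^-$. Consequently:
\begin{itemize}
\item $\mathcal{A}^{\mathrm{pos}}$ is graded by $-R^-$, and $\mathcal{A}^{\mathrm{neg}}$ is graded by $R^-$.
\item Their degree-$\boldsymbol 0$ parts are spanned by the unit, since a nonempty product of generators has nonzero degree.
\end{itemize}
Now use the surjection (\ref{eq:mult_morphism_-_0_+}) $\mathcal{A}^{\mathrm{neg}}\otimes\GZ_\hbar\otimes\mathcal{A}^{\mathrm{pos}}\twoheadrightarrow\mathcal{A}$. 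Multiplication is $R$-graded and $\GZ_\hbar$ has degree $\boldsymbol 0$, so we may take homogeneous components. Every $a\in\mathcal{A}^\xi$ is then a sum of terms $n\,h\,p$ with
\[
n\in(\mathcal{A}^{\mathrm{neg}})^{\mu},\quad h\in\GZ_\hbar,\quad p\in(\mathcal{A}^{\mathrm{pos}})^{\nu},\qquad \mu\in R^-,\ \nu\in -R^-,\ \mu+\nu=\xi.
\]
Pairing with $\theta$ gives $\langle\theta,\mu\rangle\leqslant 0\leqslant\langle\theta,\nu\rangle$.

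\emph{Injectivity.} It suffices to show $J\cap\mathcal{A}^{\boldsymbol 0}\subset J'$. Take $b\in\mathcal{A}^{-k}$ and $c\in\mathcal{A}^{k}$ with $k>0$. The $R$-degree-$\boldsymbol 0$ component of $bc$ is $\sum_\xi b_\xi c_{-\xi}$, where $b_\xi\in\mathcal{A}^\xi$, $c_{-\xi}\in\mathcal{A}^{-\xi}$ and $\langle\theta,\xi\rangle=-k<0$. Expand $b_\xi$ as above. In each term $n\,h\,p$ we must have $\mu\neq 0$, because $\mu=0$ would force $\langle\theta,\xi\rangle=\langle\theta,\nu\rangle\geqslant 0$. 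Then
\[
n\,h\,p\,c_{-\xi}\in\mathcal{A}^{\mu}\cdot\mathcal{A}^{-\mu},\qquad 0\neq\mu\in R^-,
\]
so each term lies in $J'$. Hence the degree-$\boldsymbol 0$ projection of $J$ lies in $J'$. An element of $J\cap\mathcal{A}^{\boldsymbol 0}$ equals its own degree-$\boldsymbol 0$ projection, so it lies in $J'$, and the map is injective.

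The main obstacle is the bookkeeping in the injectivity step. One must check that $R$-homogeneous components can be taken inside the PBW surjection, which holds because all three factors and the multiplication are $R$-graded. One must also check that the generators of $\mathcal{A}^{\mathrm{neg}}$ and $\mathcal{A}^{\mathrm{pos}}$ really have degrees in $R^-$ and $-R^-$ respectively; this should follow directly from Lemma \ref{lem: degrees minuscule} together with (\ref{eq: ident pi 0 lattice}).
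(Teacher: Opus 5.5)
Your proposal is correct and follows essentially the same route as the paper. Surjectivity comes from $\GZ_\hbar\subset\mathcal{A}^{\boldsymbol 0}$ together with Lemma~\ref{prop: surj onto cartan subquot}. Injectivity comes from expanding the $\theta$-negative factor through the $\mathcal{A}^{\mathrm{neg}}\otimes\GZ_\hbar\otimes\mathcal{A}^{\mathrm{pos}}$ surjection, noting that the $\mathcal{A}^{\mathrm{neg}}$-degree must be nonzero, and regrouping each term as an element of $\mathcal{A}^{\mu}\mathcal{A}^{-\mu}$. Your explicit check that $\mathcal{A}^{\mathrm{pos}}$ and $\mathcal{A}^{\mathrm{neg}}$ are graded by $-R^-$ and $R^-$ spells out a point the paper uses only implicitly.
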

\begin{proof}
The map $\GZ_\hbar \rightarrow \mathsf{C}_\theta(\mathcal{A})$ factors through the map $\GZ_\hbar \rightarrow \mathsf{C}'_\theta(\mathcal{A})$, so it follows from Lemma \ref{prop: surj onto cartan subquot} that the map (\ref{eq: map between cartans}) is surjective. To prove injectivity, we need to show that 
\begin{equation*}
\mathcal{A}^{\boldsymbol{0}} \cap \Big(\sum_{l>0} \mathcal{A}^{-l}\mathcal{A}^{l} \Big) = \sum_{0\neq\xi \in R^-}\mathcal{A}^{\xi}\mathcal{A}^{-\xi}.
\end{equation*}
It is clear that the RHS is contained in the LHS; let us check the other inclusion. Pick $a \in \mathcal{A}^{\boldsymbol{0}}$ and assume that $a=\sum_{j>0} b_jc_j$, where $b_j \in \mathcal{A}^{-j}$ and $c_j \in \mathcal{A}^{j}$. Decomposing $b_j$, $c_j$ into $R$-weight components, retaining only the components contributing to weight zero, we can assume that $b_j \in \mathcal{A}^{-\xi_j}$, $c_j \in \mathcal{A}^{\xi_j}$ for some $\xi_j \in R$ such that $\langle \xi_j,\theta\rangle > 0$. 

Proposition \ref{PBW} implies that we can decompose $b_j=\sum_l x_{l,-}x_{l,+}$, where $x_{l,-} \in \mathcal{A}^{\mathrm{neg}} \cap \mathcal{A}^{\zeta_l}$, $x_{l,+} \in (\mathscr{H}_\hbar \star \mathcal{A}^{\mathrm{pos}}) \cap \mathcal{A}^{-\xi_j-\zeta_l}$ for some $\zeta_l \in R^-$. Note that $\zeta_l \neq 0$ since otherwise $x_{l,+} \in \mathcal{A}^{-\xi_j}$ and the $\theta$-degree of $x_{l,+}$ is negative while the $\theta$-degree of any element in $\mathscr{H}_\hbar \star \mathcal{A}^{\mathrm{pos}}$ is nonnegative.

As in the proof of \cite[Proposition 3.8]{KMBP} we conclude that 
\begin{equation*}
b_jc_j = \sum_{l}x_{l,-}(x_{l,+}c_j) \in \sum_{0\neq\xi\in R^-}\mathcal{A}^\xi \mathcal{A}^{-\xi}
\end{equation*}
as $x_{l,-} \in \mathcal{A}^{\zeta_l}$, $x_{l,+}c_j \in \mathcal{A}^{-\zeta_l}$.
\end{proof}

As before, the identification (\ref{eq: quantized Hikita}) is expected to be $\GZ_\hbar$-equivariant (cf. \cite[Conjecture 8.10]{KTWWY}), and hence $\mathbb{Z}_{\geqslant 0}$-graded and $W_{\sff}$-equivariant.

\subsection{Category \texorpdfstring{$\mathcal{O}$}{O} for Coulomb branches}\label{ssec:cat O for CB}

Consider the decomposition of $\mathcal{A}$ according to the pairing of its eigenweights with $\theta$:
\begin{equation*}
\mathcal{A} = \mathcal{A}^+ \oplus \mathcal{A}^{0} \oplus \mathcal{A}^{-}.
\end{equation*}

Recall that $\mathcal{A}$ is an algebra over $H^*_{\sF \times \mathbb{C}^\times}(\operatorname{pt})=\mathbb{C}[\sff,\hbar]$. It is equipped with an action of $\mathsf{A}$, which is known to be inner as we now recall. Set $\mathsf{a}:=\operatorname{Lie}\mathsf{A}$. We use the natural identification 
\begin{equation}\label{eq:ident_a_with_H_2}
H^2_{G_{\mathsf{v}}}(\operatorname{pt}) \iso  (\mathfrak{g}_{\mathsf{v}}/[\mathfrak{g}_{\mathsf{v}},\mathfrak{g}_{\mathsf{v}}])^* = \mathsf{a}
\end{equation}
sending $c_1(V_i)$ to the trace along the $i$th factor.
Clearly, 
\begin{equation}\label{eq:pairing_of_chern_with_simple}
\langle \alpha_i,c_1(V_j)\rangle=\delta_{ij}.
\end{equation}

\begin{example}
It follows from (\ref{eq:pairing_of_chern_with_simple}) that for $\Gamma$ of type ADE, the basis of $\mathsf{a} \simeq \mathfrak{t}_{\Gamma}$ consisting of $c_1(V_i)$ coincides with the basis of {\emph{fundamental coweights}}.
\end{example}

The identification (\ref{eq:ident_a_with_H_2}) allows us to define the map
\begin{equation}\label{eq:our_quantum_comoment_map}
\mathsf{a} = H^2_{G_{\mathsf{v}}}(\operatorname{pt}) \hookrightarrow H^2_{\widetilde{G}_{\mathsf{v}}}(\operatorname{pt}) \hookrightarrow \mathscr{H}_\hbar \subset \mathcal{A}^{\boldsymbol{0}}, \quad  c^{G_{\mathsf{v}}}_1(V_i) \mapsto c_1^{\widetilde{G}_{\mathsf{v}}}(V_i) \cap [\mathcal{R}_{\boldsymbol{0}}]
\end{equation}
to be denoted 
\begin{equation*}
\iota_{\mathsf{a}}\colon \mathsf{a} \rightarrow \mathcal{A}^{\boldsymbol{0}}.
\end{equation*}
It follows from \cite[Lemma 3.20]{BFN1} that $\iota_{\mathsf{a}}$ is a {\emph{quantum comoment map}}, i.e.:
\begin{equation}\label{eq:moment_map_cond}
[\iota_{\mathsf {a}}(a),x]=2\hbar\langle \xi,a\rangle x, \qquad a\in \mathsf{a},\, x\in\mathcal A^\xi.
\end{equation}

\begin{warning}
Condition (\ref{eq:moment_map_cond}) does {\emph{not}} uniquely determine the comoment map $\iota_{\mathsf{a}}$. So, (\ref{eq:our_quantum_comoment_map}) is some {\emph{distinguished}} choice corresponding to the embedding  induced by the natural surjection $\widetilde{G}_{\mathsf{v}} \twoheadrightarrow G_{\mathsf{v}}$. The compatible choice appears at the quiver variety side when we choose generators of $D$, see Section \ref{quantDmodrev}. Compare with \cite[Section 5.1]{KMBP}.    
\end{warning}
    
For $f \in \sff$, we can consider the specialization $\mathcal{A}_{(f,1)}$. To simplify notation, we denote it by $\mathcal{A}_f$. In the present convention, its quantization parameter is $2$.

We set
$$
\delta=\delta_\theta:=\frac{1}{2}(\iota_{\mathsf a}\circ d\theta)(1).
$$
Thus $[\delta,x]=kx$ for $x\in\mathcal A_f^k$.

By \cite[Lemma 3.24]{catO_coulomb} combined with Proposition \ref{propos_fixed_Coulomb_fin_dim}, the Cartan subquotient $\mathsf{C}_{\theta}(\mathcal{A}_{f})$ is {\emph{finite-dimensional}}.

As in \cite[Definition 3.17]{catO_coulomb} (see also \cite[Definition 3.10]{BLPW}), one can define the category $\mathcal{O}_{\theta}(\mathcal{A}_{f})$. It follows from \cite[Lemma 3.27]{catO_coulomb} that the following is one of the equivalent definitions of $\mathcal{O}_{\theta}(\mathcal{A}_{f})$.

\begin{definition}
The category $\mathcal{O}_{\theta}(\mathcal{A}_f)$ is the full subcategory of finitely generated $\mathcal{A}_{f}$-modules whose objects satisfy the following conditions:
\begin{itemize}
    \item the action of $\mathcal{A}_f^+$ is locally nilpotent;
    \item $\delta$ acts locally finitely;
    \item the generalized eigenspaces of $\delta$ are finite-dimensional.
\end{itemize}
\end{definition}

The Grothendieck group of $\mathcal{O}_\theta(\mathcal{A}_f)$ has a distinguished basis consisting of classes of {\emph{Verma modules}} (see \cite[Lemma 3.27]{catO_coulomb}). For a one-dimensional representation $S$ of the algebra $\mathsf{C}_{\theta}(\mathcal{A}_f)$, the corresponding Verma module $\Delta(S)$ is defined by
\begin{equation*}
\Delta(S) := \mathcal{A}_f \otimes_{\mathcal{A}_f^{\geqslant 0}} S,
\end{equation*}
where $\mathcal{A}_f^{\geqslant 0}=\mathcal{A}_f^+ \oplus \mathcal{A}_f^0$ and the action $\mathcal{A}_f^{\geqslant 0} \curvearrowright S$ is induced by the natural surjection $\mathcal{A}_f^{\geqslant 0} \twoheadrightarrow \mathcal{A}_f^0 \twoheadrightarrow \mathsf{C}_{\theta}(\mathcal{A}_f)$.

\begin{lemma}
The category $\mathcal{O}_\theta(\mathcal{A}_f)$ is nonempty only if $\mathcal{M}_C^\theta(\mathbb{C})=\{p\}$.
\end{lemma}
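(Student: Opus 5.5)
We argue by contrapositive. Suppose $\mathcal{O}_\theta(\mathcal{A}_f)$ contains a nonzero module $M$, and produce a point of $\mathcal{M}_C^{\theta}(\mathbb{C})$. By Proposition \ref{propos_fixed_Coulomb_fin_dim} this set has at most one point, so it must then be exactly $\{p\}$.

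The first step is to find a nonzero finite-dimensional module over the Cartan subquotient $\mathsf{C}_\theta(\mathcal{A}_f)$. Since $\delta$ acts locally finitely on $M$ with finite-dimensional generalized eigenspaces, and $M$ is finitely generated with $\mathcal{A}_f^+$ acting locally nilpotently, the real parts of the $\delta$-eigenvalues on $M$ are bounded above on each coset of $\mathbb{Z}$. So there is a nonzero generalized eigenspace $M_c$ such that $M_{c+k}=0$ for all $k>0$. This $M_c$ is annihilated by $\mathcal{A}_f^+$ and preserved by $\mathcal{A}_f^0$. The ideal $\sum_{k>0}\mathcal{A}_f^{-k}\mathcal{A}_f^{k}$ kills $M_c$, so $M_c$ is a nonzero finite-dimensional $\mathsf{C}_\theta(\mathcal{A}_f)$-module. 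Hence $\mathsf{C}_\theta(\mathcal{A}_f)\neq 0$. (Alternatively, $\mathcal{O}_\theta(\mathcal{A}_f)\neq 0$ forces a nonzero Verma module, which requires a one-dimensional representation of $\mathsf{C}_\theta(\mathcal{A}_f)$.)

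The second step, which is the main obstacle, is to pass from $\mathsf{C}_\theta(\mathcal{A}_f)\neq 0$ to $\mathbb{C}[\mathcal{M}_C^\theta]\neq 0$, that is, from the quantum, specialized side to the classical $\hbar=0$ fixed-point scheme. The plan is to use the family $\mathsf{C}_\theta(\mathcal{A})$ over $\mathbb{C}[\sff,\hbar]$.
\begin{itemize}
\item By Lemma \ref{prop: surj onto cartan subquot} and Corollary \ref{cor: basic properties of Cartan subquot}, $\mathsf{C}_\theta(\mathcal{A})$ is a commutative, nonnegatively graded quotient of $\GZ_\hbar$.
\item Forming the Cartan subquotient is right exact in the sense that it commutes with central quotients: $\mathsf{C}_\theta(\mathcal{A})\otimes_{\mathbb{C}[\sff,\hbar]}\mathbb{C}_{(f,1)}=\mathsf{C}_\theta(\mathcal{A}_f)$, and similarly at $(0,0)$, where the target is the classical Cartan subquotient of $\mathbb{C}[\mathcal{M}_C]$, namely $\mathbb{C}[\mathcal{M}_C^\theta]$.
\item So $\mathsf{C}_\theta(\mathcal{A})$ is a finitely generated graded module over $\mathbb{C}[\sff,\hbar]$ with nonzero fiber at $(f,1)$. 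Since its grading is nonnegative and the grading contracts $\sff\times\mathbb{C}\hbar$ to the origin, its support is a nonempty conical closed subset, and therefore contains $(0,0)$. Equivalently, graded Nakayama applies: if the fiber at $0$ vanished, the whole module would vanish.
\item This gives $\mathsf{C}_\theta(\mathcal{A})\otimes\mathbb{C}_{(0,0)}\neq 0$. Identifying this fiber with the Cartan subquotient of $\mathbb{C}[\mathcal{M}_C]=\mathcal{A}/(\hbar,\sff)$ gives $\mathbb{C}[\mathcal{M}_C^\theta]\neq 0$.
\end{itemize}
The delicate point is the identification, in the classical commutative setting, of $\mathbb{C}[\mathcal{M}_C]^0/\sum_{k>0}\mathbb{C}[\mathcal{M}_C]^{-k}\mathbb{C}[\mathcal{M}_C]^{k}$ with the functions on the schematic fixed points. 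This is the standard description of fixed points of a $\mathbb{C}^\times$-action on an affine scheme: the fixed-point ideal is generated by the functions of nonzero weight.

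Finally, Proposition \ref{propos_fixed_Coulomb_fin_dim} says that $\mathbb{C}[\mathcal{M}_C^\theta]$ is a finite-dimensional local algebra. Since it is nonzero, it has exactly one maximal ideal, so $\mathcal{M}_C^\theta(\mathbb{C})=\{p\}$.
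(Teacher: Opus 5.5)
Your proof is correct and has the same three-step skeleton as the paper's. First, a nonempty category forces $\mathsf{C}_\theta(\mathcal{A}_f)\neq 0$. Second, this forces $\mathbb{C}[\mathcal{M}_C^\theta]\neq 0$. Third, Proposition~\ref{propos_fixed_Coulomb_fin_dim} then gives exactly one point.

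The differences are in how the first two steps are justified.
\begin{itemize}
\item For the first step, the paper uses that classes of Verma modules generate $K_0(\mathcal{O}_\theta(\mathcal{A}_f))$. You instead extract a top generalized $\delta$-eigenspace killed by $\mathcal{A}_f^+$, which is more elementary and avoids the $K_0$ statement.
\item For the second step, the paper cites \cite[Lemma 3.24]{catO_coulomb}: the associated graded of the Cartan subquotient of a filtered algebra is a quotient of the Cartan subquotient of its associated graded. You prove the needed consequence internally. You use that $\mathsf{C}_\theta$ commutes with central quotients by $\mathbb{C}[\mathsf{f},\hbar]$. You identify the commutative Cartan subquotient with functions on the schematic fixed locus. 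Finally, you apply graded Nakayama using the nonnegativity in Corollary~\ref{cor: basic properties of Cartan subquot}.
\end{itemize}
Your second step is the same mechanism as the cited lemma, phrased through the family over $\mathsf{f}\oplus\mathbb{C}\hbar$ rather than a filtration on $\mathcal{A}_f$. Its advantage is that it is self-contained and uses only the paper's PBW input, which also makes it insensitive to conicality issues. Its limitation is that, as written, it yields only nonvanishing. The cited lemma gives the bound $\dim \mathsf{C}_\theta(\mathcal{A}_f)\leqslant \dim\mathbb{C}[\mathcal{M}_C^\theta]$, which the paper also uses to get finite-dimensionality of $\mathsf{C}_\theta(\mathcal{A}_f)$.

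One small correction. Your assertion that $\mathsf{C}_\theta(\mathcal{A})$ is finitely generated over $\mathbb{C}[\mathsf{f},\hbar]$ is not justified as stated: it is a quotient of $\mathscr{H}_\hbar$, which is not finite over $\mathbb{C}[\mathsf{f},\hbar]$. You do not need it. Graded Nakayama holds for any module with grading bounded below. Concretely, the degree-zero part of $\mathsf{C}_\theta(\mathcal{A})$ is spanned by the image of $1$, while $(\mathsf{f},\hbar)\,\mathsf{C}_\theta(\mathcal{A})$ lives in strictly positive degrees, so $\mathsf{C}_\theta(\mathcal{A})\neq 0$ already forces $\mathsf{C}_\theta(\mathcal{A})/(\mathsf{f},\hbar)\neq 0$.

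Finite generation is in fact true: combine graded Nakayama with the finite-dimensionality in Proposition~\ref{propos_fixed_Coulomb_fin_dim}. Once you have it, your support argument plus upper semicontinuity of fiber dimension along the $\mathbb{C}^\times$-orbit of $(f,1)$ also recovers the paper's dimension bound.
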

\begin{proof}
The category $\mathcal{O}_\theta(\mathcal{A}_f)$ is empty if $\mathsf{C}_\theta(\mathcal{A}_f)=0$, since the classes of Verma modules generate $K_0(\mathcal{O}_\theta(\mathcal{A}_f))$. Note that $\mathsf{C}_\theta(\mathcal{A}_f) \neq 0$ implies that $\mathbb{C}[\mathcal{M}_C^\theta] \neq 0$ (apply \cite[Lemma 3.24]{catO_coulomb}); hence $\mathcal{M}_C^\theta(\mathbb{C})$ must contain at least one point. Proposition \ref{propos_fixed_Coulomb_fin_dim} implies that $\mathcal{M}_C^\theta(\mathbb{C})=\{p\}$.
\end{proof}

For future use, let us describe the possible weights of modules in $\mathcal{O}_\theta(\mathcal{A}_f)$. For $S \in \operatorname{Irr}\mathsf{C}_\theta(\mathcal{A}_f)$, let $\xi_S \in \mathsf{a}^*$ be the weight by which $\mathsf{a} \xrightarrow{\iota_{\mathsf{a}}} \mathcal{A}^{\boldsymbol{0}}_f \rightarrow \mathsf{C}_\theta(\mathcal{A}_f)$ acts on $S$.
\begin{lemma}\label{lem: estimate verma weights}
For any $M \in \mathcal{O}_{\theta}(\mathcal{A}_f)$, its generalized $\mathsf{a}$-weights lie in the union of the sets $\xi_S+2R^-$ for $S \in \operatorname{Irr}(\mathsf{C}_\theta(\mathcal{A}_f))$. 
\end{lemma}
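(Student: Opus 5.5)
The plan is to reduce from an arbitrary object of $\mathcal{O}_\theta(\mathcal{A}_f)$ to Verma modules, and then compute the weights of Verma modules directly.

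First, the weights of a module in $\mathcal{O}_\theta(\mathcal{A}_f)$ are controlled by its composition factors. Every $M$ is finitely generated with finite-dimensional generalized $\delta$-eigenspaces, so it has a finite filtration whose subquotients are simple. Generalized $\mathsf{a}$-weights are additive in short exact sequences, because $\mathsf{a}$ acts locally finitely through $\iota_{\mathsf a}$: it acts on each finite-dimensional $\delta$-eigenspace, which is stable since $\iota_{\mathsf a}(\mathsf a)\subset \mathcal{A}^{\boldsymbol 0}$ commutes with $\delta$. Hence it suffices to treat simple modules.

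Next, every simple object $L$ is a quotient of a Verma module. Take a nonzero vector in the generalized $\delta$-eigenspace of $L$ with maximal real part of the eigenvalue, among eigenvalues in the same class modulo $\mathbb{Z}$. Such a maximal eigenvalue exists because $\mathcal{A}^+_f$ acts locally nilpotently and $L$ is finitely generated, so the $\delta$-spectrum is bounded above in each coset. This eigenspace is killed by $\mathcal{A}^+_f$. It is therefore a finite-dimensional $\mathcal{A}^0_f$-module on which $\sum_{k>0}\mathcal{A}^{-k}_f\mathcal{A}^k_f$ acts by zero, so it is a $\mathsf{C}_\theta(\mathcal{A}_f)$-module. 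By Corollary \ref{cor: basic properties of Cartan subquot} this algebra is commutative and finite-dimensional, so the space contains a simple, hence one-dimensional, submodule $S$. Frobenius reciprocity then gives a nonzero map $\Delta(S)\to L$, which is surjective since $L$ is simple. The weights of $L$ are thus contained in those of $\Delta(S)$. (Alternatively, one can cite \cite[Lemma 3.27]{catO_coulomb}: classes of Vermas span $K_0$, and every simple appears in some Verma.)

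Finally, we compute the weights of $\Delta(S)$. By the triangular decomposition coming from Proposition \ref{PBW}, $\Delta(S)=\mathcal{A}_f\otimes_{\mathcal{A}^{\geqslant 0}_f}S$ is spanned by $x\otimes s$ with $x$ ranging over $\mathcal{A}^{\mathrm{neg}}_f$ (times $\mathscr{H}_\hbar$, which lands in $\mathcal{A}^{\geqslant0}$). Every homogeneous element of $\mathcal{A}^{\mathrm{neg}}$ has $R$-degree in $R^-$, since the generators $M_{\boldsymbol\eta,f}$ with $\boldsymbol\eta\in\mathcal{P}^{\mathrm{neg}}$ have degree $(|\eta_i|)\in R^-$ by Lemma \ref{lem: degrees minuscule}. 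For $x\in\mathcal{A}^{\zeta}$, the quantum comoment relation (\ref{eq:moment_map_cond}) at $\hbar=1$ gives
\[
\iota_{\mathsf a}(a)(x\otimes s)=x\,\iota_{\mathsf a}(a)s+2\langle\zeta,a\rangle x\otimes s=\bigl(\xi_S(a)+2\langle \zeta,a\rangle\bigr)x\otimes s .
\]
So $\mathsf a$ acts semisimply on $\Delta(S)$ with weights in $\xi_S+2R^-$.

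The main obstacle is the second step: showing that every simple object is a quotient of a Verma module, together with the fact that the top eigenspace is a $\mathsf{C}_\theta$-module. If the direct argument gets delicate, I would simply cite \cite[Lemma 3.27]{catO_coulomb}.
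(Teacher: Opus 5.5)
Your proof is correct and follows the paper's route: reduce to Verma modules, then use Proposition \ref{PBW} to see that $\Delta(S)$ is spanned by $\mathcal{A}_f^{\mathrm{neg}}\otimes S$, whose $\mathsf{a}$-weights lie in $\xi_S+2R^-$ by the comoment relation. The only difference is that you spell out the reduction to Vermas (finite length, and each simple being a quotient of some $\Delta(S)$), which the paper leaves implicit. Note that finite length does not follow from finite generation alone; it follows from $\dim\mathsf{C}_\theta(\mathcal{A}_f)<\infty$, which leaves only finitely many possible top $\delta$-eigenvalues, or it can be bypassed by your $K_0$ citation.
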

\begin{proof}
It is enough to prove the claim when $M$ is a Verma module $\Delta(S)$. 
By definition, $\Delta(S)$ is a cyclic module over $\mathcal{A}_f$. It follows from Proposition \ref{PBW} that $\Delta(S)$ is a cyclic module over $\mathcal{A}_f^{\mathrm{neg}}$.
The $\mathsf{a}$-weights of $\mathcal{A}_f^{\mathrm{neg}}$ lie in $2R^-$, and the claim follows.
\end{proof}

\subsection{Various ``Verma-type'' modules in the category \texorpdfstring{$\mathcal{O}_\theta(\mathcal{A}_f)$}{OAf}}\label{sec:various verma type modules}
\subsubsection{$\Theta$-modules}

Recall that $\mathcal{O}_\theta(\mathcal{A}_f)$ has a distinguished family of modules $\Delta(S)$ labeled by one-dimensional representations of $\mathsf{C}_\theta(\mathcal{A}_f)$. If $\mathcal{M}_C$ is conical and admits a symplectic resolution $\widetilde{\mathcal{M}}_C$, then, as in \cite[Section 5.3]{BLPW}, every $\theta$-fixed point $p \in \widetilde{\mathcal{M}}_C^{\theta}$ gives rise to two types of modules. First, there is a module $\Theta(p)$ constructed using sheafified quantizations. Second, there is a distinguished homomorphism $\kappa_p\colon \mathsf{C}_\theta(\mathcal{A}_f) \rightarrow \mathbb{C}$ which allows us to define $\Delta_f(p) := \mathcal{A}_f \otimes_{\mathcal{A}_f^{\geqslant 0}} \mathbb{C}$. We believe that the homomorphism $\kappa_p$ and the module $\Theta(p)$ should also make sense for an arbitrary {\emph{nonsingular}} $\theta$-fixed point on a {\emph{partial resolution}} of $\mathcal{M}_C$ (see \cite[Section 3(ix)]{BFN1}, \cite{BFN3} for the discussion of partial resolutions of Coulomb branches). A choice of $p$ as above should correspond to a choice of an isolated torus fixed point $p^{\vee} \in \widetilde{\mathcal{M}}_H^{\mathsf{F}}$ (compare with the discussion in \cite[Section 5.4.2]{quiver_slant}). It would be interesting to rigorously construct these more general modules $\Theta(p)$.

\subsubsection{Modules obtained via comultiplication}\label{ssec: modules obtained via comult}

For $\Gamma$ of type ADE, let $\alpha_i$ and $\omega_i$ denote the simple roots and fundamental weights of $\mathfrak g_\Gamma$. Set
\begin{equation}\label{vwtoweights}
\lambda=\sum_{i\in I}\dw_i\omega_i,
\qquad
\mu=\lambda-\sum_{i\in I}\dv_i\alpha_i.
\end{equation}
By \cite[Theorem~3.10]{BFN19}, the Coulomb branch $\mathcal{M}_C$ is identified, in our conventions, with the generalized affine Grassmannian slice $\overline{\mathcal{W}}^{\la}_\mu$. For $\sF=\sF_{\mathsf w}$, there exists another natural collection of modules labeled by nonsingular fixed points on partial resolutions of $\mathcal{M}_C$. Namely, a choice of a partial resolution of $\mathcal{M}_C=\overline{\mathcal{W}}^{\la}_\mu$ corresponds to a choice $\underline{\la}$ of a decomposition $\la=\la_1+\ldots+\la_\ell$. Nonsingular torus fixed points on $\widetilde{\mathcal{W}}^{\underline{\la}}_\mu$ are in bijection with $\ell$-tuples $\underline{\mu}=(\mu_k)_{k=1,\ldots,\ell}$ such that $\mu_k \in W\lambda_k$ and $\sum_{k=1}^\ell \mu_k=\mu$ (this, for example, follows from \cite[Propositions 5.7 and  5.9]{KrPer}).
We thus obtain modules $M_{\mathsf f}(\underline{\mu})$ whose specializations $M_f(\underline{\mu})$ lie in $\mathcal{O}_{\theta}(\mathcal{A}_f)$ and are labeled by the tuples $\underline{\mu}$ above. These modules are constructed as follows.

Recall that $\mathcal{A}_{\mathsf{f}}$ is the quantization of the Coulomb branch corresponding to $(\Gamma,\mathsf{v},\mathsf{w})$. The data $(\mathsf{v},\mathsf{w})$ are equivalent to the data $(\lambda,\mu)$, so let us denote $\mathcal{A}_{\mathsf{f}}$ by $\mathcal{A}_{\mathsf{f}}(\lambda,\mu)$. The decomposition $\lambda=\la_1+\ldots+\la_\ell$ determines a product decomposition $\sF=\sF^{(1)} \times \ldots \times \sF^{(\ell)}$, and hence $\mathsf f=\mathsf f^{(1)}\oplus\cdots\oplus\mathsf f^{(\ell)}$.
The modules $M_{\mathsf f}(\underline{\mu})$ are constructed using the so-called {\emph{comultiplication}} maps:
\begin{equation}\label{comult_maps}
\mathcal{A}_{\mathsf{f}}(\lambda,\mu) \rightarrow
\mathcal{A}_{\mathsf{f}^{(1)}}(\lambda_1,\mu_1)
\otimes_{\mathbb{C}[\hbar]}
\mathcal{A}_{\mathsf{f}^{(2)}}(\lambda_2,\mu_2)
\otimes_{\mathbb{C}[\hbar]} \ldots
\otimes_{\mathbb{C}[\hbar]}
\mathcal{A}_{\mathsf{f}^{(\ell)}}(\lambda_\ell,\mu_\ell).
\end{equation}
The existence of such a map uses a realization of $\mathcal{A}$ as truncated shifted Yangians. The corresponding coproduct for shifted Yangians was constructed in \cite[Section 4]{comult_shifted}.
It is a nontrivial theorem that this coproduct factors through the truncations, see \cite[Theorem 2 and Section 5.4]{KLLPW}. We do not recall the definitions of these maps here.

\begin{warning}
The map in (\ref{comult_maps}) is {\emph{not}} unique: it depends on the choice of iterated coproduct and its parenthesization. This is related to the fact that comultiplication maps between shifted Yangians are not coassociative in general; see \cite[Remark 4.15]{comult_shifted}.
\end{warning}

By definition,
\begin{equation}\label{def_M_module}
M_{\mathsf{f}}(\underline{\mu}) :=
L_{\mu_1,\mathsf{f}^{(1)}} \otimes_{\mathbb{C}[\hbar]}
L_{\mu_2,\mathsf{f}^{(2)}} \otimes_{\mathbb{C}[\hbar]} \ldots
\otimes_{\mathbb{C}[\hbar]}
L_{\mu_\ell,\mathsf{f}^{(\ell)}}
\end{equation}
where $L_{\mu_k,\mathsf{f}^{(k)}}$ are certain distinguished modules over $\mathcal{A}_{\mathsf{f}^{(k)}}(\la_k,\mu_k)$ with the action of $\mathcal{A}_{\mathsf{f}}(\la,\mu)$ induced by (\ref{comult_maps}). The modules $L_{\mu_k,\mathsf{f}^{(k)}}$ are defined as follows. The condition $\mu_k \in W\lambda_k$ implies that the natural map
\begin{equation*}
\mathbb{C}[\mathsf{f}^{(k)},\hbar] \iso \mathsf{C}_\theta(\mathcal{A}_{\mathsf{f}^{(k)}}(\la_k,\mu_k))
\end{equation*}
is an isomorphism (compare with \cite[Section 6]{quiver_slant}). Thus, we obtain an action $\mathcal{A}_{\mathsf{f}^{(k)}}(\la_k,\mu_k)^{\geqslant 0} \curvearrowright \mathbb{C}[\mathsf{f}^{(k)},\hbar]$. We then define
\begin{equation*}
L_{\mu_k,\mathsf{f}^{(k)}} := \mathcal{A}_{\mathsf{f}^{(k)}}(\la_k,\mu_k) \otimes_{\mathcal{A}_{\mathsf{f}^{(k)}}(\la_k,\mu_k)^{\geqslant 0}} \mathbb{C}[\mathsf{f}^{(k)},\hbar].
\end{equation*}
Note that $L_{\mu_k,\mathsf{f}^{(k)}}$ is free as a module over $\mathbb{C}[\mathsf{f}^{(k)},\hbar]$, with normalized character given by \cite[Equation (22)]{quiver_slant}. The argument is the same as in the proof of \cite[Proposition 6.11]{quiver_slant} and also follows from \cite{min_chamber_mod}.

\begin{remark}
The condition $\mu_k \in W\lambda_k$ implies that, for every $f \in \mathsf{f}^{(k)}$, the category $\mathcal{O}_\theta(\mathcal{A}_f(\la_k,\mu_k))$ contains a {\emph{unique}} irreducible object, the so-called ``extremal'' irreducible module over the shifted Yangian $Y_{\mu_k}$; see \cite{min_chamber_mod} or \cite[Section 6]{quiver_slant} for details. This object is precisely the specialization of $L_{\mu_k,\mathsf{f}^{(k)}}$ at $(f,1)$.
\end{remark}

\subsubsection{Relation between various modules}\label{eq: relation between modules}

Let us assume that $\overline{\mathcal{W}}^\lambda_\mu$ has a symplectic resolution $\widetilde{\mathcal{W}}^{\underline{\lambda}}_\mu$ and $\mu$ is dominant.
The modules $M_f(\underline{\mu})$, $\Theta_f(\underline{\mu})$ are then well-defined and have the same {\emph{normalized character}} (see Definition \ref{eq:def normalized graded trace} below) equal to the character of $S^\bullet(T^-_{\underline{\mu}}\widetilde{\mathcal{W}}^{\underline{\la}}_\mu)$, where $T^-_{\underline{\mu}}\widetilde{\mathcal{W}}^{\underline{\la}}_\mu$ is the $\theta$-negative part of the tangent space at $\underline{\mu}$. We do not know if $\Delta_f(\underline{\mu})$ has the same character. Note that $\Delta_f(\underline{\mu})$ is flat (considered as a family depending on $f$) whenever $f$ is outside of the {\emph{finite}} number of shifts of classical walls (compare with \cite[Proposition 4.15(2)]{LosevmodcatO}).
It is natural to expect that outside of the finite number of shifts of classical walls all of these modules are isomorphic. We expect that this happens whenever $\mathsf{C}_\theta(\mathcal{A}_f)$ is isomorphic to the direct sum of copies of $\mathbb{C}$ equal to the number of $\theta$-fixed points on $\widetilde{\mathcal{W}}^{\underline{\lambda}}_\mu$. We will call $f$ for which this fails {\emph{singular}}. 
Under (\ref{eq: quantized Hikita}),  singular $f$ correspond to the parameters $f \in \mathsf{f}$ such that zeroes $\widetilde{\mathcal{M}}_{H,\theta}^{(f,1)}$ of the vector field $(f,1) \in \mathsf{f} \oplus \mathbb{C}$ are {\emph{not}} equal to $\widetilde{\mathcal{M}}_{H,\theta}^{\mathsf{F} \times \mathbb{C}^\times_\hbar}$.

\begin{remark}
Note that by combining \cite[Conjectures 1.3.1, 1.3.2, 1.6.3]{Losev-loc} one sees that the singular hyperplanes as above should  correspond to the quantization parameters $f$ for which the {\emph{derived localization theorem}} fails.  
\end{remark}

Note that at least if $f$ is {\emph{very}} generic  (namely, after we delete a {\emph{countable}} number of shifts of classical walls, compare with \cite[Section 4.1.1]{LosevmodcatO}), 
all of the modules above become isomorphic and irreducible (compare with \cite[Proposition 4.13]{LosevmodcatO}). It is an interesting question to understand the precise relation between these modules at singular parameters. Lacking better terminology, we will call them all {\emph{Verma-type}} modules. We will use modules $M_f(\underline{\mu})$ for the actual computations needed for our purposes.

\subsection{Graded traces}\label{sec:graded traces}
Recall the identification $\mathsf{a} \simeq H^2_{G_{\mathsf{v}}}(\operatorname{pt})$, and set $\mathfrak C:=\mathbb C[\mathsf f,\hbar]$.

From this point onward, the Novikov variables used in comparisons with
graded traces are the sign-modified variables
$$
\boldsymbol{z}_i^{\mathrm{tr}}
:=
(-1)^{a_i}\boldsymbol{z}_i^{\mathrm{geom}},
\qquad
a_i=
\dw_i+
\sum_{\substack{e\\h(e)=i}}\dv_{t(e)}
-
\sum_{\substack{e\\t(e)=i}}\dv_{h(e)},
\qquad i\in I.
$$
We suppress the superscripts and write $\boldsymbol{z}_i$ for
$\boldsymbol{z}_i^{\mathrm{tr}}$.

Let $\mathbb C[{\boldsymbol{z}}]:=\mathbb C[R^-]$ be the corresponding monoid algebra, with its basis elements written as $z^\xi$, $\xi \in R^-$. We also set $\mathbb{C}[\boldsymbol{z}]_\hbar := \mathbb C[2\hbar R^-]$.
Let
\begin{equation}\label{eq:augm_ideal_R_-}
\mathfrak{m}:=(z^\xi\mid 0\neq\xi\in R^-) \subset \mathbb{C}[{\boldsymbol{z}}], \quad 
\mathfrak{m}_{\hbar}:=(z^\xi\mid 0\neq\xi\in 2\hbar R^-) \subset \mathbb{C}[{\boldsymbol{z}}]_\hbar
\end{equation}
be the corresponding  augmentation ideals.

\begin{definition}\label{[z]}
Let $V$ be a graded vector space and set $V[{\boldsymbol{z}}]:=V\otimes_{\mathbb C}\mathbb C[{\boldsymbol{z}}]$. We denote
$$
V[[{\boldsymbol{z}}]]:=\varprojlim_{n\geqslant 1}^{\mathrm{gr}}V[{\boldsymbol{z}}]/\mathfrak{m}^nV[{\boldsymbol{z}}],
$$
where the inverse limit is taken in the category of graded vector spaces.
\end{definition}

Recall the module $M_{\mathsf{f}}(\underline{\mu})$ defined in (\ref{def_M_module}). The module itself depends both on the ordering of the tuple $\underline{\mu}$ and on the choice of iterated coproduct in (\ref{comult_maps}). The key point is that the {\emph{graded trace}} of $M_{\mathsf f}(\underline{\mu})$ does not depend on these choices (see (\ref{eq:gr_tr_M_mu}) below). Let us define this graded trace. Recall the commutative subalgebra $\mathscr{H}_{\hbar} \subset \mathcal{A}$. It contains $\mathsf{a}=H^2_{G_{\mathsf{v}}}(\operatorname{pt})$ as a commutative Lie subalgebra (see (\ref{eq:our_quantum_comoment_map})), and hence $\mathsf a$ acts on $M_{\mathsf f}(\underline\mu)$. Write
$$
\xi_{\underline\mu}\in
\operatorname{Hom}(\mathsf a,\mathsf f^*\oplus\mathbb C\hbar)
$$
for its highest weight. 
Its generalized $\mathsf a$-weights lie in $\xi_{\underline\mu}+2\hbar R^-$. The graded trace of $M_{\mathsf{f}}(\underline{\mu})$ is the $\mathfrak C$-linear map
$$
\operatorname{tr}_{M_{\mathsf f}(\underline\mu)}\colon
\mathcal A^0\longrightarrow
z^{\xi_{\underline\mu}}\mathfrak C[[{\boldsymbol{z}}]]_{\hbar}
$$
given by
\begin{equation}\label{def_gr_tr_of_module}
\operatorname{tr}_{M_{\mathsf f}(\underline\mu)}(a)
=\sum_{\eta\in\xi_{\underline\mu}+2\hbar R^-}
\operatorname{tr}_{\mathfrak C}\bigl(a|_{M_\eta}\bigr)z^\eta.
\end{equation}

We will show later (see Lemma \ref{lem: H to GrTr is surjective}) that $\operatorname{tr}_{M_{\mathsf f}(\underline\mu)}$ is uniquely determined by its restriction to $\mathscr{H}_\hbar$.

Note that the definition (\ref{def_gr_tr_of_module}) makes sense for any $\mathcal{A}$-module that is free over $\mathfrak C$, decomposes as  the direct sum of its generalized $\mathsf{a}$-eigenspaces, and has eigenspaces of finite rank over $\mathfrak C$. The definition also makes sense for $\mathsf{a}$-locally-finite modules over $\mathcal{A}_{f}$; in particular, the graded trace of any object in $\mathcal{O}_{\theta}(\mathcal{A}_f)$ is well-defined. It follows from the proof of Lemma \ref{lem: estimate verma weights} that the graded trace of any Verma module $\Delta(S)$ defines a functional 
\begin{equation*}
\operatorname{tr}_{\Delta(S)}\colon \mathcal{A}_f^0 \rightarrow z^{\xi_S}\mathbb{C}[[{\boldsymbol{z}}]].
\end{equation*}
For $\Gamma$ without loops, identifying $\mathcal{A}_{f}$ with an appropriate truncated shifted Yangian (see \cite[Section 4]{cat_O_slices_and_categor}) and restricting to the corresponding Cartan subalgebra, one sees that the restriction of the graded trace to $\mathscr{H}_f \subset \mathcal{A}_{f}^0$ carries the same information as the Frenkel--Reshetikhin $q$-character of the corresponding module; this construction goes back to \cite{FRq}.

Recall that $\mathscr{H}_{\hbar} = H^*_{G_{\mathsf{v}} \times \mathbb{C}^\times \times \mathsf{F}}(\operatorname{pt})$. For $i \in I$ and $1 \leqslant p \leqslant \dv_i$ we will denote by $A_i^{(p)} \in \mathcal{A}$ the element $c_p(V_i) \in \mathscr{H}_\hbar$. 

For $\Gamma$ of type ADE, it follows from \cite[Theorem 3.14]{hernandez_zhang}, together with the formula for elementary symmetric polynomials under disjoint unions of variables, that tensor products are compatible with trace functionals in the following sense.

\begin{proposition}\label{tensorprod}
For $i\in I$, let
$$
\mathbf A_i(u):=1+\sum_{p\geqslant 1}A_i^{(p)}u^{-p} \in 1+\mathcal{A}^0[u^{-1}],
$$
where $u$ is a formal parameter. Then
$$
\operatorname{tr}_{M_1\otimes M_2}(\mathbf A_i(u))
=\operatorname{tr}_{M_1}(\mathbf A_i(u))
\operatorname{tr}_{M_2}(\mathbf A_i(u)).
$$
\end{proposition}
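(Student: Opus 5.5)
The plan is to reduce the statement to two facts: the behaviour of the Cartan generators $A_i^{(p)}$ under the comultiplication (\ref{comult_maps}), and the multiplicativity of generalized eigenvalues on tensor products. We work with $M_1\otimes M_2$ where $M_k$ are modules over the factors $\mathcal{A}_{\mathsf{f}^{(k)}}(\lambda_k,\mu_k)$, and the action of $\mathcal{A}_{\mathsf f}(\lambda,\mu)$ is pulled back along (\ref{comult_maps}).

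First, we identify $\mathbf A_i(u)$ with the generating series of the Cartan currents of the truncated shifted Yangian. Under the isomorphism of \cite[Section 4]{cat_O_slices_and_categor}, $c_p(V_i)$ corresponds to the coefficients of $A_i(u)$. The series $\mathbf A_i(u)=\prod_{s}(1+x_{i,s}u^{-1})$ is the generating function of elementary symmetric polynomials in the Chern roots of $V_i$.

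Second, we invoke \cite[Theorem 3.14]{hernandez_zhang}. It says that the coproduct satisfies
\begin{equation*}
\Delta(\mathbf A_i(u))\equiv \mathbf A_i(u)\otimes \mathbf A_i(u)
\end{equation*}
modulo terms of the form $\sum X'\otimes X''$, where $X'$ has strictly negative $\mathsf a$-weight and $X''$ has the opposite, strictly positive weight, or the analogous triangular form. On the Coulomb side this matches the fact that the Chern roots of $V_i$ for $\mathsf v=\mathsf v^{(1)}+\mathsf v^{(2)}$ are the disjoint union of those for $\mathsf v^{(1)}$ and $\mathsf v^{(2)}$. The "formula for elementary symmetric polynomials under disjoint unions of variables" then gives the leading term as the product of the two series.

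Third, we compute the trace. Each $\mathsf a$-weight space of $M_1\otimes M_2$ is $\bigoplus_{\eta_1+\eta_2=\eta}(M_1)_{\eta_1}\otimes(M_2)_{\eta_2}$; these are finite sums of finite-rank free $\mathfrak C$-modules because the weights are bounded above in $\xi+2\hbar R^-$. We filter each factor by $\theta$-degree, i.e. by the dominance order on weights, so that the correction terms $X'\otimes X''$ strictly change the bigrading. With respect to this filtration the correction terms are strictly triangular, so they contribute nothing to the trace on each weight space. The trace of $\mathbf A_i(u)\otimes\mathbf A_i(u)$ on $(M_1)_{\eta_1}\otimes(M_2)_{\eta_2}$ is the product of the traces. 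Summing with the weight $z^{\eta_1+\eta_2}$ factorizes the generating series, which gives
\begin{equation*}
\operatorname{tr}_{M_1\otimes M_2}(\mathbf A_i(u))=\operatorname{tr}_{M_1}(\mathbf A_i(u))\operatorname{tr}_{M_2}(\mathbf A_i(u)).
\end{equation*}
Because $\mathbf A_i(u)$ is a series in $u^{-1}$, the identity is checked coefficientwise in $u^{-1}$.

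The main obstacle is the second step. One must confirm that the coproduct of the truncated shifted Yangian constructed in \cite{comult_shifted}, which descends by \cite{KLLPW}, is triangular on $A_i(u)$ in the sense above, with correction terms of nonzero opposite weights. One must also check that this holds for every choice of iterated coproduct and parenthesization, which would also explain the independence stated before (\ref{eq:gr_tr_M_mu}). I would first try to check this directly on the GKLO-type formulas for $A_i(u)$ and the coproduct. Failing that, I would use the Hernandez–Zhang $q$-character multiplicativity as a black box, transporting it through the identification of graded traces restricted to $\mathscr H$ with $q$-characters.
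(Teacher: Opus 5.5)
Your proposal is correct and follows the same route as the paper. The paper's justification is just the citation of \cite[Theorem 3.14]{hernandez_zhang} together with the elementary-symmetric-polynomial identity for a disjoint union of Chern roots, and your third step spells out the standard argument: the coproduct estimate makes the correction terms traceless on each weight space of $M_1\otimes M_2$, so the graded traces multiply. The one thing to adjust is your use of the disjoint-union identity. In the paper it converts Hernandez--Zhang's multiplicativity statement for the Yangian Cartan data into multiplicativity for the generating series $\mathbf A_i(u)$ of the $c_p(V_i)$; it is not what produces the leading term of $\Delta(\mathbf A_i(u))$. This is essentially the fallback you describe.
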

In particular, 
\begin{equation}\label{eq:gr_tr_M_mu}
\operatorname{tr}_{M_{\mathsf{f}}(\underline{\mu})}{\bf{A}}_i(u)=\prod_{k=1}^{\ell}\operatorname{tr}_{L_{\mu_k,\mathsf{f}^{(k)}}}{\bf{A}}_i(u).
\end{equation}

\subsection{\texorpdfstring{$D$}{D}-module of graded traces and  the quantum Hikita conjecture}\label{ssec: D mod of graded traces}

In \cite{KMBP}, Kamnitzer, McBreen, and Proudfoot generalized the conjecture of~\eqref{eq: quantized Hikita} to the quantum setting. Let us formulate their conjecture in our convention. On the LHS, they propose to consider the $q=2\hbar$ specialization of Givental's {\emph{quantum}} $D$-module $Q^{\mathrm{Giv}}$ of $\widetilde{\mathcal{M}}_H$. As a vector space,
\begin{equation*}
Q^{\mathrm{Giv}} = H^*_{\mathsf{T}}(\widetilde{\mathcal{M}}_H)[[{\boldsymbol{z}}]][q],
\end{equation*}
where $[[\boldsymbol{z}]]$ has the meaning of Definition~\ref{[z]} but with the monoid $R^{-}$ replaced by the collection of \emph{effective} curve classes $d \in H_2(\widetilde{\mathcal{M}}_H,\mathbb{Z})$ and with the grading given by cohomological degree. As a set, the algebra $D=D^{\mathrm{Giv}}$ acting on $Q=Q^{\mathrm{Giv}}$ is
\begin{equation}\label{eq: giventals algebra D}
D^{\mathrm{Giv}}=S^\bullet(H^2_{\mathsf{T}_q}(\widetilde{\mathcal{M}}_H))[[{\boldsymbol{z}}]]
\end{equation}
where $\mathbb{C}^{\times}_{q}$ acts trivially on $\widetilde{\mathcal{M}}_{H}$. The multiplication is given by 
\begin{equation*}
(1 \otimes c)(z^d \otimes 1) = q\langle \bar{c},d\rangle z^d \otimes 1 + z^d \otimes c,
\end{equation*}
where $\bar c$ denotes the image of $c$ in $H^2(\widetilde{\mathcal M}_H)$ and $\langle \bar{c},d\rangle$ is the natural homology--cohomology pairing. The action of $D^{\mathrm{Giv}}$ on $Q^{\mathrm{Giv}}$ is given by the formula
\begin{equation}\label{eq: action D Giv on Q Div}
z^d \mapsto z^d \cdot -,~ c \mapsto q\partial_{\bar{c}}+(c \star -),
\end{equation}
where $\star$ is the {\emph{quantum}} multiplication on $H^*_{\mathsf{T}}(\widetilde{\mathcal{M}}_H)[[{\boldsymbol{z}}]]$ defined using genus-zero Gromov--Witten invariants, and $\partial_{\bar{c}}$ is the ``Euler'' vector field given by $\partial_{\bar{c}}(z^d)=\langle {\bar{c}},d\rangle z^d$.

Here and below, we use the plus-sign convention $q\partial_{\bar c}+(c\star-)$ for the quantum connection. This differs from the convention in \cite[Section 4.2]{KMBP}.

Now, $Q_{q=2\hbar}^{\mathrm{Giv}}$ is the specialization of $Q$ defined by imposing $q=2\hbar$, where $\hbar$ is the primitive equivariant parameter coming from the $\mathbb{C}^\times$-action. We will still call $Q^{\mathrm{Giv}}_{q=2\hbar}$ a ``$D$-module'' but note that the action of $D$ on it factors through the specialization $D_{q=2\hbar}$.

On the RHS of the quantum Hikita conjecture, the authors propose to consider the $D$-module of {\emph{graded traces}}. 
We denote this $D$-module by $\operatorname{GrTr}(\mathcal{A})$; it is defined as follows.

We first introduce, following \cite[Section 3.3]{KMBP},
\begin{equation}\label{eq:def_ideal_J}
J := \sum_{\xi \in R^-}\BC[{\boldsymbol{z}}] \cdot \{1 \otimes ab - z^\xi \otimes ba,\, a \in \mathcal{A}^{\xi},\, b \in \mathcal{A}^{-\xi}\} \subset \mathcal{A}^0[{\boldsymbol{z}}],
\end{equation}
\begin{equation*}
\on{GrTr}^{\mathrm{rat}}(\mathcal{A}) :=\mathcal{A}^{0}[{\boldsymbol{z}}]/J.
\end{equation*}
The superscript $\mathrm{rat}$ stands for ``rational''. One can define $\operatorname{GrTr}^{\mathrm{rat}}(\mathcal{A}_f)$ by the same formula. 

\begin{lemma}\label{lem:spec_commutes_with_GrTr_rat}
The specialization homomorphism $\mathcal{A} \twoheadrightarrow \mathcal{A}_f$ induces the isomorphism $\operatorname{GrTr}^{\mathrm{rat}}(\mathcal{A})|_{(f,1)} \iso \operatorname{GrTr}^{\mathrm{rat}}(\mathcal{A}_f)$.    
\end{lemma}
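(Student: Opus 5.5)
The plan is to realize both sides as cokernels of $\mathfrak C[\boldsymbol z]$-linear maps and then use right exactness of the tensor product. By definition,
\[
\operatorname{GrTr}^{\mathrm{rat}}(\mathcal A)=\mathcal A^0[\boldsymbol z]/J,
\]
where $J$ is the $\mathbb C[\boldsymbol z]$-span of the elements $ab-z^\xi ba$ with $a\in\mathcal A^\xi$, $b\in\mathcal A^{-\xi}$ and $\xi\in R^-$. These elements are bilinear in $(a,b)$. So $J$ is the image of the $\mathfrak C[\boldsymbol z]$-linear map
\[
\phi\colon \bigoplus_{\xi\in R^-}\bigl(\mathcal A^\xi\otimes_{\mathfrak C}\mathcal A^{-\xi}\bigr)[\boldsymbol z]\longrightarrow \mathcal A^0[\boldsymbol z],\qquad a\otimes b\mapsto ab-z^\xi ba.
\]
This gives an exact sequence of $\mathfrak C$-modules, namely $\operatorname{source}(\phi)\to\mathcal A^0[\boldsymbol z]\to\operatorname{GrTr}^{\mathrm{rat}}(\mathcal A)\to 0$. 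Here I use that $\mathfrak C=\mathbb C[\mathsf f,\hbar]$ is central, so the bilinear expression is indeed $\mathfrak C$-balanced. I also use that $J$ is already a $\mathfrak C$-submodule, because $\mathfrak C\subset\mathcal A^0$ is central and multiplying $a$ by $c\in\mathfrak C$ stays inside $\mathcal A^\xi$.

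Next, apply $-\otimes_{\mathfrak C}\mathbb C_{(f,1)}$ to this sequence. Right exactness gives an exact sequence
\[
\operatorname{source}(\phi)|_{(f,1)}\to\mathcal A^0[\boldsymbol z]|_{(f,1)}\to\operatorname{GrTr}^{\mathrm{rat}}(\mathcal A)|_{(f,1)}\to0 .
\]
I will identify the first two terms. Since the $R$-grading is compatible with the $\mathfrak C$-module structure, we have $\mathcal A_f=\bigoplus_\xi \mathcal A^\xi|_{(f,1)}$. Hence $\mathcal A^\xi|_{(f,1)}=\mathcal A_f^\xi$, which gives $\mathcal A^0[\boldsymbol z]|_{(f,1)}=\mathcal A_f^0[\boldsymbol z]$. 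Specialization also commutes with tensor products over the base:
\[
(\mathcal A^\xi\otimes_{\mathfrak C}\mathcal A^{-\xi})\otimes_{\mathfrak C}\mathbb C=\mathcal A_f^\xi\otimes_{\mathbb C}\mathcal A_f^{-\xi}.
\]
Finally, the specialized map $\phi|_{(f,1)}$ is the map $a\otimes b\mapsto ab-z^\xi ba$ for $\mathcal A_f$, because the quotient map $\mathcal A\to\mathcal A_f$ is an algebra homomorphism. Its image is therefore exactly the ideal $J_f$ that defines $\operatorname{GrTr}^{\mathrm{rat}}(\mathcal A_f)$. The cokernel of $\phi|_{(f,1)}$ is thus $\operatorname{GrTr}^{\mathrm{rat}}(\mathcal A_f)$, and the induced map is the natural one coming from $\mathcal A\twoheadrightarrow\mathcal A_f$.

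The main point needing care is the identification $\mathcal A_f^\xi=\mathcal A^\xi/\mathfrak m_{(f,1)}\mathcal A^\xi$. This amounts to the statement that the kernel of $\mathcal A\to\mathcal A_f$, namely $\mathfrak m_{(f,1)}\mathcal A$, is homogeneous for the $R$-grading. That holds because $\mathfrak m_{(f,1)}\subset\mathfrak C$ lies in degree $0$ for the $R$-grading, so the kernel is $\bigoplus_\xi\mathfrak m_{(f,1)}\mathcal A^\xi$. I should also note that no flatness of $\mathcal A$ over $\mathfrak C$ is needed, since only right exactness is used. The argument is purely formal and works for any $\mathfrak C$-point. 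It uses only that $\hbar\mapsto1$ keeps the twisting relation $ab=z^\xi ba$ with the same monoid $R^-$, because the variables $z^\xi$ are indexed by $R^-$ independently of the specialization.
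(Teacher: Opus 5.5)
Your argument is correct and is the same as the paper's proof, which consists of the one-line observation that specialization is right exact, so $\operatorname{GrTr}^{\mathrm{rat}}(\mathcal{A})|_{(f,1)}=\mathcal{A}^0_f[\boldsymbol{z}]/J_f$ with $J_f$ the image of $J$. You have spelled out the details the paper leaves implicit: the presentation of $J$ as the image of a $\mathfrak{C}$-linear map, and the $R$-homogeneity of the kernel of $\mathcal{A}\to\mathcal{A}_f$, which gives $\mathcal{A}^\xi|_{(f,1)}=\mathcal{A}_f^\xi$.
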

\begin{proof}
Specialization is right exact so $\operatorname{GrTr}^{\mathrm{rat}}(\mathcal{A})|_{(f,1)} = \mathcal{A}^0_f[{\boldsymbol{z}}]/J_f$, where $J_f$ is the image of $J$ in $\mathcal{A}_f$. The claim follows.
\end{proof}

We now define the completion

\begin{equation}\label{eq:def_of_grtr}
\on{GrTr}(\mathcal{A}) := \varprojlim_{n\geqslant 1}^{\mathrm gr}\Big(\on{GrTr}^{\mathrm{rat}}(\mathcal{A})/\mathfrak{m}^n\on{GrTr}^{\mathrm{rat}}(\mathcal{A})\Big),
\end{equation}
where $\mathfrak{m}$ is the augmentation ideal defined in \eqref{eq:augm_ideal_R_-} and the limit is taken in the category of $\mathbb{Z}$-graded modules. The grading on $\operatorname{GrTr}^{\mathrm{rat}}(\mathcal{A})$ is induced by the {\emph{cohomological}} grading on $\mathcal{A}^0$  and $\operatorname{deg}z^\xi=0$. The individual terms in (\ref{eq:def_of_grtr}) will be denoted by $\operatorname{GrTr}(\mathcal{A})_n$. 

One can define $\operatorname{GrTr}^{\mathrm{rat}}(\mathcal A_f)$ by the
same formula. More precisely, the cohomological grading on $\mathcal A$ induces an exhaustive increasing
filtration on $\mathcal A_f$; placing each $z^\xi$ in filtration degree
zero, we endow $\operatorname{GrTr}^{\mathrm{rat}}(\mathcal A_f)$ and its
$\mathfrak m$-adic quotients with the corresponding quotient filtrations.
We define
$$
  \operatorname{GrTr}(\mathcal A_f)
  :=
  \varprojlim_{n\geqslant 1}^{\mathrm{fil}}
  \left(
    \operatorname{GrTr}^{\mathrm{rat}}(\mathcal A_f)/
    \mathfrak m^n\operatorname{GrTr}^{\mathrm{rat}}(\mathcal A_f)
  \right),
$$
where the inverse limit is taken in the
category of exhaustively filtered vector spaces.

\begin{lemma}\label{lem:finite-level GrTr}
For every $n\geqslant 1$, the natural homomorphism
$$
\GZ_\hbar[{\boldsymbol z}]
 \big/\mathfrak m^n\GZ_\hbar[{\boldsymbol z}]
\longrightarrow
\operatorname{GrTr}(\mathcal A)_n
$$
is surjective. In particular, every cohomological graded component of
$\operatorname{GrTr}(\mathcal A)_n$ is finite-dimensional.
\end{lemma}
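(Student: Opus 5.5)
The plan is to show that, modulo $J+\mathfrak m^n\mathcal A^0[\boldsymbol z]$, every element of $\mathcal A^0$ is congruent to an element of $\GZ_\hbar[\boldsymbol z]$. Write $\operatorname{GrTr}(\mathcal A)_n=\mathcal A^0[\boldsymbol z]/(J+\mathfrak m^n\mathcal A^0[\boldsymbol z])$. It is $\mathbb C[\boldsymbol z]$-spanned by the image of $\mathcal A^0$, so it suffices to prove that each $a\in\mathcal A^0$ lies in $\GZ_\hbar[\boldsymbol z]+J+\mathfrak m^n\mathcal A^0[\boldsymbol z]$.

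Use Proposition \ref{PBW} in the form (\ref{eq:mult_morphism_+_0_-}). Then $a$ is a sum of products $x_+hx_-$ with $x_+\in\mathcal A^{\mathrm{pos}}\cap\mathcal A^{\eta}$, $h\in\GZ_\hbar$ and $x_-\in\mathcal A^{\mathrm{neg}}\cap\mathcal A^{-\eta}$. Since each $x_\pm$ is homogeneous of weight zero in total, $-\eta\in R^-$ for every term. The weights of $\mathcal A^{\mathrm{neg}}$ lie in $R^-$, because the generators $M_{\boldsymbol\lambda,f}$ with $\boldsymbol\lambda\in\mathcal P^{\mathrm{neg}}$ have weights $(|\lambda_i|)$ with $|\lambda_i|\le 0$ by Lemma \ref{lem: degrees minuscule}.

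Terms with $\eta=0$ have $x_\pm$ of weight zero in $\mathcal A^{\mathrm{pos}}$ and $\mathcal A^{\mathrm{neg}}$, which forces $x_\pm$ to be scalars (degree-zero parts of these subalgebras), so these terms already lie in $\GZ_\hbar$.

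For a term with $0\neq\xi:=-\eta\in R^-$, set $b=x_+h\in\mathcal A^{-\xi}$ and $c=x_-\in\mathcal A^{\xi}$. The defining relation of $J$ with $a:=c$ gives $bc\equiv z^{\xi}\,cb$ modulo $J$. Since $cb\in\mathcal A^0$ and $z^\xi\in\mathfrak m$, we get $a\in\GZ_\hbar[\boldsymbol z]+J+\mathfrak m\cdot\mathcal A^0[\boldsymbol z]$.

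Iterate this: apply the same rewriting to $cb$, which gives $\mathcal A^0\subset \GZ_\hbar[\boldsymbol z]+J+\mathfrak m(\GZ_\hbar[\boldsymbol z]+J+\mathfrak m\mathcal A^0[\boldsymbol z])$. Here one uses that $J$ is a $\mathbb C[\boldsymbol z]$-submodule. After $n$ steps, $\mathcal A^0\subset\GZ_\hbar[\boldsymbol z]+J+\mathfrak m^n\mathcal A^0[\boldsymbol z]$, which is the surjectivity claimed.

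The one point needing care is the weight bookkeeping: every term must have either $\eta=0$, with scalar $x_\pm$, or a genuinely nonzero $\xi\in R^-$. This needs the cohomological grading to be compatible, and requires that $\mathcal A^{\mathrm{pos}}$, $\mathcal A^{\mathrm{neg}}$ have trivial weight-zero part beyond scalars. The latter holds since each nonunit generator has weight in $\pm(R^-\setminus 0)$. Everything is homogeneous for the cohomological grading, and $\deg z^\xi=0$, so the surjection is graded.

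For finite-dimensionality, each graded piece of $\GZ_\hbar$ is finite-dimensional, since $\GZ_\hbar$ is a polynomial ring in positive-degree generators. Also, $\mathbb C[\boldsymbol z]/\mathfrak m^n$ is finite-dimensional and sits in degree $0$. Hence every graded component of $\GZ_\hbar[\boldsymbol z]/\mathfrak m^n$ is finite-dimensional, and so is its image $\operatorname{GrTr}(\mathcal A)_n$.
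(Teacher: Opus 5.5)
Your strategy matches the paper's: a PBW decomposition, moving the factor of weight in $R^-$ across at the cost of $z^{\xi}$, iterating $n$ times, then a dimension count. However, the key rewriting step runs in the wrong direction, so the argument fails as written.

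By definition, $J$ contains $1\otimes cb-z^{\xi}\otimes bc$ for $c\in\mathcal A^{\xi}$, $b\in\mathcal A^{-\xi}$, $\xi\in R^-$. So in $\operatorname{GrTr}(\mathcal A)_n$ one has $[cb]=z^{\xi}[bc]$. In words, a product whose $R^-$-weight factor sits on the \emph{left} equals $z^{\xi}$ times the swapped product.

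You claim instead that $bc\equiv z^{\xi}cb$ for $b=x_+h$ and $c=x_-$. With the ordering \eqref{eq:mult_morphism_+_0_-}, your terms are $bc=x_+hx_-$, with the negative-weight factor on the right. The relation only gives $[x_-x_+h]=z^{\xi}[x_+hx_-]$. Solving this for $[x_+hx_-]$ would require $z^{-\xi}\notin\mathbb C[\boldsymbol z]$. Modulo $\mathfrak m^n$ the element $z^{\xi}$ is nilpotent, so nothing can be concluded about $[x_+hx_-]$, and the iteration never starts.

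The repair is to use the other surjection, \eqref{eq:mult_morphism_-_0_+}, which Proposition~\ref{PBW} also provides. Write $a=h_0+\sum x_-\,h\,x_+$ with $h_0\in\GZ_\hbar$, $x_-\in\mathcal A^{\mathrm{neg}}\cap\mathcal A^{\xi}$, $0\neq\xi\in R^-$, and $hx_+\in\mathcal A^{-\xi}$. Then $[x_-(hx_+)]=z^{\xi}[(hx_+)x_-]$ with $(hx_+)x_-\in\mathcal A^{0}$. From there your iteration and your finite-dimensionality count go through verbatim.

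This corrected version is exactly the paper's proof, which writes $x=h+\sum_j a_jb_j$ with $a_j\in\mathcal A^{\xi_j}$, $0\neq\xi_j\in R^-$, on the left. Your weight bookkeeping is correct and makes explicit what the paper leaves implicit: the weight-zero parts of $\mathcal A^{\mathrm{pos}}$ and $\mathcal A^{\mathrm{neg}}$ are scalars, so terms with $\xi=0$ already lie in $\GZ_\hbar$.
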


\begin{proof}
Proposition \ref{PBW} implies that
every $x\in\mathcal A^{\boldsymbol0}$ can be written as
$$
x=h+\sum_j a_jb_j,
\qquad
h\in\GZ_\hbar,\quad
a_j\in\mathcal A^{\xi_j},\quad
b_j\in\mathcal A^{-\xi_j},\quad
0\neq\xi_j\in R^-.
$$
In $\operatorname{GrTr}(\mathcal A)_n$ we have
$$
[x]=[h]+\sum_j [z^{\xi_j} \otimes b_ja_j].
$$
Applying the same decomposition to each $b_ja_j$ and iterating $n$
times, the remaining terms acquire a factor in $\mathfrak m^n$ and
therefore vanish. This proves the surjectivity.

Finally, $\mathbb C[{\boldsymbol z}]/\mathfrak m^n$ is finite-dimensional,
and $\GZ_\hbar$ has finite-dimensional cohomological graded components.
The last assertion  follows.
\end{proof}

\begin{lemma}\label{lem:specialization}
\begin{enumerate}[label=(\arabic*)]

\item For $\hbar=0$, we have a natural isomorphism of $\mathbb{C}[\mathsf f][[{\boldsymbol{z}}]]$-modules
\begin{equation*}
\operatorname{GrTr}(\mathcal{A})_{\hbar=0} \simeq \mathbb{C}[\mathcal{M}_{C,\mathsf f}^\theta][[{\boldsymbol{z}}]].
\end{equation*}

\item For $z=0$, we have a natural isomorphism of $\mathfrak C$-modules
\begin{equation}\label{eq:descr_grtr_hbar_0}
\operatorname{GrTr}(\mathcal{A})/\mathfrak{m}\operatorname{GrTr}(\mathcal{A}) \simeq\mathsf C_\theta(\mathcal A).
\end{equation}
\item If $\operatorname{GrTr}(\mathcal{A})_n$ is {\emph{flat}} over $\mathfrak{C}$ for every $n$, then there is a natural isomorphism $\operatorname{GrTr}(\mathcal{A})|_{(f,1)} \iso \operatorname{GrTr}(\mathcal{A}_f)$. 
\end{enumerate}
\end{lemma}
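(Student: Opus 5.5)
The plan is to treat the three parts separately, using throughout that $\operatorname{GrTr}(\mathcal A)$ is an inverse limit of the finite-level quotients $\operatorname{GrTr}(\mathcal A)_n=\operatorname{GrTr}^{\mathrm{rat}}(\mathcal A)/\mathfrak m^n$. Lemma~\ref{lem:finite-level GrTr} says these have finite-dimensional graded pieces, so the graded inverse limits are Mittag-Leffler and behave well under right-exact operations.

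For (2), note that $\operatorname{GrTr}(\mathcal A)/\mathfrak m\operatorname{GrTr}(\mathcal A)=\operatorname{GrTr}(\mathcal A)_1$. The transition maps are surjective, and each graded piece stabilises in the completion, which is where Lemma~\ref{lem:finite-level GrTr} is used. Setting $z^\xi=0$ for $\xi\neq0$ in the relations \eqref{eq:def_ideal_J} leaves two families of relations. The first is $ab=0$ for $a\in\mathcal A^\xi$, $b\in\mathcal A^{-\xi}$ with $0\neq\xi\in R^-$. The second, coming from $\xi=0$, is $ab=ba$ for $a,b\in\mathcal A^{\boldsymbol 0}$. Hence $\operatorname{GrTr}(\mathcal A)_1=\mathcal A^{\boldsymbol0}/\bigl(\sum_{0\ne\xi\in R^-}\mathcal A^\xi\mathcal A^{-\xi}+[\mathcal A^{\boldsymbol0},\mathcal A^{\boldsymbol0}]\bigr)$. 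The sum $\sum\mathcal A^\xi\mathcal A^{-\xi}$ is a two-sided ideal of $\mathcal A^{\boldsymbol0}$, so the quotient by it is $\mathsf C'_\theta(\mathcal A)$. By Corollary~\ref{cor: basic properties of Cartan subquot} and Lemma~\ref{lem: two cartan subquotients are iso}, $\mathsf C'_\theta(\mathcal A)\simeq\mathsf C_\theta(\mathcal A)$ is commutative, so the commutators already vanish there. This gives \eqref{eq:descr_grtr_hbar_0}.

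For (1), specialising $\hbar=0$ makes $\mathcal A$ commutative, so for $a\in\mathcal A^\xi$, $b\in\mathcal A^{-\xi}$ the relations become $(1-z^\xi)ab=0$. I would first show $\operatorname{GrTr}(\mathcal A)_{\hbar=0}=\varprojlim \operatorname{GrTr}(\mathcal A_{\hbar=0})_n$. This holds by right exactness at each finite level, together with the Mittag-Leffler property of graded pieces, which lets the specialisation commute with the limit. In the completion, $1-z^\xi$ is a unit for $\xi\neq0$, so the relations reduce to $ab=0$. The completed module is therefore $\bigl(\mathbb C[\mathcal M_{C,\mathsf f}]^{\boldsymbol0}/\sum_{\xi\ne0}\mathbb C[\mathcal M_C]^\xi\mathbb C[\mathcal M_C]^{-\xi}\bigr)[[\boldsymbol z]]$. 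The remaining point is to identify this quotient with $\mathbb C[\mathcal M^\theta_{C,\mathsf f}]$. By definition of schematic fixed points, the latter is $\mathbb C[\mathcal M_C]$ modulo the ideal generated by all nonzero $\theta$-weight components. Its degree-$0$ part is $\mathbb C[\mathcal M_C]^0/\sum_{k>0}\mathbb C[\mathcal M_C]^{-k}\mathbb C[\mathcal M_C]^k$, which is the $\hbar=0$ version of $\mathsf C_\theta$. I would then run the argument of Lemma~\ref{lem: two cartan subquotients are iso} at $\hbar=0$, using Proposition~\ref{PBW}, to pass from $\theta$-weights to $R^-$-weights, and also to reduce from $\mathbb C[\mathcal M_C]^0$ to $\mathbb C[\mathcal M_C]^{\boldsymbol 0}$.

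For (3), first use Lemma~\ref{lem:spec_commutes_with_GrTr_rat} and right exactness to get $\operatorname{GrTr}(\mathcal A)_n|_{(f,1)}\simeq\operatorname{GrTr}(\mathcal A_f)_n$ for each $n$. The substantive step, which I expect to be the main obstacle, is commuting the specialisation $-\otimes_{\mathfrak C}\mathfrak C/\mathfrak n_{(f,1)}$ with the inverse limit and matching the graded limit with the filtered one. I would use flatness to get short exact sequences $0\to\mathfrak n\operatorname{GrTr}_n\to\operatorname{GrTr}_n\to\operatorname{GrTr}_n|_{(f,1)}\to0$, compatible in $n$. The systems $\mathfrak n\operatorname{GrTr}_n$ are Mittag-Leffler degree-wise, being finite-dimensional in each degree, so $\varprojlim$ is exact on these sequences. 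Finally, the specialisation $\hbar=1$ turns the grading into a filtration, and since each degree is finite-dimensional, the graded limit specialises to the filtered limit.
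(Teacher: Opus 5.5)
Parts (1) and (2) follow the paper's proof. In (1) you are in fact more explicit than the paper: you identify $\mathsf C'_\theta$ at $\hbar=0$ with $\mathbb C[\mathcal M^\theta_{C,\mathsf f}]$ via the $\hbar=0$ version of Lemma~\ref{lem: two cartan subquotients are iso}, a step the paper leaves implicit.

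The gap is in (3), at the step you flag yourself.

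\begin{itemize}
\item Your sequences $0\to\mathfrak n\operatorname{GrTr}(\mathcal A)_n\to\operatorname{GrTr}(\mathcal A)_n\to\operatorname{GrTr}(\mathcal A)_n|_{(f,1)}\to0$ are exact tautologically, so flatness plays no role in producing them.
\item The ideal $\mathfrak n$ contains the inhomogeneous elements $\hbar-1$ and the shifted linear forms cutting out $f$. So $\mathfrak n\operatorname{GrTr}(\mathcal A)_n$ is not a graded submodule, and the degreewise finite-dimensionality argument from (1)--(2) does not apply to it. Its transition maps are surjective anyway, so its $\varprojlim^1$ vanishes for free.
\end{itemize}

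Taking limits therefore only identifies $\varprojlim_n\operatorname{GrTr}(\mathcal A)_n|_{(f,1)}$ with the quotient of the limit by $\varprojlim_n\mathfrak n\operatorname{GrTr}(\mathcal A)_n$. A priori that is only the closure of $\mathfrak n\operatorname{GrTr}(\mathcal A)$.

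The real content of (3) is the equality $\varprojlim_n\mathfrak n\operatorname{GrTr}(\mathcal A)_n=\mathfrak n\operatorname{GrTr}(\mathcal A)$. Equivalently, $\varprojlim^1$ must vanish on the syzygy modules $\ker\bigl(\operatorname{GrTr}(\mathcal A)_n^{\oplus m}\to\mathfrak n\operatorname{GrTr}(\mathcal A)_n\bigr)$, where $m$ is the number of generators of $\mathfrak n$. For the homogeneous ideals $(\hbar)$ and $\mathfrak m$ in (1) and (2) these kernels are degreewise finite-dimensional, which is why Lemma~\ref{lem:finite-level GrTr} suffices there. For $\mathfrak n$ they are not, and this is exactly where flatness must enter.

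The paper handles this by dividing out the regular sequence $x_1,\dots,x_m$ defining $(f,1)$ one element at a time.
\begin{itemize}
\item Set $N_n:=\operatorname{GrTr}(\mathcal A)_n/(x_1,\dots,x_{j-1})$. Flatness makes $x_j$ a nonzerodivisor on $N_n$.
\item So in $0\to N_n\xrightarrow{x_j}N_n\to N_n/x_jN_n\to0$ the first term is the system $N_n$ itself, whose transitions are strict surjections.
\item The Mittag--Leffler sequence then gives $\bigl(\varprojlim_n N_n\bigr)/x_j\bigl(\varprojlim_n N_n\bigr)\simeq\varprojlim_n(N_n/x_jN_n)$.
\item One iterates, working with filtrations instead of gradings once an inhomogeneous element has been divided out.
\end{itemize}
Equivalently, flatness makes the Koszul complex of $x_1,\dots,x_m$ on $\operatorname{GrTr}(\mathcal A)_n$ exact. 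Then the syzygies are images of $\bigwedge^2\mathfrak C^m\otimes\operatorname{GrTr}(\mathcal A)_n$ and inherit surjective transitions. With this step inserted, your outline for (3) coincides with the paper's.
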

\begin{proof}
We claim that the natural morphism
\begin{equation}\label{eq:comp_morph_lim_hbar}
\operatorname{GrTr}(\mathcal{A})_{\hbar=0} \rightarrow \varprojlim_{n\geqslant 1}^{\mathrm gr} \Big((\operatorname{GrTr}(\mathcal{A})_n)_{\hbar=0}\Big)
\end{equation}
is an isomorphism. 

Fix a cohomological degree $r \in \mathbb{Z}$ and let $K_{n,r}$, $C_{n,r}$ be the degree $r$ components of the kernel and cokernel of 
\begin{equation*}
 \cdot \hbar \colon \operatorname{GrTr}(\mathcal{A})_n \rightarrow  \operatorname{GrTr}(\mathcal{A})_n. 
\end{equation*}
By Lemma \ref{lem:finite-level GrTr}, both $K_{n,r}$ and  $C_{n,r}$  are finite-dimensional. Thus the inverse systems $K_{n,r}$ and
$C_{n,r}$ ($r$ is fixed) satisfy the Mittag--Leffler condition. So, the morphism (\ref{eq:comp_morph_lim_hbar}) is indeed an isomorphism.

It remains to construct an isomorphism 
\begin{equation}\label{eq:identification_want}
(\operatorname{GrTr}(\mathcal{A})_n)_{\hbar=0} \simeq \mathbb{C}[\mathcal{M}^{\theta}_{C,\mathsf{f}}] \otimes (\mathbb{C}[[{\boldsymbol{z}}]]/\mathfrak{m}^n).
\end{equation}
Clearly,  
\begin{equation*}
(\operatorname{GrTr}(\mathcal{A})_n)_{\hbar=0} = \operatorname{GrTr}(\mathcal{A}_{\hbar=0})_n,
\end{equation*} 
compare with the proof of Lemma \ref{lem:spec_commutes_with_GrTr_rat}. Since $\mathcal{A}_{\hbar=0}$ is commutative, the relation in (\ref{eq:def_ideal_J}) corresponding to $\xi \in R^-$ becomes
\begin{equation*}
1 \otimes ab - z^{\xi} \otimes ba = (1-z^\xi) \otimes ab.
\end{equation*}
For $\xi={\boldsymbol{0}}$ this relation is zero. Otherwise, $1-z^{\xi}$ is invertible. The existence of the isomorphism (\ref{eq:identification_want}) follows.

For part~(2), we have: 
$$
\operatorname{GrTr}(\mathcal A)_{z=0}
\simeq
\frac{\mathcal A^{\boldsymbol0}}
{[\mathcal A^{\boldsymbol0},\mathcal A^{\boldsymbol0}]
 +\sum_{{\boldsymbol{0}}\neq\xi\in R^-}\mathcal A^\xi\mathcal A^{-\xi}}.
$$
The commutator relations are redundant because
$\mathsf C'_\theta(\mathcal A)\simeq\mathsf C_\theta(\mathcal A)$ is
commutative (Corollary \ref{cor: basic properties of Cartan subquot}). The result therefore follows from
Lemma~\ref{lem: two cartan subquotients are iso}.

It remains to prove part~(3). By right-exactness of specialization and
Lemma \ref{lem:spec_commutes_with_GrTr_rat}, for every $n\geqslant 1$ there is a natural isomorphism of $\mathbb{Z}_{\geqslant 0}$-filtered
vector spaces
$$
  \left.\operatorname{GrTr}(\mathcal A)_n\right|_{(f,1)}
  \xrightarrow{\sim}
  \operatorname{GrTr}^{\mathrm{rat}}(\mathcal A_f)/
  \mathfrak m^n\operatorname{GrTr}^{\mathrm{rat}}(\mathcal A_f).
$$
Since the transition morphisms are strict surjections, the flatness
hypothesis and the Mittag--Leffler exact sequence applied successively to
the finite regular sequence defining $(f,1)$ show that specialization
commutes with this inverse limit in the category of exhaustively filtered
vector spaces, and the claim follows from the preceding definition.
\end{proof}

\begin{remark}
It is natural to conjecture that $\operatorname{GrTr}(\mathcal{A})_n$ is  always free over $\mathfrak{C}=\mathbb{C}[\mathsf{f},\hbar]$. We will prove (see Lemma \ref{lem: freequot}) that this is indeed the case whenever Assumption~\ref{ass:our_main_ass_modules_dim_est} below holds.
\end{remark}

It is easy to see that there is a natural morphism
\begin{equation}\label{eq: master D mod to GrTr surjective}
\mathfrak V\colon \GZ_{\hbar}[[{\boldsymbol{z}}]] \rightarrow \on{GrTr}(\mathcal{A}),~z^d \otimes \tau \mapsto [z^d \otimes \tau]
\end{equation}
which is a homogeneous homomorphism of $D_{q=2\hbar}$-modules.

\begin{lemma}\label{lem: H to GrTr is surjective}
The homomorphism (\ref{eq: master D mod to GrTr surjective}) is surjective; in particular, the $\mathbb Z$-grading on the target is supported in $\mathbb Z_{\geqslant 0}$.
\end{lemma}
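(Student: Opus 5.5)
The plan is to deduce surjectivity levelwise from Lemma \ref{lem:finite-level GrTr} and then pass to the inverse limit, using the finite-dimensionality of graded pieces to control $\varprojlim$. Concretely, let $\mathfrak V_n\colon \GZ_\hbar[\boldsymbol z]/\mathfrak m^n\GZ_\hbar[\boldsymbol z]\to \operatorname{GrTr}(\mathcal A)_n$ be the finite-level maps. Then $\mathfrak V=\varprojlim^{\mathrm{gr}}_n \mathfrak V_n$, since $\GZ_\hbar[[\boldsymbol z]]$ is by Definition \ref{[z]} the graded inverse limit of the source system. By Lemma \ref{lem:finite-level GrTr}, each $\mathfrak V_n$ is surjective.

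Because the limits are taken in the category of graded vector spaces, it suffices to fix a cohomological degree $r$ and prove surjectivity of $\varprojlim_n (\mathfrak V_n)_r$. Let $K_{n}$ denote the kernel of $(\mathfrak V_n)_r$. This is a subspace of the degree-$r$ part of $\GZ_\hbar\otimes\mathbb C[\boldsymbol z]/\mathfrak m^n$, which is finite-dimensional because $\GZ_\hbar$ has finite-dimensional graded components, is supported in nonnegative degrees, and each $z^\xi$ has degree $0$. Hence $\{K_n\}$ is an inverse system of finite-dimensional vector spaces, so it satisfies the Mittag--Leffler condition and $\varprojlim^1_n K_n=0$. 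The long exact sequence for $\varprojlim$ applied to $0\to K_n\to (\GZ_\hbar[\boldsymbol z]/\mathfrak m^n)_r\to (\operatorname{GrTr}(\mathcal A)_n)_r\to 0$ then gives surjectivity of $\mathfrak V$ in degree $r$. The transition maps between the kernels are compatible because the maps $\mathfrak V_n$ are natural in $n$.

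For the second claim: since $\GZ_\hbar[[\boldsymbol z]]$ is concentrated in degrees $\geqslant 0$ (as $\GZ_\hbar=H^*_{\tGh}(\operatorname{pt})$ is), and $\mathfrak V$ is homogeneous and surjective, $\operatorname{GrTr}(\mathcal A)$ is supported in $\mathbb Z_{\geqslant0}$. The $D_{q=2\hbar}$-linearity of $\mathfrak V$ plays no role in the argument.

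The only delicate point is making sure the graded limit really commutes degreewise in the way used above: the $\varprojlim^{\mathrm{gr}}$ is the degreewise limit, and the finite-level grading on $\operatorname{GrTr}(\mathcal A)_n$ is induced by that of $\mathcal A^{\boldsymbol 0}$, so $\mathfrak V_n$ is homogeneous of degree $0$. Granted this, Mittag--Leffler is automatic.
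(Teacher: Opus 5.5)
Your argument is correct, but it takes a different route from the paper's.

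\textbf{Your route.} You take the finite-level surjections of Lemma~\ref{lem:finite-level GrTr} and pass to the graded inverse limit one cohomological degree at a time. The kernels are finite-dimensional in each degree, so the Mittag--Leffler condition holds and $\varprojlim^1$ vanishes. Nonnegativity of the grading then follows from surjectivity.

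\textbf{The paper's route.} The paper uses Lemma~\ref{lem:finite-level GrTr} only to see that $\operatorname{GrTr}(\mathcal A)$ is $\mathbb Z_{\geqslant 0}$-graded. It then proves surjectivity by a Nakayama-type reduction to the fiber at ${\boldsymbol z}=0$. That fiber is $\operatorname{GrTr}(\mathcal A)/\mathfrak m\operatorname{GrTr}(\mathcal A)\simeq\mathsf C_\theta(\mathcal A)$ by Lemma~\ref{lem:specialization}(2), and $\GZ_\hbar\to\mathsf C_\theta(\mathcal A)$ is onto by Lemma~\ref{prop: surj onto cartan subquot}.

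\textbf{Comparison.} The paper's version shows that generation already happens on the Cartan subquotient, which is the same mechanism reused in Proposition~\ref{prop: tr-iso}. Yours is shorter and more self-contained. A Nakayama argument in the $\mathfrak m$-adic direction needs completeness plus some finiteness of $\operatorname{GrTr}(\mathcal A)$. Finite generation is only established later, in Lemma~\ref{lem: GrTr fin gen}, and that proof itself uses the nonnegative grading proved here. Your Mittag--Leffler step needs only the degreewise finite-dimensionality already available, and it is the same device the paper uses in proving Lemma~\ref{lem:specialization}(1). Both routes ultimately rest on the PBW surjectivity of Proposition~\ref{PBW}.
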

\begin{proof}

It follows from Lemma \ref{lem:finite-level GrTr} that 
$\operatorname{GrTr}(\mathcal{A})_n$
is $\mathbb Z_{\geqslant 0}$-graded, and hence so is $\operatorname{GrTr}(\mathcal A)$.

Thus, one can apply the graded Nakayama lemma to 
$\operatorname{GrTr}(\mathcal{A})$ considered as a $\mathbb{Z}_{\geqslant 0}$-graded module over the nonnegatively graded ring $\mathbb{C}[\hbar]$
and finish the proof using Lemma \ref{prop: surj onto cartan subquot} combined with the identification (\ref{eq:descr_grtr_hbar_0})
from Lemma~\ref{lem:specialization} (2).
\end{proof}

\begin{lemma}\label{lem: GrTr fin gen}
The $\mathfrak C[[{\boldsymbol{z}}]]$-module $\on{GrTr}(\mathcal{A})$ is finitely generated.
\end{lemma}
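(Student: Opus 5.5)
Since $\mathfrak{V}$ is surjective (Lemma \ref{lem: H to GrTr is surjective}), it suffices to find finitely many elements of $\GZ_\hbar$ whose images generate $\operatorname{GrTr}(\mathcal{A})$ over $\mathfrak{C}[[{\boldsymbol{z}}]]$. The plan is a graded complete Nakayama argument with respect to the ideal $\mathfrak{m}$, reducing to finite generation of the fibre at ${\boldsymbol{z}}=0$. By Lemma \ref{lem:specialization}(2), $\operatorname{GrTr}(\mathcal{A})/\mathfrak{m}\operatorname{GrTr}(\mathcal{A})\simeq \mathsf{C}_\theta(\mathcal{A})$, which is a cyclic module over $\GZ_\hbar$ by Lemma \ref{prop: surj onto cartan subquot}.

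\textbf{Finite generation of $\mathsf{C}_\theta(\mathcal{A})$ over $\mathfrak{C}$.} This is the step I expect to be the main obstacle. I would argue via the graded Nakayama lemma over $\mathbb{C}[\hbar]$. By Corollary \ref{cor: basic properties of Cartan subquot}, $\mathsf{C}_\theta(\mathcal{A})$ is nonnegatively graded. Its reduction $\mathsf{C}_\theta(\mathcal{A})/\hbar$ is a quotient of $\mathsf{C}_\theta(\mathcal{A}_{\hbar=0})$, which maps onto, and is essentially, $\mathbb{C}[\mathcal{M}^\theta_{C,\mathsf f}]$. The latter is finite over $\mathbb{C}[\mathsf f]$: its fibre over $0\in\mathsf f$ is the finite-dimensional algebra of Proposition \ref{propos_fixed_Coulomb_fin_dim}, and the algebra is graded with $\mathbb{C}[\mathsf f]$ in positive degrees, so graded Nakayama over $\mathbb{C}[\mathsf f]$ applies. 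To make this precise, I would use that $\GZ\to\mathbb{C}[\mathcal M^\theta_{C,\mathsf f}]$ is surjective and that its fibre at $0\in\mathsf f$ is finite-dimensional.

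Lifting then gives finitely many homogeneous $h_1,\dots,h_N\in\GZ_\hbar$ whose images generate $\mathsf{C}_\theta(\mathcal{A})/(\mathsf f,\hbar)$ over $\mathbb{C}$. Graded Nakayama over $\mathfrak C$ (all modules are $\mathbb Z_{\geqslant 0}$-graded and the generators of $\mathfrak C$ have positive degree) shows that $h_1,\dots,h_N$ generate $\mathsf{C}_\theta(\mathcal A)$ over $\mathfrak C$.

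\textbf{Lifting to each finite level $\operatorname{GrTr}(\mathcal{A})_n$.} I claim the $h_i$ generate $\operatorname{GrTr}(\mathcal A)_n$ over $\mathfrak C[{\boldsymbol{z}}]/\mathfrak m^n$. Let $N_n$ denote the submodule they generate. Since $\mathfrak m$ is nilpotent on $\operatorname{GrTr}(\mathcal A)_n$, the usual nilpotent Nakayama argument gives $\operatorname{GrTr}(\mathcal A)_n=N_n+\mathfrak m\operatorname{GrTr}(\mathcal A)_n$, and hence $\operatorname{GrTr}(\mathcal A)_n=N_n$ after iterating $n$ times.

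\textbf{Passing to the limit.} Consider the graded map $\Phi\colon\mathfrak C[[{\boldsymbol z}]]^{\oplus N}\to\operatorname{GrTr}(\mathcal A)$, with each summand shifted by $\deg h_i$. It is the inverse limit of the surjections $\Phi_n$ onto $\operatorname{GrTr}(\mathcal A)_n$.

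Fix a cohomological degree $r$. Each degree-$r$ piece of the source at level $n$ is finite-dimensional, since $\mathfrak C$ has finite-dimensional graded pieces and $\mathbb C[{\boldsymbol z}]/\mathfrak m^n$ is finite-dimensional. Hence the kernels of the $\Phi_n$ in degree $r$ form a Mittag--Leffler system of finite-dimensional spaces, as in the proof of Lemma \ref{lem:specialization}. Therefore $\varprojlim\Phi_n=\Phi$ is surjective degreewise, and so surjective in the graded limit.

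Thus $\operatorname{GrTr}(\mathcal{A})$ is generated by the images of $h_1,\dots,h_N$. The only nonformal input is the finiteness of $\mathsf{C}_\theta(\mathcal A)$ over $\mathfrak C$. If the $\hbar=0$ comparison in the first step proves awkward, I would instead invoke \cite[Lemma 3.24]{catO_coulomb} at $\hbar=1$ together with homogeneity to deduce it.
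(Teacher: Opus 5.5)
Your argument is correct, but it runs the reductions in the opposite order from the paper.

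The paper applies graded Nakayama over $\mathfrak C$ directly to $\operatorname{GrTr}(\mathcal A)$. This reduces to the fibre $\operatorname{GrTr}(\mathcal A)_{\mathsf f=\hbar=0}\simeq\mathbb C[\mathcal M_C^\theta][[\boldsymbol z]]$ supplied by Lemma~\ref{lem:specialization}(1), which is visibly finite over $\mathbb C[[\boldsymbol z]]$ by Proposition~\ref{propos_fixed_Coulomb_fin_dim}.

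You instead kill $\boldsymbol z$ first, via Lemma~\ref{lem:specialization}(2). You then prove that $\mathsf C_\theta(\mathcal A)$ is finite over $\mathfrak C$ by graded Nakayama down to $\mathsf C_\theta(\mathcal A)/(\mathsf f,\hbar)\simeq\mathbb C[\mathcal M_C^\theta]$. Finally you climb back up through the finite levels $\operatorname{GrTr}(\mathcal A)_n$ (nilpotent Nakayama) and the graded inverse limit (Mittag--Leffler on finite-dimensional degree pieces).

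The essential input is the same in both: finite-dimensionality of $\mathbb C[\mathcal M_C^\theta]$. The paper's version is two lines because the interaction between specialization and completion was already handled in the proof of Lemma~\ref{lem:specialization}(1). Yours needs only the $\boldsymbol z=0$ fibre, where every identification is an honest quotient of $\operatorname{GrTr}^{\mathrm{rat}}(\mathcal A)$. You do the completion bookkeeping by hand, which also makes explicit that any lifts to $\GZ_\hbar$ of a basis of $\mathbb C[\mathcal M_C^\theta]$ are generators.

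Your suggested fallback via \cite[Lemma 3.24]{catO_coulomb} at $\hbar=1$ is unnecessary. Homogeneity alone would not make it work, since specializing at $\hbar=1$ cannot detect $\hbar$-torsion in $\mathsf C_\theta(\mathcal A)$.
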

\begin{proof}
Recall that the $\mathbb{Z}$-grading on $\mathcal{A}$ induces the $\mathbb{Z}_{\geqslant 0}$-grading on $\on{GrTr}(\mathcal{A})$. Thus, by the graded Nakayama lemma, it is enough to show that $\operatorname{GrTr}(\mathcal{A})_{\mathsf f=\hbar=0} = \mathbb{C}[\mathcal{M}_C^\theta] \otimes \mathbb{C}[[\boldsymbol{z}]]$ is finitely generated over $\mathbb{C}[[{\boldsymbol{z}}]]$. This follows from 
Proposition \ref{propos_fixed_Coulomb_fin_dim}.
\end{proof}

For a parameter-valued weight 
\begin{equation*}
\xi \in \operatorname{Hom}_{\mathbb{C}}(\mathsf{a},\mathsf{f}^* \oplus \mathbb{C}\hbar)
\end{equation*}
define
\begin{equation}\label{eq:def of Z xi}
\mathcal{Z}_{\xi} := z^\xi\mathbb{C}[\mathsf{f},\hbar][[2\hbar R^-]]= \Big\{ \sum_{\eta \in \xi+2\hbar R^-} z^{\eta} \otimes p_\eta\,\Big|\, p_\eta \in \mathbb{C}[\mathsf{f},\hbar]\Big\}.
\end{equation}
The space $\mathcal{Z}_{\xi}$ has a natural $D_{q=2\hbar}$-module structure (compare with \cite[Remark 3.16]{KMBP}). More generally, 
to a finite collection $\Xi=\{\xi_1,\ldots,\xi_s\}$, associate $\mathcal{Z}_{\Xi}$ defined in the same way as in (\ref{eq:def of Z xi}) with weights $\eta$ being allowed to live in $\xi_j+2\hbar R^-$ for $j=1,\ldots,s$.

Given a module $M=\bigoplus_{\xi+\eta} M_{\xi+\eta}$ whose generalized $\mathsf a$-weight spaces $\xi+\eta$ are contained in the finite union
\begin{equation*}
\bigcup_{j=1}^s (\xi_j+2\hbar R^-)
\end{equation*}
we consider the map
\begin{equation*}
\operatorname{tr}_M \colon \operatorname{GrTr}(\mathcal{A}) \rightarrow \mathcal{Z}_{\Xi}, \quad x \mapsto  \sum_{\eta \in \bigcup_j \xi_j+2\hbar R^-} z^{\eta} \otimes \operatorname{tr}_{M_{\eta}}(x|_{M_{\eta}}).
\end{equation*}

The following lemma holds by \cite[Proposition 3.15]{KMBP}.
\begin{lemma}
The map $\operatorname{tr}_M$ is a homomorphism of $D_{q=2\hbar}$-modules. 
\end{lemma}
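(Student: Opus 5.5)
The plan is to follow \cite[Proposition 3.15]{KMBP}, adapted to our normalization with quantization parameter $2\hbar$ and Novikov variables $z$. There are three steps: first show that $\operatorname{tr}_M$ is well defined on $\mathcal{A}^0[{\boldsymbol{z}}]$ and kills the ideal $J$ of (\ref{eq:def_ideal_J}); then show that it extends continuously to the completion $\operatorname{GrTr}(\mathcal{A})$; finally check compatibility with the generators of $D_{q=2\hbar}$. These generators are the monomials $z^\xi$ and the classes $c\in\mathsf a = H^2_{G_{\mathsf v}}(\operatorname{pt})$. On $\GZ_\hbar[[{\boldsymbol z}]]$, and hence on $\operatorname{GrTr}(\mathcal A)$, $c$ acts by $2\hbar\partial_c + \iota_{\mathsf a}(c)\cdot$. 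On $\mathcal Z_\Xi$, $c$ acts by the Euler operator $2\hbar\partial_c$ with $\partial_c z^\eta=\langle c,\eta\rangle z^\eta$, where $\eta$ is parameter-valued.

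For well-definedness, I would take $a\in\mathcal A^{\xi}$ and $b\in\mathcal A^{-\xi}$ with $\xi\in R^-$. By the quantum comoment relation (\ref{eq:moment_map_cond}), $a$ maps $M_\eta$ to $M_{\eta+2\hbar\xi}$ and $b$ maps $M_{\eta+2\hbar\xi}$ to $M_\eta$. Cyclicity of trace for maps between finite-rank free $\mathfrak C$-modules gives
\[
\operatorname{tr}_{M_\eta}(ba)=\operatorname{tr}_{M_{\eta+2\hbar\xi}}(ab).
\]
Reindexing the sum defining $\operatorname{tr}_M(ab)$, we get $\operatorname{tr}_M(ab)=z^{2\hbar\xi}\cdot\operatorname{tr}_M(ba)$, where $z^{2\hbar\xi}$ is the variable identified with $z^\xi$ in $\mathbb C[{\boldsymbol z}]_\hbar$. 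Hence $\operatorname{tr}_M(1\otimes ab - z^\xi\otimes ba)=0$, so the generators of $J$ are killed. Extending $\mathbb C[{\boldsymbol z}]$-linearly, $\operatorname{tr}_M$ descends to $\operatorname{GrTr}^{\mathrm{rat}}(\mathcal A)$.

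For continuity, note that $\mathfrak m^n$ maps into the subspace of $\mathcal Z_\Xi$ of series supported in degrees at least $n$ below the top weights. Therefore $\operatorname{tr}_M$ induces compatible maps $\operatorname{GrTr}(\mathcal A)_n\to\mathcal Z_\Xi/(\text{those})$, and these pass to the inverse limit. I would check that this step respects the gradings.

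Compatibility with $z^\xi$ is immediate from linearity. For $c\in\mathsf a$ and $x\in\mathcal A^0$, the operator $\iota_{\mathsf a}(c)$ commutes with $x$ by (\ref{eq:moment_map_cond}) with weight $0$, and it preserves each $M_\eta$. On $M_\eta$ it equals $\eta(c)+N$ with $N$ nilpotent. Since $N$ is a polynomial in $\iota_{\mathsf a}(c)|_{M_\eta}$, it commutes with $x$, so $Nx$ is nilpotent and $\operatorname{tr}(Nx)=0$. This gives $\operatorname{tr}_{M_\eta}(\iota_{\mathsf a}(c)x)=\eta(c)\operatorname{tr}_{M_\eta}(x)$. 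Combined with the Leibniz term from $2\hbar\partial_c$ acting on $z^d$, the operator $2\hbar\partial_c+\iota_{\mathsf a}(c)$ on the source goes to $2\hbar\partial_c$ on $\mathcal Z_\Xi$.

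The main obstacle I expect is bookkeeping rather than conceptual. The pairing $\langle c,\eta\rangle$ for a parameter-valued $\eta\in\xi_j+2\hbar R^-$ must reproduce exactly $\eta(c)$. This requires matching the factor $2\hbar$ in $q=2\hbar$ with the $2\hbar$ in the comoment relation, and checking that the identification $z^\xi\leftrightarrow z^{2\hbar\xi}$ is consistent with the $D$-module structure on $\mathcal Z_\Xi$, as in \cite[Remark 3.16]{KMBP}. I would fix these conventions first and then verify the formulas on a single weight space.
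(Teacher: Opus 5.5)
Your proposal is correct and is essentially the argument of \cite[Proposition 3.15]{KMBP}, which the paper simply cites: $J$ is killed by cyclicity of the trace between $M_\eta$ and $M_{\eta+2\hbar\xi}$, the map extends to the $\mathfrak m$-adic completion, and $\operatorname{tr}_{M_\eta}(\iota_{\mathsf a}(c)x)=\eta(c)\operatorname{tr}_{M_\eta}(x)$ for $x\in\mathcal A^0$ gives compatibility with $c$. In the write-up, fix the normalization you already flagged. On $\mathcal Z_\Xi$ the class $c$ should act by $z^\eta\mapsto\eta(c)z^\eta$, not by $2\hbar$ times that, because the factor $2\hbar$ is already contained in the parameter-valued $\eta$.
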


The following Conjecture was proposed by Kamnitzer, McBreen and Proudfoot. We state it in our convention. They proved it for hypertoric varieties and for the Springer resolution. In \cite{CHY-q}, the authors proved it for the pair $(X \rightarrow Y; Y^{\vee})$, where $Y^\vee$ is the closure of the minimal nilpotent orbit in a semisimple Lie algebra $\mathfrak{g}$ and $X$ is a resolution of the corresponding Kleinian singularity $Y$.  
\begin{conjecture}[{\cite[Conjecture 1.1]{KMBP}}]\label{conj: original quantum hikita}
If $\mathcal{M}_C$ is good (cohomological grading is conical), then there exists an isomorphism of $D$-modules
\begin{equation}\label{eq: original quantum Hikita}
Q_{q=2\hbar}^{\mathrm{Giv}} \simeq \operatorname{GrTr}(\mathcal{A})
\end{equation}
sending $1$ to $1$.
\end{conjecture}

\subsection{Refinement of the quantum Hikita conjecture}

We propose a refinement of Conjecture \ref{conj: original quantum hikita}. In (\ref{eq: original quantum Hikita}), $\operatorname{GrTr}(\mathcal{A})$ is compared with the $q=2\hbar$ specialization of Givental's quantum $D$-module $Q^{\mathrm{Giv}}$, defined using curve counts into $\widetilde{\mathcal{M}}_H$. We propose to {\emph{replace}} this $D$-module by the Pushkar--Smirnov--Zeitlin $D$-module (or simply the PSZ $D$-module), defined using curve counts into the {\emph{stacky}} quotient $\mathfrak{X}:=[\mu^{-1}(0)/G_{\mathsf{v}}]$.
Our refined version has two new features:

\begin{itemize}
    \item It does not require a conicity assumption.
    \item It identifies both objects as {\emph{quotients}} of the ``master'' $D$-module $\mathscr H_\hbar[[{\boldsymbol{z}}]]$.
\end{itemize}
In particular, solutions of our $D$-modules, considered as functions on $\mathscr H_\hbar[[{\boldsymbol{z}}]]$, will naturally correspond to each other. As we will see in Equation (\ref{eq: vertex gives solutions}) below,
every $\mathsf{T}$-fixed component $Z \subset \widetilde{\mathcal{M}}_H^{\mathsf{T}}$ determines a solution of $Q^{\mathrm{PSZ}}_{q=2\hbar}$. We conjecture that, under the identification in Conjecture~\ref{conj: our version quantum hikita}, this solution corresponds to the normalized trace of a Verma-type module over $\mathcal{A}_{\mathsf{f}}$.
If $Z=\{p\}$ is a point, then the corresponding Verma-type module is the $\Theta$-module for the {\emph{dual}} $\theta$-fixed point $p^\vee \in \widetilde{\mathcal{M}}^{\theta}_C$; see Conjectures~\ref{conj: Verma traces as vertex functions},~\ref{eq:extended_conj_tr_Verma} below for the details.

We propose the following conjecture.
\begin{conjecture}\label{conj: our version quantum hikita}
There is an isomorphism of $D$-modules
\begin{equation*}
Q_{q=2\hbar}^{\mathrm{PSZ}} \simeq \operatorname{GrTr}(\mathcal{A})
\end{equation*}
as quotients of $\mathscr H_\hbar[[{\boldsymbol{z}}]]$.
\end{conjecture}

The main result of this paper is the proof of Conjecture \ref{conj: our version quantum hikita} for ADE quivers and $\mathsf{v},\mathsf{w}$ such that $\mathsf w_i=0$ if $\omega_i$ is not minuscule. Note that we do not assume that $\mathcal{M}_C$ is conical. The following conjecture relates the original quantum Hikita conjecture of \cite{KMBP} to our refined version.

Let $\Sigma_+\subset X^*(\mathsf A)=\pi_1(G_{\mathsf v})$ be the (finite) set of $\theta$-positive equivariant roots of $\mathcal M_C$ in the sense of \cite[Section 4.1]{KMBP} and \cite[Section 3.1.4, Definition 3.2]{Oko15}. We set
$$
\mathsf A^{\mathrm{reg}}
:=\mathsf A\setminus
\bigcup_{\alpha\in\Sigma_+}\{z^\alpha=1\}
=\{z\in\mathsf A\mid z^\alpha\neq 1\text{ for all }\alpha\in\Sigma_+\}.
$$ 
For good ADE theories (i.e., when $\mu$ is {\emph{dominant}}), symplectic duality identifies $\mathsf A$ with the K\"ahler torus $\mathsf K$ of $\widetilde{\mathcal M}_H$ and $\Sigma_+$ should identify with the set $\Delta_+$ of positive K\"ahler roots; both of them are subsets of the set of positive roots of $\mathfrak{g}_{\Gamma}$. Thus $\mathsf A^{\mathrm{reg}}$ identifies with the regular K\"ahler torus
$$
\mathsf K^{\mathrm{reg}}=\mathsf K\setminus\bigcup_{\alpha\in\Delta_+}\{z^\alpha=1\}
$$
of \cite[Section 4.1 and Remark 4.2]{KMBP}.

\begin{conjecture}\label{conj: Givental vs PSZ}
For $\Gamma$ of type ADE and dominant $\mu$, the $D$-modules $Q^{\mathrm{Giv}}$ and $Q^{\mathrm{PSZ}}$ extend to isomorphic $D$-modules on $\mathsf A^{\mathrm{reg}} \simeq {\mathsf{K}}^{\mathrm{reg}}$.
\end{conjecture}

It should be possible to prove this Conjecture \ref{conj: Givental vs PSZ} by adapting the arguments of \cite{bmo} to the PSZ $D$-module $Q^{\mathrm{PSZ}}$.

\begin{remark}
Note that, for $\Gamma$ of type ADE, $\mathcal{M}_C$ is {\emph{good}} for the natural $\mathbb{C}^\times$-action if and only if $\mu$ is dominant. 
One might expect that Conjecture \ref{conj: Givental vs PSZ} holds for arbitrary {\emph{good}} quiver theories $(G_{\mathsf{v}},{\bf{N}})$.    
\end{remark}

It is instructive to see why the conjecture above may fail to hold already for \textit{ugly} theories.

\begin{example}
Consider the simplest possible ugly theory, namely $\Gamma=A_1$ and $\mathsf v=\mathsf w=1$.
Then the corresponding quiver variety $\widetilde{\mathcal{M}}_H$ is a point.

Thus, $Q^{\mathrm{Giv}}$ is a trivial $D$-module.

On the other hand, an explicit calculation shows that the PSZ algebra of differential operators is
\begin{equation*}
    D = \mathbb C[\hbar,q,c][[z]],
\end{equation*}
with the noncommutative multiplication rule $cz=zc+qz$. Here $c$ denotes the operator corresponding to the class $c_1(\mathcal V)\in H^2_{\mathbb C^\times}(\mathfrak X)$. Its differential-operator interpretation is that $z$ acts by multiplication by the Kähler variable and $c$ is the logarithmic differential operator $qz\frac{d}{dz}$. The PSZ $D$-module in this example is the cyclic left $D$-module
$$
Q^{\mathrm{PSZ}}=D\big/D\bigl(c-z(c+2\hbar)\bigr),
$$
i.e. it is generated by one vector $1$ subject to the single relation
$$
\bigl(c-z(c+2\hbar)\bigr)1=0.
$$

If one attempted to extend Conjecture \ref{conj: Givental vs PSZ} to ugly theories, one would have to extend this $D$-module from $\mathbb C[[z]]$ to the whole $\mathsf A^{\mathrm{reg}}$. In this example, $\Sigma_+=\{1\}$, and hence $\mathsf A^{\mathrm{reg}}=\mathbb C^\times\setminus\{1\}$.

This can be done as follows: in the definition of $Q^{\mathrm{PSZ}}$, replace the completion in the $z$-direction by the localization to the open subset $z \neq 0, 1$. It is easy to see that the resulting object is indeed well-defined. What one obtains is the standard $D$-module with logarithmic singularity at $z=1$.

However, unlike $Q^{\mathrm{Giv}}$, it is certainly nontrivial; the $D$-module with logarithmic singularity has nontrivial monodromy around $z=1$.
\end{example}

Last but not least, we should remark on the notion of effective divisors entering the definitions of the GW and PSZ quantum $D$-modules. For the PSZ quantum $D$-module, one must take effective divisors in the sense described in Section \ref{quasimaps} below. Namely, its definition {\emph{depends}} on $\mathfrak X=[\mu^{-1}(0)/G_{\mathsf{v}}] \supset \widetilde{\mathcal{M}}_H$, and not solely on $\widetilde{\mathcal{M}}_H$.

\subsection{``Master'' and ``classical'' \texorpdfstring{$D$}{D}-modules}
\subsubsection{Master $D$-module}\label{sssec: master D-module}
The PSZ $D$-module will be defined as a quotient of a certain ``master'' $D$-module. Let us define the latter. Recall that $\mathfrak{X}=[\mu^{-1}(0)/G_{\mathsf{v}}]$. Here and below, we set $\mathsf{T}:=\sF\times\mathbb C^\times_\hbar$ and $\mathsf{T}_q:=\mathsf T\times\mathbb C^\times_q$. More generally, for $\mathsf{T}$ one can take an arbitrary subtorus of $\mathsf{F} \times \mathbb{C}^\times_\hbar$ projecting surjectively onto $\mathbb{C}^\times_\hbar$. For example, an arbitrary cocharacter 
\begin{equation*}
\mathbb{C}^\times \rightarrow \mathsf{F} \times \mathbb{C}^\times_\hbar, \quad t \mapsto (\gamma(t),t)
\end{equation*} 
would work. Considering these more general tori would be especially important in Section \ref{sec:application of our results}.

First, the algebra $D=D^{\mathrm{PSZ}}$ acting on our modules is defined as follows (compare with (\ref{eq: giventals algebra D})):
\begin{equation*}
D^{\mathrm{PSZ}} = S^\bullet(H^2_{T_q}(\mathfrak{X}))[[{\boldsymbol{z}}]]
\end{equation*}
with the multiplication given by
\begin{equation}\label{eq: mult on D PSZ}
(z^{d'} \otimes c)(z^d \otimes x) = q\langle \bar{c},d\rangle z^{d+d'} \otimes x + z^{d+d'} \otimes cx,
\end{equation}
where $c\in H^2_{T_q}(\mathfrak X)$, $x\in S^\bullet(H^2_{T_q}(\mathfrak X))$, $\bar c$ denotes the image of $c$ in $H^2(\mathfrak X)$, and $\langle \bar{c},d\rangle$ is the natural homology--cohomology pairing.

We define
\begin{equation*}
\mathscr{H}_{\hbar,q}[[{\boldsymbol{z}}]] := H^*_{T_q}(\mathfrak{X})[[{\boldsymbol{z}}]]
\end{equation*}
with the $D^{\mathrm{PSZ}}$-module structure given by
\begin{equation*}
(z^{d'} \otimes c)\cdot(z^d \otimes x)
=q\langle \bar c,d\rangle z^{d+d'}\otimes x
+z^{d+d'}\otimes(c\cup x),
\end{equation*}
where $c\in H^2_{T_q}(\mathfrak X)$ and $x\in H^*_{T_q}(\mathfrak X)$.

\begin{remark}
If $\mathsf v_i\leqslant 1$ for all $i \in I$, then the group $G_{\mathsf{v}}$ is a product of copies of $\mathbb{C}^\times$, and $\mathscr{H}_{\hbar,q}[[{\boldsymbol{z}}]]$ identifies with $D^{\mathrm{PSZ}}$, considered as a free left module over itself. In general, $\mathscr{H}_{\hbar,q}[[{\boldsymbol{z}}]]$ contains $D^{\mathrm{PSZ}}$ as a submodule generated by $1$.
\end{remark}

We will call $\mathscr{H}_{\hbar,q}[[z]]$ the ``master'' $D$-module, since the other $D$-modules we consider will be obtained as natural quotients of it.

\subsubsection{Classical $D$-modules}
We also define the ``classical'' $D$-module $Q^{\mathrm{Clas}}$ by
\begin{equation*}
Q^{\mathrm{Clas}}=H^*_{T_q}(\widetilde{\mathcal{M}}_H)[[{\boldsymbol{z}}]]
\end{equation*}
with the $D^{\mathrm{PSZ}}$-module structure given by the same formula as above:
\begin{equation*}
(z^{d'} \otimes c)\cdot(z^d \otimes y) = q\langle \bar{c},d\rangle z^{d+d'} \otimes y + z^{d+d'} \otimes (c|_{\widetilde{\mathcal{M}}_H} \cup y),
\end{equation*}
where $c\in H^2_{T_q}(\mathfrak X)$ and $y\in H^*_{T_q}(\widetilde{\mathcal M}_H)$.

Note that $c \in H^2_{T_q}(\mathfrak{X})$ acts on $Q^{\mathrm{Clas}}$ via the operator
\begin{equation*}
q\partial_{\bar{c}}+(c|_{\widetilde{\mathcal{M}}_H} \cup -).
\end{equation*}
Thus, this is precisely the ``classical'' (non-quantum) part of Formula~(\ref{eq: action D Giv on Q Div}) defining the quantum $D$-module.

Let $B$ be an arbitrary commutative algebra. Assume that we are given a homomorphism of algebras
\begin{equation*}
\varphi\colon H^*_{T_q}(\widetilde{\mathcal{M}}_H) \rightarrow B.
\end{equation*}
It induces an action $D \curvearrowright B[[{\boldsymbol{z}}]]$ given by the same formula as before: for $c \in H^2_{T_q}(\mathfrak{X})$ and $x \in B$, we have
\begin{equation*}
(z^{d'} \otimes c) \cdot (z^d \otimes x) = q\langle \bar{c},d\rangle z^{d+d'} \otimes x + z^{d+d'} \otimes (\varphi(c|_{\widetilde{\mathcal{M}}_H})x).
\end{equation*}
Moreover, $\varphi$ induces a homomorphism of $D$-modules
\begin{equation}\label{eq: homom Q clas to B clas}
Q^{\mathrm{Clas}} \rightarrow B[[z]],~z^d \otimes x \mapsto z^d \otimes \varphi(x).
\end{equation}

In particular, the restriction homomorphism 
\begin{equation*}
H^*_{T_q}(\widetilde{\mathcal{M}}_H) \rightarrow H^*_{T_q}(\widetilde{\mathcal{M}}_H^{T})
\end{equation*}
determines the homomorphism of $D$-modules
\begin{equation*}
Q^{\mathrm{Clas}} \rightarrow H^*_{T_q}(\widetilde{\mathcal{M}}_H^{T})[[{\boldsymbol{z}}]].
\end{equation*}
We will call $H^*_{T_q}(\widetilde{\mathcal{M}}_H^{T})[[{\boldsymbol{z}}]]$ the {\emph{fixed-locus classical $D$-module}} and will denote it by 
\begin{equation}
 \label{eq:fixed-locus-classical-module}
 Q^{\mathrm{Clas,fix}}
 :=H_{T_q}^*(\widetilde{\mathcal{M}}_H^{\mathsf{T}})[[{\boldsymbol{z}}]]
 =\bigoplus_{Z\in\pi_0(\widetilde{\mathcal{M}}_H^T)}H_{T_q}^*(Z)[[{\boldsymbol{z}}]].
\end{equation}

\subsubsection{Normalized solutions of classical $D$-modules}\label{sssec: normalized classical solutions}
Fix $Z \in \pi_0(X^{T})$ and pick a point $p \in Z$. It determines a homomorphism of $D$-modules 
$$
Q^{\mathrm{Clas},\mathrm{fix}}\longrightarrow H^*_{T_q}(p)[[{\boldsymbol{z}}]]
$$
induced by the restriction homomorphism
$$
\iota_p^*\colon H^*_{T_q}(\widetilde{\mathcal M}_H)\longrightarrow H^*_{T_q}(p),
$$
where $\iota_p\colon p\hookrightarrow Z$ is the inclusion.

Let us describe the $D$-module $H^*_{T_q}(p)[[{\boldsymbol{z}}]]$. To a $T$-fixed point $p \in Z$ we associate the homomorphism
$$
\eta_p\colon \mathsf a=H^2_{G_{\mathsf v}}(\operatorname{pt})
\longrightarrow H^2_{T}(p),
\qquad
\eta_p(a)=\iota_p^*\bigl(a|_{\widetilde{\mathcal M}_H}\bigr).
$$
Here $a|_{\widetilde{\mathcal M}_H}$ denotes the image of $a$ under the natural maps
$$
H^2_{G_{\mathsf v}}(\operatorname{pt}) 
\longrightarrow
H^2_{T}(\mathfrak X)\longrightarrow
H^2_{T}(\widetilde{\mathcal M}_H).
$$
The element $\eta_p(a)$ lies in $H^2_T(p)\subset H^2_{T_q}(p)$.
Concretely, for $a_i=c_1^{G_{\mathsf v}}(V_i)\in\mathsf a$, one has
$$
\eta_p(a_i)=c_1^T(\mathcal V_i|_p)
=\sum_{\alpha\in\operatorname{Wt}_T(\mathcal V_i|_p)}\alpha,
$$
where $\mathcal V_i$ is the tautological bundle associated with $V_i$ and the weights are counted with multiplicity.

We use $z^{\eta_p}$ for the formal symbol characterized by
$$
q\partial_{a}z^{\eta_p}
=\eta_p(a)z^{\eta_p},\qquad a\in\mathsf a.
$$

\begin{lemma}
Multiplication by $z^{\eta_p}$ induces an isomorphism of $D$-modules $H^*_{T_q}(p)[[{\boldsymbol{z}}]] \iso z^{\eta_p} \cdot \mathbb{C}[\mathsf{f},\hbar,q][[{\boldsymbol{z}}]]$.
\end{lemma}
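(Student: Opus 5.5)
The plan is to make both $D$-module structures explicit and check that multiplication by $z^{\eta_p}$ intertwines them.

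First, the source. On $H^*_{T_q}(p)[[{\boldsymbol{z}}]] = \mathbb{C}[\mathsf{f},\hbar,q][[{\boldsymbol{z}}]]$ the operator $z^{d'}$ acts by multiplication. An element $c\in H^2_{T_q}(\mathfrak X)$ acts by
\[
q\partial_{\bar c}+\iota_p^*(c|_{\widetilde{\mathcal M}_H}),
\]
where the second term is multiplication by a scalar in $H^2_{T_q}(p)$.

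Second, the target. On $z^{\eta_p}\cdot\mathbb{C}[\mathsf f,\hbar,q][[{\boldsymbol{z}}]]$ the symbol $z^{\eta_p}$ is formal, with $q\partial_a z^{\eta_p}=\eta_p(a)z^{\eta_p}$ for $a\in\mathsf a$. The $D$-action is the naive one: $z^{d'}$ acts by multiplication, and $c$ acts by $q\partial_{\bar c}$ extended by the Leibniz rule to the formal factor, plus multiplication by the non-$\mathsf a$ part of $c$.

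To make this precise, decompose
\[
H^2_{T_q}(\mathfrak X)=H^2_{G_{\mathsf v}}(\operatorname{pt})\oplus H^2_{T_q}(\operatorname{pt})=\mathsf a\oplus(\mathsf f^*\oplus\mathbb C\hbar\oplus\mathbb Cq).
\]
Since $\mathfrak X=[\mu^{-1}(0)/G_{\mathsf v}]$ with $\mu^{-1}(0)$ contractible, $H^*_{T_q}(\mathfrak X)=H^*_{G_{\mathsf v}\times T_q}(\operatorname{pt})$, so this decomposition holds. The pairing $\langle\bar c,d\rangle$ only sees the $\mathsf a$-component, because $H^2(\mathfrak X)=\mathsf a$. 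Parameters in $H^2_{T_q}(\operatorname{pt})$ therefore act by scalars on both sides and commute with everything.

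Third, the intertwining computation. Set $\Phi(g)=z^{\eta_p}g$ and take $a\in\mathsf a$. By the Leibniz rule on the target,
\[
q\partial_a\bigl(z^{\eta_p}g\bigr)=z^{\eta_p}\bigl(\eta_p(a)g+q\partial_a g\bigr).
\]
By definition $\eta_p(a)=\iota_p^*(a|_{\widetilde{\mathcal M}_H})$, so the right-hand side is exactly $\Phi$ applied to the source action of $a$ on $g$. For $z^{d'}$ the compatibility is trivial, and for $c\in H^2_{T_q}(\operatorname{pt})$ both sides multiply by the same scalar $c=\iota_p^*c$.

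Since these elements generate $D^{\mathrm{PSZ}}$ topologically, and both actions are continuous in the $\mathfrak m$-adic topology, $\Phi$ is $D$-linear. It is bijective because multiplication by the formal unit $z^{\eta_p}$ is invertible, with inverse given by multiplication by $z^{-\eta_p}$ together with the identification of coefficients.

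The main obstacle is purely definitional. One must fix precisely the $D$-module structure on $z^{\eta_p}\mathbb{C}[\mathsf f,\hbar,q][[{\boldsymbol{z}}]]$, namely the $\mathcal Z_\xi$-type structure with $\xi=\eta_p$ in which $a$ acts by $q\partial_a$ including its action on $z^{\eta_p}$. One must also check that it is well-defined, i.e. satisfies the relation (\ref{eq: mult on D PSZ}). This holds because $[q\partial_a,z^{d}]=q\langle a,d\rangle z^d$ regardless of the prefactor $z^{\eta_p}$.
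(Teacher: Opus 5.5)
Your proposal is correct and is exactly the routine check that the paper dismisses with ``This is clear.'' Write $c=a+t$ with $a\in\mathsf a$ and $t\in H^2_{T_q}(\operatorname{pt})$. On the source, $c$ acts by $q\partial_a+\eta_p(a)+t$, since $\bar c=a$ and $\iota_p^*(c|_{\widetilde{\mathcal M}_H})=\eta_p(a)+t$; on the target, $q\partial_a(z^{\eta_p}g)=z^{\eta_p}(\eta_p(a)+q\partial_a)g$, and $t$ acts by the same scalar on both sides, so multiplication by $z^{\eta_p}$ intertwines the two actions and is bijective by construction of the target.
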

\begin{proof}
This is clear.
\end{proof}

Summarizing, every $T$-fixed point $p \in \widetilde{\mathcal{M}}_H$ induces a homomorphism of $D$-modules
\begin{equation}\label{eq: fixed point defines formal solution}
\iota_p^*\colon Q^{\mathrm{Clas}} \longrightarrow Q^{\mathrm{Clas},\mathrm{fix}} \longrightarrow  H^*_{T_q}(p)[[{\boldsymbol{z}}]] \simeq z^{\eta_p} \cdot \mathbb{C}[\mathsf{f},\hbar,q][[{\boldsymbol{z}}]]
\end{equation}
which only depends on the choice of the connected $Z$ component of $\widetilde{\mathcal{M}}_H^{\mathsf{T}}$ in which $p$ lives.

The map (\ref{eq: fixed point defines formal solution}) should be regarded as a {\emph{solution}} of the $D$-modules $Q^{\mathrm{Clas}}$, $Q^{\mathrm{Clas},\mathrm{fix}}$ normalized by $z^{\eta_p}$.

\subsection{Definition of the PSZ \texorpdfstring{$D$}{D}-module}\label{ssec_def_of_PSZ}

The ``hyperk\"ahler Kirwan map''
\begin{equation}\label{eq: restr determines algebra}
\mathscr H_\hbar=H^*_{\mathsf{T}}(\mathfrak{X}) \twoheadrightarrow H^*_{\mathsf{T}}(\widetilde{\mathcal{M}}_H)
\end{equation}
is surjective (\cite[Corollary 1.5]{kirv}). In particular,  the algebra structure on $H^*_T(\widetilde{\mathcal{M}}_H)$ is {\emph{uniquely}} determined by the algebra structure on $\mathscr H_\hbar$ together with the map (\ref{eq: restr determines algebra}). In \cite[Section 7.4.1]{Oko15}, Okounkov defines a so-called {\emph{capped vertex function with descendant}}, which is a surjective $\mathbb{C}[\mathsf{f},\hbar,q][[{\boldsymbol{z}}]]$-linear map
\begin{equation}\label{eq: capped vertex}
\mathscr H_{\hbar,q}[[{\boldsymbol{z}}]] \twoheadrightarrow H^*_{T_q}(\widetilde{\mathcal{M}}_H)[[{\boldsymbol{z}}]], \quad \tau \mapsto \widehat{V}^{(\tau)}
\end{equation}
which at $z=0$ identifies with the restriction homomorphism (\ref{eq: restr determines algebra}). Note that the map (\ref{eq: capped vertex}) is surjective (see Proposition \ref{prop: vertsurj} below).

The PSZ quantum product $\star$ is defined as the {\emph{unique}} multiplication on $H^*_{T}(\widetilde{\mathcal{M}}_H)[[{\boldsymbol{z}}]]$ such that the map $\tau \mapsto \widehat{V}^{(\tau)}_{q=0}$ is a {\emph{homomorphism}} of algebras, where the algebra structure on $\mathscr H_{\hbar}[[{\boldsymbol{z}}]]$ is the standard one. Thus, $\tau \mapsto \widehat{V}^{(\tau)}_{q=0}$ should be regarded as a {\emph{quantum Kirwan map}} (compare with \cite{xu}, \cite{Zhang-Zhou} and references therein). We will denote $\widehat{V}^{(\tau)}_{q=0}$ simply by $\widehat{\tau}$.

The $D$-module $Q^{\mathrm{PSZ}}$ has a very similar realization. Namely, one way of defining it is by saying that the $D$-module structure on $Q^{\mathrm{PSZ}}=H^*_{T_q}(\widetilde{\mathcal{M}}_H)[[{\boldsymbol{z}}]]$ is uniquely determined by the fact that the morphism (\ref{eq: capped vertex}) is a {\emph{homomorphism of $D$-modules}}. See Section \ref{quantDmodrev} and, in particular, Definition \ref{quantDmodrev:def} for details.

In Section~\ref{sec: vertex}, we will see that $Q^{\mathrm{PSZ}}$ has an alternative realization that is closer to the standard approach to the quantum $D$-module, as in (\ref{eq: action D Giv on Q Div}) above. Namely, we will prove that the action of $c \in H^2_{T_q}(\mathfrak{X})$ on $Q^{\mathrm{PSZ}}$ is given by the operator
\begin{equation*}
q\partial_{\bar{c}}+(\widehat{c} \star -).
\end{equation*}

\subsection{From \texorpdfstring{$Q^{\mathrm{PSZ}}$}{QPSZ} to \texorpdfstring{$Q^{\mathrm{Clas}}_{\mathrm{loc}}=Q^{\mathrm{Clas},\mathrm{fix}}_{\mathrm{loc}}$}{Qfix} via vertex functions}
In \cite[Section 7.2.3]{OkLec}, Okounkov defines a {\emph{vertex function with descendant}}
\begin{equation}\label{eq: vertex map}
V^{(-)}\colon \mathscr H_{\hbar,q}[[{\boldsymbol{z}}]] \rightarrow H^*_{T_q}(\widetilde{\mathcal{M}}_H^{\mathsf{T}})_{\mathrm{loc}}[[{\boldsymbol{z}}]], \quad \tau \mapsto V^{(\tau)},
\end{equation}
where by $\bullet_{\mathrm{loc}}$ we mean the tensor product $\bullet \otimes_{\mathbb{C}[\mathsf{f},\hbar,q]} \operatorname{Frac}(\mathbb{C}[\mathsf{f},\hbar,q])$. 
Clearly, the RHS of (\ref{eq: vertex map}) is $Q^{\mathrm{Clas},\mathrm{fix}}_{\mathrm{loc}}$ which one can identify with $Q^{\mathrm{Clas}}_{\mathrm{loc}}$.

The following lemma follows from \cite[Equation (7.2.5)]{OkLec}.
\begin{lemma}
The map (\ref{eq: vertex map}) is a homomorphism of $D$-modules.
\end{lemma}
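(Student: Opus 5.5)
The statement is that the descendant vertex map $\tau\mapsto V^{(\tau)}$ of (\ref{eq: vertex map}) is $D^{\mathrm{PSZ}}$-linear. Since $D^{\mathrm{PSZ}}$ is generated over $\mathbb{C}[\mathsf f,\hbar,q][[{\boldsymbol z}]]$ by the classes $c\in H^2_{\mathsf T_q}(\mathfrak X)$, it suffices to check two things: compatibility with multiplication by $z^{d'}$, and compatibility with the action of each such $c$. The first is immediate from $\mathbb{C}[[{\boldsymbol z}]]$-linearity of $V^{(-)}$. For the second we must show
\begin{equation*}
V^{(c\tau)}_d = q\langle \bar c,d\rangle V^{(\tau)}_d + c|_{\widetilde{\mathcal M}_H}\cdot V^{(\tau)}_d
\end{equation*}
for each degree $d$. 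Here the left side is the degree-$d$ coefficient of the vertex with descendant $c\cup\tau$, and the right side uses the classical action on $Q^{\mathrm{Clas}}_{\mathrm{loc}}$. Both sides are computed in $H^*_{\mathsf T_q}(\widetilde{\mathcal M}_H^{\mathsf T})_{\mathrm{loc}}$.

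Recall the definition. $V^{(\tau)}_d$ is obtained from the quasimap space $\mathsf{QM}^d$ relative (nonsingular) at $\infty\in\mathbb P^1$. One inserts $\ev_0^*\tau$, with $\ev_0$ the evaluation to the stack $\mathfrak X$ at $0$. One then takes $\mathbb C^\times_q$-localized virtual integration along $\ev_\infty$ to $\widetilde{\mathcal M}_H$.

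The key step is to compare $\ev_0^*c$ with $\ev_\infty^*c$ on the $\mathbb C^\times_q$-fixed loci, or more cleanly as classes on $\mathsf{QM}^d$ itself. Every $c\in H^2_{\mathsf T_q}(\mathfrak X)$ is a $\mathsf T_q$-equivariant first Chern class of a line bundle $\mathcal L$ on $\mathfrak X$, up to adding pure equivariant constants, for which the identity is trivial. This holds because $H^2_{\mathsf T}(\mathfrak X)=H^2_{G_{\mathsf v}\times\mathsf T}(\mathrm{pt})$ is spanned by $c_1(V_i)$ and characters of $\mathsf T$. Pulling back along the universal quasimap gives a line bundle $\mathcal L_{\mathrm{univ}}$ on $\mathsf{QM}^d\times\mathbb P^1$ of degree $\langle \bar c,d\rangle$ on the fibers. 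Its $\mathbb C^\times_q$-equivariant Chern class, restricted to $\mathsf{QM}^d\times\{0\}$ and $\mathsf{QM}^d\times\{\infty\}$, gives $\ev_0^*c$ and $\ev_\infty^*c$. Their difference is $q\langle\bar c,d\rangle$, with the sign fixed by the convention that $q$ is the weight of the tangent line at $0$. Indeed, equivariant localization on $\mathbb P^1$ gives
\begin{equation*}
\int_{\mathbb P^1}c_1(\mathcal L_{\mathrm{univ}}) = \frac{c_1|_0 - c_1|_\infty}{q},
\end{equation*}
and the left-hand side equals the degree. This is exactly the content of \cite[Equation (7.2.5)]{OkLec}, which I would invoke directly, checking only that sign conventions match the plus-sign convention (\ref{eq: mult on D PSZ}).

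Given this identity, $\ev_0^*(c\tau) = \ev_\infty^*(c)\,\ev_0^*\tau + q\langle\bar c,d\rangle\,\ev_0^*\tau$. The projection formula for $\ev_{\infty,*}$ then yields the required equality, since $\ev_\infty$ lands in $\widetilde{\mathcal M}_H$ where $\ev_\infty^*c=\ev_\infty^*(c|_{\widetilde{\mathcal M}_H})$. The projection formula is valid here because the pushforward is defined by localization, which is compatible with cap products.

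The main obstacle is bookkeeping rather than geometry. One must be careful with the sign conventions for $q$ and for the Novikov variables; note the sign twist ${\boldsymbol z}^{\mathrm{tr}}=(-1)^{a_i}{\boldsymbol z}^{\mathrm{geom}}$, which is irrelevant here since it commutes with $q\partial$. One must also ensure that the identification $Q^{\mathrm{Clas,fix}}_{\mathrm{loc}}\simeq Q^{\mathrm{Clas}}_{\mathrm{loc}}$ intertwines the classical actions. That holds because restriction to fixed loci is a ring homomorphism.
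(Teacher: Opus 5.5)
Your proposal is correct and follows the paper's own argument, given in Proposition~\ref{prop: derivative vertex} via \cite[Equation (7.2.5)]{OkLec}. The paper proves $\ev_{p_1}^*c_1(\tb_i)-\ev_{p_2}^*c_1(\tb_i)=q\,d_i$ on $\qm^d_{\ns p_2}$, applies the projection formula for the localized pushforward along $\ev_{p_2}$, and extends from $z$-independent $\tau$ using the commutation relation in $D$. The one cosmetic difference is that you derive the degree identity by fiberwise localization on $\mathbb{P}^1$ applied to $\det\tb_i$, whereas the paper uses $c_1(\det H^*(\mathcal{F}\otimes(\mathcal{O}_{p_1}-\mathcal{O}_{p_2})))=q\deg\mathcal{F}$.

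One terminological slip: the bare vertex imposes the \emph{nonsingular} condition at $p_2$, not the relative one. This matters, since it is what makes the universal curve $\qm^d\times\mathbb{P}^1$ so that your fiber argument applies.
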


The following proposition will be proved in Section~\ref{sec: vertex} and goes back to \cite[Equation (7.4.6)]{OkLec} and \cite[proof of Theorem 5.16]{SZ}.

\begin{proposition}\label{prop: vertex factors through kernel of capped}
The homomorphism $V^{(-)}$ factors through the quotient $\mathscr{H}_{\hbar,q}[[{\boldsymbol{z}}]] \twoheadrightarrow Q^{\mathrm{PSZ}}$, inducing a homomorphism of $D$-modules 
\begin{equation*}
\ol{V}^{(-)}\colon Q^{\mathrm{PSZ}} \rightarrow Q^{\mathrm{Clas},\mathrm{fix}}_{\mathrm{loc}}.
\end{equation*}
\end{proposition}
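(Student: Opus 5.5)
The plan is to use Okounkov's gluing formula for the bare vertex in terms of the capped vertex and the capping operator (cf. \cite[Equation (7.4.6)]{OkLec}). The goal is an identity of the form
\begin{equation*}
V^{(\tau)} = \Psi\bigl(\widehat{V}^{(\tau)}\bigr),
\end{equation*}
where $\Psi$ is a $\mathbb{C}[\mathsf f,\hbar,q][[\boldsymbol z]]$-linear operator
$$
H^*_{T_q}(\widetilde{\mathcal M}_H)[[\boldsymbol z]]\longrightarrow H^*_{T_q}(\widetilde{\mathcal M}_H^{\mathsf T})_{\mathrm{loc}}[[\boldsymbol z]],
$$
independent of $\tau$. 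Concretely, $\Psi$ is built from the equivariant count of relative quasimaps with the relative condition imposed at $\infty\in\mathbb P^1$. Once we have this factorization, any $\tau$ in the kernel of $\tau\mapsto\widehat V^{(\tau)}$ also lies in the kernel of $V^{(-)}$. Since $Q^{\mathrm{PSZ}}$ is defined (Definition~\ref{quantDmodrev:def}) as the quotient of $\mathscr H_{\hbar,q}[[\boldsymbol z]]$ by this kernel, $V^{(-)}$ descends to a linear map $\ol V^{(-)}$ on $Q^{\mathrm{PSZ}}$. This map is automatically a homomorphism of $D$-modules: $V^{(-)}$ is a $D$-module map by the lemma preceding the statement, and the quotient map is a surjective $D$-module map, using the surjectivity of the capped vertex (Proposition~\ref{prop: vertsurj}).

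The key steps are as follows.
\begin{enumerate}
\item Recall the definitions. The bare vertex is a $\mathsf T_q$-localized integral over quasimaps that are nonsingular at $\infty$, with descendant $\tau$ inserted at $0$ and evaluated at $\infty$. The capped vertex uses the space of quasimaps relative to $\infty$, with the same descendant at $0$.
\item Apply $\mathbb C^\times_q$-localization on the relative moduli space. The fixed loci split into a component over $0$, which gives exactly the bare vertex data, and a component of bubbles over $\infty$, which gives relative ``rubber'' or capping contributions. Summing over the degree splitting $d=d_0+d_\infty$ yields $\widehat V^{(\tau)}=\Psi^{-1}(V^{(\tau)})$, with $\Psi^{-1}$ the capping operator.
\item Check that $\Psi^{-1}$, and hence $\Psi$, is invertible. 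It starts with $1+O(\boldsymbol z)$, since the degree-zero term is the identity, so it is invertible over $\mathbb{C}[[\boldsymbol z]]$. This gives $V^{(\tau)}=\Psi(\widehat V^{(\tau)})$.
\item Conclude the factorization as explained above.
\end{enumerate}

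I expect the main obstacle to be step 2. The difficulty is matching the virtual normal bundle contributions at the node joining the main component to the rubber, and showing that the resulting operator does not depend on $\tau$. Independence of $\tau$ is plausible because the descendant sits at $0$, away from $\infty$. To handle this step I would follow the degeneration and gluing argument of \cite[Section 7.4]{OkLec} and \cite[proof of Theorem 5.16]{SZ}. The two points to verify are that the $\mathbb{C}^\times_q$-weight of the smoothing parameter at the node produces the $q$-dependence, and that everything stays in the localized ring $H^*_{T_q}(\widetilde{\mathcal M}_H^{\mathsf T})_{\mathrm{loc}}$.
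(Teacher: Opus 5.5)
Your proposal is correct and follows the paper's argument: the paper proves $\widehat{V}^{(\tau)}=\Psi V^{(\tau)}$ in Proposition \ref{prop: cappingeq}, where $\Psi$ denotes the capping operator (your $\Psi^{-1}$). It does so by the same $\mathbb{C}^\times_q$-localization on $\qm_{\rel p_2}$, splitting the fixed loci into bare-vertex data on $C_0$ and bubble chains at $p_2$, with normal contributions $N_1=\psi_2\otimes\mathbb{C}_q$ and $N_2$, and then uses invertibility ($\Psi=1+O(\boldsymbol z)$) to conclude that the kernels of the bare and capped vertices coincide.

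One small correction concerns Definition \ref{quantDmodrev:def}, which gives $Q^{\mathrm{PSZ}}$ the explicit operators $qz_i\partial_{z_i}+M_i(z)$ rather than the quotient structure. For that structure, $D$-linearity of the capped vertex is Proposition \ref{prop: capdiff}, which rests on the quantum differential equation for the capping operator (Proposition \ref{prop: quantum diff eq}). If you instead give $Q^{\mathrm{PSZ}}$ the quotient structure, your argument is self-contained, because the common kernel is a $D$-submodule by Proposition \ref{prop: derivative vertex}.
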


Combining this with (\ref{eq: fixed point defines formal solution}), we conclude that any $p \in \widetilde{\mathcal{M}}_H^{\mathsf{T}}$ determines a (normalized) solution of the $D$-module $Q^{\mathrm{PSZ}}$ depending {\emph{only}} on the connected component $Z \ni p$:
\begin{equation}\label{eq: vertex gives solutions}
\ol{V}^{(-)}_{p}\colon Q^{\mathrm{PSZ}} \rightarrow     H^*_{\mathsf{T}_q}(\operatorname{pt})_{\mathrm{loc}}[[{\boldsymbol{z}}]] \iso  z^{\eta_p} \cdot \operatorname{Frac}(\mathbb{C}[\mathsf{f},\hbar,q])[[{\boldsymbol{z}}]],
\end{equation}
where the identification in (\ref{eq: vertex gives solutions}) is given by $1 \mapsto z^{\eta_p}$.

\subsection{The \texorpdfstring{$q=2\hbar$}{q=2h} specialization of the PSZ \texorpdfstring{$D$}{D}-module}
It turns out that (normalized) solutions (\ref{eq: vertex gives solutions}) are {\emph{regular}} along  $q=2\hbar$ and specialize to elements of $H^*_{\mathsf{T}}(\operatorname{pt})[[{\boldsymbol{z}}]]$, i.e., {\emph{no}} localization occurs.  Namely, the following proposition will be proven in Section \ref{CYvertex}.

\begin{proposition}
 \label{nonlocvertex}
Let $\mathsf T\subset\mathsf F\times\mathbb C_\hbar^\times$ be any subtorus, and set $\mathsf T_q:=\mathsf T\times\mathbb C_q^\times$. We continue to denote by $\hbar$ the restriction to $\mathsf T$ of the inverse of the standard character of $\mathbb C_\hbar^\times$, as well as its first Chern class. Denote by $V_{\mathsf T}^{(-)}$ the uncapped vertex defined directly in $\mathsf T_q$-equivariant cohomology.
For every
 $\tau\in H^*_{\mathsf T_q}(\mathfrak X)[[{\boldsymbol z}]]$, every connected component
 $Z\subset\widetilde{\mathcal M}_H^{\mathsf T}$, and every $p\in Z$, the  restriction
 $\iota_p^*V_{\mathsf T}^{(\tau)}$ is regular along $q=2\hbar$, and
\begin{equation}\label{eq: spec vertex point}
  \left.\iota_p^*V_{\mathsf T}^{(\tau)}\right|_{q=2\hbar}
  \in H^*_{\mathsf T}(\operatorname{pt})[[{\boldsymbol z}]]
\end{equation}
is nonlocalized and depends only on $Z$.
\end{proposition}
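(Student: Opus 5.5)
The plan is to compute $\iota_p^*V_{\mathsf T}^{(\tau)}$ by virtual $\mathsf T_q$-localization on the quasimap moduli spaces $\mathsf{QM}^d$ (quasimaps from $\mathbb P^1$ nonsingular at $\infty$, evaluated at $\infty$). The proof then reduces to a weight-cancellation statement about the virtual tangent space.

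\textbf{Localization.} The $\mathsf T_q$-fixed quasimaps with $f(\infty)\in Z$ are quasimaps whose only singularity sits at $0$, with $\mathbb C^\times_q$-equivariant ``shape'' at $0$. In the usual description, one splits $\mathcal V_i|_{Z}$ into $\mathsf T$-weight pieces and records integer degrees $d$ attached to these pieces. These fixed loci are fibered over $Z$, or over closed subvarieties of $Z$ in the non-isolated case. Their contribution is
$$
\iota_p^*\Bigl(\tau\bigl|_{0}\cdot \frac{1}{e\bigl(N^{\mathrm{vir}}\bigr)}\Bigr),
$$
where $\tau|_0$ is the descendant evaluated at the singular point $0$, i.e.\ $\tau$ evaluated on the Chern roots of $\mathcal V|_0$. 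These roots are of the form $(\text{root at }p) - k q$ for integers $k$, so $\tau|_0$ is a polynomial in $\mathsf T_q$-weights and is regular at $q=2\hbar$.

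\textbf{Key step: regularity of the weight factor.} We show that $e(N^{\mathrm{vir}})^{-1}$ has no pole along $q=2\hbar$. Write
$$
T^{\mathrm{vir}} = H^\bullet\bigl(\mathbb P^1,\mathcal T^{1/2}\bigr) + \hbar^{-1}\otimes H^\bullet\bigl(\mathbb P^1,\mathcal T^{1/2}\bigr)^{\vee}
$$
(a polarization, minus the constant-map part), using $T\widetilde{\mathcal M}_H = \mathcal T^{1/2} + \hbar^{-1}(\mathcal T^{1/2})^\vee$. Each character $w$ of $\mathcal T^{1/2}|_p$ twisted by $\mathcal O(d)$ contributes the factor
$$
\prod_{j}\frac{(w+\hbar - jq)}{(w - jq)}
$$
over an appropriate range of $j$, exactly as in the usual Pochhammer form $\frac{(\hbar+w)_d}{(w)_d}$ (in additive $q$-shifts). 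The denominator factors $w-jq$ are the only possible sources of poles. Poles along the hyperplane $q=2\hbar$ could arise only from $w-jq$ with $w$ proportional to $\hbar$, i.e.\ $w= m\hbar$ in $H^2_{\mathsf T}(\operatorname{pt})$.

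The expected mechanism is that at $q=2\hbar$ the two halves pair up: a denominator factor $w-jq$ with $w=2j\hbar$ is cancelled by a numerator factor from the dual half, $-w+\hbar-j'q$ or $w+\hbar-j'q$. One must check, using the symmetry $w\leftrightarrow \hbar^{-1}w^{-1}$ of the polarization, that the matching index $j'$ lies in the product range. Concretely, I would first treat the case where $\mathcal T^{1/2}$ is a sum of line-type pieces $\mathcal O(d)\otimes w$ and verify that for each offending factor the ratio telescopes to $\pm1$ or to a polynomial at $q=2\hbar$. This is the main obstacle: the ranges depend on the signs of $d$, and weights that are pure multiples of $\hbar$ occur exactly along non-isolated directions of $Z$. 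If a direct matching fails, I would instead use a choice of polarization adapted to $p$, which changes the vertex only by a sign and a $z$-shift, to make the cancellation manifest.

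\textbf{Dependence only on $Z$.} Once regularity is known, the specialized value lies in $H^*_{\mathsf T}(\operatorname{pt})_{\mathrm{loc}}[[\boldsymbol z]]$, and it is in fact nonlocalized. All remaining denominators are of the form $w-jq$ with $w$ a nontrivial $\mathsf T$-weight of the normal directions. Before specialization these are genuinely equivariant weights in the localization; after setting $q=2\hbar$, the cancellation above leaves a polynomial. For independence of $p\in Z$, note that $Z$ is connected and $\mathsf T$ acts trivially on it. Hence $\iota_p^*$ of any class in $H^*_{\mathsf T_q}(Z)_{\mathrm{loc}}=H^*(Z)\otimes H^*_{\mathsf T_q}(\operatorname{pt})_{\mathrm{loc}}$ depends on $p$ only through the degree-zero part $H^0(Z)$, which is the same for all $p$. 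So the restriction of $V^{(\tau)}|_Z$ to $p$ is independent of $p$. Regularity at $q=2\hbar$ itself can be verified at a single point and then holds throughout $Z$.
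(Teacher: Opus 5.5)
Your outline has the right shape: localize, note that the only dangerous factors are those whose $\mathsf T_q$-weight vanishes on $\ker(q\hbar^{-2})$, and cancel them against the ``dual half''. But the key step is only announced as an expected mechanism, and as written it cannot work.

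\emph{The formula you start from is wrong.} Your $T^{\mathrm{vir}}$ has no $q$ in the duality and uses symplectic weight $\hbar^{-1}$ instead of $\hbar^{-2}$. With it, at $q=2\hbar$ and $w=2j\hbar$, the proposed partners $\pm w+\hbar-j'q$ are odd multiples of $\hbar$. They never vanish, so they cannot cancel the zero of $w-jq$.

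\emph{A normalizing factor is missing.} Your contribution formula drops the factor $e(N_{Z/X}|_p)$ produced by $\iota_p^*\circ\operatorname{ev}_{p_2,*}$. That factor is exactly what removes the piece $T_pX$ from $T^{\mathrm{vir}}$. This piece is $\hbar^{-2}$-self-dual, not self-dual in the sense you need, and left in place it would leave localized denominators.

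\emph{The missing idea.} Compute $\iota_p^*V^{(\tau)}_d$ as a localized integral over the derived evaluation fiber $M_{d,p}=\mathsf{QM}^d_{\mathrm{ns}\,p_2}\times^{\mathbf R}_X\{p\}$. Then use Serre duality with $\omega_{C_0}(p_2)\cong q^{-1}\mathcal O(-p_2)$. The virtual tangent space of $M_{d,p}$ is $E_d=\mathcal A_d-\rho\,\mathcal A_d^\vee$, where $\mathcal A_d=R\pi_*(\mathcal T^{1/2}(-p_2))$ and $\rho=q\hbar^{-2}$; that is, $E_d=-\rho E_d^\vee$. This single identity does all the index-range bookkeeping you flag as the main obstacle. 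Re-choosing the polarization cannot help: the unsigned vertex does not depend on it, and a sign change of $z$ removes no poles.

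\emph{Positive-dimensional fixed loci.} Factor-by-factor cancellation of scalar weights is not the mechanism once the $\mathsf T_q$-fixed components $K\subset M_{d,p}$ are positive-dimensional. For an arbitrary subtorus $\mathsf T$ this is the typical situation, and it occurs even in the fiber over a single point; the paper's $D_5$ example gives $K\cong\mathbb P(H)$.
\begin{itemize}
\item Write $\mathcal A_d|_K=\sum A_{\lambda,c}\otimes\mathbb C_{\lambda+c\rho}$. The resonant pieces ($\lambda=0$) are genuine bundles on $K$.
\item $1/e(A_{0,c}\otimes\mathbb C_{c\rho})$ then has poles in $\delta=q-2\hbar$ beyond its rank, coming from Chern roots. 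No numerator factor cancels these.
\item The paper removes them by a degree count. The resonant part of $1/e(N_K^{\mathrm{vir}})$ is $\delta^{v}\sum_{k\ge0}\delta^{-k}r_k$ with $r_k\in A^k(K)$ and $v=\operatorname{rk}A_{0,0}-\operatorname{rk}A_{0,1}=\operatorname{vdim}K$. Pairing with $[K]^{\mathrm{vir}}\in A_v(K)$ leaves only nonnegative powers of $\delta$.
\end{itemize}

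\emph{Nonlocalization also needs an argument.} Your claim that the remaining nonresonant denominators ``cancel to a polynomial'' rests on the same self-duality. At $\rho=1$ one gets $\prod e(A_{\lambda,c}^\vee\otimes\mathbb C_{-\lambda})/e(A_{\lambda,c}\otimes\mathbb C_{\lambda})=(-1)^{\varepsilon_K}$. Only $(-1)^{\varepsilon_K}\int_{[K]^{\mathrm{vir}}}r_v\,\gamma_{K,0}$ survives, and this lies manifestly in $H^*_{\mathsf T}(\mathrm{pt})$.

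Your argument that the result is independent of $p\in Z$ via $H^0(Z)$ is the paper's. Note that it concerns point restrictions only: $V^{(\tau)}|_Z$ itself can have a pole at $q=2\hbar$, as in Example~\ref{ex:vertex has pole at hbar q}.
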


To simplify notation, we will omit the subscript $\mathsf T$ and sometimes denote $\iota_p^*V^{(\tau)}$ by $V^{(\tau)}_{p}$; the corresponding $q=2\hbar$ specialization will be denoted $V^{(\tau)}_{p,q=2\hbar}$.

\begin{warning}\label{war: specializations do not commute}
A very important caveat is that (\ref{eq: spec vertex point}) {\emph{strongly depends}} on the choice of the torus $\mathsf{T}$ in the following sense. Assume that $\mathsf{T}' \subset \mathsf{T}$ is some subtorus. Let 
\begin{equation*}
V^{(\tau),\mathsf{T}} \in H^*_{\mathsf{T}}(\widetilde{\mathcal{M}}_H^{\mathsf{T}})_{\mathrm{loc}}[[{\boldsymbol{z}}]], \quad  V^{(\tau),\mathsf{T}'} \in H^*_{\mathsf{T}}(\widetilde{\mathcal{M}}_H^{\mathsf{T}'})_{\mathrm{loc}}[[{\boldsymbol{z}}]]
\end{equation*} 
be the corresponding vertex functions.
Then the corresponding $q=2\hbar$ specializations are {\emph{not}} compatible   in general:
\begin{equation}\label{eq:two spec are not equal}
\Big(V^{(\tau),\mathsf{T}}_{p,q=2\hbar}\Big)\Big|_{\mathsf{t}'} \neq V^{(\tau),\mathsf{T}'}_{p,q=2\hbar}.
\end{equation}
 The LHS of (\ref{eq:two spec are not equal}) also defines a solution of $Q_{q=2\hbar}^{\mathrm{PSZ}}=H^*_{\mathsf{T}'}(\widetilde{\mathcal{M}}_H)[[{\boldsymbol{z}}]]$, but a {\emph{different}} one from $V^{(\tau),\mathsf{T}'}_{p,q=2\hbar}$. This is compatible with the discussion in Section \ref{eq: relation between modules}: namely, the Verma-type modules we consider become {\emph{distinct}} at singular parameters. See Section \ref{sssec_kleinian_example} for an explicit example confirming (\ref{eq:two spec are not equal}).
 Note that without taking the $q=2\hbar$ specialization, it is true that the specialization $\Big(V^{(\tau),\mathsf{T}}_p\Big)\Big|_{\mathsf{t}'}$ {\emph{is}} well-defined and
 \begin{equation*}
\Big(V^{(\tau),\mathsf{T}}_p\Big)\Big|_{\mathsf{t}'}=V^{(\tau),\mathsf{T}'}_p.
 \end{equation*} 
Putting it differently, the $q=2\hbar$ vertex functions for all subtori of $\mathsf{T}$ can be {\emph{recovered}} from the unique function $V^{(\tau),\mathsf{T}}_p$ by taking limits in {\emph{appropriate}} order. 
\end{warning}

To simplify notation we set $\iota=\iota_{\widetilde{\mathcal{M}}_H}$.
Proposition~\ref{nonlocvertex}, in particular, implies that whenever the $\mathsf{T}$-fixed points of $\widetilde{\mathcal{M}}_H$ are {\emph{isolated}}, the $D$-module $Q^{\mathrm{PSZ}}_{q=2\hbar}$, considered as a quotient of $\mathscr H_\hbar[[{\boldsymbol{z}}]]$ via (\ref{eq: capped vertex}), can be described as a {\emph{submodule}} of $Q^{\mathrm{Clas},\mathrm{fix}}_{q=2\hbar}$; note that the latter is {\emph{not}} localized.

\begin{proposition}
 \label{prop:fixed-locus-vertex-map}
If $\widetilde{\mathcal{M}}_H^{\mathsf{T}}$ are isolated, the formula
 \begin{equation}
  \label{eq:fixed-locus-vertex-map}
  V^{\mathrm{fix},(-)}_{q=2\hbar}\colon
  \mathscr H_\hbar[[{\boldsymbol{z}}]]\longrightarrow
  Q^{\mathrm{Clas,fix}}_{q=2\hbar},
  \qquad
  \tau\longmapsto
  \left.\iota^*V^{(\tau)}\right|_{q=2\hbar},
 \end{equation}
 defines a homomorphism of $D$-modules.  At
 $z=0$, it is the natural restriction homomorphism
 $$
  \mathscr H_\hbar\longrightarrow H_T^*(\widetilde{\mathcal{M}}_H^T),
  \qquad
  \tau\longmapsto\iota^*(\tau|_X).
 $$
 Moreover,~\eqref{eq:fixed-locus-vertex-map} factors uniquely through the
 capped-vertex quotient, giving a homomorphism of $D$-modules
 \begin{equation}
  \label{eq:PSZ-to-fixed-locus-classical}
  \overline V_{q=2\hbar}\colon
  Q^{\mathrm{PSZ}}_{q=2\hbar}
  \longrightarrow Q^{\mathrm{Clas,fix}}_{q=2\hbar}.
 \end{equation}
\end{proposition}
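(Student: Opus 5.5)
We assemble the statement from three earlier inputs: the $D$-linearity of the uncapped vertex $V^{(-)}\colon \mathscr H_{\hbar,q}[[{\boldsymbol z}]]\to Q^{\mathrm{Clas,fix}}_{\mathrm{loc}}$ (from \cite[Equation (7.2.5)]{OkLec}), the regularity statement of Proposition~\ref{nonlocvertex}, and the factorization of Proposition~\ref{prop: vertex factors through kernel of capped}. Since $\widetilde{\mathcal M}_H^{\mathsf T}$ is a finite set of points, $Q^{\mathrm{Clas,fix}}=\bigoplus_p H^*_{\mathsf T_q}(p)[[{\boldsymbol z}]]$. Every statement can therefore be checked one fixed point $p$ at a time.

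\textbf{Step 1: well-definedness and $D$-linearity.} By Proposition~\ref{nonlocvertex}, each component $\iota_p^*V^{(\tau)}$ lies in the subring of $\operatorname{Frac}(\mathbb C[\mathsf f,\hbar,q])[[{\boldsymbol z}]]$ consisting of series whose coefficients are regular along $q=2\hbar$. Its specialization lies in $H^*_{\mathsf T}(p)[[{\boldsymbol z}]]$. Hence \eqref{eq:fixed-locus-vertex-map} is a well-defined $\mathbb C[\mathsf f,\hbar][[{\boldsymbol z}]]$-linear map.

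For $D$-linearity, let $D^{\mathrm{reg}}$ denote $D^{\mathrm{PSZ}}$ with coefficients in this regular subring. Both $\mathscr H_{\hbar,q}[[{\boldsymbol z}]]$ and the regular part of $Q^{\mathrm{Clas,fix}}_{\mathrm{loc}}$ are modules over $D^{\mathrm{reg}}$. This holds because the action of $z^{d'}\otimes c$ only multiplies by $q\langle\bar c,d\rangle$ and by $c|_p\in H^2_{\mathsf T_q}(p)$, both of which are polynomial. The identity $V^{(c\cdot\tau)}=c\cdot V^{(\tau)}$ holds in $Q^{\mathrm{Clas,fix}}_{\mathrm{loc}}$, and both sides are regular along $q=2\hbar$. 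Setting $q=2\hbar$, which is a ring homomorphism on the regular subring, yields $D_{q=2\hbar}$-linearity. Here one must note that $\tau\in\mathscr H_\hbar[[{\boldsymbol z}]]$ is lifted to $\mathscr H_{\hbar,q}[[{\boldsymbol z}]]$ via $H^*_{\mathsf T}\subset H^*_{\mathsf T_q}$, so the specialized map is independent of choices.

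\textbf{Step 2: the $z=0$ value.} The degree-zero quasimap moduli space is $\widetilde{\mathcal M}_H$ itself, with trivial obstruction and no $q$-dependence, so $V^{(\tau)}_0=\tau|_X$. Restricting to fixed points gives $\iota^*(\tau|_X)$, with no specialization issue at $q=2\hbar$.

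\textbf{Step 3: factorization through the capped-vertex quotient.} By Proposition~\ref{prop: vertex factors through kernel of capped}, $\ker\big(\tau\mapsto\widehat V^{(\tau)}\big)\subset\ker V^{(-)}$ over $\mathbb C[\mathsf f,\hbar,q]$. The main obstacle is passing this inclusion to $q=2\hbar$: an element of $\ker\big(\widehat V^{(-)}_{q=2\hbar}\big)$ need not obviously lift to the kernel before specialization.

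To handle this, use the gluing formula $V^{(\tau)}=\Psi\big(\widehat V^{(\tau)}\big)$, where $\Psi$ is the capping operator. This is the content behind Proposition~\ref{prop: vertex factors through kernel of capped} (\cite[Equation (7.4.6)]{OkLec}). Restricted to the fixed point $p$, this gives $\iota_p^*V^{(\tau)}=\iota_p^*\Psi\big(\widehat V^{(\tau)}\big)$. Two facts then need to be checked.
\begin{itemize}
\item Surjectivity of the capped vertex (Proposition~\ref{prop: vertsurj}) shows that every class in $H^*_{\mathsf T_q}(\widetilde{\mathcal M}_H)[[{\boldsymbol z}]]$ has the form $\widehat V^{(\tau)}$. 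Combined with Step 1, this shows that $\iota_p^*\Psi$ maps non-localized classes to series regular at $q=2\hbar$. The required bound on coefficients follows by taking a finite generating set over $\mathbb C[\mathsf f,\hbar,q][[{\boldsymbol z}]]$.
\item As a consequence, $(\iota_p^*\Psi)_{q=2\hbar}$ is a well-defined map on $Q^{\mathrm{PSZ}}_{q=2\hbar}$, and $V^{\mathrm{fix},(\tau)}_{q=2\hbar}=(\iota^*\Psi)_{q=2\hbar}\big(\widehat V^{(\tau)}_{q=2\hbar}\big)$.
\end{itemize}
This identity gives the factorization $\overline V_{q=2\hbar}:=(\iota^*\Psi)_{q=2\hbar}$. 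This map is $D$-linear because $\mathscr H_\hbar[[{\boldsymbol z}]]\to Q^{\mathrm{PSZ}}_{q=2\hbar}$ is a surjective $D$-module map and the composite is $D$-linear by Step 1. The factorization is unique because the quotient map is surjective.
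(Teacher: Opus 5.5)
Your Steps 1 and 2 agree with the paper's proof. Step 3 is correct but reaches the factorization by a different route.

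\textbf{The paper's route.} The paper does not invoke the capping operator at this step. It sets $N:=\ker(\widehat V^{(-)})$ and uses the freeness of $H^*_{\mathsf T_q}(\widetilde{\mathcal M}_H)$ over $\mathbb C[\mathsf f,\hbar,q]$ to split $0\to N\to\mathscr H_{\hbar,q}[[\boldsymbol z]]\to Q^{\mathrm{PSZ}}\to 0$. It deduces that $\ker\bigl(\widehat V^{(-)}|_{q=2\hbar}\bigr)$ is the image of $N\otimes\mathfrak C$. After that it needs only the kernel inclusion of Proposition~\ref{prop: vertex factors through kernel of capped} together with the regularity from Proposition~\ref{nonlocvertex}.

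\textbf{Your route.} You go back to the mechanism behind that inclusion and write $\iota_p^*V^{(\tau)}=\iota_p^*\Psi^{-1}\widehat V^{(\tau)}$. Surjectivity of the capped vertex, together with Proposition~\ref{nonlocvertex}, shows that the localized operator $\iota_p^*\Psi^{-1}$ sends every nonlocalized class to a series regular at $q=2\hbar$.

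Your ``as a consequence'' should spell out why this specialization is well defined on $Q^{\mathrm{PSZ}}_{q=2\hbar}$:
\begin{itemize}
\item Since $\mathbb C^\times_q$ acts trivially on $\widetilde{\mathcal M}_H$, we have $H^*_{\mathsf T_q}(\widetilde{\mathcal M}_H)=H^*_{\mathsf T}(\widetilde{\mathcal M}_H)[q]$.
\item Hence two lifts of a class in $H^*_{\mathsf T}(\widetilde{\mathcal M}_H)[[\boldsymbol z]]$ differ by $(q-2\hbar)\beta$.
\item The image $\iota_p^*\Psi^{-1}\beta$ is regular, so $(q-2\hbar)\,\iota_p^*\Psi^{-1}\beta$ vanishes at $q=2\hbar$.
\end{itemize}
Therefore $(\iota^*\Psi^{-1})|_{q=2\hbar}$ is well defined.

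\textbf{Comparison.} Your route gives an explicit formula, $\overline V_{q=2\hbar}=(\iota^*\Psi^{-1})|_{q=2\hbar}$. It also sidesteps the splitting argument, since you only use $H^*_{\mathsf T_q}=H^*_{\mathsf T}[q]$. The paper's route is purely formal: it needs only the kernel inclusion, not the invertibility of the capping operator or its behaviour on individual classes.

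\textbf{Two small corrections.}
\begin{itemize}
\item In the paper's convention (Proposition~\ref{prop: cappingeq}) the identity is $\widehat V^{(\tau)}=\Psi V^{(\tau)}$. Your $\Psi$ should therefore be $\Psi^{-1}$ throughout. This is harmless, because $\Psi=\mathrm{Id}+O(\boldsymbol z)$ is invertible.
\item The ``finite generating set'' remark is unnecessary. Surjectivity already writes every class as some $\widehat V^{(\tau)}$, and Proposition~\ref{nonlocvertex} applies to every $\tau$ in the completed space, coefficientwise in $\boldsymbol z$.
\end{itemize}
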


\begin{proof}
 Proposition~\ref{nonlocvertex} shows that the right-hand side
 of~\eqref{eq:fixed-locus-vertex-map} is well-defined over $\mathfrak C$.
 Before the specialization $q=2\hbar$, the uncapped vertex is a homomorphism
 of $D$-modules by \cite[Equation~(7.2.5)]{OkLec}.  
Proposition~\ref{nonlocvertex}
 permits us to specialize this to $q=2\hbar$; hence
 \eqref{eq:fixed-locus-vertex-map} is a homomorphism of $D$-modules.

The $z=0$ term of
 $V^{(\tau)}$ is $\tau|_{\widetilde{\mathcal{M}}_H}$: this is precisely the assertion about the constant
 term. This follows from the definition of quasimaps in Section \ref{quasimaps}.

 It remains to justify the factorization after specialization.  Set
 $\mathfrak{C}_q:=\mathbb C[\mathsf f,\hbar,q]$ and let
 $N:=\ker(\widehat V^{(-)})$.  Surjectivity of $\widehat{V}^{(-)}$ gives an
 exact sequence
 $$
  0\longrightarrow N\longrightarrow\mathscr H_{\hbar,q}[[{\boldsymbol{z}}]]
  \xrightarrow{\ \widehat V^{(-)}\ }Q^{\mathrm{PSZ}}
  \longrightarrow0.
 $$
 The $\mathfrak{C}_q$-module $H_{T_q}^*(\widetilde{\mathcal{M}}_H)$ is finite free by
 \cite[Theorem~7.3.5]{Nak_quiver_and_fd_reps}.  Consequently, $Q^{\mathrm{PSZ}}=H_{T_q}^*(\widetilde{\mathcal{M}}_H)[[{\boldsymbol{z}}]]$ is finite free over
 $\mathfrak{C}_q[[{\boldsymbol{z}}]]$, so the displayed sequence splits as a sequence of
 $\mathfrak{C}_q[[{\boldsymbol{z}}]]$-modules.  It therefore remains exact under the base change
 $\mathfrak{C}_q\to \mathfrak{C}_q/(q-2\hbar)=\mathfrak C$, and we obtain
 \begin{equation}
  \label{eq:specialized-capped-kernel}
  \ker\bigl(\widehat V^{(-)}|_{q=2\hbar}\bigr)=\operatorname{Im}\bigl(N\otimes_{\mathfrak{C}_q}\mathfrak C
    \longrightarrow\mathscr H_\hbar[[{\boldsymbol{z}}]]\bigr).
 \end{equation}
 Before specialization, Proposition~\ref{prop: vertex factors through kernel of capped} 
 implies that the restriction of the bare
 vertex to $X^T$ annihilates $N$.  Its specialization is regular by Proposition~\ref{nonlocvertex}; hence
 $V^{\mathrm{fix},(-)}_{q=2\hbar}$ annihilates the right-hand side of
 \eqref{eq:specialized-capped-kernel}.  It therefore induces
 \eqref{eq:PSZ-to-fixed-locus-classical}.  Uniqueness follows from the
 surjectivity of the specialized capped vertex.
\end{proof}

\begin{proposition}
 \label{prop:PSZ-to-fixed-locus-classical-injective}
 \begin{itemize}
     \item[(a)] The homomorphism
 \begin{equation}\label{eq:vertex PSZ to Clas fix}
  \overline V_{q=2\hbar}\colon
  Q^{\mathrm{PSZ}}_{q=2\hbar}
  \longrightarrow Q^{\mathrm{Clas,fix}}_{q=2\hbar}
 \end{equation}
 in (\ref{eq:PSZ-to-fixed-locus-classical}) is injective.
 \item[(b)] The homomorphism (\ref{eq:vertex PSZ to Clas fix}) is an isomorphism at $x \in \operatorname{Spec}\mathfrak{C}$ iff the restriction homomorphism $H^*_{T}(\widetilde{\mathcal{M}}_H)|_x \rightarrow H^*_T(\widetilde{\mathcal{M}}_H^T)|_x$ is an isomorphism.\footnote{The third author has incorrectly claimed in a couple of talks that the map $\ol{V}_{q=2\hbar}$ induces the isomorphism of $Q^{\mathrm{PSZ}}_{q=2\hbar}$ with $Q^{\mathrm{Clas}}_{q=2\hbar}$.  Proposition \ref{prop:PSZ-to-fixed-locus-classical-injective} is the correct replacement of this statement.\label{vasya_correction}}
 \end{itemize}
\end{proposition}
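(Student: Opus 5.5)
The plan is to reduce both statements to the $z=0$ term by a $z$-adic leading-order argument. Everything needed for the base case is already available. As a $\mathfrak C[[{\boldsymbol z}]]$-module, $Q^{\mathrm{PSZ}}_{q=2\hbar}=H^*_{\mathsf T}(\widetilde{\mathcal M}_H)[[{\boldsymbol z}]]$. Here $H^*_{\mathsf T}(\widetilde{\mathcal M}_H)$ is finite free over $\mathfrak C$ by \cite[Theorem~7.3.5]{Nak_quiver_and_fd_reps}, and $Q^{\mathrm{Clas,fix}}_{q=2\hbar}=H^*_{\mathsf T}(\widetilde{\mathcal M}_H^{\mathsf T})[[{\boldsymbol z}]]$ is free over $\mathfrak C[[{\boldsymbol z}]]$ because the fixed points are isolated. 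The map $\overline V_{q=2\hbar}$ is $\mathfrak C[[{\boldsymbol z}]]$-linear, since it is a homomorphism of $D$-modules. By Proposition~\ref{prop:fixed-locus-vertex-map}, together with the fact that the specialized capped vertex at $z=0$ is the Kirwan restriction, its reduction modulo $\mathfrak m$ is the restriction map
\begin{equation*}
r\colon H^*_{\mathsf T}(\widetilde{\mathcal M}_H)\to H^*_{\mathsf T}(\widetilde{\mathcal M}_H^{\mathsf T}).
\end{equation*}
So I write $\overline V_{q=2\hbar}=r+\sum_{0\neq\xi\in R^-}z^\xi F_\xi$, where each $F_\xi$ is $\mathfrak C$-linear.

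For (a), the first step is to show that $r$ is injective. This follows from the localization theorem: the kernel of $r$ is $\mathfrak C$-torsion, and $H^*_{\mathsf T}(\widetilde{\mathcal M}_H)$ is free over $\mathfrak C$, so the kernel vanishes. Now take a nonzero $x=\sum_\xi z^\xi x_\xi$ in $Q^{\mathrm{PSZ}}_{q=2\hbar}$. Choose the height function $\xi\mapsto -\sum_i\xi_i$ on $R^-$ and let $k$ be the minimal height at which some $x_\xi$ is nonzero. The component of $\overline V_{q=2\hbar}(x)$ at a weight $\xi$ of height $k$ is exactly $r(x_\xi)$, because every correction term $z^{\xi'}F_{\xi'}$ with $\xi'\neq 0$ strictly raises the height. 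Since $r$ is injective, some such component is nonzero, and hence $\overline V_{q=2\hbar}(x)\neq 0$.

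For (b), I specialize at a point $x\in\operatorname{Spec}\mathfrak C$. Both sides remain free over $\mathbb C[[{\boldsymbol z}]]$ after specialization: the source has rank $\operatorname{rk}H^*_{\mathsf T}(\widetilde{\mathcal M}_H)$ and the target has rank $|\widetilde{\mathcal M}_H^{\mathsf T}|$. These ranks agree, because $r$ becomes an isomorphism after localization. The reduction modulo $\mathfrak m$ of the specialized map is $r|_x$. If the specialized map is an isomorphism, then reducing modulo $\mathfrak m$ shows that $r|_x$ is one. Conversely, suppose $r|_x$ is an isomorphism. Surjectivity of the specialized map then follows from the complete Nakayama lemma, since both modules are $\mathfrak m$-adically complete and finitely generated. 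Injectivity follows from the same leading-order argument as in (a), now using that $r|_x$ is injective.

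I expect the main obstacle to be the bookkeeping in (b). One must check that specializing at $x$ commutes with the $z$-completion and with reduction modulo $\mathfrak m$, which holds because both modules are free over $\mathfrak C[[{\boldsymbol z}]]$. One must also make sure that $\overline V_{q=2\hbar}$ really is defined over $\mathfrak C$, with no localization, so that specializing at $x$ makes sense. Both points are supplied by Propositions~\ref{nonlocvertex} and~\ref{prop:fixed-locus-vertex-map}.
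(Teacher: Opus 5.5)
Your proof is correct. For (a) it takes a more direct route than the paper, though it uses the same ingredients: freeness of $H^*_T(\widetilde{\mathcal M}_H)$ over $\mathfrak C$, the localization theorem, and the identification of the $z=0$ term of $\overline V_{q=2\hbar}$ with the restriction map.

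The paper first passes to the completed generic fiber over $\mathfrak K=\operatorname{Frac}(\mathfrak C)$. There the $z=0$ term becomes an isomorphism by localization. Nakayama modulo $\mathfrak m_{\mathfrak K}^n$, for free modules of equal rank, then makes each truncation an isomorphism, and an inverse limit gives an isomorphism of completed modules. Injectivity over $\mathfrak C$ follows because $H^*_T(\widetilde{\mathcal M}_H)[[\boldsymbol z]]\to(H^*_T(\widetilde{\mathcal M}_H)\otimes_{\mathfrak C}\mathfrak K)[[\boldsymbol z]]$ is injective.

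You instead use only the weaker fact that the restriction map $r$ is already injective over $\mathfrak C$, and run a lowest-order-term argument in the $z$-adic filtration. This avoids base change and inverse limits entirely.

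What the paper's route buys is the stronger statement that $\overline V_{q=2\hbar}$ becomes an isomorphism after completed extension to $\mathfrak K$, which is the generic-point case of (b). The paper then calls (b) clear. Your argument for (b) is a correct expansion of that step: equal ranks via localization, complete Nakayama for surjectivity, the leading-term argument (or equality of ranks) for injectivity, and reduction modulo $\mathfrak m$ for the converse.
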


\begin{proof}
 Let us prove part (a). We regard both sides of (\ref{eq:vertex PSZ to Clas fix}) as families over $\operatorname{Spec}\mathfrak C$. Set $\mathfrak{K}:=\operatorname{Frac}(\mathfrak{C})$.
 The equivariant cohomology $H_T^*(\widetilde{\mathcal{M}}_H)$ is free over $\mathfrak C$ by
 \cite[Theorem~7.3.5]{Nak_quiver_and_fd_reps}.  Consequently, the
 coefficientwise homomorphism
 \begin{equation}
  \label{eq:PSZ-to-completed-generic-fiber}
  H_T^*(\widetilde{\mathcal{M}}_H)[[{\boldsymbol{z}}]]\longrightarrow
  \bigl(H_T^*(\widetilde{\mathcal{M}}_H) \otimes_{\mathfrak{C}} \mathfrak{K}\bigr)[[{\boldsymbol{z}}]]
 \end{equation}
 is injective.  It is therefore enough to prove that the completed
 extension of~\eqref{eq:PSZ-to-fixed-locus-classical} to $\mathfrak{K}$ is injective.

 At ${\boldsymbol{z}}=0$, the capped vertex is the natural surjection
 $\mathscr H_\hbar\twoheadrightarrow H_T^*(\widetilde{\mathcal{M}}_H)$ in
 \eqref{eq: restr determines algebra}.  The constant-term assertion in
 Proposition~\ref{prop:fixed-locus-vertex-map} therefore shows that the
 reduction of $\overline V_{q=2\hbar}$ at ${\boldsymbol{z}}=0$ is the
 restriction homomorphism
 $$
  \iota^*\colon H_T^*(\widetilde{\mathcal{M}}_H)\longrightarrow H_T^*(\widetilde{\mathcal{M}}_H^T).
 $$
 By the equivariant localization theorem, this homomorphism induces an
 isomorphism
 \begin{equation}
  \label{eq:generic-fixed-locus-localization}
  H_T^*(\widetilde{\mathcal{M}}_H)\otimes_{\mathfrak C} \mathfrak{K}
  \xrightarrow{\ \sim\ }
  H_T^*(\widetilde{\mathcal{M}}_H^T)\otimes_{\mathfrak C} \mathfrak{K}.
 \end{equation}

 We use completed extension of coefficients in the Novikov direction; thus,
 for example,
 $$
  \left(Q^{\mathrm{PSZ}}_{q=2\hbar}\right)_{\mathfrak{K}}
  :=\bigl(H_T^*(\widetilde{\mathcal{M}}_H)\otimes_{\mathfrak C} \mathfrak{K}\bigr)[[\boldsymbol{z}]].
 $$
 The reduction of the resulting morphism
 $\left(\overline V^{\mathrm{fix}}_{q=2\hbar}\right)_{\mathfrak{K}}$ at ${\boldsymbol{z}}=0$ is
 precisely~\eqref{eq:generic-fixed-locus-localization}.

 Since ${\mathsf{T}}$ acts trivially on $X^{\mathsf{T}}$, one has
 $$
  H_T^*(\widetilde{\mathcal{M}}_H^T)=H^*(\widetilde{\mathcal{M}}_H^T)\otimes_{\mathbb C}\mathfrak C;
 $$
 in particular, $H_T^*(\widetilde{\mathcal{M}}_H^T)$ is a finite free $\mathfrak C$-module.  Let
 $\mathfrak{m}_{\mathfrak{K}}\subset \mathfrak{K}[\boldsymbol{z}]$ be the ideal generated by the
 nonconstant Novikov monomials.  For every $n\geqslant1$, reduce the source
 and target modulo $\mathfrak{m}_{\mathfrak{K}}^n$.  These are finite free modules of the same rank
 over $\mathfrak{K}[\boldsymbol z]/\mathfrak{m}_{\mathfrak{K}}^n$, and the reduction of the induced map modulo
 $\mathfrak{m}_{\mathfrak{K}}$ is the isomorphism~\eqref{eq:generic-fixed-locus-localization}.
 Nakayama's lemma shows that the map modulo $\mathfrak{m}_{\mathfrak{K}}^n$ is surjective; since its
 source and target are free of the same finite rank, it is an isomorphism.
 Passing to the inverse limit gives an isomorphism
 $$
  \left(Q^{\mathrm{PSZ}}_{q=2\hbar}\right)_{\mathfrak{K}}
  \xrightarrow{\ \sim\ }
  \left(Q^{\mathrm{Clas,fix}}_{q=2\hbar}\right)_{\mathfrak{K}}.
 $$

 If $h$ lies in the kernel of
 \eqref{eq:PSZ-to-fixed-locus-classical}, its image under
 \eqref{eq:PSZ-to-completed-generic-fiber} lies in the kernel of the
 completed generic morphism, which is an isomorphism.  Hence $h$ maps to
 zero in the completed generic fiber, and the injectivity of
 \eqref{eq:PSZ-to-completed-generic-fiber} gives $h=0$.

 Part (b) is clear. 
\end{proof}

\begin{corollary}
 \label{cor:fixed-locus-realization-PSZ}
 The image
 $$
  Q^{\mathrm{fix}}_{q=2\hbar}
  :=\operatorname{Im}\bigl(V^{(-)}_{q=2\hbar}\bigr)
  \subset Q^{\mathrm{Clas,fix}}_{q=2\hbar}
 $$
 is a $D$-submodule of $Q^{\mathrm{Clas},\mathrm{fix}}_{q=2\hbar}$, and the induced map gives an
 isomorphism
 $$
  Q^{\mathrm{PSZ}}_{q=2\hbar}
  \xrightarrow{\ \sim\ }Q^{\mathrm{fix}}_{q=2\hbar}.
 $$
 Equivalently, one has a commutative diagram of $D$-modules
 $$
 \begin{tikzcd}
  \mathscr H_\hbar[[{\boldsymbol{z}}]]
   \arrow[->>]{r}{\widehat V^{(-)}|_{q=2\hbar}}
   \arrow{dr}[swap]{V^{(-)}_{q=2\hbar}}
  & Q^{\mathrm{PSZ}}_{q=2\hbar}
   \arrow[hookrightarrow]{d}{\overline V^{(-)}_{q=2\hbar}} \\
  & Q^{\mathrm{Clas,fix}}_{q=2\hbar}.
 \end{tikzcd}
 $$
\end{corollary}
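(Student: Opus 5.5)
The statement follows directly from Proposition~\ref{prop:fixed-locus-vertex-map} and part~(a) of Proposition~\ref{prop:PSZ-to-fixed-locus-classical-injective}. We work throughout under the standing hypothesis that $\widetilde{\mathcal M}_H^{\mathsf T}$ is isolated, so that $V^{\mathrm{fix},(-)}_{q=2\hbar}$ is defined over $\mathfrak C$ by Proposition~\ref{nonlocvertex}.

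First, Proposition~\ref{prop:fixed-locus-vertex-map} tells us that the map
\[
V^{(-)}_{q=2\hbar}=V^{\mathrm{fix},(-)}_{q=2\hbar}\colon \mathscr H_\hbar[[\boldsymbol z]]\to Q^{\mathrm{Clas,fix}}_{q=2\hbar}
\]
is a homomorphism of $D$-modules (here $D$ means $D_{q=2\hbar}$). Hence its image $Q^{\mathrm{fix}}_{q=2\hbar}$ is a $D$-submodule of the target; no further argument is needed for this part.

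Second, the same proposition gives the factorization
\[
V^{\mathrm{fix},(-)}_{q=2\hbar}=\overline V_{q=2\hbar}\circ \widehat V^{(-)}|_{q=2\hbar}.
\]
This yields the commutative triangle in the statement.

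Third, the specialized capped vertex $\widehat V^{(-)}|_{q=2\hbar}$ is surjective. Indeed, the capped vertex is surjective before specialization (Proposition~\ref{prop: vertsurj}), and surjectivity is preserved under the right-exact base change $\mathfrak C_q\to\mathfrak C_q/(q-2\hbar)$. Consequently the image of $\overline V_{q=2\hbar}$ equals the image of $V^{(-)}_{q=2\hbar}$, namely $Q^{\mathrm{fix}}_{q=2\hbar}$.

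Fourth, $\overline V_{q=2\hbar}$ is injective by part~(a) of Proposition~\ref{prop:PSZ-to-fixed-locus-classical-injective}. An injective $D$-module map onto its image is an isomorphism $Q^{\mathrm{PSZ}}_{q=2\hbar}\iso Q^{\mathrm{fix}}_{q=2\hbar}$. It is $D$-linear because $\overline V_{q=2\hbar}$ is.

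The only point that needs care is the compatibility of $D$-module structures on the specialized objects. Both $Q^{\mathrm{PSZ}}_{q=2\hbar}$ and $Q^{\mathrm{Clas,fix}}_{q=2\hbar}$ carry actions factoring through $D_{q=2\hbar}$, and the relevant maps are specializations of $D$-linear maps. The earlier propositions already handle this, including the regularity of the specialization that makes it well defined, so I expect no real obstacle here.
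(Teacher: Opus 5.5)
Your proposal is correct and follows the same route as the paper, which states this corollary without a separate proof as an immediate consequence of Proposition~\ref{prop:fixed-locus-vertex-map} ($D$-linearity of the specialized vertex and its factorization through the surjective specialized capped vertex) together with Proposition~\ref{prop:PSZ-to-fixed-locus-classical-injective}(a) (injectivity of $\overline V_{q=2\hbar}$). Your right-exactness argument for the surjectivity of $\widehat V^{(-)}|_{q=2\hbar}$ is a fine substitute for the paper's split-exact-sequence base change. You also correctly keep the standing isolated-fixed-point hypothesis, which is needed for $V^{(-)}_{q=2\hbar}$ to be defined.
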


Proposition~\ref{prop:PSZ-to-fixed-locus-classical-injective} implies that {\emph{whenever $\widetilde{\mathcal{M}}_H^{\mathsf{T}}$ are isolated}}, Conjecture~\ref{conj: our version quantum hikita} can be reformulated as follows (without mentioning the $D$-module $Q^{\mathrm{PSZ}}_{q=2\hbar}$):

\begin{conjecture}\label{conj: classical reformulation quantum hikita} Assume $\widetilde{\mathcal{M}}_H^{\mathsf{T}}$ are isolated.
Then there exists an isomorphism of $D$-modules $Q_{q=2\hbar}^{\mathrm{fix}} \iso \operatorname{GrTr}(\mathcal{A})$ making the following diagram commutative:

\begin{equation}\label{eq:conj reformulated}
\begin{tikzcd}
	& {\GZ_{\hbar}[[{\boldsymbol{z}}]]} & \\
	{Q_{q=2\hbar}^{\mathrm{fix}}} && {\operatorname{GrTr}(\mathcal{A})}
	\arrow["V^{(-)}_{q=2\hbar}"', from=1-2, to=2-1]
	\arrow[from=1-2, to=2-3]
	\arrow["\simeq", from=2-1, to=2-3]
\end{tikzcd}
\end{equation}
\end{conjecture}
Here the left diagonal arrow is given by the uncapped vertex function, and the right diagonal arrow is the tautological map.

Note that Conjecture~\ref{conj: classical reformulation quantum hikita} gives a  ``classical'' description of the $D$-module $\operatorname{GrTr}(\mathcal{A})$, while the ``quantum'' part is absorbed by the map $V^{(-)}_{q=2\hbar}$.

Conjecture \ref{conj: classical reformulation quantum hikita} {\emph{only}} makes sense as stated with the assumption that the points of $\widetilde{\mathcal{M}}_H^{\mathsf{T}}$ are isolated. The reason is the following: without this assumption, $V^{(-)}$ will {\emph{not}} be regular at $q=2\hbar$. The simplest example is the following. It is an interesting question whether there is still a ``reasonable'' way to specialize $V^{(-)}$ at $q=2\hbar$.

\begin{example}\label{ex:vertex has pole at hbar q}
Let $\Gamma$ be an $A_3$-quiver with 
\begin{equation*}
v_1=v_2=v_3=1, \quad w_1=w_3=1,
w_2=0.
\end{equation*} Then $\widetilde{\mathcal{M}}_H$ is the resolution of
$\mathbb{A}^2/(\mathbb{Z}/4\mathbb{Z})$. Take
${\mathsf{T}}=\mathbb{C}^\times_\hbar$, so
$\widetilde{\mathcal{M}}_H^{\mathbb{C}^\times_\hbar}$ contains a component
$C$ isomorphic to $\mathbb{P}^1$.

Temporarily restore the effective one-dimensional flavor torus and write
\begin{equation*}
\varphi=f_1-f_3
\end{equation*} 
for its parameter. Let $p_L,p_R\in C$ be the two
flavor-fixed points, and denote by $V_L(\varphi)$ and $V_R(\varphi)$ the
corresponding point restrictions of the vertex. Choose
$\widetilde\eta\in H^2_{\mathsf F_{\mathsf w}}(C)$ such that
$$
 \widetilde\eta|_{p_L}=0,
 \qquad
 \widetilde\eta|_{p_R}=\varphi,
$$
and let $\eta\in H^2(C)$ be its image after forgetting the flavor
equivariance. Equivariant interpolation on $C\cong\mathbb P^1$ gives
$$
 \left.V^{(1)}\right|_C
 =
 V_L(\varphi)
 +\frac{\widetilde\eta}{\varphi}
 \bigl(V_R(\varphi)-V_L(\varphi)\bigr).
$$
Consequently, after forgetting the flavor equivariance,
$$
 \left.V^{(1)}\right|_C
 =
 V_U+
 \eta\left.
 \frac{\partial}{\partial\varphi}
 \bigl(V_R(\varphi)-V_L(\varphi)\bigr)
 \right|_{\varphi=0},
 \qquad
 V_U:=V_L(0)=V_R(0).
$$
In degree $(0,0,1)$, the localization calculation of
Subsection~\ref{sssec_kleinian_example} gives
$$
 [z_3]V_L(\varphi)
 =
 \frac{2\hbar\varphi}{q(2\hbar+\varphi-q)},
 \qquad
 [z_3]V_R(\varphi)=0.
$$
It follows that
$$
 [z_3]\left(\left.V^{(1)}\right|_C\right)
 =
 \frac{2\hbar}{q(q-2\hbar)}\,\eta.
$$
Since $\eta\neq0$, the component-valued vertex has a genuine pole at
$q=2\hbar$. This pole is invisible after restriction to any point
$p\in C$: the pullback of $\eta$ to $p$ vanishes, and hence
$\left.V^{(1)}\right|_p=V_U$. Thus it is the restriction to the whole
fixed component, rather than its pointwise restrictions, that obstructs
the Calabi--Yau specialization.
\end{example}

\begin{remark}\label{rem: vertex for  cotangent}
There is, nevertheless, an important class of examples with a
positive-dimensional fixed locus for which the global specialization {\emph{is}}
well-defined. Let
$$
 X=T^*(\operatorname{GL}_n/P)
$$
with its standard type~$A$ Nakajima-quiver presentation, and take
$\mathsf T=\mathbb C_\hbar^\times$, acting by cotangent dilation in the
normalization of~\eqref{action_hbar_quiver}. Then
$$
 X^{\mathsf T}=Y:=\operatorname{GL}_n/P,
$$
but the specialization
$$
 \left.\bigl(V^{(-)}|_Y\bigr)\right|_{q=2\hbar}\colon
 \mathscr H_\hbar[[{\boldsymbol z}]]
 \longrightarrow
 H^*_{\mathbb C_\hbar^\times}(Y)[[{\boldsymbol z}]]
 \cong
 \bigl(H^*(Y)\otimes\mathbb C[\hbar]\bigr)[[{\boldsymbol z}]]
$$
is defined without localization and $\Big(V^{(1)}|_Y\Big)_{q=2\hbar}$ lands in $H^0(Y)[[{\boldsymbol{z}}]]$.
See Section \ref{eq: ssec case of cootangent}
for the details.
\end{remark}

Note that, on the other hand, Conjecture \ref{conj: our version quantum hikita} should be true in general without any assumptions on the quiver and the torus $\mathsf{T}$.

The following proposition will be crucial for our approach as it allows us to reduce the proof of Conjecture \ref{conj: our version quantum hikita} to the case of a ``maximal possible'' torus $\mathsf{T}$. Then, whenever this ``large'' torus has isolated fixed points, Conjecture \ref{conj: our version quantum hikita} for this particular torus reduces to Conjecture \ref{conj: classical reformulation quantum hikita}.

\begin{proposition}
Assume that Conjecture \ref{conj: our version quantum hikita} holds for some torus $\mathsf{T}$. Then it also holds for any subtorus $\mathsf{T}' \subset \mathsf{T}$.
\end{proposition}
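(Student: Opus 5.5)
The plan is to show that everything in Conjecture~\ref{conj: our version quantum hikita} for $\mathsf{T}'$ is obtained from the corresponding objects for $\mathsf{T}$ by base change along $\mathfrak C=\mathbb C[\mathsf t]\to\mathfrak C'=\mathbb C[\mathsf t']$. The isomorphism for $\mathsf T'$ is then the base change of the isomorphism for $\mathsf T$. Since both sides are quotients of the master module, it suffices to compare kernels: we need $\ker(\mathscr H_\hbar[[\boldsymbol z]]\to Q^{\mathrm{PSZ},\mathsf T'}_{q=2\hbar})$ to coincide with $\ker(\mathscr H_\hbar[[\boldsymbol z]]\to \operatorname{GrTr}(\mathcal A_{\mathsf t'}))$. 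Here $\mathscr H_\hbar$ for $\mathsf T'$ is $\mathscr H_\hbar\otimes_{\mathfrak C}\mathfrak C'$, and $\mathcal A_{\mathsf t'}$ is the corresponding partial specialization of $\mathcal A$.

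\textbf{Geometric side.} The capped vertex is defined by virtual localization-free pushforwards on quasimap spaces with proper evaluation. It is therefore compatible with restriction of equivariance: $\widehat V^{(\tau),\mathsf T}|_{\mathsf t'}=\widehat V^{(\tau),\mathsf T'}$, and this commutes with setting $q=2\hbar$. This is unlike the bare vertex of Warning~\ref{war: specializations do not commute}, where localization intervenes. The $D$-module structures on $Q^{\mathrm{PSZ}}$ (Definition~\ref{quantDmodrev:def}) are characterized by the capped vertex being a $D$-homomorphism, so they also base change. Surjectivity and freeness of $H^*_{\mathsf T}(\widetilde{\mathcal M}_H)$ over $\mathfrak C$ \cite[Theorem~7.3.5]{Nak_quiver_and_fd_reps} give
\[
Q^{\mathrm{PSZ},\mathsf T}_{q=2\hbar}\otimes_{\mathfrak C}\mathfrak C'\simeq Q^{\mathrm{PSZ},\mathsf T'}_{q=2\hbar}.
\]
The kernel of the $\mathsf T'$ capped vertex is the image of the base-changed $\mathsf T$ kernel, by the same splitting argument as in the proof of Proposition~\ref{prop:fixed-locus-vertex-map}.

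\textbf{Coulomb side.} $\operatorname{GrTr}^{\mathrm{rat}}$ is defined by a right-exact construction, so as in Lemma~\ref{lem:spec_commutes_with_GrTr_rat} we get $\operatorname{GrTr}^{\mathrm{rat}}(\mathcal A)\otimes_{\mathfrak C}\mathfrak C'\simeq\operatorname{GrTr}^{\mathrm{rat}}(\mathcal A_{\mathsf t'})$. The main obstacle is passing this through the $\mathfrak m$-adic completion. The plan is to use the hypothesis itself. Since Conjecture~\ref{conj: our version quantum hikita} holds for $\mathsf T$, $\operatorname{GrTr}(\mathcal A)\simeq Q^{\mathrm{PSZ},\mathsf T}_{q=2\hbar}$ is free over $\mathfrak C[[\boldsymbol z]]$. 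Hence each truncation $\operatorname{GrTr}(\mathcal A)_n$ is free, in particular flat, over $\mathfrak C$. The Mittag--Leffler argument of Lemma~\ref{lem:specialization}(3) then shows that specialization along the regular sequence cutting out $\mathsf t'\subset\mathsf t$ commutes with the inverse limit (with gradings, since $\mathsf t'$ is a linear subspace cut out by homogeneous linear forms). This yields
\[
\operatorname{GrTr}(\mathcal A)\otimes_{\mathfrak C}\mathfrak C'\simeq\operatorname{GrTr}(\mathcal A_{\mathsf t'}),
\]
compatibly with the tautological maps from $\mathscr H_\hbar[[\boldsymbol z]]$ and with the $D_{q=2\hbar}$-actions. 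The $D$-algebra for $\mathsf T'$ is likewise the base change, with the pairing with $d$ unaffected.

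Finally, tensor the isomorphism $Q^{\mathrm{PSZ},\mathsf T}_{q=2\hbar}\simeq\operatorname{GrTr}(\mathcal A)$ of quotients of $\mathscr H_\hbar[[\boldsymbol z]]$ with $\mathfrak C'$ (completed in $\boldsymbol z$). Commutativity of the triangle is preserved, so we obtain the required isomorphism for $\mathsf T'$ as quotients of $\mathscr H_\hbar\otimes\mathfrak C'[[\boldsymbol z]]$. It is unique, since both maps from the master module are surjective.
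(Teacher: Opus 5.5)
Your proposal is correct and takes the same route as the paper. The paper's proof combines compatibility of the capped vertex with restriction of equivariance (Proposition~\ref{Vasyapoprosil}) on the Higgs side with the flatness and Mittag--Leffler base-change argument of Lemma~\ref{lem:specialization}(3) on the Coulomb side. Your observation that the required flatness of $\operatorname{GrTr}(\mathcal A)_n$ follows from the hypothesis itself, via freeness of $H^*_{\mathsf T}(\widetilde{\mathcal M}_H)$, supplies a point the paper leaves implicit.
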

\begin{proof}
This follows from Proposition \ref{Vasyapoprosil} below combined with Lemma \ref{lem:specialization}(3). 
\end{proof}

In particular, whenever we know Conjecture \ref{conj: classical reformulation quantum hikita} holds for some $\mathsf{T}=\mathsf{F} \times \mathbb{C}^\times_\hbar$, then it also holds for the {\emph{trivial}} flavor torus and $\hbar=1$ (from the Coulomb branch perspective this corresponds to the most {\emph{singular}} quantization parameter, compare with Section \ref{eq: relation between modules}). In this case Conjecture \ref{conj: our version quantum hikita} boils down to the existence of the following commutative diagram
\begin{equation}\label{eq:conj reformulated after specialization}
\begin{tikzcd}
	& {\GZ_{\hbar}[[{\boldsymbol{z}}]]} & \\
	{H^*(\widetilde{\mathcal{M}}_H^{\mathbb{C}^\times_\hbar})} && {\operatorname{GrTr}(\mathcal{A}_{\hbar=1}),}
	\arrow["\widehat{V}^{(-)}_{\hbar=1,q=2}"', from=1-2, to=2-1]
	\arrow[from=1-2, to=2-3]
	\arrow["\simeq", from=2-1, to=2-3]
\end{tikzcd}
\end{equation}
Proposition \ref{nonlocvertex} then implies that $\operatorname{GrTr}(\mathcal{A}_{\hbar=1})$ has a collection of distinguished solutions labeled by the connected components of $\widetilde{\mathcal{M}}_H^{\mathbb{C}^\times_\hbar}$ and considered as functionals on $\mathscr{H}_\hbar[[{\boldsymbol{z}}]]$.

\subsection{\texorpdfstring{$\on{GrTr}(\mathcal{A})$}{GrTrA} for generic parameters}
In this section we make the following assumption which will be checked in Section \ref{sec:ADE-Hilbert} for the cases of interest to us.

\begin{definition}
We will say that an $\mathcal{A}$-module $M$ is a ``Verma-type module'' if the following conditions hold.
\begin{enumerate}
    \item The action of $\mathsf{a} \curvearrowright M$ decomposes $M$ as a direct sum of eigenspaces, and each of these eigenspaces is free of finite rank over $\mathfrak{C}$.
    \item The graded character of $M$ is an element of $z^{\xi} \cdot (1+\mathfrak{m}_\hbar\mathfrak{C}[[{\boldsymbol{z}}]]_\hbar)$ for some linear functional $\xi\colon \mathsf{a} \rightarrow \mathsf{f}^*\oplus \mathbb{C}\hbar$. We call $\xi$ the {\emph{highest weight}} of $M$.
\end{enumerate}
\end{definition}

If $M$ is a Verma-type module with highest weight $\xi$, then the corresponding $\mathsf{a}$-weight space $M(\xi)$ has rank one over $\mathfrak{C}$. In particular, every element of $\mathscr{H}_\hbar$ acts on $M(\xi)$ via the multiplication by some element of $\mathfrak{C}$.

\begin{assumption}\label{ass:our_main_ass_modules_dim_est}
\begin{enumerate}[label=(\arabic*)]
\item There exists a finite collection of Verma-type modules $\{M_p\mid p \in S\}$ over $\mathcal{A}$ labeled by some finite set $S$, such that the map 
\begin{equation*}
\mathscr{H}_\hbar \rightarrow \bigoplus_{p \in S}\mathfrak{C}
\end{equation*}
induced by the action of $\mathscr{H}_\hbar$ on $M_p(\xi_p)$ becomes {\emph{surjective}} after tensoring by $\mathfrak{K}=\operatorname{Frac}(\mathfrak{C})$.
\item We have $\dim_{\mathbb C}\mathbb{C}[\mathcal{M}_C^\theta] \leqslant |S|$.
\end{enumerate}
\end{assumption}

\begin{remark}
 We expect Assumption \ref{ass:our_main_ass_modules_dim_est}(2) to always hold for $|S|=\operatorname{dim}H^*(\widetilde{\mathcal{M}}_H)$. 
 
 We expect that Assumption \ref{ass:our_main_ass_modules_dim_est}(1) holds whenever the points of $\widetilde{\mathcal{M}}_H^{\mathsf{F}}$ are isolated. The modules that Botta and Tamagni obtain via in \cite{BT26} should hopefully provide the desired family.
\end{remark}

\begin{proposition}\label{prop: tr-iso}
Let $\operatorname{GrTr}(\mathcal{A})_{\mathrm{loc}}$ be the  completed localization to $\mathfrak{K}$. Then the maps $\operatorname{tr}_{M_p}$ induce an isomorphism of $\mathfrak{K}[[{\boldsymbol{z}}]]$-modules:
\begin{equation*}
\bigoplus_{p\in S} \operatorname{tr}_{M_p}\colon \operatorname{GrTr}(\mathcal{A})_{\mathrm{loc}} \iso \bigoplus_{p\in S} z^{\xi_p} \cdot \mathfrak{K}[[{\boldsymbol{z}}]].
\end{equation*}
\end{proposition}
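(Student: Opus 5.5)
Let $\Phi:=\bigoplus_{p\in S}\operatorname{tr}_{M_p}$. The plan is to show that $\Phi$ is a map between modules of the same rank which is surjective, and then to compare ranks. First I will check that $\Phi$ is well defined on $\operatorname{GrTr}(\mathcal{A})$. Each $\operatorname{tr}_{M_p}$ kills the ideal $J$ by the usual cyclicity computation: for $a\in\mathcal{A}^\xi$ and $b\in\mathcal{A}^{-\xi}$, the element $ab$ preserves each weight space, $b$ moves $M_\eta$ into $M_{\eta-2\hbar\xi}$, and $\operatorname{tr}(ab|_{M_\eta})=\operatorname{tr}(ba|_{M_{\eta+2\hbar\xi}})$, which produces the factor $z^\xi$. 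Since the characters lie in $z^{\xi_p}\mathfrak{C}[[\boldsymbol z]]_\hbar$, $\Phi$ is continuous in the $\mathfrak m$-adic topology and extends to the completion and then to the completed localization. After factoring out $z^{\xi_p}$, I identify each target summand with $\mathfrak{K}[[\boldsymbol z]]$, using the identification of $z^{2\hbar\xi}$ with $z^\xi$ implicit in $\mathcal{Z}_\xi$.

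Next I will prove that $\Phi_{\mathrm{loc}}$ is surjective. Both sides are $\mathfrak m$-adically complete, and the target is finite free over $\mathfrak{K}[[\boldsymbol z]]$, so by completed Nakayama it suffices to check surjectivity modulo $\mathfrak m$. Modulo $\mathfrak m$, the trace $\operatorname{tr}_{M_p}(h)$ of $h\in\GZ_\hbar$ reduces to the scalar by which $h$ acts on the rank-one space $M_p(\xi_p)$; this uses condition (2) in the definition of Verma-type modules. Composing with the surjection $\GZ_\hbar\to \operatorname{GrTr}(\mathcal{A})/\mathfrak m$ from Lemma~\ref{lem: H to GrTr is surjective}, the reduction of $\Phi$ modulo $\mathfrak m$ becomes, after $\otimes\mathfrak K$, precisely the map of Assumption~\ref{ass:our_main_ass_modules_dim_est}(1). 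That map is surjective, so $\Phi_{\mathrm{loc}}$ is surjective.

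The remaining and main step is injectivity, which I will get from a rank bound. By Lemma~\ref{lem: GrTr fin gen}, $\operatorname{GrTr}(\mathcal{A})$ is finitely generated over $\mathfrak{C}[[\boldsymbol z]]$, so its completed localization is finitely generated over $\mathfrak{K}[[\boldsymbol z]]$. I want to show that it is generated by at most $|S|$ elements. The grading is nonnegative (Lemma~\ref{lem: H to GrTr is surjective}) and $\operatorname{GrTr}(\mathcal{A})_{\mathsf f=\hbar=0}\simeq\mathbb{C}[\mathcal{M}_C^\theta][[\boldsymbol z]]$ by Lemma~\ref{lem:specialization}(1). Choose homogeneous lifts of a $\mathbb C$-basis of $\mathbb{C}[\mathcal M_C^\theta]$; by Assumption~\ref{ass:our_main_ass_modules_dim_est}(2) there are at most $|S|$ of them. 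By graded Nakayama over the positively graded ring $\mathfrak{C}$, applied levelwise to each $\operatorname{GrTr}(\mathcal{A})_n$ (each is finite in every degree by Lemma~\ref{lem:finite-level GrTr}) and then passed to the limit, these lifts generate $\operatorname{GrTr}(\mathcal{A})$ over $\mathfrak{C}[[\boldsymbol z]]$. This is the delicate point, because one must make sure that graded Nakayama applies compatibly with the completion; I would argue at each finite level $n$ and then use Mittag--Leffler as in the proof of Lemma~\ref{lem:specialization}. Once this is done, we have a surjection $\mathfrak{K}[[\boldsymbol z]]^{m}\twoheadrightarrow \operatorname{GrTr}(\mathcal{A})_{\mathrm{loc}}\twoheadrightarrow\mathfrak{K}[[\boldsymbol z]]^{|S|}$ with $m\le |S|$.

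Finally, a surjective endomorphism of a finite free module over the commutative ring $\mathfrak{K}[[\boldsymbol z]]$ is an isomorphism, and a surjection from a free module of rank $m$ onto one of rank $|S|$ forces $m\geq|S|$. Hence $m=|S|$, the composite is an isomorphism, and both maps are isomorphisms. In particular $\Phi_{\mathrm{loc}}$ is an isomorphism.
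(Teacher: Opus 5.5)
Your proposal is correct and follows essentially the paper's proof. Surjectivity is proved by Nakayama, reduced to $\boldsymbol{z}=0$ and Assumption~\ref{ass:our_main_ass_modules_dim_est}(1). Injectivity comes from a size bound using graded Nakayama, Lemma~\ref{lem:specialization}(1) and Assumption~\ref{ass:our_main_ass_modules_dim_est}(2). The only difference is that you count generators over $\mathfrak{K}[[\boldsymbol{z}]]$ and use that a surjection between free modules of equal finite rank is an isomorphism, whereas the paper compares $\mathfrak{K}$-dimensions modulo each $\mathfrak{m}_{\mathfrak{K}}^n$.

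There is one small sign slip in your well-definedness check. Since $b$ sends $M_\eta$ to $M_{\eta-2\hbar\xi}$, the cyclicity identity is $\operatorname{tr}(ab|_{M_\eta})=\operatorname{tr}(ba|_{M_{\eta-2\hbar\xi}})$, not $M_{\eta+2\hbar\xi}$. This is what gives $\operatorname{tr}_M(ab)=z^{2\hbar\xi}\operatorname{tr}_M(ba)$, i.e.\ the factor $z^{\xi}$ you claim.
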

\begin{proof}
We denote by $\mathfrak{m}_{\mathfrak{K}}$ the ideal generated by $\mathfrak{m}$ in $\mathfrak{K}[{\boldsymbol{z}}]$ and, abusing notation, consider all objects modulo $\mathfrak{m}_{\mathfrak{K}}^n$.
It is enough to prove the claim after taking the quotient by $\mathfrak{m}_{\mathfrak{K}}^n$ for $n \geqslant 1$ (use that both $\mathfrak{K}[[{\boldsymbol{z}}]]$-modules are complete).

We claim that the morphism $\bigoplus_{p\in S} \operatorname{tr}_{M_p}$ is surjective. By Nakayama's lemma for the ring $\mathfrak K[{\boldsymbol{z}}]/\mathfrak{m}_{\mathfrak K}^n$, combined with Lemma~\ref{lem: GrTr fin gen}, it is enough to show that the morphism
$$
\mathsf{C}_{\theta}(\mathcal{A})\otimes_{\mathfrak C}\operatorname{Frac}(\mathfrak C)
\longrightarrow \bigoplus_{p\in S}\operatorname{Frac}(\mathfrak C)
$$
is surjective. This follows from Assumption~\ref{ass:our_main_ass_modules_dim_est}(1) and Lemma~\ref{prop: surj onto cartan subquot}.

It remains to prove that $\bigoplus_{p\in S} \operatorname{tr}_{M_p}$ is injective. Consider
$$
\on{GrTr}_{n}(\mathcal{A})_{\mathrm{loc}}:=\operatorname{GrTr}(\mathcal A)_{\mathrm{loc}}/\mathfrak m_{\mathfrak K}^n\operatorname{GrTr}(\mathcal A)_{\mathrm{loc}}
$$
as a finitely generated module over $\operatorname{Frac}(\mathfrak{C})$ by Lemma~\ref{lem: GrTr fin gen}. Then one has
\begin{multline}\label{eq: comparison}
\dim_{\operatorname{Frac}(\mathfrak C)} \on{GrTr}_{n}(\mathcal{A})_{\mathrm{loc}}
\leqslant \dim_{\mathbb{C}}\left(\operatorname{GrTr}(\mathcal A)_{\mathsf f=\hbar=0}/\mathfrak m^n\right)
= {}\\
\dim_{\mathbb C} \mathbb C[\mathcal M_C^{\theta}] \cdot \dim_{\mathbb{C}}(\mathbb{C}[{\boldsymbol z}]/\mathfrak m^n),
\end{multline}
where the last equality follows from Lemma~\ref{lem:specialization}(1).

Equation~\eqref{eq: comparison} and Assumption~\ref{ass:our_main_ass_modules_dim_est}(2) show that the dimension over $\operatorname{Frac}(\mathfrak C)$ of the source is not greater than that of the target. Since $\bigoplus_{p\in S} \operatorname{tr}_{M_p}$ is surjective, it must be an isomorphism.
\end{proof}

Let us denote by $\mathfrak m_{\mathfrak C}$ the ideal generated by $\mathfrak m$ in $\mathfrak C[{\boldsymbol{z}}]$.
\begin{lemma}\label{lem: freequot}
For every $n\geqslant 1$, the module
$$
\operatorname{GrTr}(\mathcal A)_n:=\operatorname{GrTr}(\mathcal A)/\mathfrak m_{\mathfrak{C}}^n\operatorname{GrTr}(\mathcal A)
$$
is free over $\mathfrak C[{\boldsymbol z}]/\mathfrak m_{\mathfrak C}^n$.
\end{lemma}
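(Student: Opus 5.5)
We prove that $\operatorname{GrTr}(\mathcal A)_n$ is free over $\mathfrak C[{\boldsymbol z}]/\mathfrak m_{\mathfrak C}^n$ (under Assumption~\ref{ass:our_main_ass_modules_dim_est}) by comparing the fiber over $\mathsf f=\hbar=0$ with the generic fiber, using the graded structure to pass from a rank count to freeness. Set $N:=\operatorname{GrTr}(\mathcal A)_n$ and $R_n:=\mathbb C[{\boldsymbol z}]/\mathfrak m^n$, so that $N$ is a module over $\mathfrak C\otimes R_n=\mathfrak C[{\boldsymbol z}]/\mathfrak m_{\mathfrak C}^n$. By Lemma~\ref{lem:finite-level GrTr}, $N$ is a finitely generated, nonnegatively graded $\mathfrak C\otimes R_n$-module; the grading is cohomological, with $\mathfrak C=\mathbb C[\mathsf f,\hbar]$ in positive degrees and $z$ in degree zero. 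By Lemma~\ref{lem:specialization}(1), the fiber $N_{\mathsf f=\hbar=0}$ is isomorphic to $\mathbb C[\mathcal M_C^\theta]\otimes R_n$. Here one must check that the Mittag--Leffler argument there also gives $(\operatorname{GrTr}(\mathcal A)/\mathfrak m^n)_{\hbar=0}\simeq\operatorname{GrTr}(\mathcal A_{\hbar=0})_n$, which is right exactness; setting $\mathsf f=0$ is handled the same way. This fiber is free over $R_n$ of rank $d:=\dim\mathbb C[\mathcal M_C^\theta]$.

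Next we lift a basis. Choose homogeneous elements $e_1,\dots,e_d\in N$ whose images form an $R_n$-basis of $N_{\mathsf f=\hbar=0}$. Since $\mathfrak C$ sits in positive degrees and $N$ is bounded below, graded Nakayama over the augmentation ideal $(\mathsf f,\hbar)$ shows that the $e_i$ generate $N$ over $\mathfrak C\otimes R_n$. This gives a surjection $\pi\colon(\mathfrak C\otimes R_n)^{d}\twoheadrightarrow N$. It remains to prove that $\ker\pi=0$.

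Injectivity is the main step, and for it we use the generic fiber. By Proposition~\ref{prop: tr-iso}, reduced modulo $\mathfrak m_{\mathfrak K}^n$, the localization $N\otimes_{\mathfrak C}\mathfrak K$ is isomorphic to $\bigoplus_{p\in S}z^{\xi_p}\mathfrak K[{\boldsymbol z}]/\mathfrak m_{\mathfrak K}^n$. Its $\mathfrak K$-dimension is therefore $|S|\cdot\dim R_n$. On the other hand, $\pi\otimes\mathfrak K$ is a surjection from a space of dimension $d\cdot\dim R_n$. Combining this with Assumption~\ref{ass:our_main_ass_modules_dim_est}(2), $d\leqslant|S|$, forces $d=|S|$, so $\pi\otimes\mathfrak K$ is an isomorphism of finite-dimensional $\mathfrak K$-spaces. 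Hence $\ker\pi$ is a $\mathfrak C$-torsion submodule of the free $\mathfrak C$-module $(\mathfrak C\otimes R_n)^d$. Since $\mathfrak C$ is a domain and $R_n$ is finite-dimensional over $\mathbb C$, this free module is torsion-free over $\mathfrak C$, so $\ker\pi=0$ and $N$ is free of rank $|S|$.

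The point I expect to need the most care is identifying the generic fiber, namely that $\operatorname{GrTr}(\mathcal A)_{\mathrm{loc}}/\mathfrak m_{\mathfrak K}^n$ is the localization of $N$. This holds because localization is exact and commutes with quotients by $\mathfrak m^n$. One also has to make sure that the trace isomorphism of Proposition~\ref{prop: tr-iso} was established at each finite level $n$, as its proof indicates. A secondary check is that graded Nakayama applies even though $R_n$ lives in degree zero. This is fine because we apply it over the positively graded ideal $(\mathsf f,\hbar)$, with $N$ finitely generated and bounded below in degree.
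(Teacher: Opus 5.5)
Your proposal is correct and follows the paper's own proof. The paper likewise uses graded Nakayama to reduce freeness to a rank comparison: the generic rank is $|S|\cdot\dim R_n$ by Proposition~\ref{prop: tr-iso}, and the $\mathsf f=\hbar=0$ fiber has dimension at most this by Assumption~\ref{ass:our_main_ass_modules_dim_est}(2) and the estimate \eqref{eq: comparison}. Your lift of an $R_n$-basis and the torsion-free-kernel argument simply spell out that Nakayama step, and they make explicit that the freeness holds over $\mathfrak C[\boldsymbol z]/\mathfrak m_{\mathfrak C}^n$ and not only over $\mathfrak C$.
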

\begin{proof}

By Lemma~\ref{lem: H to GrTr is surjective}, $\operatorname{GrTr}(\mathcal A)$ is a $\mathbb{Z}_{\geqslant 0}$-graded $\mathfrak{C}$-module.

By the graded Nakayama lemma, it is therefore enough to prove that each $\operatorname{GrTr}(\mathcal A)_n$ has the same dimension after localization to $\operatorname{Frac}(\mathfrak C)$ as after specialization at $\mathsf{f}=\hbar=0$.

From Proposition~\ref{prop: tr-iso}, we know that the dimension of the generic specialization is equal to
$$
|S|\cdot \dim_{\mathfrak K}\left(\mathfrak K[{\boldsymbol z}]/\mathfrak m_{\mathfrak K}^n\right).
$$

Thus the lemma follows from the same estimate as in Equation~\eqref{eq: comparison}.
\end{proof}

\section{Proofs of Theorems \ref{mainthm:reduction}, \ref{maintheorem: ADE} and applications}\label{sec: our approach}

\subsection{Proof of Theorem \ref{mainthm:reduction}}

\begin{lemma}\label{lem: when traces verma determine GrTr}
Assuming Assumption \ref{ass:our_main_ass_modules_dim_est} holds, there is an embedding of $D$-modules:
\begin{equation*}
\bigoplus_{p\in S} \operatorname{tr}_{M_p}\colon \on{GrTr}(\mathcal{A}) \hookrightarrow \bigoplus_{p\in S}\mathcal{Z}_{\xi_p}.
\end{equation*}
\end{lemma}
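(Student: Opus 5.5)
The embedding will be deduced from Proposition~\ref{prop: tr-iso} together with the freeness statement of Lemma~\ref{lem: freequot}. We already know that each $\operatorname{tr}_{M_p}$ is a homomorphism of $D_{q=2\hbar}$-modules $\operatorname{GrTr}(\mathcal{A})\to\mathcal{Z}_{\xi_p}$ (by \cite[Proposition 3.15]{KMBP}, quoted above). Hence the direct sum is a $D$-module map, and the only content is injectivity.

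The argument runs as follows.

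\begin{enumerate}
\item Since $\operatorname{GrTr}(\mathcal{A})=\varprojlim_n \operatorname{GrTr}(\mathcal{A})_n$ and the traces are $\mathfrak{C}[[{\boldsymbol{z}}]]$-linear and continuous, it suffices to show injectivity modulo $\mathfrak{m}_{\mathfrak C}^n$ for each $n$, compatibly in $n$. Then take the inverse limit, which is left exact. Here one should check that the target modulo $\mathfrak{m}^n$ is the truncation $\bigoplus_p z^{\xi_p}\mathfrak{C}[\boldsymbol z]/\mathfrak{m}_{\mathfrak C}^n$, since the weights of $M_p$ lie in $\xi_p+2\hbar R^-$.

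\item Fix $n$. By Lemma~\ref{lem: freequot}, $\operatorname{GrTr}(\mathcal{A})_n$ is free over $\mathfrak{C}[\boldsymbol z]/\mathfrak{m}_{\mathfrak C}^n$, hence free (in particular torsion-free) over $\mathfrak{C}$. Its rank is finite by Lemma~\ref{lem: GrTr fin gen}. Therefore the localization map $\operatorname{GrTr}(\mathcal{A})_n\to \operatorname{GrTr}(\mathcal{A})_n\otimes_{\mathfrak C}\mathfrak{K}$ is injective.

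\item The localized trace map is an isomorphism modulo $\mathfrak{m}_{\mathfrak K}^n$ by Proposition~\ref{prop: tr-iso}, whose proof works level by level. Chase the commutative square formed by these two maps and the localization map of the target, which is the inclusion $\mathfrak{C}\subset\mathfrak{K}$ coefficientwise. An element of the kernel of $\bigoplus_p\operatorname{tr}_{M_p}$ at level $n$ maps to zero after localization, so it vanishes by the previous step. Note that the truncated traces are well defined on $\operatorname{GrTr}(\mathcal{A})_n$, because $\operatorname{tr}_{M_p}$ carries $\mathfrak{m}^n$ into $\mathfrak{m}_\hbar^n$-multiples.
\end{enumerate}

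I expect the main obstacle to be the passage to the inverse limit: identifying $\operatorname{GrTr}(\mathcal{A})/\mathfrak{m}^n$ with the finite-level objects $\operatorname{GrTr}(\mathcal{A})_n$ used in the definition, and checking that an element that is zero at every level is zero in $\operatorname{GrTr}(\mathcal{A})$. The latter holds because $\operatorname{GrTr}(\mathcal{A})$ is defined as the graded inverse limit, and the graded components at each level are finite-dimensional by Lemma~\ref{lem:finite-level GrTr}. A secondary subtlety is matching the variables $z^\xi$ with $\xi\in R^-$ against $z^\eta$ with $\eta\in 2\hbar R^-$. I would handle this through the identification $\mathbb{C}[\boldsymbol z]\simeq\mathbb{C}[\boldsymbol z]_\hbar$, under which the truncations correspond.
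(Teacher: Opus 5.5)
Your proposal is correct and follows the paper's proof: you reduce to the $\mathfrak{m}$-adic truncations, use the freeness from Lemma~\ref{lem: freequot} so that injectivity can be checked after localizing to $\mathfrak{K}$, and conclude by the level-by-level isomorphism of Proposition~\ref{prop: tr-iso}. The bookkeeping points you flag are glossed over in the paper as well. These are the identification of $\operatorname{GrTr}(\mathcal{A})/\mathfrak{m}^n$ with the finite-level quotients and the matching of $R^-$ with $2\hbar R^-$ in the target.
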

\begin{proof}
It is enough to prove the corresponding statement for all quotients modulo powers of the ideal $\mathfrak{m}$.

It remains to note that, since these quotients are free by Lemma~\ref{lem: freequot}, injectivity can be checked after localization; hence the lemma follows from Proposition~\ref{prop: tr-iso}.
\end{proof}

\begin{definition}\label{eq:def normalized graded trace} For a Verma-type module $M_p$ with highest weight $\xi$, we define the \textit{normalized} graded trace $\widetilde{\operatorname{tr}}_{M_p}(-)$ by the formula 
\begin{equation*}
\widetilde{\operatorname{tr}}_{M_p}(-) := z^{-\xi} \operatorname{tr}_{M_p}(-).
\end{equation*}
\end{definition}

Assume $\widetilde{\mathcal{M}}_H^{\mathsf{F}}$ is finite and symplectic duality gives a bijection
\begin{equation*}
\widetilde{\mathcal{M}}_H^{\mathsf{F}} \ni p \longleftrightarrow p^\vee \in \widetilde{\mathcal{M}}_C^\theta.
\end{equation*}
\begin{conjecture}\label{conj: Verma traces as vertex functions}
For every $\tau \in \mathscr{H}_\hbar[[{\boldsymbol{z}}]]$
\begin{equation}\label{eq:equality vertex trace theta}
\widetilde{\operatorname{tr}}_{\Theta(p^\vee)}(\tau) = V^{(\tau)}_{q=2\hbar,p}.
\end{equation}
\end{conjecture}

Note that by Proposition \ref{prop: limver} below, the RHS of (\ref{eq:equality vertex trace theta}) factorizes and boils down to the case when our quiver variety is a {\emph{point}}. We expect that the LHS of (\ref{eq:equality vertex trace theta}) has the same factorization property (compare with our proof of Theorem \ref{maintheorem: ADE} in Section \ref{sec:ADE-Hilbert}).

We expect that Conjecture \ref{conj: Verma traces as vertex functions} holds more generally. 
Namely, there is no need to assume that the fixed points $\widetilde{\mathcal{M}}_H^{{\mathsf{T}}}$ are isolated.
For concreteness, let us assume the quiver $\Gamma$ is of type $ADE$ and let ${\mathsf{F}}$ be some flavor torus; for example, $\mathsf{F}$ can be trivial.

As in \cite{KTWWY}, assume that $f \in \mathsf{f}$ is {\emph{integral}}, i.e., it comes from some cocharacter $\gamma\colon \mathbb{C}^\times \rightarrow \mathsf{F}$. Let $\tilde{\gamma}$ be the following cocharacter of $\mathsf{T}$:
\begin{equation*}
\tilde{\gamma}\colon \mathbb{C}^\times \rightarrow \mathsf{T}, \quad t \mapsto (\gamma(t),t).
\end{equation*}

By the results of \cite{varprod}, the set of connected components 
$
\pi_0\big(\bigsqcup_{\mathsf{v}}\widetilde{\mathcal{M}}_H^{\tilde{\gamma}}\big)
$
has a $\mathfrak{g}_{\Gamma}$-crystal structure (so-called {\emph{monomial crystal}}). By \cite{KTWWY}, the set of Verma modules for  $\mathcal{A}_{f}$
is labeled by the same set.

\begin{example}
For $f=0$, the corresponding crystal is the one corresponding to the irreducible representation $V(\lambda)$. When $\mathsf{F}$ is the full framing torus and $\gamma$ is generic, one gets the crystal of the tensor product $\bigotimes_{k} V(\lambda_k)$, where $\la=\sum_k \lambda_k$ is the decomposition of $\lambda$ into the sum of fundamentals. 
\end{example}

Let $b$ be an element of the crystal as above and let $\Delta(b)$ be the corresponding Verma module over $\mathcal{A}_f$. Let $Z_b \subset \widetilde{\mathcal{M}}_H^{\tilde{\gamma}(\mathbb{C}^\times)}$ be the corresponding connected component and fix any point $p \in Z_{b}$.
It is natural to formulate the following refinement of Conjecture \ref{conj: Verma traces as vertex functions}. 
\begin{conjecture}\label{eq:extended_conj_tr_Verma}
\begin{equation}\label{eq:conj verma vertex more general}
\widetilde{\operatorname{tr}}_{\Delta(b)}(\tau) = V^{(\tau),\tilde{\gamma}(\mathbb{C}^\times)}_{q=2\hbar,p}.
\end{equation} 
\end{conjecture}
The index $\tilde{\gamma}(\mathbb{C}^\times)$ in (\ref{eq:conj verma vertex more general}) emphasizes that the $q=2\hbar$ specialization is taken in $\tilde{\gamma}(\mathbb{C}^\times)$-equivariant cohomology (compare with Warning \ref{war: specializations do not commute} above).

Conjecture \ref{conj: Verma traces as vertex functions} is an extreme case of Conjecture \ref{eq:extended_conj_tr_Verma} corresponding to $\mathsf{F}$ such that $p \in \widetilde{\mathcal{M}}_H^{\mathsf{F}}$ is isolated and $\gamma$ is generic.

The other extreme case is $f=0$.
Then Conjecture \ref{eq:extended_conj_tr_Verma} describes graded traces of Verma modules in the category $\mathcal{O}$ for $\mathcal{A}_{0}$, i.e., in the most singular block. Note that if $p \in \widetilde{\mathcal{M}}_H^{\mathsf{F}_{\mathsf{w}}}$ is an {\emph{isolated}} torus fixed point, then the corresponding module $\Theta(p)$ after specialization to $0 \in \mathsf{f}$ and $\hbar=1$ is still a well-defined $\mathcal{A}_0$-module.
This object will {\emph{not}} be a Verma module over $\mathcal{A}_0$. Compare with Warning \ref{war: specializations do not commute} above, see also Equation (\ref{eq:two spec are not equal}) and computations in Section \ref{sssec_kleinian_example} below.

\begin{remark}\label{rem: BT construction}
The authors of \cite{BT26} construct an action of the quantized Coulomb branch $\mathcal{A}$ on the critical cohomology of the moduli space of based quasimaps to $\widetilde{\mathcal{M}}_H$. It is natural to expect that their construction provides a categorification of Conjecture \ref{eq:extended_conj_tr_Verma}. When $\widetilde{\mathcal{M}}_H$ is a {\emph{point}}, it is not hard to identify the module constructed in \cite{BT26} (assuming it is finitely generated) with the unique Verma module in the category $\mathcal{O}$ over the corresponding quantized Coulomb branch (compare with \cite[Sections 5, 6]{quiver_slant}).   
\end{remark}

We are now ready to prove Theorem \ref{mainthm:reduction}.
\begin{theorem}\label{prop: numerical implies Hikita}
Assume Assumption \ref{ass:our_main_ass_modules_dim_est}(2) holds,  the family $\{M_p\}_{p \in S}$ is indexed by $\widetilde{\mathcal{M}}_H^{\mathsf{F}}$, and the vertex-trace identity (\ref{eq:equality vertex trace theta}) holds for this family. Then Conjecture \ref{conj: classical reformulation quantum hikita} holds. So, 
Conjecture~\ref{conj: our version quantum hikita} also holds.
\end{theorem}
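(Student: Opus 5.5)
Both $\GZ_\hbar[[\boldsymbol z]]\to Q^{\mathrm{fix}}_{q=2\hbar}$ (via $V^{(-)}_{q=2\hbar}$) and $\GZ_\hbar[[\boldsymbol z]]\to\operatorname{GrTr}(\mathcal A)$ (the tautological map $\mathfrak V$) are surjective $D$-module maps. Lemma~\ref{lem: H to GrTr is surjective} gives surjectivity of $\mathfrak V$, and Corollary~\ref{cor:fixed-locus-realization-PSZ} gives it for $V^{(-)}_{q=2\hbar}$. It therefore suffices to show that the two maps have the same kernel. I will do this by embedding both targets, compatibly, into the same product of one-dimensional solution modules.

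\emph{Step 1 (verify Assumption~\ref{ass:our_main_ass_modules_dim_est}(1)).} We take $S=\widetilde{\mathcal M}_H^{\mathsf F}$, which is finite, so fixed points are isolated for $\mathsf T$ as well. At $z=0$ the vertex-trace identity (\ref{eq:equality vertex trace theta}) says that $\tau\in\GZ_\hbar$ acts on the highest weight line $M_p(\xi_p)$ by the scalar $\iota_p^*(\tau|_X)$; this is the constant term by Proposition~\ref{prop:fixed-locus-vertex-map}. Hence the map $\GZ_\hbar\to\bigoplus_{p\in S}\mathfrak C$ is the Kirwan surjection (\ref{eq: restr determines algebra}) followed by $\iota^*\colon H^*_{\mathsf T}(X)\to H^*_{\mathsf T}(X^{\mathsf T})$. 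The latter becomes an isomorphism over $\mathfrak K$ by localization. So Assumption~\ref{ass:our_main_ass_modules_dim_est}(1) holds, and together with the hypothesis (2), Lemma~\ref{lem: when traces verma determine GrTr} yields an injective $D$-module map $\bigoplus_p\operatorname{tr}_{M_p}\colon\operatorname{GrTr}(\mathcal A)\hookrightarrow\bigoplus_p\mathcal Z_{\xi_p}$.

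\emph{Step 2 (compare targets).} Multiplication by $z^{-\xi_p}$ identifies $\mathcal Z_{\xi_p}$ with $\mathfrak C[[\boldsymbol z]]$, and $z^{-\eta_p}$ identifies $H^*_{\mathsf T}(p)[[\boldsymbol z]]$ with the same space (\ref{eq: fixed point defines formal solution}). To match the $D$-module structures I must check that $\xi_p=\eta_p$ under the conventions relating $q=2\hbar$ and the $2\hbar R^-$ weight lattice. Both are the scalar by which $\mathsf a\subset\GZ_\hbar$ acts on the highest weight line, respectively on the constant term of the vertex, so this equality is exactly the degree-two part of the $z=0$ identity from Step 1. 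This gives an isomorphism of $D$-modules $\Phi\colon Q^{\mathrm{Clas,fix}}_{q=2\hbar}=\bigoplus_p H^*_{\mathsf T}(p)[[\boldsymbol z]]\xrightarrow{\sim}\bigoplus_p\mathcal Z_{\xi_p}$. By (\ref{eq:equality vertex trace theta}), $\Phi\circ V^{\mathrm{fix},(-)}_{q=2\hbar}=(\bigoplus_p\operatorname{tr}_{M_p})\circ\mathfrak V$ as maps out of $\GZ_\hbar[[\boldsymbol z]]$. The identity is first stated for $\tau\in\GZ_\hbar$ and then extended by $\mathfrak C[[\boldsymbol z]]$-linearity and continuity, using the $D$-linearity of both sides.

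\emph{Step 3 (conclude).} The right-hand composite has kernel $\ker\mathfrak V$, since $\bigoplus_p\operatorname{tr}_{M_p}$ is injective. The left-hand composite has kernel $\ker V^{(-)}_{q=2\hbar}$, since $\Phi$ is an isomorphism. The kernels therefore coincide, and the induced map $Q^{\mathrm{fix}}_{q=2\hbar}\to\operatorname{GrTr}(\mathcal A)$ is a $D$-module isomorphism making (\ref{eq:conj reformulated}) commute, which proves Conjecture~\ref{conj: classical reformulation quantum hikita}. Corollary~\ref{cor:fixed-locus-realization-PSZ} identifies $Q^{\mathrm{fix}}_{q=2\hbar}$ with $Q^{\mathrm{PSZ}}_{q=2\hbar}$ as quotients of $\GZ_\hbar[[\boldsymbol z]]$, which gives Conjecture~\ref{conj: our version quantum hikita} for $\mathsf T$.

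The main obstacle is the bookkeeping in Step 2. One must ensure that the normalizations of weights ($z^{\xi_p}$ versus $z^{\eta_p}$, the factor $2\hbar$, the sign-modified Novikov variables) make $\Phi$ genuinely $D$-linear and not merely linear. One must also ensure that equality of normalized traces at the level of $\GZ_\hbar$ suffices, i.e.\ that the traces factor through $\mathfrak V$ continuously in the $\mathfrak m$-adic topology. I would settle both by checking the action of a single generator $c_1(V_i)\in\mathsf a$ on both sides.
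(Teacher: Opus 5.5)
Your proposal is correct and follows the paper's proof. You deduce Assumption~\ref{ass:our_main_ass_modules_dim_est}(1) from the $z=0$ part of the vertex--trace identity via Kirwan surjectivity and localization; Lemma~\ref{lem: when traces verma determine GrTr} then embeds $\operatorname{GrTr}(\mathcal A)$ into $\bigoplus_p\mathfrak C[[\boldsymbol z]]$ after normalization, and (\ref{eq:equality vertex trace theta}) forces the two quotients of $\mathscr H_\hbar[[\boldsymbol z]]$ to have the same kernel. The Step~2 bookkeeping you flag as the main obstacle is unnecessary: the hypothesis is already stated for all $\tau\in\mathscr H_\hbar[[\boldsymbol z]]$, and since both maps are $D$-linear surjections from the same source, equal kernels alone make the induced isomorphism $D$-linear, so $\Phi$ only needs to be injective (the identity of $\bigoplus_p\mathfrak C[[\boldsymbol z]]$ already works, and no check of $\xi_p=\eta_p$ is required).
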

\begin{proof}
Whenever the vertex--trace identity holds for a collection of Verma-type
modules indexed by isolated fixed points, part~(1) of
Assumption~\ref{ass:our_main_ass_modules_dim_est} is automatic.  Indeed,
after taking the constant term in ${\boldsymbol z}$, the map defined by the pointwise vertices is the
composite of the hyperk\"ahler Kirwan map~\eqref{eq: restr determines algebra}
with restriction to the fixed points.  After tensoring with
$\mathfrak K=\operatorname{Frac}(\mathfrak C)$, this map is surjective by
\eqref{eq:generic-fixed-locus-localization}.  Under the vertex--trace
identity, its components are precisely the characters by which
$\mathscr H_\hbar$ acts on the highest-weight lines, which gives the map in
Assumption~\ref{ass:our_main_ass_modules_dim_est}(1).

Our goal is to identify $Q^{\mathrm{PSZ}}_{q=2\hbar}$ and $\on{GrTr}(\mathcal{A})$ as quotients of $\mathscr{H}_{\hbar}[[{\boldsymbol{z}}]]$. The fact that this isomorphism is compatible with the $D$-module structures would then follow automatically from Lemma~\ref{lem: H to GrTr is surjective} and the definition of $Q^{\mathrm{PSZ}}_{q=2\hbar}$. By Lemma~\ref{lem: when traces verma determine GrTr}, after normalizing each component, we have an embedding
\begin{equation*}
\bigoplus_{p\in S}\widetilde{\operatorname{tr}}_{M_p}\colon
\operatorname{GrTr}(\mathcal{A}) \hookrightarrow \bigoplus_{p\in S} \mathfrak C[[{\boldsymbol{z}}]].
\end{equation*}
Combining it with the surjection $\mathscr{H}_\hbar[[{\boldsymbol{z}}]] \twoheadrightarrow \operatorname{GrTr}(\mathcal{A})$, we conclude that $\operatorname{GrTr}(\mathcal{A})$ identifies with the image of the map
$$
\mathscr{H}_{\hbar}[[{\boldsymbol{z}}]] \longrightarrow \bigoplus_{p\in S}\mathfrak C[[{\boldsymbol{z}}]],
\qquad \tau\longmapsto\bigl(\widetilde{\operatorname{tr}}_{M_p}(\tau)\bigr)_{p\in S}.
$$
By Corollary \ref{cor:fixed-locus-realization-PSZ}
it remains to identify the maps
\begin{align*}
&\mathscr{H}_{\hbar}[[{\boldsymbol{z}}]] \xrightarrow{V^{(-)}_{q=2\hbar}} Q_{q=2\hbar}^{\mathrm{Clas},\mathrm{fix}}=\bigoplus_{p\in\widetilde{\mathcal M}_H^T}H_T^*(p)[[{\boldsymbol{z}}]],\\
&\mathscr{H}_{\hbar}[[{\boldsymbol{z}}]] \longrightarrow \operatorname{GrTr}(\mathcal{A}) \xrightarrow{\ \bigoplus_{p\in S}\widetilde{\operatorname{tr}}_{M_p}\ } \bigoplus_{p\in S}\mathfrak C[[{\boldsymbol{z}}]],
\end{align*}
where the corresponding fixed points identify the terms in the two rightmost direct sums.

Our assumption claiming that the statement of Conjecture~\ref{conj: Verma traces as vertex functions} holds for our collection of modules precisely implies that these maps are equal.
\end{proof}

To summarize, Theorem~\ref{prop: numerical implies Hikita} allows us to deduce our refined version of the quantum Hikita conjecture from the ``numerical'' Conjecture~\ref{conj: Verma traces as vertex functions}, combined with the dimension estimate (Assumption \ref{ass:our_main_ass_modules_dim_est}(2)), which holds automatically, for example, if the {\emph{original}} (non-equivariant and non-quantized) Hikita conjecture (\ref{eq: Hikita conj}) holds.

\subsection{Proof of Theorem \ref{maintheorem: ADE}}\label{sec:ADE-Hilbert}
By Theorem
\ref{prop: numerical implies Hikita}, it is enough to verify Conjecture
\ref{conj: Verma traces as vertex functions} and Assumption
\ref{ass:our_main_ass_modules_dim_est}(2).  We deal with these two statements
separately.  The calculation of the vertex functions used below is carried
out in Theorem~\ref{thm: pointvertex}.

Write
$$
    \lambda=\lambda_1+\cdots+\lambda_r,
$$
where every $\lambda_a$ is minuscule, and set
$$
 S_{\lambda,\mu}:=
 \left\{\underline{\mu}=(\mu_1,\ldots,\mu_r)\ \middle|\
 \mu_a\in W\lambda_a,\quad \sum_{a=1}^r\mu_a=\mu\right\}.
$$

\begin{proposition}\label{prop:ADE-Verma-vertex}
Let $I$ be an ADE quiver and suppose that the framing is supported at
minuscule vertices.  Then Conjecture
\ref{conj: Verma traces as vertex functions} holds for the modules
$M_{\mathsf f}(\underline{\mu})$, $\underline{\mu}\in S_{\lambda,\mu}$.
\end{proposition}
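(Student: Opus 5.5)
The identity $\widetilde{\operatorname{tr}}_{M_{\mathsf f}(\underline{\mu})}(\tau)=V^{(\tau)}_{q=2\hbar,p_{\underline{\mu}}}$ will be proved by factorizing both sides into contributions of the minuscule pieces $\lambda_a$ and then comparing the pieces. First we fix the bijection. For $\mathsf F=\mathsf F_{\mathsf w}$ the full framing torus, the fixed points of $\widetilde{\mathcal M}_H$ are isolated. They are in bijection with $S_{\lambda,\mu}$: a fixed point splits $\widetilde{\mathcal M}_H$ along the decomposition $W=\bigoplus_a W^{(a)}$ of the framing into lines at minuscule nodes. This gives a product of quiver varieties $\mathcal M(\mathsf v^{(a)},\omega_{i_a})$, each of which is a point with weight $\mu_a=\lambda_a-\sum_i \mathsf v^{(a)}_i\alpha_i\in W\lambda_a$. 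Under this bijection, $p_{\underline{\mu}}$ will be matched with $M_{\mathsf f}(\underline{\mu})$.

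\emph{Step 1: reduction to generators.} Both sides are $\mathbb{C}[[{\boldsymbol z}]]$-linear functionals on $\mathscr H_\hbar[[{\boldsymbol z}]]$. Moreover, $\mathscr H_\hbar=\mathbb{C}[\mathfrak t_{\mathsf v}]^{W_{\mathsf v}}\otimes\mathfrak C$ is generated as a $\mathfrak C$-module by products of the coefficients of the series ${\bf A}_i(u)$. It therefore suffices to compare the two sides on descendants $\tau$ given by products $\prod_j {\bf A}_{i_j}(u_j)$, or, more symmetrically, on the generating functions $\prod_i\prod_{k}{\bf A}_i(u_{i,k})$. On the trace side we use Proposition~\ref{tensorprod} and (\ref{eq:gr_tr_M_mu}). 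Since the multiplicativity there comes from the Cartan part of the coproduct (\cite[Theorem 3.14]{hernandez_zhang}), it holds for the whole commutative algebra $\mathscr H_\hbar$ acting on weight spaces. Hence the normalized trace of any symmetric descendant factors as the corresponding product over $a$ of normalized traces of $L_{\mu_a,\mathsf f^{(a)}}$, where the Chern roots split as a disjoint union.

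\emph{Step 2: factorization of the vertex.} On the geometric side we invoke the factorization property Proposition~\ref{prop: limver}. At $q=2\hbar$, the point vertex $V^{(\tau)}_{p_{\underline{\mu}}}$ equals the product of point vertices for the one-dimensional framings $W^{(a)}$, with $\tau$ evaluated on the union of the tautological Chern roots. This is the same splitting rule as on the trace side, so the claim reduces to the case $r=1$. There $\mathsf w=\omega_{i}$ with $i$ minuscule, and $\widetilde{\mathcal M}_H$ is a point.

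\emph{Step 3: the single minuscule case.} Here Theorem~\ref{thm: pointvertex} computes $V^{(\tau)}_{q=2\hbar}$ as a sum over reverse plane partitions on the dominant-minuscule heap $U$ attached to $\mu_a\in W\lambda_a$. On the trace side we use the explicit normalized character of the extremal module $L_{\mu_a}$ from \cite{min_chamber_mod, krylov-klyuev-minu}, cf. \cite[Equation (22)]{quiver_slant}. Its weight vectors are indexed by the same reverse plane partitions. Moreover, the eigenvalues of the ${\bf A}_i(u)$ on the basis vector indexed by $\phi$ are the $\ell$-weights recorded by the Chern roots $\tau(\phi)$, shifted by $2\hbar$ times the heights $\phi(x)$. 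We then match the two formulas termwise: the monomials $\prod_x z_{\boldsymbol\pi(x)}^{\phi(x)}$ correspond to the weights $\xi-2\hbar\sum_x\phi(x)\alpha_{\boldsymbol\pi(x)}$, and the values $\tau(\phi)$ correspond to the $\mathscr H_\hbar$-eigenvalues.

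I expect Step~3 to be the main obstacle. The difficulty is making the conventions agree exactly, namely the $2\hbar$ shifts, the sign-modified Novikov variables ${\boldsymbol z}^{\mathrm{tr}}$, and the normalization of the Cartan generators $A_i^{(p)}$ against $c_p(\mathcal V_i)$. I would first check the $A_1$ case with $\mathsf v=\mathsf w=1$, using the explicit $D$-module relation $c-z(c+2\hbar)$, to calibrate these conventions. A secondary point is to verify that the multiplicativity in Step~1 really holds for all of $\mathscr H_\hbar$ and not only for the coefficients of ${\bf A}_i(u)$; this follows because both sides are algebra-compatible evaluations on the Chern roots.
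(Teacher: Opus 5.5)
Your proposal is correct and follows essentially the same route as the paper: reduce to a single minuscule summand using Proposition~\ref{tensorprod} on the trace side and Example~\ref{fixpoints} with Proposition~\ref{prop: limver} on the vertex side, then match the reverse-plane-partition formula of Theorem~\ref{thm: pointvertex} with the reverse-plane-partition formula for the normalized trace of the extremal module from \cite{min_chamber_mod} (or \cite{krylov-klyuev-minu} in types $A$ and $D$). Your Step~1 adds useful care that the paper skips when it simply calls the normalized trace ``multiplicative'': you read multiplicativity as a statement about joint generalized eigenvalues of the whole Cartan algebra (i.e.\ $q$-characters), which is literal on the group-like generating functions $\prod_{i,k}\mathbf{A}_i(u_{i,k})$ and amounts to Chern-root splitting for general descendants.
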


\begin{proof}
We first reduce to the case $r=1$.  By \eqref{def_M_module},
$M_{\mathsf f}(\underline{\mu})$ is the tensor product of the minuscule
chamber modules $L_{\mu_a,\mathsf f^{(a)}}$.  Proposition
\ref{tensorprod}, together with the additivity of the highest weight, shows
that the normalized graded trace is multiplicative under this tensor
product.  On the Higgs side, the corresponding decomposition, see Example \ref{fixpoints}, is 
$$
 \widetilde{\mathcal M}_{H,\theta}(\mathsf v,\mathsf w)^{\mathsf A}
 =\bigsqcup_{\underline{\mu}\in S_{\lambda,\mu}}
   \prod_{a=1}^r
   \widetilde{\mathcal M}_{H,\theta}
   (\mathsf v^{(a)},\mathsf w^{(a)}).
$$
Proposition~\ref{prop: limver} below gives the same factorization for the
$q=2\hbar$ specialized vertex function.  Notice that the normalization is
compatible with this factorization: if the two highest weights are
$\xi_1,\xi_2$, then the highest weight of the tensor product is
$\xi_1+\xi_2$, and hence
$z^{-(\xi_1+\xi_2)}=z^{-\xi_1}z^{-\xi_2}$.

It remains to consider a minuscule $\lambda$ and $\mu\in W\lambda$.  The
corresponding quiver variety is a point.  If $w$ is the minimal Weyl-group
element such that $\mu=w\lambda$, the normalized trace of the minuscule
chamber module is the sum over {\emph{reverse plane partitions}} on the heap
$H(w)$ (see Section \ref{ssec: dominant minuscule heaps} for the details). This is proven in \cite{min_chamber_mod} by using the KLRW-approach to the representations of Coulomb branches (see \cite{cat_O_slices_and_categor}) and then reducing the question to \cite{kleshchev-ram}.  For $G=AD$, this is also proven in \cite{krylov-klyuev-minu} by completely different techniques, namely by identifying this trace with the sphere trace $T_{\mathsf{sph}}$ (the identification also works for type $E$) and then computing $T_{\mathsf{sph}}$ by residues. 
Theorem~\ref{thm: pointvertex} below gives precisely the
same sum (over reverse plane partitions on the heap $H(w)$) for the $q=2\hbar$ specialization of the descendant vertex function.  The insertion and the
Cartan generators are evaluated on the same collection
$\phi \in\operatorname{rpp}(H(w))$, so the equality holds as an equality
of functionals on $\mathscr{H}_\hbar[[{\boldsymbol{z}}]]$.  This proves the proposition.
\end{proof}

\begin{proposition}\label{prop:ADE-dimension-assumption}
Let $I$ be an ADE quiver and suppose that the framing is supported at
minuscule vertices.  Then Assumption
\ref{ass:our_main_ass_modules_dim_est}(2) holds for the collection
$$
 \bigl\{M_{\mathsf f}(\underline{\mu})\ \bigm|\
       \underline{\mu}\in S_{\lambda,\mu}\bigr\}.
$$
\end{proposition}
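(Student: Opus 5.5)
The plan is to reduce Assumption \ref{ass:our_main_ass_modules_dim_est}(2) to the non-quantized Hikita conjecture, as suggested in Remark \ref{rem: what need for approach}. Concretely, the target inequality is
\[
\dim_{\mathbb C}\mathbb C[\mathcal M_C^\theta]\leqslant |S_{\lambda,\mu}|.
\]
By \cite[Theorem 3.10]{BFN19}, $\mathcal M_C\simeq\overline{\mathcal W}^\lambda_\mu$. For ADE type with minuscule framing, the Hikita conjecture $\mathbb C[(\overline{\mathcal W}^\lambda_\mu)^{\theta}]\simeq H^*(\widetilde{\mathcal M}_H(\mathsf v,\mathsf w))$ is known; I would cite Kamnitzer--Tingley--Webster--Weekes--Yacobi \cite{KTWWY} together with its extension to non-dominant $\mu$ in \cite{krylov_shykov}. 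Hence $\dim\mathbb C[\mathcal M_C^\theta]=\dim H^*(\widetilde{\mathcal M}_H)$, and it remains to show $\dim H^*(\widetilde{\mathcal M}_H)=|S_{\lambda,\mu}|$.

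For the cohomological count I would use the flavor torus $\mathsf F$ attached to the decomposition $\lambda=\lambda_1+\cdots+\lambda_r$. Its fixed locus factorizes as in the decomposition quoted in the proof of Proposition \ref{prop:ADE-Verma-vertex} (Example \ref{fixpoints}): it is the disjoint union over $\underline\mu\in S_{\lambda,\mu}$ of products of quiver varieties $\widetilde{\mathcal M}_H(\mathsf v^{(a)},\mathsf w^{(a)})$. Each factor has fundamental framing $\lambda_a$ minuscule and weight $\mu_a\in W\lambda_a$, so it is a single point. Thus $\widetilde{\mathcal M}_H^{\mathsf F}$ is finite and in bijection with $S_{\lambda,\mu}$. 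Since $H^*_{\mathsf F}(\widetilde{\mathcal M}_H)$ is free by \cite[Theorem 7.3.5]{Nak_quiver_and_fd_reps}, the localization theorem gives $\dim H^*(\widetilde{\mathcal M}_H)=\operatorname{rank}H^*_{\mathsf F}=|\widetilde{\mathcal M}_H^{\mathsf F}|=|S_{\lambda,\mu}|$. Odd cohomology vanishes, so no parity issues arise.

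The main obstacle is ensuring that the Hikita isomorphism is actually available in the generality needed, namely arbitrary (possibly non-dominant, hence non-conical) $\mu$ with minuscule framing. If the literature only covers dominant $\mu$, I would instead prove the inequality directly, which is all that is required. By Lemma \ref{prop: surj onto cartan subquot}, $\mathbb C[\mathcal M_C^\theta]$ is a quotient of $\GZ$, and I would bound its dimension by degenerating: $\dim\mathbb C[\mathcal M_C^\theta]\leqslant\dim\mathsf C_\theta(\mathcal A_{f})$ for generic $f$, by upper semicontinuity in the flat-or-not family over $\mathfrak C$ via graded Nakayama as in Lemma \ref{lem: GrTr fin gen}. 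The generic Cartan subquotient is then controlled by the number of simple modules in $\mathcal O_\theta(\mathcal A_f)$. Using the truncated shifted Yangian description and the KTWWY classification, its simples are labeled by the product monomial crystal, which for generic integral $f$ is the tensor-product crystal of minuscule $V(\lambda_a)$ of weight $\mu$, with cardinality $|S_{\lambda,\mu}|$. Semisimplicity of $\mathsf C_\theta(\mathcal A_f)$ at generic $f$ would then give equality of dimension with the number of one-dimensional representations, completing the bound.
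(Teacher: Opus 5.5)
Your main argument is the paper's proof. You invoke the Hikita isomorphism $\mathbb C[\mathcal M_C^\theta]\simeq H^*(\widetilde{\mathcal M}_H)$, then get $\dim H^*(\widetilde{\mathcal M}_H)=|\widetilde{\mathcal M}_H^{\mathsf F}|=|S_{\lambda,\mu}|$ from freeness and localization, which in fact gives equality. The only correction is the reference for non-dominant $\mu$, which is genuinely needed since no conicity is assumed. The paper combines \cite[Theorem~8.1]{KTWWY} for dominant $\mu$ with \cite[Theorem~A.7]{dk} for arbitrary $\mu$, not \cite{krylov_shykov}. It also notes that the $E_7$, $E_8$ restrictions in those references are vacuous when the framing is supported at minuscule vertices.

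Your fallback does not work and should be dropped. $\mathsf C_\theta(\mathcal A)=\operatorname{GrTr}(\mathcal A)/\mathfrak m\operatorname{GrTr}(\mathcal A)$ is a finitely generated $\mathfrak C$-module. Upper semicontinuity of fiber dimension therefore says the fiber at $\mathsf f=\hbar=0$ can only be larger than the generic fiber. This gives $\dim\mathbb C[\mathcal M_C^\theta]\geqslant\dim\mathsf C_\theta(\mathcal A_f)$ for generic $f$, which is the opposite of the inequality you need.

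The inequality $\leqslant$ is equivalent to flatness of $\mathsf C_\theta(\mathcal A)$ at the origin. That flatness is exactly what Assumption~\ref{ass:our_main_ass_modules_dim_est}(2) is used to establish in Lemma~\ref{lem: freequot}, so the fallback is circular. Counting simples of $\mathcal O_\theta(\mathcal A_f)$ at generic $f$, even with semisimplicity, only bounds the generic fiber from below; that is the role of part~(1). Some independent control of the special fiber, such as the Hikita isomorphism, cannot be avoided.
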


\begin{proof}

The Hikita isomorphism for generalized affine-Grassmannian
slices gives
$$
 \mathbb C[\mathcal M_C^{\theta}]
 \simeq H^*(\widetilde{\mathcal M}_H).
$$
When $\mu$ is dominant, this is
\cite[Theorem~8.1]{KTWWY}.  For arbitrary $\mu$, which is needed here
because we do not impose the conicity assumption, it is
\cite[Theorem~A.7]{dk}.  The restrictions in types $E_7$ and $E_8$ in the
 references are automatic when the framing is supported at
minuscule vertices.  The framing torus has isolated fixed points indexed
by $S_{\lambda,\mu}$; hence equivariant localization and the freeness of
equivariant cohomology give
$$
 \dim_{\mathbb C}\mathbb C[\mathcal M_C^{\theta}]
 =\dim_{\mathbb C}H^*(\widetilde{\mathcal M}_H)
 =|S_{\lambda,\mu}|.
$$
This proves part~(2), in fact with equality.
\end{proof}

We now take the Jordan quiver with $\mathsf v=n$ and $\mathsf w=r$, so that
the Higgs branch is the ADHM space $\qv(n,r)$ (also known as the Gieseker variety). Let
$\mathsf P_r(n)$ denote the set of $r$-tuples of partitions
$\ul{\mathsf Y}=(\mathsf Y^{(1)},\ldots,\mathsf Y^{(r)})$ such that
$\sum_s|\mathsf Y^{(s)}|=n$.

\begin{proposition}\label{prop:Gieseker-Verma-vertex}
Conjecture~\ref{conj: Verma traces as vertex functions} holds for the
ADHM space $\qv(n,r)$.
\end{proposition}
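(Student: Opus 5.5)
We follow the same two-step pattern as in Proposition~\ref{prop:ADE-Verma-vertex}: factorize both sides over the framing torus fixed points, then prove the rank-one identity by matching a combinatorial formula on each side. For the Jordan quiver with $\mathsf v=n$, $\mathsf w=r$, the Coulomb branch is $\operatorname{Sym}^n(\mathbb C^2\times\mathbb C^\times)$-type data deformed by $r$ flavor parameters. Its quantization $\mathcal A$ is a spherical (trigonometric/rational) Cherednik-type algebra, identified with a quotient of the shifted affine Yangian of $\mathfrak{gl}_1$. The $\mathsf F\times\mathbb C^\times_\hbar$-fixed points of $\qv(n,r)$ are isolated and labeled by $\mathsf P_r(n)$. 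We take the family of modules indexed by $\mathsf P_r(n)$ to be the $r$-fold tensor products (via the coproduct of the affine Yangian, or equivalently the Coulomb branch comultiplication for splitting $\mathsf w=1+\cdots+1$) of the rank-one modules $L_{f_s}$ attached to the Jordan quiver with $\mathsf w=1$. The weight spaces of these tensor products are labeled by $r$-tuples of partitions.

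\textbf{Step 1 (factorization).} On the Higgs side, Proposition~\ref{prop: limver} should give that the $q=2\hbar$ specialized vertex at $p_{\ul{\mathsf Y}}$ factors as a product over $s$ of the vertices of $\qv(|\mathsf Y^{(s)}|,1)=\operatorname{Hilb}^{|\mathsf Y^{(s)}|}(\mathbb C^2)$ at the monomial ideal $\mathsf Y^{(s)}$, evaluated at the shifted equivariant parameters. Here the descendant $\tau$, a symmetric function in the Chern roots of $V$, is split via $\mathbf A(u)$ as a product of generating series. On the Coulomb side, the analogue of Proposition~\ref{tensorprod} for the affine Yangian gives multiplicativity of the $\mathbf A(u)$-traces and additivity of the highest weights. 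Hence the normalized traces factor in the same way, and we reduce to $r=1$.

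\textbf{Step 2 (rank one).} For $r=1$, the torus fixed points of $\operatorname{Hilb}^n(\mathbb C^2)$ are partitions $\mathsf Y$ with $|\mathsf Y|=n$.

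On the vertex side, we compute by localization on quasimap spaces, as in Section~\ref{sec:vertpoint}. Fixed quasimaps to $p_{\mathsf Y}$ of degree $d$ are described by boxes of $\mathsf Y$ labeled with nonnegative integers, i.e.\ by reverse plane partitions on $\mathsf Y$ viewed as a heap. At $q=2\hbar$, the tangent-weight factors should cancel pairwise, as they do in the proof of Theorem~\ref{thm: pointvertex}, leaving
\begin{equation*}
\sum_{\phi\in\rpp(\mathsf Y)}\tau(\phi)\,z^{\sum\phi}.
\end{equation*}

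On the trace side, the rank-one module $L_{f}$ over the spherical Cherednik/affine Yangian algebra is the Fock-type module. Its weight vectors, computed via the Gelfand--Tsetlin subalgebra $\mathscr H_\hbar$, are again indexed by such labeled diagrams, with eigenvalues given by the multiset of contents shifted by $2\hbar\phi(x)$. Matching eigenvalues box by box then yields the equality of functionals on $\mathscr H_\hbar[[\boldsymbol z]]$.

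\textbf{Main obstacle.} The hardest step is the $q=2\hbar$ cancellation for the Hilbert scheme vertex with arbitrary descendants. The Jordan quiver contributes the extra adjoint factor $\hbar$-shifted by $t_1,t_2$, and since we work $\mathbb C^\times_\hbar\times\mathsf F$-equivariantly we must keep only $\hbar$ and $f$, not a generic two-torus. The relevant difficulty is verifying that the localization sum collapses to the reverse-plane-partition sum, which is not manifest because non-rpp fixed quasimaps can a priori contribute. We would first test this against the known $K$-theoretic Hilbert scheme vertex formulas specialized in cohomology, and try to show that non-rpp contributions carry a factor $(q-2\hbar)$ in the numerator. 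A second, lesser obstacle is identifying the character of the Coulomb-side module with a sum over $\rpp(\mathsf Y)$. If a direct computation is hard, we would instead invoke the known weight-space description of Fock modules of the affine Yangian of $\mathfrak{gl}_1$.
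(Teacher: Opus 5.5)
Your proposal has two genuine gaps, one on each side of the identity.

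\textbf{Higgs side.} Your Step 1 localizes only with respect to the framing torus. That lands you on $\prod_s\operatorname{Hilb}^{|\mathsf Y^{(s)}|}(\mathbb C^2)$, and Step 2 then leaves the Hilbert-scheme computation at $q=2\hbar$ as an unresolved ``main obstacle.''

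The missing idea is to apply Proposition~\ref{prop: limver} to the full flavor torus, loop-scaling together with framing. By Example~\ref{fixpointsGieseker}, the fixed component through $p_{\ul{\mathsf Y}}$ is then $\prod_s\qv_{A_\infty}(\mathsf v(\mathsf Y^{(s)}),\delta_0)$. This is a point of a type $A$ quiver variety with minuscule framing, whose heap is the Young diagram $\mathsf Y^{(s)}$. The sign bookkeeping in Proposition~\ref{prop: limver} takes care of the normal directions at $q=2\hbar$, and Theorem~\ref{thm: pointvertex} then gives the sum over $r$-tuples of reverse plane partitions directly.

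Your worry about ``non-rpp fixed quasimaps'' comes from your remark that one should keep only $\hbar$ and the framing parameters. With that torus the fixed points are not isolated. For $r=1$ the framing torus acts trivially and $\mathbb C^\times_\hbar$ scales $\mathbb C^2$ diagonally, so homogeneous ideals form positive-dimensional families, e.g.\ a $\mathbb P^1$ in $\operatorname{Hilb}^2(\mathbb C^2)$. This contradicts your own claim of isolated fixed points and puts you outside the hypotheses of Conjecture~\ref{conj: Verma traces as vertex functions}. Once loop-scaling is included, the torus acts on $V$ with simple spectrum, and the fixed quasimaps are exactly the reverse plane partitions.

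\textbf{Coulomb side.} Your family of modules is not actually defined.
\begin{itemize}
\item The tensor-product mechanism in the ADE case rests on shifted-Yangian coproducts that factor through truncations \cite{comult_shifted,KLLPW}, and on Proposition~\ref{tensorprod} \cite{hernandez_zhang}. No analogous coproduct for the Jordan-quiver Coulomb branch splitting $\mathsf w=1+\cdots+1$, nor any multiplicativity statement for its traces, is available. So ``the analogue of Proposition~\ref{tensorprod} for the affine Yangian'' is an unsupported step.
\item There is no single ``rank-one module'' $L_{f_s}$. For $\mathsf w=1$ the quantized Coulomb branch is the spherical rational Cherednik algebra of $S_m$, whose category $\mathcal O$ has one Verma module for each partition of $m$.
\item The Fock module with basis indexed by all partitions is the affine Yangian's representation on $\bigoplus_m H^*_{\mathsf T}(\operatorname{Hilb}^m(\mathbb C^2))$. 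That is a Higgs-side object. Its weight spaces are not the $\mathsf a$-weight spaces of an $\mathcal A$-module: those are indexed by integers, and the $\mathscr H_\hbar$-eigenbasis inside each one is indexed by reverse plane partitions.
\end{itemize}

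The paper needs no reduction to $r=1$ on this side. By \cite{KN18}, $\mathcal A$ is the spherical cyclotomic rational Cherednik algebra of $G(r,1,n)$. The $S_n\ltimes(\mathbb Z/r\mathbb Z)^n$-invariants of the Verma module $\Delta(\ul{\mathsf Y})$ carry the generalized Jack eigenbasis of \cite{DG10}. This basis is indexed by $r$-tuples of reverse plane partitions on the $\mathsf Y^{(s)}$, with explicit Cartan eigenvalues that match Theorem~\ref{thm: pointvertex} term by term. That is exactly the input your ``lesser obstacle'' is missing.
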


\begin{proof}
Both sides are indexed by $\ul{\mathsf Y}\in\mathsf P_r(n)$. If
$\mathsf v(\mathsf Y^{(s)})$ records the number of boxes of each content
in $\mathsf Y^{(s)}$, the component of the flavor-torus fixed locus
containing the fixed point $p_{\ul{\mathsf Y}}$ is the product
$$
\prod_{s=1}^{r}
\qv_{A_\infty}\bigl(\mathsf v(\mathsf Y^{(s)}),\delta_0\bigr);
$$
see Example~\ref{fixpointsGieseker}. Proposition~\ref{prop: limver}
therefore identifies the vertex function at $p_{\ul{\mathsf Y}}$ with
the vertex function of this product.

By \cite[Theorem~1.1]{KN18} (see also \cite[Theorem 4.1]{Webster2019CyclotomicCherednik} and \cite{BravermanEtingofFinkelberg2020CyclotomicDAHA}), the quantized Coulomb branch of the Jordan
quiver with framing $r$ is identified with the spherical cyclotomic
rational Cherednik algebra of type $G(r,1,n)$. 
We
use $S_n \ltimes (\mathbb{Z}/r\mathbb{Z})^n$-invariants of the Verma module $\Delta(\ul{\mathsf Y})$ over the corresponding cyclotomic rational Cherednik algebra (compare with \cite[Section 2]{DG10}). Its generalized
Jack basis and the eigenvalues of the Cartan generators are given by
\cite[Theorem~1.3 and the proof of Theorem~2.2]{DG10}. This basis is
naturally indexed by $r$-tuples of reverse plane partitions on the
Young diagrams $\mathsf Y^{(s)}$. Comparing the corresponding
eigenvalues with Theorem~\ref{thm: pointvertex} identifies, term by
term, the normalized trace of $\Delta(\ul{\mathsf Y})$ with the
$q=2\hbar$ specialized vertex function at
$p_{\ul{\mathsf Y}}$. This proves the claim.
\end{proof}

\begin{remark} The physical incarnation of the results from \cite{DG10} cited above is given in \cite{GO,GO24}. Moreover, as far as we understand, the upcoming results of \cite{BT26} should give a geometric proof.
\end{remark}

\begin{proposition}\label{prop:Hilb-dimension-assumption}
For the Jordan quiver 
Assumption
\ref{ass:our_main_ass_modules_dim_est}(2) holds.
\end{proposition}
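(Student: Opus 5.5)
We will prove Assumption \ref{ass:our_main_ass_modules_dim_est}(2), namely $\dim_{\mathbb C}\mathbb C[\mathcal M_C^{\theta}]\leqslant |S|$ with $S=\mathsf P_r(n)$, by mirroring the proof of Proposition \ref{prop:ADE-dimension-assumption}. The target number is $|\mathsf P_r(n)|$, the number of $r$-tuples of partitions of total size $n$. By the localization theorem and the freeness of $H^*_{\mathsf T}(\qv(n,r))$ \cite[Theorem~7.3.5]{Nak_quiver_and_fd_reps}, this number equals $\dim_{\mathbb C}H^*(\qv(n,r))$. It is also the number of torus fixed points of the Gieseker variety for the torus $\mathsf F_{\mathsf w}\times\mathbb C^\times$ that rotates the plane. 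So it suffices to establish the classical Hikita statement $\mathbb C[\mathcal M_C^\theta]\simeq H^*(\qv(n,r))$, or even just the inequality.

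The first step is to identify the Coulomb side explicitly. By \cite{KN18}, $\mathcal M_C$ for the Jordan quiver with framing $r$ is $\operatorname{Sym}^n(\mathbb C^2/(\mathbb Z/r\mathbb Z))$. Here $\mathsf A=\mathbb C^\times$ acts with $\theta$ acting on the $r$-fold cyclic quotient, so that the monopole variables have weights $\pm1$. The second step is to apply the known Hikita isomorphism for this pair, which is proved in the literature: for $r=1$ it is Hikita's original computation \cite{H17} (Hilbert scheme versus $\operatorname{Sym}^n\mathbb C^2$), and for general $r$ it follows from the cyclotomic Cherednik/Calogero--Moser description. We would cite \cite{H17} together with \cite{KTWWY} or \cite{dk} where they cover it.

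If no reference cleanly covers general $r$, the fallback is a direct argument. The $\theta$-fixed scheme of $\operatorname{Sym}^n(Y)$, with $Y=\mathbb C^2/\mathbb Z_r$, is cut out in $\mathbb C[Y^n]^{S_n}$ by the ideal generated by positive- and negative-weight invariants. Then $\mathbb C[\mathcal M_C^\theta]$ is the quotient of $(\mathbb C[Y^{\theta}]^{\otimes n})^{S_n}$ by the appropriate ideal. We would bound its dimension using Lemma \ref{prop: surj onto cartan subquot}: $\GZ=\mathbb C[\mathfrak t_n]^{S_n}$ surjects onto $\mathbb C[\mathcal M_C^\theta]$, so the latter is a quotient of symmetric polynomials in $x_1,\dots,x_n$. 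We would then exhibit explicit relations coming from products $M_{\omega^*}M_{\omega}$ of minuscule monopoles, which are of the form $\prod_{s}(x-f_s)$-type polynomials in the Cartan variables. These cut the quotient down to a ring with a monomial spanning set indexed by $r$-multipartitions, in the manner of the Gröbner-basis computations for Hikita's conjecture.

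The main obstacle is this last counting step in the general-$r$ case. Matching the dimension requires controlling the relation ideal exactly, not just producing relations. We would first try to deduce the inequality by degeneration. Proposition \ref{propos_fixed_Coulomb_fin_dim} guarantees $\mathbb C[\mathcal M_C^{\theta}]$ is local and finite-dimensional, and its dimension is upper semicontinuous in the flavor deformation $\mathcal M_{C,\mathsf f}^\theta$. At a generic flavor parameter, the fixed scheme becomes reduced with points in bijection with $\mathsf P_r(n)$. This works provided $\mathbb C[\mathcal M_{C,\mathsf f}^\theta]$ is flat over $\mathsf f$, and that flatness is the point we would need to verify.
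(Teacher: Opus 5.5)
There is a genuine gap. Your reduction is the right one, and the paper's proof is simply a citation: the bound $\dim_{\mathbb C}\mathbb C[\mathcal M_C^\theta]\leqslant|\mathsf P_r(n)|$ is \cite[Corollary A.9]{krylov_shykov}, going back to \cite[Lemma 2.1.4]{Hatano2021FramedModuli}. However, your proposal never actually supplies this bound for general $r$. The references you name do not cover it:
\cite{H17} handles only the Hilbert scheme case $r=1$, while \cite{KTWWY} and \cite{dk} concern ADE quivers (generalized affine Grassmannian slices), not the Jordan quiver. The sentence ``it follows from the cyclotomic Cherednik/Calogero--Moser description'' is not an argument.

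Your fallbacks do not close the gap either.

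The degeneration argument points the wrong way. Upper semicontinuity of $b\mapsto\dim\bigl(\mathbb C[\mathcal M^\theta_{C,\mathsf f}]\otimes k(b)\bigr)$ gives
$\dim\mathbb C[\mathcal M_C^\theta]\geqslant$ (generic fiber count). Assumption~\ref{ass:our_main_ass_modules_dim_est}(2) needs the opposite inequality $\leqslant$.

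The flatness you say you would ``need to verify'' is not an auxiliary point either. $\mathbb C[\mathcal M^\theta_{C,\mathsf f}]$ is a nonnegatively graded module over the positively graded ring $\mathbb C[\mathsf f]$, finitely generated by graded Nakayama. Such a module is free if and only if its fiber at $0$ has dimension equal to its generic rank. So, granting the generic count $|\mathsf P_r(n)|$, flatness is equivalent to the very statement you want to prove, and the argument is circular.

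The relation-counting route is the one that goes in the right direction. For an upper bound you do not need exact control of the ideal; you only need enough explicit relations to exhibit a spanning set of size $|\mathsf P_r(n)|$ in the quotient of $\GZ$ (which surjects by Lemma~\ref{prop: surj onto cartan subquot}). But you leave this computation entirely undone, and it is precisely the content of the result the paper cites.
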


\begin{proof}

This is \cite[Corollary A.9]{krylov_shykov} going back to \cite[Lemma 2.1.4]{Hatano2021FramedModuli}.
\end{proof}

\subsection{Application}\label{sec:graded traces vs twisted traces}
\subsubsection{Graded traces vs twisted traces} Let us discuss the relation between $\on{GrTr}(\mathcal{A})$ and so-called twisted traces which we have briefly mentioned in Section \ref{ssec:intro Coulomb twisted and graded}. This will also shed some light on the definition of this object.

In our setting, we fix $t\in\mathsf A$ and regard it as an automorphism of $\mathcal A=\mathcal A_{f}$ for some $f \in \mathsf{f}$. A $t$-twisted trace is a functional
\begin{equation*}
\operatorname{Tr}_t\colon \mathcal{A} \rightarrow \mathbb{C}
\end{equation*}
such that
\begin{equation*}
\on{Tr}_t(a \cdot b) = \on{Tr}_t(b \cdot t(a))\qquad\text{for all }a,b \in \mathcal{A}.
\end{equation*}

The following simple lemma motivates the definition of $\operatorname{GrTr}(\mathcal{A})$ above.
\begin{lemma}\label{lem:twisted trace determined by A 0}
For $a \in \mathcal{A}^\eta$ such that $\eta \neq {\boldsymbol{0}}$, we have $\operatorname{Tr}_t(a)=0$. In other words, $\operatorname{Tr}_t$ is determined by its restriction to $\mathcal{A}^{\boldsymbol{0}}$.
\end{lemma}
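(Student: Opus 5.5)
The plan is to use the defining property of a twisted trace with $b$ an element of $\mathscr{H}_\hbar$ (specialized at $(f,1)$), namely the quantum comoment map $\iota_{\mathsf a}$. Fix $a \in \mathcal{A}^\eta$ with $\eta \neq \boldsymbol{0}$. Since $\eta\neq 0$ is a character of $\mathsf A$ (an element of $R\simeq\mathbb{Z}^I$), we can choose $x \in \mathsf{a}$ with $\langle \eta, x\rangle \neq 0$. Set $h:=\iota_{\mathsf a}(x)\in\mathscr{H}\subset\mathcal{A}^{\boldsymbol 0}$. By \eqref{eq:moment_map_cond}, after specializing $\hbar=1$, we have $[h,a]=2\langle\eta,x\rangle a$, so
\begin{equation*}
a=\frac{1}{2\langle\eta,x\rangle}\,(ha-ah).
\end{equation*}

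Next we apply $\operatorname{Tr}_t$ to both sides. The twisted-trace relation gives $\operatorname{Tr}_t(ha)=\operatorname{Tr}_t(a\cdot t(h))$. The key point is that $h$ has $\mathsf{A}$-weight $\boldsymbol 0$, so $t(h)=h$ (here $t$ acts on $\mathcal{A}^\xi$ by the scalar $t^\xi$). Hence $\operatorname{Tr}_t(ha)=\operatorname{Tr}_t(ah)$, and therefore $\operatorname{Tr}_t(a)=\frac{1}{2\langle\eta,x\rangle}\bigl(\operatorname{Tr}_t(ha)-\operatorname{Tr}_t(ah)\bigr)=0$. Since $\mathcal{A}=\bigoplus_\xi\mathcal{A}^\xi$ and $\operatorname{Tr}_t$ is linear, it follows that $\operatorname{Tr}_t$ is determined by its restriction to $\mathcal{A}^{\boldsymbol 0}$.

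There is no serious obstacle. The only points requiring care are that $\eta\neq0$ really pairs nontrivially with some $x\in\mathsf a$, which holds because $\mathsf a$ is the Lie algebra of $\mathsf A$ and the $c_1(V_i)$ form the basis dual to the $\alpha_i$ by \eqref{eq:pairing_of_chern_with_simple}, and that the quantization parameter $2\hbar$ specializes to a nonzero number, which it does since $\hbar=1$ in $\mathcal{A}_f$. An alternative route, avoiding the comoment map, works whenever $t^\eta\neq1$: the relation $\operatorname{Tr}_t(1\cdot a)=\operatorname{Tr}_t(a\cdot t(1))$ is vacuous, but $\operatorname{Tr}_t(a\cdot 1)=\operatorname{Tr}_t(1\cdot t(a))=t^\eta\operatorname{Tr}_t(a)$ gives $\operatorname{Tr}_t(a)=0$ directly. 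The comoment argument is preferable because it also covers $t^\eta=1$.
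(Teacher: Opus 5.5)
Your proposal is correct and is essentially the paper's proof: both pick $x\in\mathsf a$ with $\langle\eta,x\rangle\neq0$, use that $t$ fixes $\iota_{\mathsf a}(x)\in\mathcal A^{\boldsymbol 0}$ to get $\operatorname{Tr}_t([\iota_{\mathsf a}(x),a])=0$, and conclude from $[\iota_{\mathsf a}(x),a]=2\langle\eta,x\rangle a$. Your extra remarks (why $\eta\neq0$ pairs nontrivially with some $c_1(V_i)$, and the shortcut when $t^\eta\neq1$) are accurate but not needed.
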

\begin{proof}
Pick $x\in\mathsf a$ such that $\langle\eta,x\rangle \neq 0$. Since $t$ fixes the image of the quantum comoment map, we have
\begin{equation*}
\operatorname{Tr}_t(\iota_{\mathsf a}(x) \cdot a) = \operatorname{Tr}_t(a \cdot t(\iota_{\mathsf a}(x)))=\operatorname{Tr}_t(a \cdot \iota_{\mathsf a}(x)),
\end{equation*}
so $\operatorname{Tr}_t([\iota_{\mathsf a}(x),a])=0$. Since $[\iota_{\mathsf a}(x),a]=2\langle\eta,x\rangle a$, we conclude that $\operatorname{Tr}_t(a)=0$, as desired.
\end{proof}

Looking at the definition of $\on{GrTr}(\mathcal{A})$, we see that $\mathbb{C}[[{\boldsymbol{z}}]]$-linear functionals on $\on{GrTr}(\mathcal{A})$ are precisely ``formal'' versions of twisted traces on $\mathcal{A}$. Namely, let us start with a $\mathbb{C}[[{\boldsymbol{z}}]]$-linear functional
\begin{equation}\label{eq: grtr to c z}
\operatorname{Tr}\colon \on{GrTr}(\mathcal{A}) \rightarrow \mathbb{C}[[{\boldsymbol{z}}]],
\end{equation}
The target of (\ref{eq: grtr to c z}) may vary depending on the context; compare with Warning \ref{intro:warning target tr}.

For every $x \in \on{GrTr}(\mathcal{A})$, the value $\operatorname{Tr}(x)$ is a formal power series in ${\boldsymbol{z}}$. Whenever the values of $\operatorname{Tr}$ admit a well-defined evaluation at ${\boldsymbol{z}}=t$, we obtain the actual $t$-twisted trace by specializing ${\boldsymbol{z}}=t$.
By Lemma~\ref{lem:twisted trace determined by A 0}, any $t$-twisted trace is uniquely determined by its restriction to $\mathcal{A}^{\boldsymbol{0}}$. Thus, whenever it is a specialization of a functional on $\operatorname{GrTr}(\mathcal{A})$, this functional determines it {\emph{uniquely}}. Motivated by this, we call functionals on $\operatorname{GrTr}(\mathcal{A})$ {\emph{graded traces}}.

One important twisted trace was predicted by Gaiotto--Okazaki, who call it the {\emph{sphere trace}}. It should be thought of as a {\emph{correlation function}} of the corresponding gauge theory.
It follows from \cite{zhang-analytical-traces-coulomb} for type $A$ quivers and from \cite{Klyuevres} in general that the functional $T_{\mathrm{sph}}$ is indeed well-defined for {\emph{good or ugly}} theories, as Gaiotto--Okazaki predicted. This functional is given by an explicit integral (see \cite[Equation (2.17)]{GO}).

\subsubsection{An application to graded traces}\label{sec:application of our results}
Let us briefly mention an application of the main result of the paper. It would be interesting to study this in more detail.

Recall that one of the main results of the current paper is an isomorphism of $D$-modules $Q_{q=2\hbar}^{\mathrm{PSZ}} \iso \operatorname{GrTr}(\mathcal{A})$ making the following diagram commutative:

 \begin{equation}\label{eq:our main iso section applications}
\begin{tikzcd}
	& {\GZ_{\hbar}[[{\boldsymbol{z}}]]} & \\
	{Q_{q=2\hbar}^{\mathrm{PSZ}}} && {\operatorname{GrTr}(\mathcal{A}).}
	\arrow["\widehat{V}_{q=2\hbar}^{(-)}"', from=1-2, to=2-1]
	\arrow[from=1-2, to=2-3]
	\arrow["\simeq", from=2-1, to=2-3]
\end{tikzcd}
\end{equation}

One may study specializations of this isomorphism to various points $(f, \hbar_0) \in \mathsf f \oplus \mathbb C\hbar$. Our proofs in this paper apply only to specializations at \textit{generic} points, that is, parameters for which the corresponding fixed locus consists of isolated points. In this case the category $\mathcal{O}_{\theta}(\mathcal{A}_{f,\hbar_0})$ is ``large enough''
in the following sense: for such a generic $(f, \hbar_0)$, the natural morphism
\begin{equation}\label{KGrTr}
K_0(\mathcal{O}_{\theta}(\mathcal{A}_{f,\hbar_0}))\otimes_{\mathbb Z}\mathbb C[[{\boldsymbol{z}}]] \rightarrow \operatorname{Hom}^{\mathrm{cont}}_{\mathbb{C}[[{\boldsymbol{z}}]]}\bigl(\operatorname{GrTr}(\mathcal{A}_{f,\hbar_0}),\mathbb C[[{\boldsymbol{z}}]]\bigr),
\end{equation}
given on the classes of Verma modules by the formula 
\begin{equation*}
[M] \mapsto \widetilde{\operatorname{tr}}_{M}(-),
\end{equation*}
is an isomorphism of $\mathbb C[[{\boldsymbol{z}}]]$-modules.
In other words, for generic values of the parameters, every $\mathbb{C}[[{\boldsymbol{z}}]]$-linear functional on $\on{GrTr}(\mathcal{A}_{f,\hbar_0})$ is a linear combination of normalized graded traces of Verma modules with coefficients in $\mathbb C[[{\boldsymbol{z}}]]$.

However, the specialization to $(f, \hbar_0)$ makes sense for \textit{all} pairs $(f, \hbar_0)$. It is especially interesting for \textit{special} pairs, that is, those for which the fixed locus is nonisolated. Among those, the most interesting are the {\emph{integral}} ones, namely those coming from a cocharacter
\begin{equation*}
\tilde{\gamma}\colon \mathbb{C}^\times \rightarrow \mathsf{F} \times \mathbb{C}^\times_\hbar,\quad t \mapsto (\gamma(t),t)
\end{equation*}
for some $\gamma\colon \mathbb{C}^\times \rightarrow \mathsf{F}$ as above.

In this case, the morphism~\eqref{KGrTr} is not surjective, so there is {\emph{no}} way to read off all graded traces from the set of connected components of the fixed locus. Namely, whenever Conjecture \ref{conj: our version quantum hikita} holds, it automatically implies that functionals
\begin{equation}\label{eq:vertex gives solution of grtr application}
\mathscr{H}_\hbar[[{\boldsymbol{z}}]] \ni \tau    \mapsto V^{(\tau),\tilde{\gamma}(\mathbb{C}^\times)}_{2\hbar=q=2,p}, \quad p \in Z \in \pi_0\Big(\widetilde{\mathcal{M}}_H^{\tilde{\gamma}(\mathbb{C}^\times)}\Big)
\end{equation}
considered in Conjecture \ref{eq:conj verma vertex more general} factor through $\operatorname{GrTr}(\mathcal{A}_f)$ and give {\emph{certain}} solutions of this $D$-module. 

Conjecture \ref{conj: our version quantum hikita} actually gives a geometric description of all graded traces on $\operatorname{GrTr}(\mathcal{A}_f)$ via the {\emph{capped}} vertex.  Recall that by \cite[Theorem 7.3.5]{Nak_quiver_and_fd_reps} there exists a perfect ``integration'' pairing
\begin{equation*}
\int\colon H_*(\widetilde{\mathcal{M}}_H^{\tilde{\gamma}(\mathbb{C}^\times)}) \otimes H_*^T(\mathfrak{L}^{\tilde{\gamma}(\mathbb{C}^\times)}) \rightarrow \mathbb{C},
\end{equation*}
where $\mathfrak L\subset\widetilde{\mathcal M}_H$ is the Lagrangian fiber over the cone point $\widetilde{\mathcal{M}}_H \rightarrow \mathcal{M}_H$.

In particular, using the identification (\ref{eq:our main iso section applications}) between $\operatorname{GrTr}(\mathcal{A}_f)$ and the specialized quantum $D$-module, we get the following corollary describing graded traces as functionals on $\mathscr{H}_\hbar$.

 \begin{corollary}\label{cor: description of graded traces}
 Every graded trace on $\mathcal{A}_{f}$ is given by
\begin{equation*}
\mathscr{H}_\hbar \ni \tau \mapsto \int_{\widetilde{\mathcal M}_H^{\tilde{\gamma}(\mathbb{C}^\times)}} \widehat{V}^{(\tau)}_{2\hbar=q=2}\cap a
\end{equation*}
for some $a \in H_*(\mathfrak{L}^{\tilde{\gamma}(\mathbb{C}^\times)})[[{\boldsymbol{z}}]]$. In particular, every graded trace has a natural $q$-deformation given by the same formula but without imposing the $q=2\hbar$ specialization.
 \end{corollary}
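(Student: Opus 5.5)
The plan is to transport the main isomorphism \eqref{eq:our main iso section applications} to the specialized parameter $(f,1)$ and then dualize. I would run the argument with the torus $\mathsf T'=\tilde\gamma(\mathbb C^\times)$ rather than with $\mathsf F\times\mathbb C^\times_\hbar$.

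\emph{Step 1: specialize the isomorphism.} By the proposition reducing Conjecture \ref{conj: our version quantum hikita} to subtori, the refined quantum Hikita isomorphism for $\mathsf T$ implies the same isomorphism for $\mathsf T'$. On the Coulomb side, Lemma \ref{lem: freequot} says each $\operatorname{GrTr}(\mathcal A)_n$ is free, so Lemma \ref{lem:specialization}(3) identifies the specialization of $\operatorname{GrTr}(\mathcal A)$ at $(f,1)$ with $\operatorname{GrTr}(\mathcal A_f)$. On the Higgs side, $H^*_{\mathsf T'}(\widetilde{\mathcal M}_H)$ is free over $H^*_{\mathsf T'}(\operatorname{pt})=\mathbb C[\hbar]$, so the capped vertex map specializes at $\hbar=1$ (with $q=2$). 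Combining these gives an isomorphism of quotients of $\mathscr H_\hbar[[\boldsymbol z]]$:
\begin{equation*}
H^*_{\mathsf T'}(\widetilde{\mathcal M}_H)|_{\hbar=1}[[\boldsymbol z]]\simeq \operatorname{GrTr}(\mathcal A_f), \qquad \widehat V^{(\tau)}_{2\hbar=q=2}\leftrightarrow [\tau].
\end{equation*}

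\emph{Step 2: localize to the fixed locus.} Next I identify $H^*_{\mathsf T'}(\widetilde{\mathcal M}_H)|_{\hbar=1}$ with $H^*(\widetilde{\mathcal M}_H^{\tilde\gamma(\mathbb C^\times)})$. Equivariant cohomology is free, and the restriction to the fixed locus becomes an isomorphism once $\hbar$ is inverted, by the localization theorem. Specializing $\hbar=1$ therefore gives $H^*_{\mathsf T'}(\widetilde{\mathcal M}_H^{\tilde\gamma})|_{\hbar=1}=H^*(\widetilde{\mathcal M}_H^{\tilde\gamma})$. Along this identification, $\widehat V^{(\tau)}_{2\hbar=q=2}$ is understood as its restriction to the fixed locus, which is the meaning of the integrand in the corollary.

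\emph{Step 3: dualize.} A graded trace is a continuous $\mathbb C[[\boldsymbol z]]$-linear functional on $\operatorname{GrTr}(\mathcal A_f)$. By Steps 1 and 2, this is the same as a continuous $\mathbb C[[\boldsymbol z]]$-linear functional on the free module $H^*(\widetilde{\mathcal M}_H^{\tilde\gamma})[[\boldsymbol z]]$, composed with $\tau\mapsto \widehat V^{(\tau)}$. Such functionals are determined by their values on a basis, so they are given by elements of the dual space extended by $\boldsymbol z$. The perfect pairing of \cite[Theorem 7.3.5]{Nak_quiver_and_fd_reps} between $H^*(\widetilde{\mathcal M}_H^{\tilde\gamma})$ and $H_*(\mathfrak L^{\tilde\gamma})$ identifies this dual with $H_*(\mathfrak L^{\tilde\gamma})[[\boldsymbol z]]$. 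This yields exactly the formula $\tau\mapsto\int \widehat V^{(\tau)}\cap a$. The $q$-deformation statement follows by keeping $q$ free in the capped vertex, since the pairing does not involve $q$.

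\emph{Main obstacle.} The delicate step is Step 1: checking that specialization commutes with the completion and with the identification of quotients of $\mathscr H_\hbar[[\boldsymbol z]]$. Concretely, the kernels of both surjections must specialize to the kernels of the specialized surjections. For the Coulomb side I would argue degreewise modulo $\mathfrak m^n$, using freeness from Lemma \ref{lem: freequot}. For the capped vertex I would use the splitting argument from the proof of Proposition \ref{prop:fixed-locus-vertex-map}.
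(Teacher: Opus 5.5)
Your proposal is correct and follows essentially the same route as the paper. You specialize the refined quantum Hikita isomorphism to $(f,1)$ via the subtorus reduction and Lemma \ref{lem:specialization}(3), identify the specialized equivariant cohomology with $H^*(\widetilde{\mathcal M}_H^{\tilde\gamma(\mathbb C^\times)})$ by the localization theorem, and dualize using Nakajima's perfect pairing \cite[Theorem 7.3.5]{Nak_quiver_and_fd_reps}. One small remark: the flatness needed for Lemma \ref{lem:specialization}(3) follows directly from the Higgs side whenever the conjecture holds, because the isomorphism identifies each truncation of $\operatorname{GrTr}(\mathcal A)$ with a free module. So you need Lemma \ref{lem: freequot} only in the setting where its assumptions are verified.
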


More generally, changing the target in (\ref{eq: grtr to c z}) by some other complete $D$-module $\mathcal{Z}$ (such as $\mathcal{Z}_\xi$ or $\mathcal{Z}_\Xi$) simply corresponds to replacing $H_*(\mathfrak{L}^{\tilde{\gamma}(\mathbb{C}^\times)})[[{\boldsymbol{z}}]]$ by $H_*(\mathfrak{L}^{\tilde{\gamma}(\mathbb{C}^\times)}) \otimes_{\mathbb{C}} \mathcal{Z}$.

In particular, in the terminology of Section~\ref{sec:graded traces vs twisted traces}, it follows that there exists a class $a^{\mathrm{sph}}\in H_*(\mathfrak L^{\tilde{\gamma}(\mathbb{C}^\times)}) \otimes_{\mathbb{C}} \mathcal{Z}$
such that
\begin{equation}\label{eq:T sph via vertex}
T_{\mathrm{sph}}(\tau) = \int_{\widetilde{\mathcal M}_H^{\tilde{\gamma}(\mathbb{C}^\times)}} \widehat{V}^{(\tau)}_{2\hbar=q=2} \cap a^{\mathrm{sph}}.
\end{equation}
It would be interesting to describe this class $a^{\mathrm{sph}}$.

\begin{remark}
One particularly intriguing aspect of this discussion is that $T_{\mathrm{sph}}$ does not always exist: because of some \textit{analytic} issues, it is defined only for good or ugly theories.

On the other hand, the capped vertex function exists whenever the quiver variety is nonempty. 
\end{remark}

Let us finally note that $V^{(-)}$ is {\emph{much}} simpler to compute than $\widehat{V}^{(-)}$ so it would be desirable to have a description of graded traces on $\mathcal{A}_f$ similar to Corollary \ref{cor: description of graded traces} but {\emph{purely}} in terms of $V^{(-)}$ (compare with (\ref{eq:vertex gives solution of grtr application}) above). The main issue is that $V^{(-)}$ itself does not have a $q=2\hbar$-limit (see Example \ref{ex:vertex has pole at hbar q} above). It could still be true that for appropriately chosen classes $a \in H_*(\mathfrak{L}^{\tilde{\gamma}(\mathbb{C}^\times)})(q)[[{\boldsymbol{z}}]]$ 
the integrals
\begin{equation*}
\int_{\widetilde{\mathcal M}_H^{\tilde{\gamma}(\mathbb{C}^\times)}} V^{(\tau)}\cap a
\end{equation*}
have $2\hbar=q=2$ specializations and every graded trace can be obtained this way. In particular, it is possible that one should seek (\ref{eq:T sph via vertex}) with $\widehat{V}^{(\tau)}$ replaced by $V^{(\tau)}$. It would be interesting to figure this out, starting with the hypertoric case.

\begin{remark}
Note that \cite[Section 1.1]{GO} 
mentions that there should be a relation between sphere traces and the quantum Hikita conjecture. We hope that the present Section~\ref{sec:application of our results} provides a framework for this relation.
\end{remark}

\section{Quantum \texorpdfstring{$D$}{D}-module and vertex functions}\label{sec: vertex}

\subsection{Nakajima quiver varieties}\label{sec: quiver}

Retain the notation of Section~\ref{ssec: quiver gauge theories}. For the remainder of Section~\ref{sec: vertex}, we fix $\Gamma$, $\dv$, and $\dw$, take $\theta=(1,\ldots,1)$, and write
$$
\begin{aligned}
\stackqv&=[\mu^{-1}(0)/G_{\dv}],\\
X&=\qv(\dv,\dw)=\widetilde{\mathcal M}_H\\
&=T^*{\bf N}(\dv,\dw)/\!\!/\!\!/_\theta G_{\dv}\\
&=\mu^{-1}(0)^{\theta\text{-}\mathrm{st}}/G_{\dv}.
\end{aligned}
$$

By the trace pairing, we have $\Hom(V,V')^{*} \cong \Hom(V',V)$ so that we can denote a general element of 
$$
T^{*}{\bf N}(\dv,\dw)\cong {\bf N}(\dv,\dw)\oplus {\bf N}(\dv,\dw)^{*}
$$
by a quadruple 
$$
(\{Y_{e}\}_{e \in E}, \{Z_{e}\}_{e \in E}, \{A_{i}\}_{i \in I}, \{B_{i}\}_{i \in I})
$$
where $Y_{e} \in \Hom(V_{t(e)},V_{h(e)})$, $Z_{e} \in \Hom(V_{h(e)},V_{t(e)})$, $A_{i} \in \Hom(W_{i},V_{i})$ and $B_{i} \in \Hom(V_{i},W_{i})$. We abbreviate this by $(Y,Z,A,B)$.

The stability condition is described explicitly by the following proposition.

\begin{proposition}[\cite{GinzburgLectures}, Proposition 5.1.5]
A quadruple $(Y,Z,A,B)\in \mu^{-1}(0)$ is $\theta$-stable if and only if there is no collection of subspaces $T_i\subseteq V_i$, not all equal to $V_i$, preserved by $Y$ and $Z$ and satisfying $T_i\supseteq\Ima A_i$.
\end{proposition}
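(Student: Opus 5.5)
The plan is to deduce the statement from King's GIT characterization of $\chi$-(semi)stability for quiver representations, using the Crawley-Boevey trick to absorb the framing into an extra vertex. First I will replace the framed data $(Y,Z,A,B)$ by an unframed representation $\widehat{x}$ of a quiver $\widehat{\Gamma}$. This quiver has one extra vertex $\infty$ with dimension $1$, and it has $\mathsf{w}_i$ arrows $\infty\to i$ and $\mathsf{w}_i$ arrows $i\to\infty$, coming from the components of $A_i$ and $B_i$ after choosing a basis of $W_i$. The group $G_{\mathsf v}$ equals $\widehat{G}/\mathbb{C}^\times$, where $\widehat{G}=G_{\mathsf v}\times GL_1$ and $\mathbb{C}^\times$ is the diagonal scalar subgroup, which acts trivially. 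Under this identification, $\chi_\theta$ extends to the character $\widehat{\theta}$ of $\widehat{G}$ with $\widehat{\theta}_i=\theta_i=1$ and $\widehat{\theta}_\infty=-\sum_i\mathsf{v}_i$. This choice of $\widehat{\theta}_\infty$ makes $\widehat{\theta}$ trivial on the diagonal, so $\widehat{\theta}\cdot\dim\widehat{x}=0$.

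Second, I will apply King's theorem. It is the Hilbert--Mumford criterion made explicit: a one-parameter subgroup $\lambda$ for which $\lim_{t\to0}\lambda(t)\widehat{x}$ exists is the same as a filtration of $\widehat{x}$ by subrepresentations. Consequently, $\widehat{x}$ is $\widehat{\theta}$-semistable (resp. stable) iff every subrepresentation $\widehat{T}$ satisfies $\widehat{\theta}\cdot\dim\widehat{T}\leqslant 0$ (resp. $<0$ for $\widehat{T}\neq0,\widehat{x}$), with the sign fixed by the convention that $\mu^{-1}(0)^{\theta\text{-}\mathrm{st}}$ is defined using $\chi_\theta$ rather than $\chi_\theta^{-1}$. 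I expect this sign bookkeeping to be the main obstacle: the inequality has to be oriented correctly and matched with the convention $\theta=(1,\ldots,1)$ used to define $\widetilde{\mathcal{M}}_{H,\theta}$. To pin the sign down, I will check it against the simplest example, $\Gamma=A_1$ with $\mathsf{v}=\mathsf{w}=1$. There $\widetilde{\mathcal M}_H$ is a point, so stability must force $A\neq0$; requiring this fixes the orientation.

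Third, I will split the subrepresentations $\widehat{T}=(T_i,T_\infty)$ of $\widehat{x}$ into two cases. If $T_\infty=0$, the pairing is $\widehat{\theta}\cdot\dim\widehat{T}=\sum_i\dim T_i\geqslant0$. With the orientation fixed in the second step, such a $\widehat{T}$ never destabilizes, so these subrepresentations impose no condition. If $T_\infty=\mathbb{C}$, then $\widehat{T}$ being a subrepresentation means exactly that each $T_i$ contains $\operatorname{Im}A_i$ and that the family $(T_i)$ is preserved by $Y$ and $Z$; the arrows $B$ land in $\infty$, so they impose nothing. In this case the pairing is $\sum_i\dim T_i-\sum_i\mathsf{v}_i$, which is negative unless $T_i=V_i$ for all $i$. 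Stability therefore becomes the condition that no collection of such subspaces exists with some $T_i\neq V_i$. This is exactly the condition in the proposition; note that it does not involve the moment map equation, so it holds on all of $T^*\mathbf{N}$ and in particular on $\mu^{-1}(0)$.

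Finally, I will note that the computation above never produces a subrepresentation with $\widehat{\theta}\cdot\dim\widehat{T}=0$ other than $0$ and $\widehat{x}$. Hence, for this $\theta$, semistability coincides with stability. This is also why the stabilizers are trivial (a stable representation has only scalar endomorphisms, and these act trivially), so $G_{\mathsf v}$ acts freely on the stable locus and the quotient is as described earlier.
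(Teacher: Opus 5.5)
\textbf{The gap: the orientation of the stability inequality is never determined.} Your reduction (the Crawley-Boevey vertex $\infty$, $\widehat{\theta}_\infty=-\sum_i\mathsf v_i$, King's criterion, then a case split on $T_\infty$) is the standard one, and it does prove the statement once the orientation is right. The paper gives no argument of its own, only the citation to Ginzburg. Once King's theorem is granted, the orientation is the only substantive point, and this is exactly the step you flagged.

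As written, Step 2 says $\widehat x$ is semistable iff $\widehat{\theta}\cdot\dim\widehat{T}\leqslant 0$ for all subrepresentations. With that orientation, Step 3 comes out the other way.
\begin{itemize}
\item Subrepresentations with $T_\infty=\mathbb C$ have pairing $\leqslant 0$, so they never destabilize.
\item Subrepresentations with $T_\infty=0$ and $T\neq 0$ are exactly the nonzero $Y,Z$-stable collections inside $\ker B$. They have pairing $>0$, so they do destabilize.
\end{itemize}
You would therefore get the dual condition ``no nonzero $Y,Z$-stable $T\subseteq\ker B$''. Step 3 silently uses the opposite orientation ($\geqslant 0$, resp.\ $>0$).

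Your calibration cannot decide between the two orientations. For $\Gamma=A_1$, $\mathsf v=\mathsf w=1$, both give a one-point quotient: $\{A\neq 0,\ B=0\}/\mathbb C^\times$ or $\{A=0,\ B\neq 0\}/\mathbb C^\times$. So ``$\widetilde{\mathcal M}_H$ is a point, hence stability must force $A\neq 0$'' does not follow. Requiring $A\neq 0$ is just assuming the proposition in that case.

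\textbf{How to fix it.} Derive the orientation from the definition of $\chi_\theta$-semistability instead of calibrating against the desired answer. Use King's convention: $x$ is semistable iff $f(x)\neq 0$ for some $f$ with $f(g\cdot x)=\chi_\theta(g)^n f(x)$, $n\geqslant 1$. Use the natural action $A_i\mapsto g_iA_i$, $B_i\mapsto B_ig_i^{-1}$. This is the convention under which the proposition holds with $\chi_\theta=\prod_i\det(g_i)$.
\begin{itemize}
\item If $\lim_{t\to 0}\lambda(t)x$ exists, then $f(\lambda(t)x)=t^{n\langle\chi_\theta,\lambda\rangle}f(x)$ must have a limit, so $\langle\chi_\theta,\lambda\rangle\geqslant 0$. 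This translates into $\widehat{\theta}\cdot\dim\widehat{T}\geqslant 0$, which is King's orientation.
\item Directly: take a collection $T_i\supseteq\operatorname{Im}A_i$ preserved by $Y,Z$, not all equal to $V_i$. The one-parameter subgroup acting by $1$ on $T_i$ and by $t^{-1}$ on a complement has a limit, and its pairing is $-\sum_i\dim(V_i/T_i)<0$.
\item In your $A_1$ example, the semi-invariants of weight $\chi_\theta^n$ are all divisible by $A^n$. That is the honest reason stability means $A\neq 0$.
\end{itemize}
With $\chi_\theta^{-1}$, or the opposite semi-invariant convention, you get the dual condition, so the convention has to be stated explicitly. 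Once it is, your Step 3 and your final paragraph (semistable $=$ stable, trivial stabilizers) go through as written.
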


Fix a torus
$$
\mathsf F\subset \operatorname{Aut}_{G_{\dv}}({\bf N}),
$$
where $\operatorname{Aut}_{G_{\dv}}({\bf N})$ denotes the group of linear automorphisms of ${\bf N}$ commuting with $G_{\dv}$. We call such an $\mathsf F$ a \emph{flavor torus}. Its cotangent lift to $T^*{\bf N}$ preserves the moment map and the $\theta$-stable locus, and hence induces an action on $X$. An additional $\mathbb{C}^{\times}_{\hbar}$ acts on $X$ by the formula
 \begin{equation}\label{action_hbar_quiver}
t \cdot (Y,Z,A,B) = (tY,tZ,t^2A,B).
\end{equation}
We denote by $\hbar$ the inverse of the standard character of
$\mathbb C^\times_{\hbar}$. Thus, for the action
\eqref{action_hbar_quiver}, the coordinates $Y,Z,A,B$ have respective
weights
$$
\hbar^{-1},\qquad \hbar^{-1},\qquad \hbar^{-2},\qquad 1.
$$
In particular, the symplectic form has weight $\hbar^{-2}$. Thus,
$\bT=\mathsf F\times\mathbb C^\times_{\hbar}$ acts on $\qv(\dv,\dw)$.

For $j\in I$, let $\tb_j$ be the rank $\dv_j$ vector bundle on
$[\mu^{-1}(0)/G_{\dv}]$ defined by
\begin{equation}\label{eq: induced bundle}
\tb_j=(\mu^{-1}(0)\times V_j)/G_{\dv},
\end{equation}
where $G_{\dv}$ acts by
$$
g\cdot(x,v)=(g^{-1}x,g_j^{-1}v).
$$
By abuse of notation, we denote by the same symbol its restriction to
the open substack
$$
\qv(\dv,\dw)\subset[\mu^{-1}(0)/G_{\dv}].
$$
The bundle $\tb_j$ is naturally $\bT$-equivariant, with equivariant
structure induced by the action on $\mu^{-1}(0)$. Similarly, the vector
spaces $W_i$ determine topologically trivial equivariant bundles
$\tbw_i$.

A polarization of $X$ compatible with the above action is
$$
T^{1/2}
=
\hbar^{-1}\sum_{e\in E}
\Hom(\tb_{t(e)},\tb_{h(e)})
+
\hbar^{-2}\sum_{i\in I}
\Hom(\tbw_i,\tb_i)
-
\sum_{i\in I}\Hom(\tb_i,\tb_i).
$$
It satisfies
$$
TX=T^{1/2}+\hbar^{-2}(T^{1/2})^\vee
\qquad\text{in }K_{\bT}(X).
$$

\subsection{Quasimaps}\label{quasimaps}

We will use quasimap theory, which we now briefly review, to define our quantum $D$-module. See \cite{qm} and \cite{OkLec} for more details.

From the discussion in Section~\ref{sec: quiver}, $X$ is the stable open substack of the quotient stack $\stackqv$.

\begin{definition}
Let $C$ be a connected projective curve having at worst nodal singularities. A quasimap from $C$ to $X$ is a morphism
$$
f\colon C\longrightarrow\stackqv.
$$
It is called stable if the generic point of every irreducible component of $C$ is mapped to $X$.

More explicitly, let $\qmtbw_{i}=W_{i}\times C$ be the rank $\dw_i$ trivial vector bundle on $C$. A quasimap consists of the data $f=(s,\{\qmtb_{i}\}_{i \in I})$, where
\begin{itemize} 
    \item $\qmtb_{i}$ is a rank $\dv_{i}$ vector bundle over $C$ for $i \in I$ and
    \item $s \in H^{0}\left(C, \mathscr{M} \oplus \hbar^{-2} \mathscr{M}^{\vee} \right)$, where
    $$
\mathscr{M}=\hbar^{-1}\bigoplus_{e \in E} \Hom(\qmtb_{t(e)},\qmtb_{h(e)}) \oplus \hbar^{-2}\bigoplus_{i \in I} \Hom(\qmtbw_{i},\qmtb_{i})
    $$
\end{itemize}
such that $s(p)$ satisfies the moment map equations for all $p \in C$. Equivalently, the quasimap is stable if $s(p)$ is $\theta$-stable away from finitely many points of $C$. A point $p\in C$ is called a nonsingular point of the quasimap if $s(p)$ is $\theta$-stable.

The degree of a quasimap is defined to be 
\begin{equation*}
\deg (s,\{\qmtb_{i}\}_{i \in I})= ( \deg \qmtb_{i})_{i \in I} \in \mathbb{Z}^{I}.
\end{equation*} 
\end{definition}

Let us now explain the types of quasimap moduli spaces that we will consider in this paper, following \cite[Definition 7.2.1]{qm}.

Let $C_0=\mathbb P^1$ and let
$S_{\mathrm{ns}},S_{\mathrm{rel}}\subset C_0$ be disjoint finite
sets.

\begin{definition}
An {\emph{allowed}} source curve is a connected nodal curve $C$ of
arithmetic genus zero together with a regular map
$$
\pi\colon C\longrightarrow C_0
$$
and marked points $\hat p\in C$ for $p\in S_{\mathrm{rel}}$ satisfying
the following conditions.  There is a distinguished irreducible
component $C_{\mathrm{par}}\subset C$ such that
$$
\pi|_{C_{\mathrm{par}}}\colon C_{\mathrm{par}}
\xrightarrow{\sim} C_0
$$
is an isomorphism.  The map $\pi$ is an isomorphism over
$C_0\setminus S_{\mathrm{rel}}$, while for each
$p\in S_{\mathrm{rel}}$ it contracts a possibly empty chain of
rational curves attached to $C_{\mathrm{par}}$ at $p$.  The point
$\hat p$ is a smooth point at the free end of this chain; when the
chain is empty, $\hat p$ is the point of $C_{\mathrm{par}}$
corresponding to $p$. 
\end{definition} 

We can now define the quasimap moduli space we will be dealing with.
\begin{definition}
$\qm_{\ns S_{\mathrm{ns}},\rel S_{\mathrm{rel}}}$ is the moduli
space parameterizing an allowed source curve $C$ together with a stable
quasimap $f\colon C\dashrightarrow X$ which is nonsingular at:
\begin{itemize}
    \item the
points of $C_{\mathrm{par}}$ corresponding to $S_{\mathrm{ns}}$, 
    \item the points $\hat p$ for $p\in S_{\mathrm{rel}}$, 
    \item and every node of
$C$,
\end{itemize}
and whose automorphism group over $C_0$ is finite.
\end{definition}

Fix $S_{\mathrm{ns}}$ and $S_{\mathrm{rel}}$ and denote $\qm:=\qm_{\ns S_{\mathrm{ns}}, \rel S_{\mathrm{rel}}}$. For $d \in \mathbb{Z}^{I}$, let $\qm^{d}\subset \qm$ be the substack of degree $d$ quasimaps.

Let the one-dimensional torus $\mathbb{C}^{\times}_{q}$ act on $\mathbb{P}^{1}$ with $(\mathbb{P}^{1})^{\mathbb{C}^{\times}_{q}}=\{p_1,p_2\}$, and denote the weight of the tangent space at $p_2$ by $q^{-1}$. 
From now on we assume that $S_{\mathrm{ns}},S_{\mathrm{rel}}\subset\{p_1,p_2\}$.

\begin{lemma} The actions of $\bT$ on $X$ and of $\mathbb{C}^{\times}_{q}$ on $\mathbb{P}^{1}$ induce an action of $\bT_{q}:=\bT \times \mathbb{C}^{\times}_{q}$ on $\qm^{d}$. 
\end{lemma}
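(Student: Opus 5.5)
The plan is to build the action in three layers: first on source curves, then on the quasimap data over a fixed source curve, and finally to check that the defining conditions of $\qm^{d}$ are preserved.

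\emph{Action of $\mathbb{C}^\times_q$.} Given $\lambda\in\mathbb{C}^\times_q$, let $\lambda$ also denote the automorphism of $C_0=\mathbb{P}^1$. An allowed source curve $(C,\pi,\{\hat p\})$ is sent to $(C,\lambda\circ\pi,\{\hat p\})$. Since $S_{\mathrm{ns}},S_{\mathrm{rel}}\subset\{p_1,p_2\}$ are fixed by $\lambda$, the new curve is again allowed. The chains over $S_{\mathrm{rel}}$ still lie over the same points, and the point of $C_{\mathrm{par}}$ corresponding to $p\in S_{\mathrm{ns}}$ is unchanged.

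\emph{Action of $\bT$.} For $g\in\bT=\mathsf F\times\mathbb{C}^\times_\hbar$, keep the bundles $\qmtb_i$ and replace the section $s$ by $g\cdot s$. Here $g$ acts fiberwise on $\mathscr M\oplus\hbar^{-2}\mathscr M^\vee$ through its linear action on $T^*{\bf N}$. This is possible because the cotangent lift of $\mathsf F$ commutes with $G_{\dv}$, and because the $\mathbb{C}^\times_\hbar$-action \eqref{action_hbar_quiver} commutes with $G_{\dv}$ as well, acting by scalars on each summand. Consequently the action descends to $\stackqv=[\mu^{-1}(0)/G_{\dv}]$, i.e.\ to morphisms $C\to\stackqv$. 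Equivalently, $g$ acts on the associated bundle $P\times^{G_{\dv}}T^*{\bf N}$, where $P$ is the frame bundle of $\oplus_i\qmtb_i$.

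\emph{Preservation of the conditions.} I will check the following.
\begin{itemize}
\item $\bT$ preserves $\mu^{-1}(0)$: $\mathsf F$ is a Hamiltonian lift commuting with $G_{\dv}$, and $\hbar$ scales $\mu$ by $t^2$, since $YZ$ and $AB$ terms both pick up $t^2$.
\item $\bT$ preserves the $\theta$-stable locus. By the stability criterion of Ginzburg's proposition, the existence of destabilizing subspaces $T_i$ is unchanged under rescaling $Y,Z,A$ and under $\mathsf F$, which acts by $G_{\dv}$-commuting automorphisms preserving the images and invariant subspaces.
\item Hence nonsingular points, stability of generic points, the degree $(\deg\qmtb_i)$, and finiteness of automorphisms over $C_0$ are all preserved. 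For the last point, an automorphism of the transformed object conjugates to one of the original.
\end{itemize}

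\emph{Commutation and algebraicity.} The two actions commute, since one acts on the target and the other on the parametrization of $C_0$. To make this a genuine algebraic action on the stack, I will do the above constructions in families over a base $B$: pull back $s$ along $\bT_q\times B$ and twist by the universal scalars. This gives a morphism $\bT_q\times\qm^d\to\qm^d$ satisfying the action axioms, with the 2-isomorphisms being identities because the bundles are unchanged.

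The only step needing care is the compatibility of the $\mathbb{C}^\times_q$-action with the marked/relative structure, and this is exactly why the hypothesis $S_{\mathrm{ns}},S_{\mathrm{rel}}\subset\{p_1,p_2\}$ is imposed. I expect this to be the main (but mild) obstacle.
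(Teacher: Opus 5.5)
Your proposal is correct. For the $\bT$-part it matches the paper: you post-compose the quasimap with the $\bT$-action on $\stackqv$. You also spell out the preservation of $\mu^{-1}(0)$, of stability and of the nonsingularity conditions, which the paper takes for granted from Section~\ref{sec: quiver}.

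For the $\mathbb{C}^\times_q$-part you take a genuinely different route. The paper lifts $\lambda\in\mathbb{C}^\times_q$ to an automorphism $\tilde\lambda$ of every allowed source curve $C$: it is transported through $\pi|_{C_{\mathrm{par}}}$ on the parametrized component and is trivial on the contracted chains. These pieces glue because the attaching points lie over $p_1,p_2$, and the result makes $\pi$ equivariant. The paper then acts by pulling the quasimap back along this lift. You instead leave $C$ and $f$ untouched and replace the structure map $\pi$ by $\lambda\circ\pi$.

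The two actions agree on the stack: $\tilde\lambda$ is itself an isomorphism $(C,\lambda\circ\pi,f)\simeq(C,\pi,f\circ\tilde\lambda^{-1})$ over $C_0$, and it fixes the marked points $\hat p$. Your version therefore needs no choice of lift and no gluing check, and it is visibly a strict action in families.

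What the paper's version buys is a concrete action on the curves themselves, and that is what gets used later. A $\mathbb{C}^\times_q$-fixed point of your action is an object together with isomorphisms $(C,\lambda\circ\pi,f)\simeq(C,\pi,f)$, i.e.\ a lift of $\lambda$ to $C$ compatible with $f$. To identify fixed loci and normal weights, one has to recover exactly the paper's lift, for instance the smoothing direction $\psi_2\otimes\mathbb{C}_q$ at the node over $p_2$ in the proof of Proposition~\ref{prop: cappingeq}.
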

\begin{proof}
Let us show that the $\mathbb C_q^\times$-action on $C_0=\mathbb{P}^1$  lifts canonically to
every allowed source curve $C$.  On $C_{\mathrm{par}}$ it is transported
through the isomorphism
$\pi|_{C_{\mathrm{par}}}\colon C_{\mathrm{par}}\xrightarrow{\sim}C_0$,
and it acts trivially on every component contracted by $\pi$.  These
actions glue because all attaching points are fixed by
$\mathbb C_q^\times$, and they make $\pi$ equivariant.  Together with
the $\bT$-action on $X$, pullback by this action defines the desired action of $\bT_q$
on $\qm^d$.
\end{proof}

There are three types of evaluation morphisms:
\begin{itemize}
    \item $\ev_{p}: \qm \to X$ for $p \in S_{\mathrm{ns}}$, 
    \item $\widehat{\ev}_{p}: \qm \to X$ for $p \in S_{\mathrm{rel}}$,
    \item $\ev_{p}: \qm \to \stackqv$ for $p \in C_{0} \setminus (S_{\mathrm{ns}} \cup S_{\mathrm{rel}})$.
\end{itemize}

If $S_{\mathrm{ns}}=\emptyset$, then the evaluation morphisms of the second type are proper. In general, the restrictions of the first two evaluation morphisms to $(\qm^{d})^{\mathbb{C}^{\times}_{q}}$ are proper. Thus, the pushforwards in equivariant cohomology can be defined by localization.

\begin{proposition}[\cite{qm}]\label{prop:pot}
 The moduli space $\qm^{d}_{\ns S_{\mathrm{ns}}, \rel S_{\mathrm{rel}}}$ is a Deligne--Mumford stack of finite type, equipped with a perfect obstruction theory.
\end{proposition}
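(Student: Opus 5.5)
The plan is to follow the general framework of \cite{qm}, adapted to the relative/nonsingular conditions at the points of $S_{\mathrm{rel}}\cup S_{\mathrm{ns}}\subset\{p_1,p_2\}$. I treat the Deligne--Mumford and finite type assertions separately from the obstruction theory.

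\textbf{Deligne--Mumford and finite type.} First I consider the moduli of quasimaps from the fixed curve $C_0=\mathbb P^1$ without relative conditions. For fixed $d$ it is an open substack of a stack of pairs $(\{\qmtb_i\}, s)$. Here I use that $\mu^{-1}(0)$ is affine and $\theta$ is fixed. Stability (generic point lands in $X$) bounds the family of bundles $\qmtb_i$, as in \cite[Section 3]{qm}. The sections $\Ima A_i$ generate destabilizing-free subsheaves, which bounds the Harder--Narasimhan type of the $\qmtb_i$ in terms of $d$. So the moduli is of finite type: it is an open part of a finite-type Quot-like stack over $\operatorname{Bun}_{G_{\dv}}(\mathbb P^1)$ restricted to a bounded (quasi-compact) open. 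Imposing nonsingularity at $S_{\mathrm{ns}}$ is an open condition.

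For the relative points I allow the bubbled chains. The space of allowed source curves with chains of bounded length is an Artin stack of finite type (an expanded degeneration in the sense of Li). Each contracted component must carry positive degree, or else it has infinite automorphisms and is excluded by the finiteness condition. Hence the chain length is bounded by $|d|$, and finite type follows. The finite automorphism group is built into the definition. Since we are in characteristic zero, finite automorphism groups make the stack Deligne--Mumford. One also needs the automorphism group scheme to be unramified, which follows from the free $G_{\dv}$-action on the stable locus, since generic points map to $X$.

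\textbf{Perfect obstruction theory.} Let $\pi\colon\mathcal C\to\qm$ be the universal curve and $\mathcal P$ the universal principal $G_{\dv}$-bundle. I take the relative obstruction theory over the stack of (expanded) source curves to be
\begin{equation*}
\bigl(R\pi_*\,\mathcal P\times_{G_{\dv}}\mathbb{T}_{[\mu^{-1}(0)/G_{\dv}]}\bigr)^\vee,
\end{equation*}
where $\mathbb{T}$ is the two-term complex $\mathfrak g_{\dv}\to T(T^*{\bf N})\xrightarrow{d\mu}\mathfrak g_{\dv}^*$. This equals the deformation complex $\mathscr M\oplus\hbar^{-2}\mathscr M^\vee-\mathrm{ad}-\hbar^{-2}\mathrm{ad}^\vee$. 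It is perfect of amplitude $[-1,0]$ once the terms $H^0$ of $\mathrm{ad}$ and $H^1$ of $\hbar^{-2}\mathrm{ad}^\vee$ are shown not to contribute. The former vanishes by stability, since generic freeness means infinitesimal automorphisms vanish. The latter vanishes by Serre duality together with the surjectivity of $d\mu$ at stable points. This is \cite[Section 4.4]{qm}, and the stacky nonsmooth locus of $\mu^{-1}(0)$ is handled by lci-ness away from a codimension where generic stability suffices.

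The relative version over the stack of expanded curves is standard, following Li's relative obstruction theory. Here I use that the points $\hat p$ and the nodes are nonsingular, so evaluation there lands in the smooth $X$ and the predeformability condition is automatic. Combining with the smooth base of source curves gives an absolute perfect obstruction theory.

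\textbf{Main obstacle.} I expect the hardest step to be the amplitude check. It requires $\mu^{-1}(0)$ to be lci near the points hit by quasimaps, and vanishing of the relevant cohomology at the finitely many unstable points. I would first try to quote \cite[Theorem 7.2.2]{qm} directly, which covers exactly this setup, and only reverify the bounding argument for the expanded chains.
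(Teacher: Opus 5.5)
Your outline is the argument the paper invokes by citing \cite{qm}. It has three parts. Finite type comes from bounded splitting types and bounded chain length: a degree-zero bubble, being nonsingular at both special points, is constant and so has a $\mathbb{C}^\times$ of automorphisms. The Deligne--Mumford property comes from generic freeness in characteristic zero. The obstruction theory is $R\pi_*$ of the deformation complex relative to the smooth stack of expanded source curves, which is the paper's $\operatorname{forg}$-relative theory over $\operatorname{Bun}_{G_{\dv}}$ combined with that of $\operatorname{Bun}_{G_{\dv}}$. The one step that fails as written is your treatment of the singularities of $\mu^{-1}(0)$.

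The complex $\mathfrak g_{\dv}\to T(T^*{\bf N})\xrightarrow{d\mu}\mathfrak g_{\dv}^*$ (it has three terms, not two) is the tangent complex of $[\mu^{-1}(0)/G_{\dv}]$ only when $\mu^{-1}(0)$ is a complete intersection of the expected dimension. Your proposed patch, that lci-ness is only needed away from a small set where generic stability takes over, cannot work. The obstruction theory must exist at every point of $\qm^d$, and the base points of a quasimap are exactly where it meets the unstable, possibly non-lci, part of $\mu^{-1}(0)$.

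This happens in the generality the paper needs, since no conicity is assumed. For $\Gamma=A_1$ and $\dv=\dw=2$, $X$ is a point and $\mu^{-1}(0)=\{AB=0\}$. It has a $4$-dimensional component $\{B=0\}$ and a $5$-dimensional component, meeting along $\{\operatorname{rk}A\leqslant 1,\ B=0\}$. That locus is not lci and contains every base point of every positive-degree quasimap.

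The remedy is to drop lci altogether. $\mu^{-1}(0)$ is the zero scheme of the section $\mu$ of the trivial bundle with fiber $\mathfrak g_{\dv}^*$ on the smooth space $T^*{\bf N}$. So $[\mathfrak g_{\dv}\otimes\mathcal O\to\Omega_{T^*{\bf N}}|_{\mu^{-1}(0)}]$ maps to its cotangent complex as a $G_{\dv}$-equivariant perfect obstruction theory (isomorphism on $h^0$, surjection on $h^{-1}$), regardless of singularities. Running the moduli-of-sections construction with this complex in place of the cotangent complex gives exactly your $R\pi_*$ of the three-term complex, i.e.\ \eqref{Tvir}. Your two vanishing arguments then yield amplitude $[-1,0]$ with no further input: no infinitesimal automorphisms by generic freeness, and vanishing top cohomology by Serre duality and generic surjectivity of $d\mu$.
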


We review the definition of the obstruction theory. Let $\operatorname{Bun}_{G_{\dv}}$ be the stack parameterizing an allowed source curve together with a $G_{\dv}$-bundle on it. Let $\pi\colon \mathcal C \to \qm$ be the universal curve, $\mathfrak P$ the universal $G_{\dv}$-bundle over the universal curve, $u\colon \mathcal C \to \mathfrak P \times_{G_{\dv}} \mu^{-1}(0)$ the universal quasimap, $\varrho\colon \mathfrak P \times_{G_{\dv}} \mu^{-1}(0) \to \mathcal C$ the projection (with fiber isomorphic to $\mu^{-1}(0)$), and $\mathbb{T}_{\varrho}$ the relative tangent complex of $\varrho$. There is a forgetful morphism $\operatorname{forg}\colon \qm \to \operatorname{Bun}_{G_{\dv}}$. The $\operatorname{forg}$-relative obstruction theory (cf.~\cite[Section 4.5]{qm}) is defined by $R\pi_*(u^*\mathbb{T}_{\varrho})$.

Taking into account the obstruction theory on $\operatorname{Bun}_{G_{\dv}}$, we arrive at the following formula for the virtual tangent space to a quasimap $f=(s,\{\qmtb_{i}\}_{i \in I})$ when $S_{\mathrm{rel}}=\emptyset$ (cf.~\cite[Section 4.3.16]{OkLec}):

\begin{align}
\qmtan|_{f} &= H^*(C_{0},\mathscr M \oplus \hbar^{-2}\mathscr M^\vee) - (1+\hbar^{-2})\sum_{i \in I} \operatorname{Ext}^*(\qmtb_{i},\qmtb_{i}) \nonumber \\
&= H^{*}(C_{0}, \qmpol\oplus \hbar^{-2}(\qmpol)^{\vee})\label{Tvir},
\end{align}
where  
$$
\qmpol:=\hbar^{-1}\sum_{e\in E} \Hom(\qmtb_{t(e)},\qmtb_{h(e)}) +\hbar^{-2}\sum_{i \in I} \Hom(\qmtbw_{i},\qmtb_{i}) - \sum_{i \in I} \Hom(\qmtb_{i},\qmtb_{i})
$$
and $\qmtb_i$, $\qmtbw_{i}$, and $\mathscr M$ are as in the definition of a quasimap. 

When $S_{\mathrm{rel}} \neq \varnothing$, the virtual tangent space has additional terms arising from deformations of the curve $C$.

By Proposition \ref{prop:pot}, there exists a virtual fundamental class on $\qm^{d}$. To save on notation, we will denote this by $\vrs$, relying on context to determine what nonsingular/relative conditions are assumed.

Finally, we set $\eff(X):=\eff(\mathfrak X):=\mathbb{Z}_{\geqslant 0}^I \subset \mathbb Z^I$.

The following lemma shows that every ordinary or relative quasimap has
degree in $\eff(X)$.  In particular, generating functions that encode the enumerative invariants of quasimaps are supported
on effective degrees.

For $\mathbb C_q^\times$-fixed nonsingular quasimaps, the effectivity
statement also follows from the fixed-point description in
\cite[Lemma~7.2.10 and Equation~7.2.14]{OkLec}.  The elementary argument
below supersedes that fixed-point argument in the present setting: it
does not use $\mathbb C_q^\times$-fixedness and applies uniformly to
ordinary and relative quasimaps.

\begin{lemma}
\begin{enumerate}
\item[(a)] If $\qm^d\neq\varnothing$, then $d\in\eff(X)$.
\item[(b)] If $\qm_{\rel p_2}^d\neq\varnothing$, then
$d\in\eff(X)$.
\end{enumerate}
\end{lemma}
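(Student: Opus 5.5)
The plan is to prove effectivity componentwise: for each $i\in I$ we show $\deg\qmtb_i\geqslant 0$ by exhibiting enough sections of $\qmtb_i$ coming from the framing, using only that the quasimap is generically $\theta$-stable. Fix a stable quasimap $f=(s,\{\qmtb_i\})$ on an allowed source curve $C$, and let $\eta\in C$ be a generic point of an irreducible component. Over $\eta$ the datum $s(\eta)$ is $\theta$-stable with $\theta=(1,\ldots,1)$. The stability criterion of \cite[Proposition 5.1.5]{GinzburgLectures} recalled above then says that the smallest $Y,Z$-stable collection of subspaces containing $\Ima A$ is all of $V$. Concretely, the images of $W$ under all words $P$ in $Y_e,Z_e$ composed with $A$ span each $V_i$.

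We globalize this on $C$. Each word $P$ from vertex $j$ to vertex $i$ of length $\ell$, composed with $A_j$, gives a section of $\Hom(\qmtbw_j,\qmtb_i)$ twisted by a torus character. Since $\qmtbw_j$ is trivial, this is a morphism of sheaves $\mathcal O_C^{\oplus \dw_j}\to\qmtb_i$ (the equivariant twist is irrelevant for degrees). Summing over finitely many words gives a morphism $\Phi_i\colon\mathcal O_C^{\oplus N}\to\qmtb_i$ that is surjective at the generic point of every component, by the stability criterion. For case (a) this is the whole source curve $C_0=\mathbb P^1$. Hence $\Ima\Phi_i\subset\qmtb_i$ is a subsheaf of full rank $\dv_i$ with torsion cokernel, so $\deg\qmtb_i=\deg\Ima\Phi_i+\operatorname{length}(\operatorname{coker})\geqslant\deg\Ima\Phi_i$. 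Moreover $\Ima\Phi_i$ is a quotient of a trivial bundle, hence globally generated; on a nodal genus-zero curve a globally generated vector bundle has nonnegative degree on each component. This gives $\deg\qmtb_i\geqslant 0$.

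For case (b), with relative point $p_2$, the curve $C$ is a chain: $C_{\mathrm{par}}$ with rational tails contracted by $\pi$. The degree of $\qmtb_i$ is the sum of its degrees on the components. The quasimap is stable, so every component has generic point in $X$, and the argument above applies componentwise to $\qmtb_i|_{C'}$ for each irreducible component $C'$. Each $\qmtb_i|_{C'}$ contains a full-rank, globally generated subsheaf, so its degree is nonnegative, and summing gives $d\in\mathbb Z^I_{\geqslant0}$.

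The main subtlety is making sure the restriction to a component is still generically stable, and that torsion in $\Ima\Phi_i$ cannot occur. The first point holds by the definition of stable quasimaps. The second holds because we restrict to a locally free sheaf on an integral curve, so any subsheaf is torsion-free. Beyond these checks the proof is routine; no $\mathbb C_q^\times$-fixedness is used.
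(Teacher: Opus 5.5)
Your proposal is correct and follows essentially the same route as the paper: on each irreducible component, stability at the generic point gives a generically surjective map from a trivial bundle $\bigoplus_{j,\gamma}\qmtbw_j$ to $\qmtb_i$, and the degrees are then summed over components. The only difference is the last step: where you pass to the globally generated full-rank image subsheaf, the paper takes a nonzero maximal minor of this map to produce a nonzero section of $\det(\qmtb_i|_D)$; both give $\deg_D(\qmtb_i|_D)\geqslant 0$ equally well.
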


\begin{proof}
Let $f=(s,\{\qmtb_i\}_{i\in I})$ be either an ordinary or a relative
quasimap, and let $D$ be an irreducible component of its source.  Fix
$i\in I$.  At the generic point of $D$, stability implies that $V_i$ is
generated by the images of the framing maps under compositions of the
maps $Y_e$ and $Z_e$.  Consequently, finitely many paths $\gamma$ in
the doubled quiver give a generically surjective morphism
$$
\Phi_{i,D}\colon
\bigoplus_{j,\gamma}\qmtbw_j|_D
\longrightarrow
\qmtb_i|_D,
\qquad
\left.\Phi_{i,D}\right|_\gamma
=
\gamma(Y,Z)\circ A_j.
$$
The bundle on the left is trivial.  Hence a nonzero maximal minor of
$\Phi_{i,D}$ gives a nonzero global section of
$\det(\qmtb_i|_D)$, and therefore
$$
\deg_D(\qmtb_i|_D)\geqslant0.
$$
Since degree is additive over the irreducible components of a nodal
curve,
$$
d_i=\deg\qmtb_i
=
\sum_{D\subset C}\deg_D(\qmtb_i|_D)
\geqslant0.
$$
This holds for every $i\in I$, so $d\in\eff(X)$.
\end{proof}

\subsection{Vertex functions}\label{sec:multcap} 
Quasimap spaces are used to construct ``enumerative invariants'' of $X$.

For a ring $R$, let $R[[{\boldsymbol{z}}]]=\Big\{\sum_{d\in \eff(X)} r_{d} z^{d} \, \mid \, r_{d} \in R\Big\}$. Let
$$
H^*_{\bT_q}(X)_{\loc}:=H^*_{\bT_q}(X)\otimes_{H^*_{\bT_q}(\mathrm{pt})}\operatorname{Frac}\bigl(H^*_{\bT_q}(\mathrm{pt})\bigr).
$$
and
$$
\mathscr H_{\hbar,q}=H^*_{\bT_q}(\stackqv)
$$

\begin{definition}\label{def: vertex}
    The vertex function with descendant $\tau \in \mathscr H_{\hbar,q}$ is defined to be
    $$
\ver{\tau}=\sum_{d \in \eff(X)} \ev_{p_2,*}\left(\qm_{\ns p_2}^{d},\vrs \cap \ev_{p_1}^{*}(\tau)\right) z^{d}  \in H^*_{\bT_q}(X)_{\loc}[[{\boldsymbol{z}}]].
$$
\end{definition}

Although the $\mathbb C_\hbar^\times$-action \eqref{action_hbar_quiver} is not itself contracting, it commutes with the scalar action
$$
s\cdot(Y,Z,A,B)=(sY,sZ,sA,sB).
$$
The latter preserves $\mu^{-1}(0)$, commutes with $\bT_q\times G_{\dv}$, and extends to an $\mathbb A^1$-action contracting $\mu^{-1}(0)$ to the origin. It therefore gives a $\bT_q\times G_{\dv}$-equivariant contraction, and hence the space of descendants identifies with
$$
\mathscr H_{\hbar,q}
=H^*_{\bT_q}(\stackqv)
=H^*_{\bT_q\times G_{\dv}}\bigl(\mu^{-1}(0)\bigr)
\cong H^*_{\bT_q\times G_{\dv}}(\on{pt}).
$$

\begin{definition}\label{def: capped vertex}
    The capped vertex function with descendant $\tau \in \mathscr H_{\hbar,q}$ is defined to be
    $$
\cpver{\tau}=\sum_{d \in \eff(X)} \widehat{\ev}_{p_2,*}\left(\qm_{\rel p_2}^{d},\vrs \cap \ev_{p_1}^{*}(\tau)\right) z^{d}  \in H^*_{\bT_q}(X)[[{\boldsymbol{z}}]].
    $$
\end{definition}

\begin{definition}
    For $\tau \in H^*_{\bT}(\stackqv)$, the quantum tautological class associated to $\tau$ is $\widehat{\tau}=\cpver{\tau}|_{q=0}$. Equivalently, it is given by the formula of Definition~\ref{def: capped vertex} after setting the $\mathbb C_q^\times$-equivariant parameter to zero.
\end{definition}

We record the compatibility of capped vertex functions with restriction
of equivariance.  Let $\mathsf F'\subseteq\mathsf F$ be a subtorus and set
$$
 \bT':=\mathsf F'\times\mathbb C_\hbar^\times,
 \qquad
 \bT'_q:=\bT'\times\mathbb C_q^\times.
$$
Thus, the $\mathbb C_\hbar^\times$-factor
and the source-rotation torus $\mathbb C_q^\times$ are unchanged.  We
write
$$
 \operatorname{res}^{\bT}_{\bT'}
 \quad\text{and}\quad
 \operatorname{res}^{\bT_q}_{\bT'_q}
$$
for the corresponding restriction homomorphisms in equivariant
cohomology.  In the following statement, subscripts on capped vertex
functions indicate the torus equivariance being used.

\begin{proposition}\label{Vasyapoprosil}
The capped vertex commutes with restriction of equivariance.  More
precisely, for every $\tau\in H^*_{\bT}(\stackqv)$, one has
\begin{equation}
\label{eq:restriction-capped-vertex}
 \operatorname{res}^{\bT_q}_{\bT'_q}
 \left(\cpver{\tau}_{\bT}\right)
 =
 \cpver{\operatorname{res}^{\bT}_{\bT'}(\tau)}_{\bT'}
 \qquad\text{in }H^*_{\bT'_q}(X)[[{\boldsymbol{z}}]].
\end{equation}
After extending the capped vertex maps
$H^*_{\bT_q}(\mathrm{pt})[[{\boldsymbol{z}}]]$-linearly, the diagram
$$
\begin{tikzcd}
 H^*_{\bT_q}(\stackqv)[[{\boldsymbol{z}}]]
 \arrow[r,"\cpver{-}_{\bT}"]
 \arrow[d,"\operatorname{res}^{\bT_q}_{\bT'_q}"']
 &
 H^*_{\bT_q}(X)[[{\boldsymbol{z}}]]
 \arrow[d,"\operatorname{res}^{\bT_q}_{\bT'_q}"]
 \\
 H^*_{\bT'_q}(\stackqv)[[{\boldsymbol{z}}]]
 \arrow[r,"\cpver{-}_{\bT'}"']
 &
 H^*_{\bT'_q}(X)[[{\boldsymbol{z}}]]
\end{tikzcd}
$$
is commutative.

In particular, the associated quantum tautological classes satisfy
\begin{equation}
\label{eq:restriction-quantum-tautological}
 \operatorname{res}^{\bT}_{\bT'}(\widehat{\tau}_{\bT})
 =
 \widehat{\operatorname{res}^{\bT}_{\bT'}(\tau)}_{\bT'}
 \qquad\text{in }H^*_{\bT'}(X)[[{\boldsymbol{z}}]].
\end{equation}
\end{proposition}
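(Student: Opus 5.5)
The capped vertex is a proper pushforward of a virtual class capped with a pulled-back class, so the claim is an instance of the compatibility of equivariant virtual pushforwards with restriction of equivariance. The plan is to verify this degree by degree in $d\in\eff(X)$. The key point is that $\qm^d_{\rel p_2}$, its perfect obstruction theory, the evaluation maps $\ev_{p_1}$ and $\widehat{\ev}_{p_2}$, and the virtual class are all constructed $\bT_q$-equivariantly. They do not depend on which torus one remembers: the $\bT'_q$-equivariant objects are obtained from the $\bT_q$-equivariant ones by restricting the group action along $\bT'_q\subset\bT_q$. The torus inclusion leaves the $\mathbb C^\times_\hbar$ and $\mathbb C^\times_q$ factors unchanged, so the action on the source curve, the polarization and the moduli problem are literally the same.

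The argument proceeds in four steps.

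First, the equivariant virtual fundamental class is natural under restriction of the group. Concretely, $[\qm^d_{\rel p_2}]^{\mathrm{vir}}_{\bT_q}\in H^{\bT_q}_*(\qm^d)$ maps to $[\qm^d_{\rel p_2}]^{\mathrm{vir}}_{\bT'_q}$ under the restriction map $H^{\bT_q}_*\to H^{\bT'_q}_*$. This holds because the intrinsic normal cone and the obstruction bundle stack are the same objects with the action restricted, and refined Gysin maps commute with forgetting equivariance. This is standard, as in Behrend–Fantechi and Edidin–Graham.

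Second, restriction commutes with pullback: $\operatorname{res}\circ\ev_{p_1}^*=\ev_{p_1}^*\circ\operatorname{res}$ on $H^*_{\bT_q}(\stackqv)$. It also respects cap products.

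Third, $\widehat{\ev}_{p_2}$ is proper when $S_{\mathrm{ns}}=\emptyset$, which is the case here. Proper pushforward in equivariant Borel–Moore homology commutes with restriction to a subgroup. Combined with Poincaré duality on the smooth target $X$, this shows that the coefficient of $z^d$ on both sides of \eqref{eq:restriction-capped-vertex} agrees. Because properness holds, no localization is needed, and this is the reason the statement is for the capped rather than the bare vertex.

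Fourth, extend $H^*_{\bT_q}(\mathrm{pt})[[{\boldsymbol{z}}]]$-linearly. The restriction maps are ring homomorphisms compatible with $H^*_{\bT_q}(\mathrm{pt})\to H^*_{\bT'_q}(\mathrm{pt})$, so the diagram commutes. Setting $q=0$ then gives \eqref{eq:restriction-quantum-tautological}, since restricting the torus and specializing $q$ commute.

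The main obstacle is making the first step rigorous. Equivariant homology of the Deligne–Mumford stack $\qm^d$ must be handled through finite-dimensional approximations $E\bT_q$. One then has to check that the virtual class built on $\qm^d\times_{\bT_q}E\bT_q$ pulls back to the one built on $\qm^d\times_{\bT'_q}E\bT'_q$ along the natural map. I would handle this by choosing the approximation for $\bT'_q$ to be the restriction of the one for $\bT_q$. The map $\qm^d\times_{\bT'_q}E\bT_q\to\qm^d\times_{\bT_q}E\bT_q$ is then a smooth fibration with fiber $\bT_q/\bT'_q$. Virtual classes are compatible with flat pullback, and under this map the obstruction theory pulls back to the obstruction theory.
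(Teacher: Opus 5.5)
Your proposal is correct and follows essentially the same route as the paper. Both arguments restrict the virtual class via the smooth Cartesian comparison of finite-dimensional Borel approximations with fiber $\bT_q/\bT'_q$, use that restriction commutes with pullback and cap product, apply proper base change for $\widehat{\ev}_{p_2}$, and finish by specializing at $q=0$.
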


\begin{proof}
Fix $d\in\eff(X)$ and put
$$
M_d:=\qm^d_{\rel p_2}.
$$
We write
$$
\ev_{p_1}^{\bT_q}\colon M_d\longrightarrow\stackqv,
\qquad
\widehat{\ev}_{p_2}^{\bT_q}\colon M_d\longrightarrow X
$$
for the $\bT_q$-equivariant evaluation morphisms, and
$$
\ev_{p_1}^{\bT'_q}\colon M_d\longrightarrow\stackqv,
\qquad
\widehat{\ev}_{p_2}^{\bT'_q}\colon M_d\longrightarrow X
$$
for the same underlying morphisms equipped with the restricted $\bT'_q$-equivariant structures. The underlying moduli space $M_d$, the underlying evaluation morphisms, and the perfect obstruction theory do not depend on the choice of equivariance torus. Restricting the $\bT_q$-equivariant structures to $\bT'_q$ gives precisely the corresponding $\bT'_q$-equivariant structures. In particular,
\begin{equation}
\label{eq:restriction-virtual-class}
\operatorname{res}^{\bT_q}_{\bT'_q}
\left([M_d]^{\mathrm{vir}}_{\bT_q}\right)
=
[M_d]^{\mathrm{vir}}_{\bT'_q}.
\end{equation}

For completeness, this may be seen using finite-dimensional approximations to the Borel constructions. Choose a representation $U$ on which $\bT_q$ acts freely. It is then also free for $\bT'_q$, and the horizontal arrows in the diagram
$$
\begin{tikzcd}
(M_d\times U)/\bT'_q
\arrow[r]
\arrow[d,"\widehat{\ev}_{p_2}^{\bT'_q}"']
&
(M_d\times U)/\bT_q
\arrow[d,"\widehat{\ev}_{p_2}^{\bT_q}"]
\\
(X\times U)/\bT'_q
\arrow[r]
&
(X\times U)/\bT_q
\end{tikzcd}
$$
are smooth, with fiber $\bT_q/\bT'_q\simeq\mathsf F/\mathsf F'$. The diagram is Cartesian. The obstruction theory and its intrinsic normal cone pull back through the upper horizontal arrow, which gives \eqref{eq:restriction-virtual-class}.

Restriction of equivariance also commutes with pullback and cap product. Therefore
$$
\operatorname{res}^{\bT_q}_{\bT'_q}
\left(
[M_d]^{\mathrm{vir}}_{\bT_q}
\cap
\left(\ev_{p_1}^{\bT_q}\right)^*(\tau)
\right)
=
[M_d]^{\mathrm{vir}}_{\bT'_q}
\cap
\left(\ev_{p_1}^{\bT'_q}\right)^*
\left(\operatorname{res}^{\bT}_{\bT'}(\tau)\right).
$$
Moreover, $\widehat{\ev}_{p_2}^{\bT_q}$ and $\widehat{\ev}_{p_2}^{\bT'_q}$ have the same underlying proper morphism. Proper base change in the preceding Cartesian diagram consequently gives
$$
\operatorname{res}^{\bT_q}_{\bT'_q}
\circ
\widehat{\ev}_{p_2,*}^{\bT_q}
=
\widehat{\ev}_{p_2,*}^{\bT'_q}
\circ
\operatorname{res}^{\bT_q}_{\bT'_q}.
$$
Combining these identities, we obtain
\begin{align*}
&
\operatorname{res}^{\bT_q}_{\bT'_q}
\left[
\widehat{\ev}_{p_2,*}^{\bT_q}
\left(
[M_d]^{\mathrm{vir}}_{\bT_q}
\cap
\left(\ev_{p_1}^{\bT_q}\right)^*(\tau)
\right)
\right]
\\
&\qquad=
\widehat{\ev}_{p_2,*}^{\bT'_q}
\left(
[M_d]^{\mathrm{vir}}_{\bT'_q}
\cap
\left(\ev_{p_1}^{\bT'_q}\right)^*
\left(\operatorname{res}^{\bT}_{\bT'}(\tau)\right)
\right).
\end{align*}
Multiplying by $z^d$ and summing over $d\in\eff(X)$ proves \eqref{eq:restriction-capped-vertex}.

Finally, restriction from $\bT_q$ to $\bT'_q$ commutes with the specialization $q=0$. Applying this specialization to \eqref{eq:restriction-capped-vertex} proves \eqref{eq:restriction-quantum-tautological}.
\end{proof}

We will later make use of the following straightforward proposition. 

\begin{proposition}\label{prop: vertsurj} The $H^*_{\bT_q}(\mathrm{pt})[[{\boldsymbol{z}}]]$-linear map
\begin{align*}
     H^*_{\bT_q}(\stackqv)[[{\boldsymbol{z}}]] &\longrightarrow H^*_{\bT_q}(X)[[{\boldsymbol{z}}]] \\ 
    \tau &\mapsto \cpver{\tau}
\end{align*}
is surjective.
\end{proposition}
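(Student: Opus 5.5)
The approach is a $z$-adic Nakayama argument that reduces surjectivity to the constant term in ${\boldsymbol{z}}$.

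First I would identify the ${\boldsymbol{z}}=0$ term. In degree $d=0$ the moduli space $\qm^0_{\rel p_2}$ consists of constant quasimaps: the bundles $\qmtb_i$ are trivial, and the section $s$ is a constant map to $\mu^{-1}(0)$ which is stable at $\hat p_2$, hence everywhere. No bubbling occurs in degree zero, since every bubble of a relative chain must carry positive degree to have finite automorphisms. Thus $\qm^0_{\rel p_2}\cong X$, the virtual class is the fundamental class, and $\ev_{p_1}=\widehat{\ev}_{p_2}$ is the inclusion $X\hookrightarrow\stackqv$. It follows that
\[
\cpver{\tau}\equiv \tau|_X \pmod{{\boldsymbol{z}}},
\]
so the constant term of the capped vertex is the Kirwan restriction $H^*_{\bT_q}(\stackqv)\to H^*_{\bT_q}(X)$. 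This map is surjective by \cite[Corollary 1.5]{kirv}, as in \eqref{eq: restr determines algebra}, after tensoring with $\mathbb C[q]$, since $\mathbb C^\times_q$ acts trivially.

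Second, I would lift surjectivity from the constant term by successive approximation. The map is $H^*_{\bT_q}(\mathrm{pt})[[{\boldsymbol{z}}]]$-linear, so it respects the filtration by $\mathfrak m^n$. Given a target $y=\sum_d y_d z^d$, choose $\tau_0\in\mathscr H_{\hbar,q}$ with $\tau_0|_X=y_0$. Then $y-\cpver{\tau_0}\in\mathfrak m\, H^*_{\bT_q}(X)[[{\boldsymbol{z}}]]$. Correct each monomial coefficient by $z^{d}\tau_d$ with $\tau_d|_X$ equal to the leading coefficient of the remainder, and proceed by induction on $|d|$. The resulting series $\tau=\sum_d z^d\tau_d$ converges in $H^*_{\bT_q}(\stackqv)[[{\boldsymbol{z}}]]$, because each coefficient of $z^d$ receives only finitely many corrections: $\eff(X)=\mathbb Z^I_{\geqslant0}$ has finitely many elements below any $d$. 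By continuity, $\cpver{\tau}=y$.

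Equivalently, one may argue modulo $\mathfrak m^n$ and invoke Nakayama's lemma for the nilpotent ideal, then pass to the inverse limit. The graded issue (Definition~\ref{[z]}) is harmless, because the capped vertex preserves cohomological degree once $\deg z^d=0$ is fixed.

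The main obstacle is the first step: justifying carefully that the degree-zero relative moduli space is exactly $X$ with its fundamental class, so that no contribution comes from nontrivial bubbles or from the obstruction theory. I would argue that $R\pi_*u^*\mathbb T_\varrho$ for a constant map from $\mathbb P^1$ reduces to $H^0$ with $H^1=0$, which gives $T_X$. The remaining steps are formal.
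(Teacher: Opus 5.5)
Your proof is correct and follows the paper's argument. The paper also reduces to the constant term: it uses that the capped vertex at ${\boldsymbol{z}}=0$ is the Kirwan restriction $H^*_{\bT_q}(\stackqv)\to H^*_{\bT_q}(X)$, which is surjective by \cite{kirv}, and then concludes by Nakayama's lemma over $H^*_{\bT_q}(\mathrm{pt})[[{\boldsymbol{z}}]]$. Your successive approximation is an explicit form of that Nakayama step, and your check that $\qm^0_{\rel p_2}\cong X$ carries its fundamental class fills in a point the paper leaves implicit.
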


\begin{proof} Kirwan surjectivity for quiver varieties \cite{kirv} implies that the pullback under the inclusion $X \hookrightarrow \stackqv$ is surjective. The proposition now follows by the graded Nakayama lemma applied to the ring $H^*_{\bT_q}(\mathrm{pt})[[{\boldsymbol{z}}]]$.
\end{proof}

The capped vertex is related to the uncapped (i.e., bare) vertex by means of the so-called capping operator.

\begin{definition}
    The capping operator is
    $$
\cp=\sum_{d \in \eff(X)} (\ev_{p_1} \times \widehat{\ev}_{p_2})_{*}\left(\qm_{\ns p_1, \rel p_2}^{d},\vrs \right) z^{d}  \in H^*_{\bT_q}(X\times X)_{\loc}[[{\boldsymbol{z}}]].
    $$
\end{definition}

We will view $\cp$ and other classes on $X \times X$ as operators on (localized) cohomology by the convolution action, defined by
$$
\alpha  \circ \beta =\operatorname{pr}_{2,*}(\operatorname{pr}_1^{*}(\beta) \cup \alpha), \quad \alpha \in H^*_{\bT_q}(X \times X)_{\loc}, \quad  \beta \in H^*_{\bT_q}(X),
$$
where $\operatorname{pr}_{i}:X \times X \to X$ is the projection to the $i$th factor.

\begin{proposition}[\cite{PSZ,OkLec}]\label{prop: cappingeq}
    The operator $\cp$ is invertible and satisfies the formula
    $$
  \cpver{\tau}=\cp \ver{\tau}.
    $$
\end{proposition}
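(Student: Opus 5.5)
The plan is to prove the identity $\widehat{V}^{(\tau)}=\Psi V^{(\tau)}$ by equivariant localization with respect to $\mathbb{C}^\times_q$ on the relative moduli space $\mathsf{QM}^d_{\text{rel}\, p_2}$, with the descendant inserted at $p_1$. Invertibility of $\Psi$ should then follow formally.

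First, invertibility. By definition $\Psi=\sum_d z^d\Psi_d$ with $\Psi_d\in H^*_{\mathsf{T}_q}(X\times X)_{\mathrm{loc}}$. In degree $d=0$ every quasimap is constant and the relative chain at $p_2$ is empty, so $\mathsf{QM}^0_{\text{ns}\, p_1,\text{rel}\, p_2}\cong X$ embedded diagonally. Its virtual class pushes forward to the diagonal class, hence $\Psi_0$ acts by the identity under convolution. A power series in the Novikov variables $z$ with identity constant term is invertible in the $\mathfrak{m}$-adically complete ring of operators, which proves the first assertion.

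Second, the capping formula. I would compute $\widehat{\mathrm{ev}}_{p_2,*}\bigl([\mathsf{QM}^d_{\text{rel}\, p_2}]^{\mathrm{vir}}\cap \mathrm{ev}_{p_1}^*\tau\bigr)$ by virtual localization for $\mathbb{C}^\times_q$, which acts only on $C_{\mathrm{par}}$. The steps are:

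\begin{enumerate}
\item Describe the $\mathbb{C}^\times_q$-fixed loci. A fixed relative quasimap decomposes as a fixed quasimap on $C_{\mathrm{par}}$, whose singularities (and degree) are concentrated at $p_1$ and $p_2$, together with a possibly empty rational chain attached at $p_2$. Since the quasimap must be nonsingular at the node, the parametrized component splits at the point $p_2$.
\item Identify the fixed loci as fibered products. The part near $p_1$ (nonsingular at $p_2$) gives exactly the $\mathbb{C}^\times_q$-fixed locus contributing to the bare vertex $V^{(\tau)}$. The part near $p_2$, together with the bubble chain, is the fixed locus contributing to $\Psi$ on $\mathsf{QM}_{\text{ns}\, p_1,\text{rel}\, p_2}$ after relabeling the points. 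The two pieces are glued over a common evaluation in $X$ at the splitting point.
\item Compare virtual normal bundles. The virtual tangent space \eqref{Tvir} decomposes as $T_{\mathrm{vir}}=T_{\mathrm{vir}}^{(1)}+T_{\mathrm{vir}}^{(2)}-T_{f(p)}X$, by the normalization sequence at the splitting point. The extra $-TX$ term is exactly what converts the gluing into convolution: it produces the factor $e(TX)^{-1}$, which pairs with the Künneth/diagonal pushforward $\mathrm{pr}_{2,*}(\mathrm{pr}_1^*\beta\cup\alpha)$.
\item Sum over degree splittings $d=d'+d''$. This gives $\widehat{V}^{(\tau)}_d=\sum_{d'+d''=d}\Psi_{d''}\circ V^{(\tau)}_{d'}$, which is the claimed identity $\widehat{V}^{(\tau)}=\Psi V^{(\tau)}$ coefficient by coefficient.
\end{enumerate}

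The main obstacle is step 3, for the terms where the splitting point is the relative point and a chain of rational curves is attached. Here the deformation of the node contributes a smoothing term $-\bigl(\text{tangent weight at }p_2\bigr)-\psi$. I would handle it as in Okounkov's gluing argument \cite[Section 7.4]{OkLec}: the same smoothing factor appears in the fixed-locus description of $\Psi$ itself, so I would check that both sides' localization formulas involve identical chain contributions. This reduces the statement to a degeneration-type identity. An alternative is to invoke the degeneration formula for quasimaps, degenerating $C_0$ into two $\mathbb{P}^1$'s glued at a point, which yields the identity directly. I would cite \cite{PSZ,OkLec} for this and verify only the sign and normalization conventions of the convolution.
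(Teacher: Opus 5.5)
Your proposal is correct and is essentially the paper's argument: $\mathbb{C}^\times_q$-localization on $\qm_{\rel p_2}$, whose fixed locus is $\qm_{\ns p_2}^{\mathbb{C}^\times_q}\times_X\mathcal{Q}$ (with $\mathcal{Q}$ the moduli of quasimaps from the bubble chain), and whose virtual normal bundle is the vertex's moving part $N_2$ plus the node-smoothing term $N_1$. The chain factor is then recognized as $\cp$ because $(\qm_{\ns p_1,\rel p_2})^{\mathbb{C}^\times_q}=\mathcal{Q}$, and nonsingularity at both $p_1$ and $p_2$ forces the bundles $\qmtb_i$ to be trivial on $C_0$, so only $N_1$ moves there; this last observation is exactly the check you defer. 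Two small points: the $C_{\mathrm{par}}$ part is singular only at $p_1$, not at $p_2$; and your degree-zero argument, showing that $\cp_0$ is the diagonal class and hence that $\cp$ is invertible, supplies a step the paper leaves implicit.
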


\begin{proof}
The proof of the $K$-theoretic version of this statement, which relies on $\mathbb{C}^{\times}_{q}$-equivariant localization, appears in~\cite{PSZ}. We reproduce it here in the cohomological case for completeness of the exposition and to correct a minor typo.

Let $f \in \qm_{\rel p_2}^{\mathbb C_q^{\times}}$. The element $f$ defines a quasimap from $C$ to $X$, where the curve $C$ is obtained from $\mathbb P^1 = C_0$ by gluing a chain of additional $\mathbb P^1$-components to the point $p_2$.

We observe that
\begin{itemize}
    \item  when restricted to $C_0$, $f$ gives a $\mathbb{C}^{\times}_{q}$-fixed quasimap $C_0 \to X$ nonsingular at $p_2$, i.e., an element of $\qm_{\ns p_2}^{\mathbb C_q^{\times}}$ (cf.~\cite[Section 7.1.7]{OkLec});
    \item when restricted to the closure of $C \setminus C_{0}$, $f$ gives a stable quasimap from a chain of rational curves with a nonsingular point at $p_2$ and another nonsingular point $\hat{p}_2$ at a smooth point on the last component in the chain. 
\end{itemize}

Let us denote the stack parameterizing quasimaps in the second bullet by $\mathcal Q$ (cf.~\cite[Theorem 11]{PSZ}), so that
$$
\qm_{\rel p_2}^{\mathbb C_q^{\times}} \simeq \qm_{\ns p_2}^{\mathbb C_q^{\times}} \times_{X} \mathcal Q
$$
where the fiber product is taken for the morphism $\ev_{p_2}$. 

The virtual normal bundle to $\qm_{\rel p_2}^{\mathbb{C}^{\times}_{q}} \hookrightarrow \qm_{\rel p_2}$ has two factors. The first comes from deformations of the source curve and is $N_1:=\psi_2 \otimes \mathbb C_q$, where $\psi_2$ is the cotangent line at the point $p_2$, and $\mathbb C_q$ is the weight $1$ representation of $\mathbb C_q^{\times}$.

The second factor, denoted $N_{2}$, comes from deformations of the quasimap itself and is the part of the virtual tangent space~\eqref{Tvir} with nonzero $\mathbb C_q^{\times}$-weight. Equivalently, it is the normal bundle to $\qm_{\ns p_2}^{\mathbb C_q^{\times}} \hookrightarrow \qm_{\ns p_2}$.

By localization, we get
\begin{multline} \cpver{\tau}= \sum_{d_1} z^{d_1} (\ev_{p_2} \times \widehat{\ev}_{p_2})_{*}\left(\mathcal Q, \frac{[\mathcal Q]^{\text{vir}}}{e(N_1)}\right) \\ \circ \left(\sum_{d_2} z^{d_2} \ev_{p_2,*}  \left(\qm_{\ns p_2}^{\mathbb C_q^{\times}} , \frac{[\qm_{\ns p_2}^{\mathbb C_q^{\times}}]^{\text{vir}}\cap \ev_{p_1}^{*}(\tau)}{e(N_2)}\right)\right) \label{locform}
\end{multline}
where $\circ$ denotes the convolution action of the first factor on the second one over the $p_2$-index.

By definition, the second factor in \eqref{locform} is the vertex. We need to recognize the first factor in~\eqref{locform} as the capping operator.

Recall that the capping operator is defined using the space $\qm_{\ns p_1, \rel p_2}$. It is straightforward to see that $\left(\qm_{\ns p_1, \rel p_2}\right)^{\mathbb{C}^{\times}_{q}}=\mathcal{Q}$ as sets. Since a $\mathbb{C}^{\times}_{q}$-fixed quasimap in $\qm_{\ns p_1, \rel p_2}$ must be nonsingular at both $p_1$ and $p_2$, all bundles $\mathscr{V}_{i}$ constituting the quasimaps in this space are trivial when restricted to $C_0$, as in~\cite[Section 7.1.7]{OkLec}. As a result, $\mathbb C_q^{\times}$ acts nontrivially only on the part of the obstruction theory corresponding to the deformations of the curve. So the first factor in~\eqref{locform} is exactly the capping operator.
\end{proof}

\begin{remark}\label{rem:typo}
In~\cite{PSZ}, the $N_2$ factor is missing. As we explain in the last paragraph of the preceding proof, this is justified for $\qm_{\ns p_1, \rel p_2}$. However, when the nonsingularity condition at $p_1$ is omitted, the virtual normal bundle is larger.
\end{remark}

\subsection{Quantum multiplication}
Quasimap theory was used by Pushkar, Smirnov, and Zeitlin to define a quantum product in \cite{PSZ}. This operation, which we will refer to as PSZ quantum multiplication, deforms the usual multiplication of $H^*_{\bT}(X)$ in a way that resembles the ordinary definition of quantum multiplication via Gromov--Witten invariants.

Consider the moduli space $\qm_{\rel p_1,\rel p_2}$ relative to both points. The domain of such a quasimap is a curve $C$ with a distinguished component $C_{0} \cong \mathbb{P}^{1}$ and a morphism $\pi\colon C \to C_{0}$ given by contracting chains of copies of $\mathbb{P}^{1}$.

\begin{definition}
For $i \in I$, let 

\begin{multline}\label{PSZ}
M_i(z):=\sum_{d\in\eff(X)}z^d
(\widehat{\ev}_{p_1}\times\widehat{\ev}_{p_2})_*
\Bigl(
\qm^d_{\rel p_1,\rel p_2}, \\
\vrs\cap c_1\!\left(\det H^*\!\left(\qmtb_i\otimes\pi^*\mathcal O_{p_2}\right)\right)
\Bigr).
\end{multline}
\end{definition}

\begin{remark}\label{rem: cohomological gluing}
    The reader familiar with \cite{PSZ} will notice the absence of the gluing matrix in the formula~\eqref{PSZ}. The reason is that the cohomological gluing matrix is equal to the identity operator.
\end{remark}

\begin{definition}\label{def: quantum multiplication}
    Let $p \in C_{0}$ be a third point, distinct from $p_1$ and $p_2$. Let $\alpha \in H^*_{\bT}(X)$. The operator of quantum multiplication by $\alpha$ is
    $$
    \alpha \qcup :=\sum_{d \in \eff(X)} z^{d}(\widehat{\ev}_{p_1} \times \widehat{\ev}_{p_2})_{*}\left(\qm^{d}_{\rel p_1,\rel p,\rel p_2}, \vrs \cap \widehat{\ev}^{*}_{p}(\alpha)  \right).
    $$
\end{definition}

The same argument as in \cite{PSZ} shows that $\qcup$ is associative. The vector space $H^*_{\bT}(X)[[{\boldsymbol{z}}]]$, endowed with the product $\qcup$, is called the quasimap quantum cohomology of $X$.

We now discuss a few features of quasimap quantum cohomology which do not hold in the $K$-theoretic story studied in \cite{PSZ}. In \cite{PSZ}, it is shown that the $K$-theoretic version of the operator $M_{i}(z)$ specializes to a quantum multiplication operator when $q=1$. In cohomology, degree reasons force $M_{i}(z)$ to be equal to a quantum multiplication operator on the nose.

\begin{proposition}\label{prop: M and c1}
    As operators on $H^{*}_{\bT}(X)$, we have $M_{i}(z)=\widehat{c_1(\tb_{i})} \qcup$.
\end{proposition}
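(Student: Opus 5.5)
The plan is to compare the two moduli problems that define $M_i(z)$ and $\widehat{c_1(\tb_i)}\qcup$, and to show that their difference is forced to vanish by a cohomological degree count. This parallels the standard argument identifying divisor insertions in genus-zero theories.

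First, write $\widehat{c_1(\tb_i)}\qcup$ via Definition~\ref{def: quantum multiplication}, using the third point $p$ and the class $\widehat{\ev}_p^*(\widehat{c_1(\tb_i)})$. Since $\widehat{c_1(\tb_i)}=\cpver{c_1(\tb_i)}|_{q=0}$, the standard gluing/degeneration argument of \cite{PSZ} applies: degenerate the source so that $p$ bubbles off. In cohomology the gluing matrix is the identity (Remark~\ref{rem: cohomological gluing}), so this degeneration expresses the product as the operator
\begin{equation*}
\sum_d z^d(\widehat{\ev}_{p_1}\times\widehat{\ev}_{p_2})_*\bigl([\qm^d_{\rel p_1,\rel p_2}]^{\mathrm{vir}}\cap \ev_p^*c_1(\tb_i)\bigr),
\end{equation*}
in which $p$ is a nonrelative point with a descendant insertion evaluated in $\stackqv$. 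This is the same mechanism that makes quantum multiplication by $\widehat{\tau}$ correspond to inserting the descendant $\tau$.

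Second, compare $\ev_p^*c_1(\tb_i)=c_1(\qmtb_i|_p)$ with $c_1(\det H^*(\qmtb_i\otimes\pi^*\mathcal O_{p_2}))$. Both are first Chern classes of line bundles on the moduli space built from $\qmtb_i$ restricted to a point of $C_0$. Moving the point $p$ to $p_2$ along $C_0$ changes the class only by a term involving the $\mathbb C_q^\times$-weight. Concretely, the family version of $\qmtb_i|_{p}$ over $C_0$ has first Chern class in $H^2_{\bT_q}(\qm\times C_0)$ of the form $c_1(\qmtb_i|_{p_2})+(\text{multiple of } q)\cdot(\text{degree term } d_i)$, by equivariant localization on $C_0$.

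Third, and this is the key step, argue by degree. Both operators are homogeneous of the same cohomological degree and have the same $z^d$-weights. Any discrepancy is a $\mathbb C_q^\times$-equivariant correction proportional to $q$. However, these operators live in $H^*_{\bT}(X\times X)[[\boldsymbol z]]$, because the relative conditions at both $p_1$ and $p_2$ make the evaluation maps proper and the result polynomial. The operators therefore do not depend on $q$ after setting the $q$-parameter to zero, or equivalently the correction term $q\,d_i\cdot(\text{pushforward of }[\qm]^{\mathrm{vir}})$ vanishes at $q=0$. The virtual dimension count must also be checked: the pushforward of the bare virtual class of $\qm^d_{\rel p_1,\rel p_2}$ has degree lowered by $2$ relative to the insertion version, and the $q$-independence has to hold consistently with this shift.

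The main obstacle is making the first step rigorous, namely justifying that quantum multiplication by the capped class $\widehat{c_1(\tb_i)}$ equals insertion of the bare descendant $c_1(\tb_i)$ at an interior point of $\qm_{\rel p_1,\rel p_2}$. I would try to prove this by $\mathbb C_q^\times$-localization, as in the proof of Proposition~\ref{prop: cappingeq}, together with the $q=0$ specialization.
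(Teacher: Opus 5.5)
Your Steps 1 and 2 only establish the identity after forgetting the $\mathbb C_q^\times$-equivariance. An insertion at an interior point $p\neq p_1,p_2$ is not $\mathbb C_q^\times$-equivariant, so the degeneration that bubbles off $p$, and the comparison of $\ev_p^*c_1(\tb_i)$ with $c_1(\det H^*(\qmtb_i\otimes\pi^*\mathcal O_{p_2}))$, together give $M_i(z)|_{q=0}=\widehat{c_1(\tb_i)}\qcup$. That is the cohomological analogue of \cite[Theorem~16]{PSZ}, which the paper simply quotes; it is not the hard part.

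The content of the proposition is that $M_i(z)$ has no $q$-dependence at all. It is defined $\bT_q$-equivariantly, so a priori it lies in $H^*_{\bT_q}(X\times X)[[{\boldsymbol z}]]=H^*_{\bT}(X\times X)[q][[{\boldsymbol z}]]$. Your Step~3 does not prove $q$-independence. Properness of $\widehat{\ev}_{p_1}\times\widehat{\ev}_{p_2}$ makes the pushforward polynomial in $q$, not independent of $q$; the capped vertex is also a proper pushforward and is $q$-dependent in general. The phrase ``the operators do not depend on $q$ after setting $q=0$'' is vacuous. A degree count does not help either: $M_i(z)$ raises cohomological degree by $2$, so a correction $M_i(z)-\widehat{c_1(\tb_i)}\qcup=qB$ with $B$ an operator of degree $0$ is perfectly allowed. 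This is exactly the worry you raise at the end of Step~3 and leave open; the obstacle you name as the main one is the routine part.

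The missing input is a support-and-dimension argument that uses the geometry of $X$. In the paper, $\widehat{\ev}_{p_1}\times\widehat{\ev}_{p_2}$ factors through the Steinberg variety $Z=X\times_{\mathcal M_H}X$, because a connected proper curve maps to a point of the affine quotient $\mathcal M_H$. Moreover $\dim Z=\dim X$ for the quivers under consideration (no loops, or the Jordan quiver via semismallness). Hence $B\in H^{\bT_q,BM}_{2\dim X}(Z)[[{\boldsymbol z}]]$ is a top-dimensional class and carries no equivariant parameters.

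One then specializes $\hbar=0$. For $d\neq0$ one has $[\qm^d]^{\mathrm{vir}}=2\hbar[\qm^d]^{\mathrm{vir,red}}$, so the positive-degree parts of both operators vanish at $\hbar=0$. Their $d=0$ parts are both classical multiplication by $c_1(\tb_i)$. Hence $qB|_{\hbar=0}=0$, and since $B$ is constant, $B=0$.

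Even if you made Step~2 precise, the correction would appear as $q$ times $d_i$-weighted pushforwards of the bare classes $[\qm^d_{\rel p_1,\rel p_2}]^{\mathrm{vir}}$, and you would still need this input to kill them. The reduced class has dimension $\dim X+1>\dim Z$, so those pushforwards vanish for $d\neq0$ once you know they factor through $Z$.
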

\begin{proof}
By definition, $M_i(z)$ is obtained by pushing forward from $\qm^d_{\rel p_1,\rel p_2}$.

Recall that there is a projective morphism $X\to\mathcal M_H$, where $\mathcal M_H$ is affine. This morphism extends to $\varpi\colon\stackqv\to\mathcal M_H$.  Since $G_{\dv}$ is reductive,
$$
\operatorname{Spec}\Gamma(\stackqv,\mathcal O_{\stackqv})
=\operatorname{Spec}\mathbb C[\mu^{-1}(0)]^{G_{\dv}}
=\mathcal M_H.
$$
Thus $\varpi$ is the affinization morphism, and its restriction to $X$ is the usual projective GIT morphism. For a family of connected proper quasimap curves $\pi\colon C\to S$, one has $\pi_*\mathcal O_C=\mathcal O_S$. Hence the composition $C\to\stackqv\xrightarrow{\varpi}\mathcal M_H$ factors through $S$. It follows that
$$
\widehat{\ev}_{p_1}\times\widehat{\ev}_{p_2}\colon
\qm^d_{\rel p_1,\rel p_2}\longrightarrow X\times X
$$
factors through the Steinberg variety
$$
Z:=X\times_{\mathcal M_H}X
$$
by a map which we denote by $\phi$.

If $\Gamma$ has no loops, every irreducible component of $Z$ has dimension at most $\dim X$ by~\cite[Theorem~7.2.4(ii)]{GinzburgLectures}, equivalently~\cite[Theorem~7.2]{NakQv}. For the Jordan-quiver Hilbert schemes considered here, the same statement follows from semismallness of the Hilbert--Chow symplectic resolution; see~\cite[Lemma~2.11]{K06}. Since $Z$ contains the diagonal, in the cases considered in this paper we have $\dim Z=\dim X$. Pushing forward by $\phi$ gives
$$
A^{(d)}:=\phi_*
\left(
\vrs\cap c_1\!\left(\det H^*\!\left(\qmtb_i\otimes\pi^*\mathcal O_{p_2}\right)\right)
\right)
\in H^{\bT_q,BM}_{2\dim X-2}(Z).
$$
Let $j\colon Z\hookrightarrow X\times X$ be the inclusion. Then
$$
M_i(z)=\sum_d z^d\,j_*A^{(d)}.
$$

The cohomological analogue of the proof of~\cite[Theorem~16]{PSZ} gives
$$
M_i(z)|_{q=0}=\widehat{c_1(\tb_i)}\qcup.
$$
Indeed, after forgetting the $\mathbb C_q^\times$-equivariance, the determinant insertion may be moved to a third nonrelative point, and the degeneration formula identifies the resulting correspondence with quantum multiplication. Since $\mathbb C_q^\times$ acts trivially on $Z$, it follows that
$$
M_i(z)-\widehat{c_1(\tb_i)}\qcup=qB
$$
for some
$$
B\in H^{\bT_q,BM}_{2\dim X}(Z)[[\boldsymbol{z}]].
$$
Since $\dim Z=\dim X$, top equivariant Borel--Moore homology is nonequivariant:
$$
H^{\bT_q,BM}_{2\dim X}(Z)\cong H^{BM}_{2\dim X}(Z).
$$
In particular, $B$ is independent of all equivariant parameters.

For $d\ne0$, the moduli space of quasimaps admits a reduced virtual class of complex virtual dimension one larger. In the convention of~\cite[Remark~5.5.6(ii)]{CK14}, the additive weight of the symplectic form is written as $-\lambda$. Since this weight is $-2\hbar$ here, $\lambda=2\hbar$, and
$$
[\qm^d]^{\mathrm{vir}}=2\hbar[\qm^d]^{\mathrm{vir,red}};
$$
therefore the positive-degree terms in both operators vanish at $\hbar=0$, while their degree-zero terms are both multiplication by $c_1(\tb_i)$. Specializing the equality
$M_i(z)-\widehat{c_1(\tb_i)}\qcup=qB$ at $\hbar=0$ gives $qB=0$. Since $B$ is nonequivariant, we conclude that $B=0$.
\end{proof}

In \cite{PSZ}, it is emphasized that the multiplicative identity in quantum $K$-theory is not the usual identity. In ordinary (i.e., stable-map) quantum cohomology, the unit does not deform. The following proposition, well known to experts, shows that this is also the case for quasimap quantum cohomology.

\begin{proposition}\label{identity_in_quant_is_identity}
    The multiplicative identity in quasimap quantum cohomology is the class $1 \in H^{*}_{\bT}(X)$.
\end{proposition}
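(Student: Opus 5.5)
The plan is to show that $1\qcup\beta=\beta$ for every $\beta\in H^*_{\bT}(X)$. By Definition~\ref{def: quantum multiplication}, this amounts to the following two facts:
\begin{itemize}
\item the degree-zero term of the operator $1\qcup$ is the identity;
\item for every $d\neq 0$ the class
$$
(\widehat{\ev}_{p_1}\times\widehat{\ev}_{p_2})_*\bigl([\qm^{d}_{\rel p_1,\rel p,\rel p_2}]^{\mathrm{vir}}\bigr)
$$
vanishes.
\end{itemize}
Inserting $\widehat{\ev}_p^*(1)=1$ means there is no constraint at the third point $p$.

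The first step is the degree-zero term. When $d=0$, every $\qmtb_i$ is trivial and the quasimap is constant and nonsingular, so it is an honest map to $X$. Bubbling at the relative points is then unstable: a constant map on a contracted chain has infinite automorphisms over $C_0$. Hence $\qm^0_{\rel p_1,\rel p,\rel p_2}\cong X$, with $\widehat{\ev}_{p_1}\times\widehat{\ev}_{p_2}$ the diagonal embedding and unobstructed virtual class $[X]$. So the degree-zero term is the identity.

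The second step handles $d\neq0$. We use the forgetful structure at $p$: since no nonsingularity condition is needed at $p$, the space relative to $p$ should be compared with the space $\qm^d_{\rel p_1,\rel p_2}$ in which $p$ is an ordinary point. Concretely, I would argue as in the degeneration argument used in the proof of Proposition~\ref{prop: M and c1}. We degenerate the parametrized $\mathbb P^1$ at $p$, and the degeneration formula expresses $1\qcup$ as a composition of two-point relative correspondences with the unit inserted. The two-point relative invariant with no insertion, $\sum_d z^d(\widehat{\ev}_{p_1}\times\widehat{\ev}_{p_2})_*[\qm^d_{\rel p_1,\rel p_2}]^{\mathrm{vir}}$, is the gluing operator, which is the identity in cohomology by Remark~\ref{rem: cohomological gluing}. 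Alternatively, one can make a direct degree argument. The $\mathbb C^\times_q$-action on $C_0$ fixing $p_1,p_2$ acts on the space with virtual dimension $\dim X+1$ (one dimension more than the two-point relative space, because of the moving point $p$). Pushing forward along the $\mathbb C^\times$-translation, which fixes $p_1,p_2$ and moves $p$, the moduli space is a family over this one-dimensional automorphism direction and the class is pulled back from a space of dimension $\dim X$. Its pushforward to $X\times X$ therefore lands in $H^{BM}_{2\dim X+2}$ of the Steinberg variety $Z=X\times_{\mathcal M_H}X$, exactly as in the proof of Proposition~\ref{prop: M and c1}.

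The final and hardest step is the dimension count. $Z$ has dimension $\dim X$, so $H^{\bT,BM}_{2\dim X+2}(Z)$ can only receive contributions carrying negative powers of equivariant parameters, which is impossible for a nonlocalized class. Hence these terms vanish for $d\neq 0$. I expect this to be the main obstacle: one must check carefully that the evaluation map factors through $Z$ with the correct homological degree. Since there is no $\hbar$-reduced-class argument available as in Proposition~\ref{prop: M and c1}, I would first try the pure degree count via the factorization through $Z$, and fall back on the degree-zero reduction to the identity gluing operator if the grading bookkeeping is off by the $\mathbb C^\times$-automorphism.
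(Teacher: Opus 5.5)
Your degree-zero step is fine, but the argument for $d\neq 0$ has a genuine gap. The virtual dimension count in your direct argument is wrong. In Definition~\ref{def: quantum multiplication} the point $p$ is a \emph{fixed} point of the parametrized component $C_0$, not a moving one. Relative conditions at fixed points do not change the virtual dimension, so $\qm^{d}_{\rel p_1,\rel p,\rel p_2}$ has virtual dimension $\dim X$, exactly like the two-point space. This is why $\alpha\qcup$ has the same cohomological degree as $\alpha\cup$, and why in the proof of Proposition~\ref{prop: M and c1} the class $A^{(d)}$ lies in $H_{2\dim X-2}$. The moduli space is not a family over a $\mathbb{C}^\times$ of positions of $p$. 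Consequently the $d$-th term of $1\qcup$ lies in $H^{\bT,BM}_{2\dim X}(Z)$. That group is spanned by the fundamental classes of the top-dimensional components of $Z$, including the diagonal, and is not zero. No pure degree count can kill it.

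Your fallback does not close the gap either. The gluing (tube) operator is the two-point relative correspondence in which $p$ is an \emph{ordinary} point. The degeneration formula (bubble off $p$ and cap the bubble) identifies an ordinary point carrying the descendant $\tau$ with a relative point carrying $\widehat{\tau}=\cpver{\tau}|_{q=0}$, not $\tau$. So what you obtain is $\widehat{1}\qcup=\mathrm{Id}$, i.e.\ $\widehat{1}$ is the unit. This is precisely the cohomological analogue of \cite[Theorem~15]{PSZ} that the paper uses as its first step. To conclude that $1$ is the unit you still need $\widehat{1}=1$, i.e.\ the vanishing of $\widehat{\ev}_{p_2,*}[\qm^d_{\rel p_2}]^{\mathrm{vir}}$ for $d\neq 0$. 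That is a number in $H^0(X)$, which again does not vanish for degree reasons.

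The missing idea is exactly the one you set aside as unavailable. The holomorphic symplectic form gives $[\qm^d]^{\mathrm{vir}}=2\hbar[\qm^d]^{\mathrm{vir,red}}$ for $d\neq0$, with the reduced class of virtual dimension $\dim X+1$. The paper pushes this forward along the proper map $\widehat{\ev}_{p_2}$, landing in $H_{2\dim X+2}(X)=0$, hence $\widehat{1}=1$.

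Alternatively, the reduced class is the genuine source of the ``extra dimension'' your last step wants. Applied directly to $\qm^d_{\rel p_1,\rel p,\rel p_2}$, its pushforward lands in $H^{BM}_{2\dim X+2}(Z)=0$. This gives a valid direct proof, but it uses $\dim Z=\dim X$ as in Proposition~\ref{prop: M and c1}. The paper's route only pushes forward to $X$ and does not need this.
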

\begin{proof}

By the cohomological analogue of~\cite[Theorem~15]{PSZ}, the multiplicative identity is the quantum tautological class
$$
\widehat 1
:=\cpver{1}|_{q=0}
=\left.
\sum_{d\in\eff(X)}z^d\,
\widehat{\ev}_{p_2,*}\left(\qm^d_{\rel p_2},\vrs\right)
\right|_{q=0}.
$$
Its degree-zero term is $1$. If $d\ne0$, then, as in the proof of Proposition~\ref{prop: M and c1},
$$
[\qm^d]^{\mathrm{vir}}=2\hbar[\qm^d]^{\mathrm{vir,red}},
$$
and the reduced class has complex virtual dimension one larger. Since $\widehat{\ev}_{p_2}$ is proper, the pushforward of the reduced class to $X$ vanishes for dimension reasons. Hence all positive-degree coefficients of $\widehat 1$ vanish, so $\widehat 1=1$ and therefore
$$
1\qcup-=1\cup-.
$$

For comparison, the cohomological tube operator satisfies an idempotence relation. More precisely, the degeneration formula in the form of~\cite[Section~6.5 and Theorem~7.1.4]{OkLec}, gives $\mathcal T^2=\mathcal T$ because the cohomological gluing operator $\mathcal T$ is the identity; see Remark~\ref{rem: cohomological gluing}. Since $\mathcal T=\operatorname{Id}+O({\boldsymbol z})$, it is invertible and hence $\mathcal  T=\operatorname{Id}$. Thus the relevant relation is idempotence, rather than $\mathcal T^2=1$. 
\end{proof}

\subsection{The quantum \texorpdfstring{$D$}{D}-module}\label{quantDmodrev}

Here, we give the geometric realization of the PSZ quantum $D$-module introduced in Section~\ref{ssec_def_of_PSZ}. Let $D=D^{\mathrm{PSZ}}$ be the algebra of logarithmic differential operators defined there. Since $H^{2}(\stackqv,\mathbb{C})$ has a basis given by $c_1(\tb_{i})$, the algebra $D$ is topologically generated over $H^*_{\bT_q}(\mathrm{pt})$ by $z_i$ and $\partial_i$, for $i\in I$, with relations
$$
[\partial_i,z_j]=q\delta_{ij}z_j,\qquad [z_i,z_j]=[\partial_i,\partial_j]=0.
$$

\begin{definition}\label{quantDmodrev:def}
The PSZ quantum $D$-module is the vector space $Q^{\mathrm{PSZ}}:=H^{*}_{\bT_q}(X)[[{\boldsymbol{z}}]]$ endowed with the structure of a module over $D$ by
    \begin{align*}
        z_i \cdot \alpha&=z_i \alpha, \\
        \partial_{i} \cdot \alpha&=\left(q z_i \frac{\partial}{\partial z_i}+M_{i}(z)\right)\alpha,
    \end{align*}
    for $\alpha \in Q^{\mathrm{PSZ}}$.
\end{definition}

Thus, consistently with the convention fixed in Section~\ref{sec:overview}, our quantum connection operators are
$$
\nabla_i^{\mathrm{PSZ}}=q z_i\frac{\partial}{\partial z_i}+M_i(z).
$$

Due to the complexity of the operators $M_{i}(z)$, it is difficult to study the quantum $D$-module directly. We will fit it into a commutative diagram that allows us to study it indirectly.

We define a normalized version of the bare vertex by $\nver{\tau}=\left(\prod_{i \in I} z_i^{c_1(\tb_{i})/q} \right) \ver{\tau}$. By Proposition \ref{prop: vertsurj} and the localization theorem, there are maps
$$
\begin{tikzcd}
  \mathscr H_{\hbar,q}[[{\boldsymbol{z}}]]\arrow[->>]{r}{\cpver{\cdot}} \arrow{rd}[swap]{\bigoplus\limits_{p\in X^{\bT}} \nver{\cdot}_{p}} & H^*_{\bT_q}(X)[[{\boldsymbol{z}}]] \\
     & \bigoplus\limits_{p \in X^{\bT}} z^{\eta_p}H^*_{\bT_q}(p)_{\loc}[[{\boldsymbol{z}}]]
\end{tikzcd}
$$

By Proposition \ref{prop: cappingeq}, the kernels of the capped and bare vertices are equal. Hence we get an induced commutative diagram
\begin{equation}\label{diag2}
\begin{tikzcd}
    \mathscr H_{\hbar,q}[[{\boldsymbol{z}}]] \arrow[->>]{r}{\cpver{\cdot}} \arrow{rd}[swap]{\bigoplus\limits_{p\in X^{\bT}} \nver{\cdot}_{p}} & H^*_{\bT_q}(X)[[{\boldsymbol{z}}]] \arrow[hookrightarrow]{d} \\
     & \bigoplus\limits_{p \in X^{\bT}} z^{\eta_p}H^*_{\bT_q}(p)_{\loc}[[{\boldsymbol{z}}]]
\end{tikzcd}
\end{equation}
\begin{warning}
Note that the right vertical arrow in (\ref{diag2}) is {\emph{not}} the restriction map to the fixed-point set.    
\end{warning}

We give the upper-left and lower-right terms of~\eqref{diag2} the $D$-module structures introduced in Sections~\ref{sssec: master D-module} and~\ref{sssec: normalized classical solutions}, respectively. Thus, $z_i$ acts by multiplication in both terms, while
\begin{align*}
z_i \cdot \tau &= z_i \tau, \\
\partial_{i} \cdot \tau &=\left(q z_i \frac{\partial}{\partial z_i} + c_1(\tb_{i})\right) \tau,
\end{align*}
for $\tau \in \mathscr H_{\hbar,q}[[{\boldsymbol{z}}]]$, and
\begin{align*}
z_i \cdot \alpha &= z_i \alpha, \\
\partial_{i} \cdot \alpha &= q z_i \frac{\partial}{\partial z_i} \alpha,
\end{align*}
for $\alpha \in \bigoplus_{p \in X^{\bT}} z^{\eta_p}H^*_{\bT_q}(p)_{\loc}[[{\boldsymbol{z}}]]$. In the latter formula, the derivative acts also on the formal factor $z^{\eta_p}$; in particular,
$$
qz_i\frac{\partial}{\partial z_i}z^{\eta_p}
=\eta_p(a_i)z^{\eta_p},
\qquad
a_i=c_1^{G_{\dv}}(V_i),
$$

\begin{theorem}\label{thm: Dmodhomom} Each of the three maps in \eqref{diag2} is a morphism of $D$-modules.
\end{theorem}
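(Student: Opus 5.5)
The theorem concerns the three arrows in \eqref{diag2}: the capped vertex $\cpver{\cdot}$, the diagonal normalized bare vertex, and the vertical inclusion. I would check $D$-linearity for each, with the bare vertex as the anchor. Since $D$ is topologically generated over $H^*_{\bT_q}(\mathrm{pt})$ by the $z_i$ and $\partial_i$, and every map is $H^*_{\bT_q}(\mathrm{pt})[[{\boldsymbol z}]]$-linear, commutation with $z_i$ is automatic. Everything therefore reduces to intertwining the $\partial_i$.

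\textbf{Step 1 (the diagonal arrow).} On the source, $\partial_i$ acts by $qz_i\partial_{z_i}+c_1(\tb_i)$; on the target it acts by $qz_i\partial_{z_i}$, which also differentiates the factor $z^{\eta_p}$. I need the identity
\[
qz_i\partial_{z_i}\nver{\tau}_p=\nver{c_1(\tb_i)\tau}_p+qz_i\partial_{z_i}\text{(applied to the coefficients of $\tau$)}.
\]
Since $\prod_i z_i^{c_1(\tb_i)/q}$ restricts at $p$ to $z^{\eta_p}$, this is equivalent to the unnormalized equation (7.2.5) of \cite{OkLec}:
\[
\ver{c_1(\tb_i)\tau}=\bigl(qz_i\partial_{z_i}+c_1(\tb_i)\bigr)\ver{\tau}.
\]
I would prove it by localization on $\qm^d_{\ns p_2}$. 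For a $\mathbb C_q^\times$-fixed quasimap, the bundle $\qmtb_i$ is $\mathbb C_q^\times$-equivariant on $\mathbb P^1$ and trivial near $p_2$. Its fibres at $p_1$ and $p_2$ therefore differ in the first Chern class by $q\deg\qmtb_i=qd_i$, up to the sign fixed by the convention that the tangent weight at $p_2$ is $q^{-1}$. Inserting $\ev_{p_1}^*c_1(\tb_i)$ thus equals inserting $\ev_{p_2}^*c_1(\tb_i)+qd_i$. The first term gives cup product with $c_1(\tb_i)$ after pushforward, and the second gives $qz_i\partial_{z_i}$ on $z^d$. Applying the normalizing factor turns the operator $qz_i\partial_{z_i}+c_1(\tb_i)$ into $qz_i\partial_{z_i}$ on the normalized side. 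Restricting to fixed points gives $D$-linearity of the diagonal arrow. This already holds before passing to the fixed-point sum.

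\textbf{Step 2 (the capped arrow).} Here $\partial_i$ acts on $Q^{\mathrm{PSZ}}$ by $qz_i\partial_{z_i}+M_i(z)$. By Proposition~\ref{prop: cappingeq}, $\cpver{\tau}=\cp\ver{\tau}$ with $\cp$ invertible. Combined with Step 1, $D$-linearity of the capped vertex is then equivalent to the capping operator intertwining the two actions:
\[
\bigl(qz_i\partial_{z_i}+M_i(z)\bigr)\circ\cp=\cp\circ\bigl(qz_i\partial_{z_i}+c_1(\tb_i)\cup\bigr).
\]
This is the cohomological analogue of the capping/difference-equation theorem in \cite{PSZ}, and I would prove it the same way. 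I would compute $\cp\,\ver{c_1(\tb_i)\tau}$ by moving the descendant $c_1(\tb_i)$ through the capped geometry: degenerate the relative quasimap space and glue along $p_2$. The determinant class $c_1\bigl(\det H^*(\qmtb_i\otimes\pi^*\mathcal O_{p_2})\bigr)$ measures the difference between $c_1(\tb_i)$ evaluated at the parametrized component and at the relative point. That difference produces exactly the correspondence $M_i(z)$ from \eqref{PSZ}, plus the degree shift $qd_i$. The gluing matrix is the identity (Remark~\ref{rem: cohomological gluing}, Proposition~\ref{identity_in_quant_is_identity}), which simplifies the degeneration formula. Proposition~\ref{prop: M and c1} is not needed here, since Definition~\ref{quantDmodrev:def} uses $M_i$ directly.

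\textbf{Step 3 (the vertical arrow).} This arrow is defined so that the triangle commutes, and $\cpver{\cdot}$ is surjective by Proposition~\ref{prop: vertsurj}. Given $\alpha=\cpver{\tau}$, Steps 1 and 2 give
\[
\mathrm{vert}(\partial_i\alpha)=\mathrm{vert}(\cpver{\partial_i\tau})=\nver{\partial_i\tau}=\partial_i\nver{\tau}=\partial_i\,\mathrm{vert}(\alpha).
\]
So $D$-linearity of the vertical arrow follows formally.

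The main obstacle is Step 2: the degeneration argument with the determinant insertion. This requires careful bookkeeping of the sign and weight conventions, and of the normal bundle factor $N_2$ that \cite{PSZ} omits (Remark~\ref{rem:typo}). I would first try to deduce Step 2 from the $\mathbb C_q^\times$-localization formula \eqref{locform}, restricting everything to fixed loci.
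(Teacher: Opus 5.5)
Your proposal is correct and is essentially the paper's proof. Step~1 is Proposition~\ref{prop: derivative vertex}; the paper uses the identity $\ev_{p_1}^*c_1(\tb_i)-\ev_{p_2}^*c_1(\tb_i)=qd_i$ globally on $\qm^d_{\ns p_2}$ rather than on fixed loci, which makes no difference. Step~2 is Proposition~\ref{prop: quantum diff eq}, which the paper proves by splitting $qz_i\frac{\partial}{\partial z_i}\cp$ into determinant insertions at $p_1$ and $p_2$ and applying the degeneration formula with trivial cohomological gluing. Step~3 is the same formal argument via $\cpver{\tau}=\cp\ver{\tau}$ and surjectivity of the capped vertex.

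Drop your fallback of extracting Step~2 from \eqref{locform}. That formula by itself only encodes $\cpver{\tau}=\cp\ver{\tau}$ and never involves $M_i(z)$, so the degeneration argument is the one to carry out.
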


Theorem~\ref{thm: Dmodhomom} provides cohomological analogues of the well-known $K$-theoretic results of the Okounkov school.

\begin{proposition}\label{prop: derivative vertex}
     \textit{(Cf.~\cite[(7.2.5)]{OkLec})} 
     Let $\tau \in \mathscr H_{\hbar,q}[[{\boldsymbol{z}}]]$. For any $i \in I$, we have
\begin{equation}\label{eq:derivative-vertex}
V^{\Big(\left(q z_i\frac{\partial}{\partial z_i}+c_1(\tb_i)\right)\tau\Big)}=\left(q z_i\frac{\partial}{\partial z_i}+c_1(\tb_i)\right)\ver{\tau}.
\end{equation}
\end{proposition}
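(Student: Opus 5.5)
We prove \eqref{eq:derivative-vertex} degree by degree in ${\boldsymbol{z}}$, comparing coefficients of $z^d$ on both sides. Since $qz_i\frac{\partial}{\partial z_i}z^d=qd_iz^d$, the identity is equivalent to
$$
V^{(c_1(\tb_i)\tau)}_d \;=\; \bigl(c_1(\tb_i)-qd_i\bigr)\,V^{(\tau)}_d ,
$$
where $V^{(\tau)}_d=\ev_{p_2,*}\bigl([\qm^d_{\ns p_2}]^{\mathrm{vir}}\cap\ev_{p_1}^*\tau\bigr)$.

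On the left, $\ev_{p_1}^*c_1(\tb_i)=c_1(\qmtb_i|_{p_1})$ is the first Chern class of the universal bundle $\qmtb_i$ restricted to $\{p_1\}\times\qm^d$. On the right, $\ev_{p_2}^*c_1(\tb_i)=c_1(\qmtb_i|_{p_2})$, because $p_2$ is a nonsingular point. The projection formula lets us move this class inside the pushforward. It therefore suffices to prove, in $\bT_q$-equivariant cohomology of $\qm^d_{\ns p_2}$,
$$
c_1(\qmtb_i|_{p_1})-c_1(\qmtb_i|_{p_2}) = -\,q\,d_i .
$$
We may need to allow the weaker version in which the difference of the two sides pairs to zero against the virtual class.

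The key step is this equivariant comparison of the universal bundle at the two fixed points of $C_0=\mathbb P^1$. Consider the line bundle $\mathcal L=\det\qmtb_i$ on $C_0\times\qm^d$. It has degree $d_i$ along fibers, so $\mathcal L\cong \mathcal O(d_i)\boxtimes\mathcal N$ for some line bundle $\mathcal N$ on $\qm^d$, up to an equivariant twist. The nonequivariant Chern classes of its restrictions to $p_1$ and $p_2$ then agree. The equivariant difference is read off from $\mathbb C_q^\times$-localization on $\mathbb P^1$: for $\mathcal O(d)$, the fiber weights at the two fixed points differ by $d$ times the tangent weight. The tangent weight at $p_2$ is $q^{-1}$, i.e. additive weight $-q$, which fixes the sign. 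Concretely, we pick an equivariant section of $\mathcal O(p_2)^{\otimes d_i}$ and use $c_1(\mathcal O(p_1))-c_1(\mathcal O(p_2))=\pm q$ restricted appropriately.

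The main obstacle is making this splitting rigorous in families, since $\mathcal N$ and the equivariant structure exist only on the moduli stack. The cleanest route is to take the $\bT_q$-equivariant first Chern class $c_1(\mathcal L)\in H^2_{\bT_q}(C_0\times\qm^d)$ and apply the Künneth/localization formula for $H^*_{\mathbb C_q^\times}(\mathbb P^1)$. We write $c_1(\mathcal L)=d_i[\mathrm{pt}]+\mathrm{pr}^*\beta$. The class $[\mathrm{pt}]$ restricts to $0$ at one fixed point and to the tangent weight $\mp q$ at the other. Restricting to $\{p_1\}$ and $\{p_2\}$ and subtracting gives exactly $-qd_i$.

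Finally, we sum over $d$ and multiply by $z^d$, which yields \eqref{eq:derivative-vertex}. The sign convention of the tangent weight $q^{-1}$ at $p_2$ is the point to double-check against the $D$-module convention $\partial_i=qz_i\partial_{z_i}+c_1$.
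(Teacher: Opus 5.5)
Your strategy is the paper's: move the insertion $\ev_{p_1}^*c_1(\tb_i)$ to $p_2$ using an equivariant comparison of $c_1(\qmtb_i)$ at the two $\mathbb C_q^\times$-fixed points, then apply the projection formula for $\ev_{p_2}$. The paper gets the comparison from $q\deg\mathcal F=c_1\det H^*(\mathcal F\otimes(\mathcal O_{p_1}-\mathcal O_{p_2}))$. Your decomposition $c_1(\det\qmtb_i)=d_i[p_2]+\mathrm{pr}^*\beta$ in $H^2_{\bT_q}(C_0\times\qm^d_{\ns p_2})$ works equally well and holds on the nose, so the ``weaker version'' you mention is unnecessary.

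However, the proposal as written has the sign wrong in two places, and the result is not the stated proposition.

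\begin{itemize}
\item The coefficient of $z^d$ on the right-hand side of the proposition is $(c_1(\tb_i)+qd_i)V^{(\tau)}_d$, not $(c_1(\tb_i)-qd_i)V^{(\tau)}_d$.
\item Your key identity $c_1(\qmtb_i|_{p_1})-c_1(\qmtb_i|_{p_2})=-qd_i$ is false in the paper's convention. The class $[p_2]$ restricts to $0$ at $p_1$ and to $e(T_{p_2}\mathbb P^1)=-q$ at $p_2$, so the difference is $d_i\bigl(0-(-q)\bigr)=+qd_i$. As a check, for $T\mathbb P^1=\mathcal O(2)$ the fiber weights are $q$ at $p_1$ and $-q$ at $p_2$, a difference of $2q$. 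Choosing $[p_1]$ instead of $[p_2]$ gives the same answer.
\end{itemize}

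With your signs the argument would yield $V^{(c_1(\tb_i)\tau)}=\bigl(c_1(\tb_i)-qz_i\partial_{z_i}\bigr)\ver{\tau}$. That is not the proposition, so your concluding sentence does not follow.

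Once both signs are fixed, you get $\ev_{p_1}^*c_1(\tb_i)=\ev_{p_2}^*c_1(\tb_i)+qd_i$ on $\qm^d_{\ns p_2}$. The projection formula then gives $V^{(c_1(\tb_i)\tau)}=\bigl(qz_i\partial_{z_i}+c_1(\tb_i)\bigr)\ver{\tau}$ for $z$-independent $\tau$, exactly as in the paper.

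You also need the short extension to $\tau z^a$, since your coefficient comparison silently assumes $qz_i\partial_{z_i}\tau=0$. By $\mathbb C[[\boldsymbol z]]$-linearity of $\ver{-}$ and the Leibniz rule, both sides acquire the same extra term $qa_iz^a\ver{\tau}$, so the identity holds for all $\tau\in\mathscr H_{\hbar,q}[[\boldsymbol z]]$.
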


\begin{proof}

Suppose first that $\tau \in H^{*}_{\bT_{q}}(\stackqv)$, i.e., that $\tau$ does not depend on ${\boldsymbol{z}}$. By definition of $\ver{-}$, one has
\begin{equation}\label{quasidef}
\ver{c_1(\tb_{i}) \tau}=\sum_{d \in \eff(X)} z^{d} \ev_{p_2,*}\left(\qm_{\ns p_2}^{d},\vrs \cap \ev_{p_1}^{*}(c_1(\tb_i)\tau)\right).
\end{equation}

Let $\ev\colon \qm_{\ns p_2} \times \mathbb{P}^{1} \to \stackqv$ be the universal morphism, and let $\pi_1$ and $\pi_2$ be the two projections. One can show, using the projection formula, that
$$
\ev_{p_{j}}^{*} \tb_{i}=\pi_{1,*}\left(\ev^{*} \tb_{i} \otimes \pi_2^{*} \mathcal{O}_{p_j}\right).
$$

By the same reasoning as in \cite[Equation~(8.1.4)]{OkLec},
$$
\deg( \mathcal{F}) q= c_1\left( \det H^{*}( \mathcal{F} \otimes (\mathcal{O}_{p_1}-\mathcal{O}_{p_2}))\right)
$$
for any $\mathbb{C}^{\times}_{q}$-equivariant coherent sheaf $ \mathcal{F}$ on $\mathbb{P}^{1}$. 
So, on $\qm_{\ns p_2}^{d}$, we have
\begin{align*}
d_i q&=c_1\left( \det \pi_{1,*}(\ev^{*}\tb_{i} \otimes (\pi_2^{*}\mathcal{O}_{p_1}-\pi_2^{*}\mathcal{O}_{p_2}))\right) \\
&=c_1\left( \det ( \ev_{p_1}^{*} \tb_{i}-\ev_{p_2}^{*}\tb_{i})\right) \\
&=\ev_{p_1}^{*}( c_1(\tb_{i})) -\ev_{p_2}^{*}(c_1(\tb_{i})).
\end{align*}

So \eqref{quasidef} is equal to
$$
\sum_{d \in \eff(X)} z^{d} \ev_{p_2,*} \left(\qm_{\ns p_2}^{d},\left(\vrs \cap \ev_{p_1}^*(\tau)\right)  \cap \ev_{p_2}^*(c_1(\tb_i) + d_i q)\right).
$$

By applying the projection formula to the morphism $\ev_{p_2}$, this is equal to
\begin{align*}
&\sum_{d \in \eff(X)}  (c_1(\tb_i) + d_{i} q)z^{d}\ev_{p_2,*} \left(\qm_{\ns p_2}^{d},\vrs \cap \ev_{p_1}^*(\tau) \right) \\
&= \left(c_1(\tb_i) + q z_i \frac{\partial}{\partial z_{i}}\right) \sum_{d \in \eff(X)} z^{d}\ev_{p_2,*} \left(\qm_{\ns p_2}^{d},\vrs \cap \ev_{p_1}^*(\tau) \right) \\
&=\left(c_1(\tb_i) + q z_i \frac{\partial}{\partial z_{i}}\right) \ver{\tau}.
\end{align*}

For terms of the form $\tau z^{a}$, with $\tau \in H^*_{\bT_q}(\stackqv)$ and $a \in \eff(X)$, the proposition now follows immediately, and hence holds in general.

\end{proof}

The following corollary is an elementary check using the definition of the normalized vertex $\nver{-}$.

\begin{corollary}
$$
\widetilde{V}^{\Big(\Big(q z_i \frac{\partial}{\partial z_i} + c_1(\tb_{i})\Big)  \tau\Big)} = q z_i \frac{\partial }{\partial z_i} \nver{\tau}.
$$
\end{corollary}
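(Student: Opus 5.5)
The plan is to deduce the corollary from Proposition~\ref{prop: derivative vertex} by a Leibniz-rule computation on the formal prefactor. By definition,
$$
\nver{\sigma}=\Big(\prod_{j\in I} z_j^{c_1(\tb_j)/q}\Big)\ver{\sigma}
$$
for every $\sigma\in\mathscr H_{\hbar,q}[[{\boldsymbol{z}}]]$. We apply this with $\sigma=\left(q z_i\frac{\partial}{\partial z_i}+c_1(\tb_i)\right)\tau$ and use \eqref{eq:derivative-vertex} to rewrite $\ver{\sigma}$ as $\left(q z_i\frac{\partial}{\partial z_i}+c_1(\tb_i)\right)\ver{\tau}$.

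The key step is the commutation identity
$$
q z_i\frac{\partial}{\partial z_i}\circ \prod_{j} z_j^{c_1(\tb_j)/q}
=\prod_{j} z_j^{c_1(\tb_j)/q}\circ\Big(q z_i\frac{\partial}{\partial z_i}+c_1(\tb_i)\Big).
$$
This comes from the defining property of the formal symbol: $qz_i\partial_{z_i}$ applied to $\prod_j z_j^{c_1(\tb_j)/q}$ returns that symbol multiplied by $c_1(\tb_i)$. Here $c_1(\tb_i)$ is understood after restriction to each fixed point $p$, where it becomes $\eta_p(a_i)$. This matches the convention of Section~\ref{sssec: normalized classical solutions} and the description of the $D$-module structure on the lower-right term of \eqref{diag2}. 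Combining the identity with the rewriting of $\ver{\sigma}$ gives
$$
\nver{\sigma}=\prod_j z_j^{c_1(\tb_j)/q}\left(q z_i\tfrac{\partial}{\partial z_i}+c_1(\tb_i)\right)\ver{\tau}
= q z_i\frac{\partial}{\partial z_i}\nver{\tau},
$$
which is the claim.

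The only point needing care is that the prefactor is a formal symbol rather than a genuine power series, so the Leibniz rule has to be justified. The plan is to work componentwise at each fixed point $p$, where the prefactor becomes $z^{\eta_p}$ with $\eta_p(a_i)=c_1(\tb_i)|_p$ a scalar in $H^2_{\bT}(p)$. There $qz_i\partial_{z_i}$ acting on $z^{\eta_p}\cdot(\text{series})$ satisfies the Leibniz rule by definition of the $D$-module $z^{\eta_p}H^*_{\bT_q}(p)_{\loc}[[{\boldsymbol{z}}]]$. Since the vertex lives in localized cohomology of the fixed locus, this componentwise check suffices.
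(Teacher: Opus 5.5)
Your proposal is correct and is essentially the paper's argument: the paper gives no separate proof, only calling the corollary an elementary check, which amounts to combining Proposition~\ref{prop: derivative vertex} with the Leibniz rule for the prefactor $\prod_j z_j^{c_1(\tb_j)/q}$, on which $qz_i\partial_{z_i}$ acts as multiplication by $c_1(\tb_i)$. Your componentwise justification via $z^{\eta_p}$ matches the convention of \eqref{diag2}, which presumes isolated fixed points; for positive-dimensional fixed components, restricting to points would not detect the class, and you would instead run the same Leibniz computation formally on $X$ or on the fixed components.
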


The situation with the capped vertex $\cpver{-}$ is more complicated. For this, we will make use of the quantum differential equation satisfied by the capping operator.

\begin{proposition}\label{prop: quantum diff eq}
The capping operator satisfies the equation

$$
\left(q z_i \frac{\partial}{\partial z_i}+M_i(z)\right)\cp(z)=\cp(z)c_1(\tb_i).
$$
Thus, the capping operator intertwines the two connections written with the same plus-sign convention.
\end{proposition}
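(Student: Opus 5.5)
The plan is to prove the quantum differential equation for $\cp$ by comparing two localization computations, using the divisor-type identity established in the proof of Proposition~\ref{prop: derivative vertex}. The capping operator is the generating function of $(\ev_{p_1}\times\widehat{\ev}_{p_2})_*$ over $\qm_{\ns p_1,\rel p_2}$. I would first introduce an auxiliary moduli space, $\qm_{\rel p_1,\rel p_2}$ with the insertion $c_1(\det H^*(\qmtb_i\otimes\pi^*\mathcal O_{p}))$ at a point $p$ of the parametrized component. Then I would compute the resulting class in two ways, moving $p$ to $p_1$ and moving $p$ to $p_2$.

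The key identity, as in the proof of Proposition~\ref{prop: derivative vertex}, is
$$
d_i q = c_1\bigl(\det H^*(\qmtb_i\otimes(\pi^*\mathcal O_{p_1}-\pi^*\mathcal O_{p_2}))\bigr),
$$
which holds on the whole moduli space.

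Concretely, consider the operator $M_i(z)\circ\cp(z)$. I would apply the degeneration/gluing formula \cite[Section 6.5]{OkLec} at $p_1$ of the relative space defining $M_i$. Since the cohomological gluing operator is the identity (Remark~\ref{rem: cohomological gluing}), gluing $\qm_{\rel p_1,\rel p_2}$ with the insertion at $p_2$ onto $\qm_{\ns p_1,\rel p_2}$ expresses $M_i\cp$ as the pushforward over $\qm_{\ns p_1,\rel p_2}$ with insertion $c_1(\det H^*(\qmtb_i\otimes\pi^*\mathcal O_{p_2}))$. By the identity above, this insertion equals the insertion at $p_1$ minus $d_iq$. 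The $-d_iq$ term summed against $z^d$ gives $-qz_i\partial_{z_i}\cp$.

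At $p_1$ the quasimap is nonsingular, so $\det H^*(\qmtb_i\otimes\mathcal O_{p_1})=\det\ev_{p_1}^*\tb_i$, and the insertion is $\ev_{p_1}^*c_1(\tb_i)$. Pushing forward, this is exactly $\cp\circ c_1(\tb_i)$, where $c_1(\tb_i)$ acts by cup product on the input factor. Rearranging gives the claim.

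The main obstacle is justifying the gluing step rigorously, namely that the degeneration of the curve at the nonsingular point $p_1$ splits the insertion-at-$p_2$ class into $M_i$ composed with $\cp$, with no extra gluing matrix and with the insertion landing on the correct side. If this is awkward, I would fall back on $\mathbb C^\times_q$-localization in the style of the proof of Proposition~\ref{prop: cappingeq}. There the fixed loci decompose into a chain over $p_1$ and a chain over $p_2$, and the insertion localizes to whichever end carries the relevant bundle. I would also need to check the signs in the convention $qz\partial+M_i$ against the orientation with $q^{-1}$ the tangent weight at $p_2$.
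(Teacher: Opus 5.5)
Your proposal is correct and matches the paper's proof. The paper uses the identity $d_i q=c_1\bigl(\det H^*(\qmtb_i\otimes\pi^*(\mathcal O_{p_1}-\mathcal O_{p_2}))\bigr)$ to write $qz_i\partial_{z_i}\cp$ as the $p_1$-insertion term $\cp\,c_1(\tb_i)$ minus the $p_2$-insertion term, and identifies the latter with $M_i(z)\cp(z)$ via the degeneration formula with trivial cohomological gluing matrix, exactly as in your ``concrete'' argument; the auxiliary-space idea and the localization fallback you mention are not needed.
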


\begin{proof}

We will imitate the argument from~\cite[Section 2.8]{PSZ}.

Recall that domains $C$ of quasimaps with relative marked points come equipped with a map $\pi: C \to C_{0} \cong \mathbb{P}^{1}$.

Let $C$, $\pi$, and $\qmtb_{i}$ for $i \in I$ be the data provided by a quasimap in $\qm_{\ns p_1,\rel p_2}$. As in the proof of Proposition \ref{prop: derivative vertex}, we have
$$
c_1\!\left(\det H^{*}\!\left(\qmtb_{i} \otimes \pi^*(\mathcal O_{p_1} - \mathcal O_{p_2})\right)\right) = \deg(\qmtb_{i})q.
$$

Then

\begin{multline*}
q z_i\frac{\partial}{\partial z_i}\cp(z) = \sum\limits_{d \in \eff(X)} z^d(\ev_{p_1} \times \widehat{\ev}_{p_2})_{*} \\ 
\left(\qm^d_{\ns p_1, \rel p_2},   \vrs \cap c_1\!\left(\det H^{*}\!\left(\qmtb_{i} \otimes \pi^*(\mathcal O_{p_1} - \mathcal O_{p_2})\right)\right)\right).
\end{multline*}

We can write this as $A-B$, where

\begin{multline*}
A = \sum_{d \in \eff(X)} z^d(\ev_{p_1} \times \widehat{\ev}_{p_2})_{*} \\ 
\left(\qm^d_{\ns p_1, \rel p_2},  \vrs \cap c_1(\det H^{*}(\qmtb_{i} \otimes \pi^*(\mathcal O_{p_1}))) \right) \\
= \cp(z) c_{1}(\tb_{i})
\end{multline*}
and
\begin{multline*}
B=\sum_{d \in \eff(X)} z^d(\ev_{p_1} \times \widehat{\ev}_{p_2})_{*} \\
\left(\qm^d_{\ns p_1, \rel p_2}, \vrs \cap c_1(\det H^{*}(\qmtb_{i} \otimes \pi^*(\mathcal O_{p_2}))) \right).
\end{multline*}
The cohomological analogue of~\cite[Section~6.5]{OkLec}, applied as in~\cite[Section~2.5, equation~(24), and Section~2.8]{PSZ} gives
$$
B=M_{i}(z) \cp(z)
$$
with no additional gluing matrix, by Remark~\ref{rem: cohomological gluing}. This concludes the proof.

\end{proof}

\begin{proposition}\label{prop: capdiff}

Let $\tau \in \mathscr H_{\hbar,q}[[{\boldsymbol{z}}]]$. For any $i \in I$, we have
\begin{equation}\label{eq:capped-derivative-vertex}
\widehat{V}^{\Big(\left(q z_i\frac{\partial}{\partial z_i}+c_1(\tb_i)\right)\tau\Big)}=\left(q z_i\frac{\partial}{\partial z_i}+M_i(z)\right)\cpver{\tau}.
\end{equation}
\end{proposition}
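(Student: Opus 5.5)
The plan is to combine the three ingredients already established: the capping relation $\cpver{\tau}=\cp\,\ver{\tau}$ (Proposition~\ref{prop: cappingeq}), the differential equation for the bare vertex (Proposition~\ref{prop: derivative vertex}), and the quantum differential equation for the capping operator (Proposition~\ref{prop: quantum diff eq}). Concretely, for $\tau\in\mathscr H_{\hbar,q}[[{\boldsymbol z}]]$, set $\nabla_i^{\mathrm{cl}}:=qz_i\frac{\partial}{\partial z_i}+c_1(\tb_i)$. Applying Proposition~\ref{prop: cappingeq} to the descendant $\nabla_i^{\mathrm{cl}}\tau$ and then Proposition~\ref{prop: derivative vertex} gives
$$
\widehat V^{(\nabla_i^{\mathrm{cl}}\tau)}=\cp\,V^{(\nabla_i^{\mathrm{cl}}\tau)}=\cp\Bigl(qz_i\tfrac{\partial}{\partial z_i}+c_1(\tb_i)\Bigr)\ver{\tau}.
$$

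Next, expand the right-hand side with the Leibniz rule for the convolution action. This action is $H^*_{\bT_q}(\mathrm{pt})$-bilinear and acts coefficientwise in ${\boldsymbol z}$, so
$$
qz_i\tfrac{\partial}{\partial z_i}\bigl(\cp\,\ver{\tau}\bigr)=\bigl(qz_i\tfrac{\partial}{\partial z_i}\cp\bigr)\ver{\tau}+\cp\,\bigl(qz_i\tfrac{\partial}{\partial z_i}\ver{\tau}\bigr).
$$
Proposition~\ref{prop: quantum diff eq} then gives $\bigl(qz_i\partial_{z_i}\cp\bigr)=\cp\,c_1(\tb_i)-M_i(z)\cp$, where $c_1(\tb_i)$ acts by cup product before convolution. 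Substituting,
$$
\cp\,c_1(\tb_i)\ver{\tau}+\cp\,qz_i\partial_{z_i}\ver{\tau}=qz_i\partial_{z_i}(\cp\ver{\tau})+M_i(z)\cp\ver{\tau}=\bigl(qz_i\partial_{z_i}+M_i(z)\bigr)\cpver{\tau},
$$
which is the claim.

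The main thing to check is bookkeeping rather than geometry. First, the composition convention must match: $\cp\,c_1(\tb_i)$ in Proposition~\ref{prop: quantum diff eq} has to mean ``multiply by $c_1(\tb_i)$ then convolve with $\cp$'', and $M_i(z)\cp$ means the composite of convolutions. I would verify this against the definition $\alpha\circ\beta=\operatorname{pr}_{2,*}(\operatorname{pr}_1^*\beta\cup\alpha)$ and the roles of $p_1,p_2$ in $\cp$. Second, the identities must hold in the localized completed module $H^*_{\bT_q}(X)_{\loc}[[{\boldsymbol z}]]$, where the vertex lives. Both sides of the final equation lie in the unlocalized $H^*_{\bT_q}(X)[[{\boldsymbol z}]]$, and localization is injective there because $H^*_{\bT_q}(X)$ is free, so no information is lost. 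Finally, since everything is $\mathbb C[[{\boldsymbol z}]]$-semilinear, the statement for general $\tau$ reduces to $\tau\in H^*_{\bT_q}(\stackqv)$ times a monomial $z^a$, as in Proposition~\ref{prop: derivative vertex}.
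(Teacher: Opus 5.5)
Your proposal is correct and is the paper's argument: the paper says the identity follows immediately from Propositions~\ref{prop: cappingeq}, \ref{prop: derivative vertex}, and~\ref{prop: quantum diff eq}. Your Leibniz computation supplies exactly the omitted bookkeeping, using $qz_i\frac{\partial}{\partial z_i}\Psi=\Psi\,c_1(\tb_i)-M_i(z)\Psi$; this is the decomposition $A-B$ in the proof of Proposition~\ref{prop: quantum diff eq}, with $c_1(\tb_i)$ inserted at $p_1$, i.e.\ applied before convolution. The final reduction to $\tau\in H^*_{\bT_q}(\stackqv)$ times a monomial is unnecessary, since Proposition~\ref{prop: derivative vertex} is already stated for all $\tau\in\mathscr H_{\hbar,q}[[{\boldsymbol z}]]$.
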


\begin{proof}
This follows immediately from Propositions~\ref{prop: cappingeq}, \ref{prop: derivative vertex}, and~\ref{prop: quantum diff eq}.
\end{proof}

\begin{proof}[Proof of Theorem \ref{thm: Dmodhomom}]
This follows by combining Proposition~\ref{prop: derivative vertex}, Proposition~\ref{prop: capdiff}, the commutative diagram~\eqref{diag2}, and the definition of the three $D$-module structures.
\end{proof}

\subsection{Comparison with the stable map quantum \texorpdfstring{$D$}{D}-module}\label{sec: D module comparison}

There are two places in which our quasimap quantum $D$-module may differ from the ordinary quantum $D$-module, which is defined using moduli spaces of stable maps.

The first occurs in Definition~\ref{def: quantum multiplication}. It is proved in~\cite{CK14} that genus-zero quasimap theory and Gromov--Witten theory are related by wall-crossing, so the information encoded by the quasimap and usual quantum $D$-modules is equivalent.

The second discrepancy concerns the operators $M_i(z)$. By Proposition~\ref{prop: M and c1}, $M_i(z)$ is the operator of quantum multiplication by the deformed class $\widehat{c_1(\tb_i)}$, rather than by $c_1(\tb_i)$ itself.

This discrepancy is related to ``large framing vanishing.'' By~\cite[Theorem~7.5.23]{OkLec}, for fixed $\dv$ and $\tau$, one has $\cpver{\tau}=\tau$ on $\qv(\dv,\dw)$ for all sufficiently large $\dw$. See also~\cite{AD} for situations in which this is proved for cyclic quiver varieties. 
For an application of large framing vanishing in the Jordan-quiver setting, see also~\cite[Section~1.2]{AS25}, where it is used to obtain explicit capped descendant vertices for $\operatorname{Hilb}^n(\mathbb C^2)$ from the rank-two instanton moduli space.

Suppose that $\cpver{c_{1}(\tb_{i})}=c_{1}(\tb_{i})$ for all $i$. Then $M_{i}(z)=c_{1}(\tb_{i}) \qcup$. Conjecturally, $\cpver{c_{1}(\tb_{i})}=c_{1}(\tb_{i})$ holds for all of the cases treated in \cite{KMBP} for which quasimap theory is defined, namely type $A$ Springer resolutions and smooth hypertoric varieties. We believe that, in general, the stable-map quantum $D$-module is not the correct object to consider for the quantum Hikita conjecture.

\subsection{Calabi--Yau specialization of vertex functions}\label{CYvertex}
\subsubsection{Proof of Proposition~\ref{nonlocvertex}}

Recall that $\mathsf T\subset\mathsf F\times\mathbb C_\hbar^\times$ is a subtorus as in the statement and that $\mathsf T_q=\mathsf T\times\mathbb C_q^\times$. In particular, the symplectic form has character $\hbar^{-2}$.

\begin{proof}[Proof of Proposition~\ref{nonlocvertex}]
By linearity, take $\tau\in H^*_{\bT_q}(\stackqv)$ and work coefficientwise in ${\boldsymbol{z}}$.  Put
$$
 M_d:=\qm^d_{\ns p_2}
$$
and $\mathfrak C_{\bT}:=H^*_{\bT}(\operatorname{pt})$.  The degree-$d$ coefficient of the vertex is
$$
 V_{\bT,d}^{(\tau)}
 :=(\ev_{p_2})_*^{\loc}
 \bigl([M_d]^{\mathrm{vir}}\cap\ev_{p_1}^*\tau\bigr).
$$
Full $\bT_q$-localization gives
\begin{equation}
\label{eq:T-full-localization}
 V_{\bT,d}^{(\tau)}
 =\sum_{P\in\pi_0(M_d^{\bT_q})}
 (\ev_P)_*
 \left(
  \frac{[P]^{\mathrm{vir}}\cap\ev_{p_1}^*\tau}
       {e_{\bT_q}(N_P^{\mathrm{vir}})}
 \right),
 \qquad
 \ev_P:=\ev_{p_2}|_P .
\end{equation}
Every $\ev_P$ is proper, since $P\subset M_d^{\mathbb C_q^\times}$.

Form the derived evaluation fiber
$$
 M_{d,p}:=M_d\mathop{\times}\limits_X^{\mathbf R}\{p\}.
$$
Refined virtual base change, applied to the proper summands in the
$\mathbb C_q^\times$-localization defining the vertex, gives
\begin{equation}
\label{eq:point-fiber-localization}
 \iota_p^*V_{\bT,d}^{(\tau)}
 =\int_{[M_{d,p}]^{\mathrm{vir}}}^{\loc}\ev_{p_1}^*\tau.
\end{equation}
There is no factor $e_{\bT_q}(T_pX)^{-1}$ in
\eqref{eq:point-fiber-localization}, since $\iota_p^*$ is ordinary pullback.
Moreover, $(M_{d,p})^{\mathbb C_q^\times}$ is proper.  Hence
$$
 \iota_p^*V_{\bT,d}^{(\tau)}
 =\sum_{K\in\pi_0(M_{d,p}^{\bT_q})}
 \int_{[K]^{\mathrm{vir}}}
 \frac{\gamma_K}{e_{\bT_q}(N_K^{\mathrm{vir}})},
 \qquad
 \gamma_K:=\ev_{p_1}^*\tau|_K,
$$
and every $K$ is proper.

Let $\pi\colon\mathcal C\to M_d$ be the universal curve and set
$$
 \mathcal A_d:=R\pi_*\bigl(\qmpol(-p_2)\bigr).
$$
Since the tangent weight at $p_2$ is $q^{-1}$,
\begin{equation}
\label{eq:point-relative-POT}
 E_d:=T^{\mathrm{vir}}_{M_{d,p}}
 =\mathcal A_d-\rho\mathcal A_d^\vee,
 \qquad
 \rho:=q\hbar^{-2},
 \qquad
 E_d=-\rho E_d^\vee.
\end{equation}
Indeed,
$$
 \omega_{C_0}(p_2)\cong q^{-1}\mathcal O_{C_0}(-p_2),
 \qquad
 R\pi_*\bigl(\hbar^{-2}(\qmpol)^\vee(-p_2)\bigr)=-q\hbar^{-2}\mathcal A_d^\vee,
$$
and \eqref{eq:point-relative-POT} follows from the exact sequence at
$p_2$ and~\eqref{Tvir}.  Thus $E_d$ has virtual rank zero.

Put
$$
 \bT_{\mathrm{CY}}:=\ker(\rho\colon\bT_q\to\mathbb C^\times)
 =\{(t,\hbar(t)^2):t\in\bT\}\simeq\bT,
 \qquad
 \delta:=c_1(\rho)=q-2\hbar.
$$
Since $\rho$ has $q$-weight one, every character of $\bT_q$ is uniquely $\lambda+c\rho$, where
$\lambda\in X^*(\bT)$ and $c\in\mathbb Z$.  Write
$$
 \mathcal A_d|_K
 =\sum_{\lambda,c}A_{\lambda,c}\otimes
   \mathbb C_{\lambda+c\rho}.
$$
All ranks below are virtual.  From~\eqref{eq:point-relative-POT},
$$
 T_K^{\mathrm{vir}}=A_{0,0}-A_{0,1}^\vee,
 \qquad
 v:=\operatorname{vdim}K
 =\operatorname{rk}A_{0,0}-\operatorname{rk}A_{0,1},
 \qquad
 [K]^{\mathrm{vir}}\in A_v(K).
$$
If $v<0$, this class is zero; assume $v\geq0$.  Formula
\eqref{eq:point-relative-POT} gives
\begin{equation}
\label{eq:point-inverse-Euler}
 \frac1{e_{\bT_q}(N_K^{\mathrm{vir}})}
 =
 \frac{\displaystyle
  \prod_{(\lambda,c)\ne(0,1)}
  e\bigl(A_{\lambda,c}^\vee\otimes
          \mathbb C_{-\lambda+(1-c)\rho}\bigr)}
 {\displaystyle
  \prod_{(\lambda,c)\ne(0,0)}
  e\bigl(A_{\lambda,c}\otimes
          \mathbb C_{\lambda+c\rho}\bigr)}.
\end{equation}

Write $A^\bullet(K)$ for the ordinary Chow ring.  The resonant part of
\eqref{eq:point-inverse-Euler} is
\begin{equation}
\label{eq:point-resonant}
 R_K(\delta)
 =\frac{\displaystyle\prod_{c\ne1}
  e\bigl(A_{0,c}^\vee\otimes\mathbb C_{(1-c)\rho}\bigr)}
 {\displaystyle\prod_{c\ne0}
  e\bigl(A_{0,c}\otimes\mathbb C_{c\rho}\bigr)}
 =\delta^v\sum_{k\geq0}\delta^{-k}r_k,
 \qquad r_k\in A^k(K).
\end{equation}
Indeed, the leading exponent is
$$
 \sum_{c\ne1}\operatorname{rk}A_{0,c}
 -\sum_{c\ne0}\operatorname{rk}A_{0,c}=v,
$$
and the coefficient of $\delta^{v-k}$ has ordinary codimension $k$.
The last equality in~\eqref{eq:point-resonant} is read in the completion
by ordinary codimension; only $k\leq v$ can contribute to an integral
over $[K]^{\mathrm{vir}}$.

Let $P_K(\delta)$ be the nonresonant part of
\eqref{eq:point-inverse-Euler}.  It is Taylor-regular in $\delta$ after
localizing the nonzero $\bT$-characters.  Since $\gamma_K$ is
nonlocalized, write
$$
 P_K(\delta)\gamma_K
 =\sum_{j,\ell\geq0}\delta^j b_{j,\ell},
 \qquad
 b_{j,\ell}\in A^\ell(K)\otimes
 \operatorname{Frac}(\mathfrak C_{\bT}).
$$
Since $\bT_q$ acts trivially on $K$, a term in its contribution is
$$
 \delta^{v-k+j}
 \int_{[K]^{\mathrm{vir}}}r_kb_{j,\ell}.
$$
It can be nonzero only if $k+\ell=v$; its $\delta$-exponent is then
$\ell+j\geq0$.  Thus every summand is regular at $q=2\hbar$.

At $\delta=0$, the nonresonant factors cancel:
$$
 P_K(0)
 =\prod_{\lambda\ne0,\,c}
 \frac{e(A_{\lambda,c}^\vee\otimes\mathbb C_{-\lambda})}
      {e(A_{\lambda,c}\otimes\mathbb C_\lambda)}
 =(-1)^{\varepsilon_K},
 \qquad
 \varepsilon_K:=\sum_{\lambda\ne0,\,c}\operatorname{rk}A_{\lambda,c}.
$$
Here
$e(A^\vee\otimes\mathbb C_{-\lambda})
=(-1)^{\operatorname{rk}A}e(A\otimes\mathbb C_\lambda)$, also for a
virtual perfect complex $A$.
A term survives at $\delta=0$ only if $v-k+j=0$.  Together with
$k+\ell=v$, this gives $j=\ell=0$ and $k=v$.  Hence the specialized
contribution of $K$ is
\begin{equation}
\label{eq:point-specialized-contribution}
 (-1)^{\varepsilon_K}
 \int_{[K]^{\mathrm{vir}}}r_v\gamma_{K,0}
 \in\mathfrak C_{\bT},
 \qquad
 \gamma_{K,0}:=
 \left(\gamma_K|_{\delta=0}\right)_{\mathrm{ord}=0}.
\end{equation}
Summing over $K$ and $d$ proves regularity and nonlocalization. Since $\bT_q$ acts trivially on the connected component $Z$, restriction of a class on $Z$ to a point factors through $H^0(Z)$ and is therefore independent of $p\in Z$.
\end{proof}

Assume now that $\bT=\mathsf F\times\mathbb C_\hbar^\times$, where $\mathsf F$ is the full flavor torus, and that $X^{\mathsf F}$
is finite. In this case, Proposition~\ref{nonlocvertex} has the following more geometric proof.

\begin{proof}[Proof of Proposition~\ref{nonlocvertex}]
As above, work coefficientwise in ${\boldsymbol{z}}$.  Put
$$
 M_d:=\qm^d_{\ns p_2},
 \qquad
 \rho:=q\hbar^{-2},
 \qquad
 \bT_{\mathrm{CY}}:=\ker(\rho\colon\bT_q\to\mathbb C^\times).
$$

\textit{Step 1.}  The fixed locus $M_d^{\bT_{\mathrm{CY}}}$ is proper.
Indeed, $\mathsf F\subset\bT_{\mathrm{CY}}$ acts trivially on the
source, so a fixed quasimap is constant on the nonsingular locus, with
value in $X^{\mathsf F}$.  Since $\mathbb C_\hbar^\times$ commutes with
$\mathsf F$, it fixes this finite set pointwise.  The projection
$\bT_{\mathrm{CY}}\to\mathbb C_q^\times$ is surjective (on the
$\mathbb C_\hbar^\times$-factor it is a degree-two isogeny), so the only
possible singularity is $p_1$.

Fix $p\in X^{\mathsf F}$ and choose a stable representative.  Its gauge
stabilizer is trivial, so the $\mathbb C_\hbar^\times$-action has a
unique lift to its tautological spaces.  After pulling the
$\mathbb C_q^\times$-linearization back along this degree-two isogeny and twisting it by this lift as in
\cite[Sections~4.3.1--4.3.15]{OkLec}, the fixed-quasimap
description of \cite[Sections~7.2.6--7.2.16]{OkLec} identifies the fiber of
$$
 \ev_{p_2}\colon M_d^{\bT_{\mathrm{CY}}}
 \longrightarrow X^{\mathsf F}
$$
over $p$ with closed incidence loci in products of $(\mathsf F\times\mu_2)$-graded
framed flag varieties.  There are only finitely many flag types in fixed
degree, and these loci are proper by \cite[Corollary~7.2.15]{OkLec}.
Thus every fiber is proper, and so is
$M_d^{\bT_{\mathrm{CY}}}$.

\textit{Step 2.}  Fix $p\in X^\bT=X^{\mathsf F}$ and put
$$
 M_{d,p}:=M_d\mathop{\times}\limits_X^{\mathbf R}\{p\}.
$$
By Step~1, $M_{d,p}^{\bT_{\mathrm{CY}}}$ is proper.  Localization groups the $\bT_q$-fixed summands in
\eqref{eq:point-fiber-localization} by the components
$K\subset M_{d,p}^{\bT_{\mathrm{CY}}}$.  Put
$B_K:=(\mathcal A_d|_K)^{\mathrm{mov}}$.  By
\eqref{eq:point-relative-POT},
$$
 N_K^{\mathrm{vir}}=B_K-\rho B_K^\vee,
 \qquad
 \iota_p^*V_{\bT,d}^{(\tau)}
 =\sum_K\int_{[K]^{\mathrm{vir}}}
 \ev_{p_1}^*\tau\,
 \frac{e_{\bT_q}(\rho B_K^\vee)}{e_{\bT_q}(B_K)}.
$$
All weights of $B_K$ are nonzero on $\bT_{\mathrm{CY}}$, and the last
quotient is regular at $\rho=1$, where it equals
$(-1)^{\operatorname{rk}B_K}$.  Since $K$ is proper,
$$
 \left.\iota_p^*V_{\bT,d}^{(\tau)}\right|_{q=2\hbar}
 =\sum_{K\in\pi_0(M_{d,p}^{\bT_{\mathrm{CY}}})}
 (-1)^{\operatorname{rk}B_K}
 \int_{[K]^{\mathrm{vir}}}
 \left.\ev_{p_1}^*\tau\right|_{q=2\hbar}
 \in H^*_{\bT_{\mathrm{CY}}}(\operatorname{pt})
 =H^*_{\bT}(\operatorname{pt}).
$$
Summing over $d$ proves the proposition in this case.
\end{proof}

\subsubsection{Comparing specialized vertex functions for $X$ and $X^{\mathsf{F}}$}

The following proposition will be used to reduce the computation of $\ver{\tau}_{p}|_{q=2\hbar}$ ($p \in X^{\mathsf{F}}$ is isolated) to the case when $X$ is a single point.

Let $\mathsf F$ be an arbitrary flavor torus. Then $X^{\mathsf F}$ is the disjoint union of Nakajima quiver varieties for possibly different quivers (see \cite[Proposition 2.3.1]{MO19}). Let $Z$ be a connected component of $X^{\mathsf F}$.

Let $\tau$ be a descendant for $X$. It induces a descendant $\tau_Z$ for $Z$. On the one hand, we can consider $\ver{\tau}_{X,Z}$, the vertex function of $X$ restricted to $Z$. On the other hand, we have $\ver{\tau_Z}_{Z}$, the vertex function of $Z$. It is shown in \cite[Section 7.3]{OkLec} that the latter is a limit of the former. We need here a cohomological version of this statement after the further specialization $q=2\hbar$.

Writing $\ver{\tau}=\sum_d V_d^{(\tau)}z^d$, we define a signed version of the vertex by
$$
\signver{\tau}=\sum_d(-1)^{(T^{1/2}X,d)}V_d^{(\tau)}z^d,
$$
where $T^{1/2}X$ is a polarization of $X$. Since any two polarizations differ by a class of the form $\alpha-\hbar^{-2} \alpha^{\vee}$, the sign does not depend on the choice.
Explicitly, the shift is
$$
\signver{\tau}=\ver{\tau}|_{z_i=(-1)^{a_i} z_i},
$$
where
$$
a_{i}=\dw_{i}+\sum_{\substack{e \\ h(e)=i}} \dv_{t(e)}-\sum_{\substack{e \\ t(e)=i}} \dv_{h(e)}.
$$

\begin{proposition}\label{prop: limver}
At the Calabi--Yau specialization, the two signed vertex functions are equal:
$$
\signver{\tau}_{X,Z}\big|_{q=2\hbar}= \signver{\tau_{Z}}_{Z}\big|_{q=2\hbar}.
$$
\end{proposition}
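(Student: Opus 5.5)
We will compare both sides coefficientwise in $\boldsymbol z$ by $\bT_q$-localization, using the point-fiber form of the vertex from the proof of Proposition~\ref{nonlocvertex}. Fix $p\in Z$ and a degree $d\in\eff(X)$. By \eqref{eq:point-fiber-localization}, $\iota_p^*V^{(\tau)}_{X,d}$ is a sum over components $K$ of the fixed locus of $M_{d,p}=\qm^d_{\ns p_2}(X)\times^{\mathbf R}_X\{p\}$. Since $\mathsf F$ acts trivially on $C_0$, any $\mathsf F$-fixed quasimap nonsingular at $p_2$ with $\ev_{p_2}=p\in Z$ factors, after an $\mathsf F$-linearization twist of the tautological bundles, through the fixed stack $[\mu^{-1}(0)/G_{\dv}]^{\mathsf F}$. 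This stack is the quasimap target for $Z$ by \cite[Proposition~2.3.1]{MO19}. Summing over the finitely many splittings of $\qmtb_i$ into $\mathsf F$-isotypic pieces compatible with $Z$, the $\mathsf F$-fixed loci identify with $\qm_{\ns p_2}(Z)$ of the induced degrees $d'$. The induced map $d'\mapsto d$ matches $z^d$ on $X$ with the corresponding monomial on $Z$, as in \cite[Section~7.3]{OkLec}. So the first step is to set up the bijection between the $\bT_q$-fixed components $K$ for $X$ and the ones for $Z$, with equal $[K]^{\mathrm{vir}}$ up to the obstruction theory discussed below.

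Second, split $\mathcal A_d|_K$ into its $\mathsf F$-fixed and $\mathsf F$-moving parts. The $\mathsf F$-fixed part reproduces exactly the obstruction theory $E^Z_{d'}$ of $M_{d',p}(Z)$, and the descendant restricts to $\tau_Z$. The $\mathsf F$-moving part $B$ enters the virtual normal bundle as $B-\rho B^\vee$, by the self-duality \eqref{eq:point-relative-POT}. Its contribution is $e(\rho B^\vee)/e(B)$, with all weights nonzero on $\bT_{\mathrm{CY}}$. As in Step~2 of the second proof of Proposition~\ref{nonlocvertex}, this factor is regular at $\rho=1$ and equals $(-1)^{\operatorname{rk}B}$ there. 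So at $q=2\hbar$ the contribution of $K$ to $X$ equals $(-1)^{\operatorname{rk}B}$ times its contribution to $Z$. A subtlety is that the rank of the moving part must be counted at the level of the full fixed locus rather than the resonant/nonresonant splitting in the first proof. We will use the $\bT_{\mathrm{CY}}$-grouping there, now with $\bT_{\mathrm{CY}}$ replaced by $\mathsf F\times$(CY part), and apply the first proof's argument to the $\mathsf F$-fixed resonant factors for $Z$.

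Third, and this is the main obstacle, we must show that $(-1)^{\operatorname{rk}B}$ is exactly the ratio of the sign twists, $(-1)^{(T^{1/2}X,d)-(T^{1/2}Z,d')}$. It must also be independent of $K$, so that it can be absorbed into $z\mapsto \pm z$. Since $\operatorname{rk}B=\chi(C_0,\mathcal T^{1/2,\mathrm{mov}}(-p_2))$, Riemann--Roch gives $\operatorname{rk}B=\deg\mathcal T^{1/2,\mathrm{mov}}+\operatorname{rk}(\ldots)\cdot 0$, because the twist by $-p_2$ kills the constant term. We choose the polarization $T^{1/2}X|_Z=T^{1/2}Z+T^{1/2}_{\mathrm{mov}}$, which is compatible with \cite[Section~7.3]{OkLec}. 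Then $\deg \mathcal T^{1/2}=(T^{1/2}X,d)$ and $\deg\mathcal T^{1/2,\mathrm{fix}}=(T^{1/2}Z,d')$, so the parities match. The remaining check is that moving pieces with weight exactly $\rho$ cannot occur, since they are nonzero on $\mathsf F$. We will also verify that the polarization-independence of the sign noted before the proposition lets us use this adapted polarization. Summing over $K$ and $d$ then gives the equality of signed vertices.
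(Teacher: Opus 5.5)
Your proposal is correct and is essentially the paper's argument. You localize with respect to $\mathsf F$, identify the fixed locus with quasimaps to $Z$, note that the $\mathsf F$-moving part of the obstruction theory has the self-dual form $B-\rho B^\vee$ and so contributes $(-1)^{\operatorname{rk}B}$ at $\rho=1$, and absorb this sign via Riemann--Roch, $\operatorname{rk}B=(N^{1/2}_{Z/X},d)$, together with the splitting $T^{1/2}X|_Z=T^{1/2}Z+N^{1/2}_{Z/X}$.

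The one difference is that you work pointwise at $\bT$-fixed points $p\in Z$, through the full $\bT_q$-localization formula of the first proof of Proposition~\ref{nonlocvertex}; there $\varepsilon^X_K=\varepsilon^Z_K+\operatorname{rk}B$ makes the specialization rigorous term by term. The paper instead localizes only with respect to $\mathsf F$ on $\qm^d_{p_2\mapsto Z}(X)$ and states the identity as classes on $Z$. Your pointwise version is exactly what is used later (where $Z$ is a point), and it gives the class-level statement whenever $Z^{\bT}$ is finite.
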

\begin{proof}
   We compute the left-hand side by localization with respect to $\mathsf F$. Define the incidence condition at $p_2$ by the derived fiber product
   $$
   \qm^d_{p_2\mapsto Z}(X)
   :=\qm^d_{\ns p_2}(X)\mathop{\times}\limits_X^{\mathbf R}Z,
   $$
   taken with respect to $\ev_{p_2}$ and the inclusion $Z\hookrightarrow X$. Let $\qm^{d}_{\ns p_2}(Z)$ be the moduli space of degree $d$ quasimaps to $Z$ nonsingular at $p_2$. Then
   $$
   \bigl(\qm^{d}_{p_2 \mapsto Z}(X)\bigr)^{\mathsf F}
   =\qm^{d}_{\ns p_2}(Z).
   $$

   So 
   \begin{align*}
\signver{\tau}_{X,Z}&=\sum_{d \in \eff(X)}  (-1)^{(T^{1/2}X,d)}z^{d} \ev_{p_2,*}\left(\qm(X)^{d}_{p_2\mapsto Z},\vrs \cap \ev_{p_1}^{*}(\tau)\right)  \\
&=\sum_{d \in \eff(Z)} (-1)^{(T^{1/2}X,d)} z^{d} \ev_{p_2,*}\left(\qm(Z)^{d}_{\ns p_2},\frac{ \vrs \cap \ev_{p_1}^{*}(\tau_{Z}) }{e(N_{\mathrm{vir}})}\right),
\end{align*}
where $N_{\mathrm{vir}}$ is the virtual normal bundle to the $\mathsf F$-fixed locus.

Set
$$
N^{1/2}_{Z/X}:=\bigl(T^{1/2}X|_Z\bigr)^{\mathsf F\text{-}\mathrm{mov}},
\qquad
\mathcal B_d:=R\pi_*\bigl(f^*N^{1/2}_{Z/X}(-p_2)\bigr),
$$
where $f$ denotes the universal quasimap. The same Serre-duality calculation as in~\eqref{eq:point-relative-POT} gives
$$
N_{\mathrm{vir}}=\mathcal B_d-\rho\mathcal B_d^\vee,
\qquad \rho=q\hbar^{-2}.
$$
At $q=2\hbar$, equivalently $\rho=1$, it follows that
$$
\left.\frac{1}{e(N_{\mathrm{vir}})}\right|_{q=2\hbar}
=(-1)^{\operatorname{rk}\mathcal B_d}.
$$
Riemann--Roch gives
$$
\operatorname{rk}\mathcal B_d=(N^{1/2}_{Z/X},d),
$$
while the restricted polarization decomposes as
$$
T^{1/2}X|_Z=T^{1/2}Z+N^{1/2}_{Z/X}.
$$
Consequently,
$$
(T^{1/2}X,d)+\operatorname{rk}\mathcal B_d
\equiv (T^{1/2}Z,d)\pmod 2.
$$
The localization sign therefore converts the signed $X$-vertex into the signed $Z$-vertex, and we obtain
\begin{align*}
&\signver{\tau}_{X,Z}\big|_{q=2\hbar} \\
&=\sum_{d \in \eff(Z)} (-1)^{(T^{1/2} Z,d)} z^{d} \ev_{p_2,*}\left(\qm(Z)^{d}_{\ns p_2}, \vrs \cap \ev_{p_1}^{*}(\tau_{Z})\right)\big|_{q=2\hbar} \\
&=\signver{\tau_{Z}}_{Z}|_{q=2\hbar}.
\end{align*}
\end{proof}

\begin{example}
\label{fixpoints}
One case in which this gives a rather drastic reduction is when $\Gamma$ is of type ADE and $\dw$ is a sum of minuscule coweights. Indeed, in this case the fixed-point components are products of point-like quiver varieties.

More precisely, recall that $I=\{1,\ldots,r\}$. For $i \in I$, let $\delta_i$ be the vector with $1$ in the $i$th component and $0$ elsewhere. Let $\alpha_i$ and $\fundwt_i$ denote the simple roots and fundamental weights.

Fixed points $X^{\mathsf F_{\mathsf w}}$ are in bijection with all possible decompositions
\begin{equation*}
\mathsf v=\sum_{i\in I}\sum_{s=1}^{\mathsf w_i}\mathsf v_i^{(s)},
\end{equation*}
such that the quiver varieties $\qv({\mathsf{v}}_i^{(s)},\delta_i)$ are nonempty. Each of these quiver varieties is a point.

The variety $\qv(\dv,\delta_i)$ is nonempty if and only if $\fundwt_i-\sum_j\dv_j\alpha_j\in W\fundwt_i$.

Let us denote a collection $(\mathsf{v}_i^{(s)})$ as above by $\ul{\mathsf{v}}$. Explicitly, the fixed point $p_{\ul{\mathsf{v}}}$ corresponding to $\ul{\mathsf{v}}$ is equal to the image of
\begin{equation*}
\prod_{\substack{i\in I\\1\leqslant s\leqslant\mathsf w_i}}\qv(\mathsf{v}_i^{(s)},\delta_i) \hookrightarrow X,
\end{equation*}
under the map taking the direct sum of all representatives.
\end{example}

 \begin{example}\label{fixpointsGieseker}
Let $X=\qv(n,r)$ be the Gieseker variety associated with the Jordan
quiver.  Assume that $\mathsf F$ contains the loop-scaling torus and the
full framing torus.  Write
$$
W=\bigoplus_{s=1}^{r}W_s,
\qquad \dim W_s=1.
$$
If $p\in X^{\mathsf F}$, the $\mathsf F$-action lifts to the tautological
space of a stable representative, since its gauge stabilizer is trivial.
This gives a decomposition
$$
V=\bigoplus_{s=1}^{r}\bigoplus_{k\in\mathbb Z}V_k^{(s)}
$$
such that
$$
Y(V_k^{(s)})\subset V_{k-1}^{(s)},\qquad
Z(V_k^{(s)})\subset V_{k+1}^{(s)},
$$
while the framing maps have degree zero.  The moment map and stability
conditions therefore identify the corresponding fixed component with
$$
\prod_{s=1}^{r}
\qv_{A_\infty}\bigl(\mathsf v^{(s)},\delta_0\bigr),
\qquad
\mathsf v_k^{(s)}=\dim V_k^{(s)}.
$$
Since the vectors $\mathsf v^{(s)}$ have finite support, these are
finite type $A$ quiver varieties with one minuscule framing.

More explicitly, the fixed points are indexed by $r$-tuples of
partitions
$$
\ul{\mathsf Y}
=(\mathsf Y^{(1)},\ldots,\mathsf Y^{(r)}),
\qquad
\sum_s|\mathsf Y^{(s)}|=n,
$$
and
$$
\mathsf v_k(\mathsf Y^{(s)})
=
\#\{\square\in\mathsf Y^{(s)}\mid c(\square)=k\}.
$$
Here $c(i,j)=i-j$ is the content of a box.
Thus
$$
X^{\mathsf F}
=
\bigsqcup_{\ul{\mathsf Y}}
\prod_{s=1}^{r}
\qv_{A_\infty}
\bigl(\mathsf v(\mathsf Y^{(s)}),\delta_0\bigr).
$$
Every factor is nonempty and has minuscule framing, so
Example~\ref{fixpoints} implies that it is a point.  Hence the displayed
components are precisely the points $p_{\ul{\mathsf Y}}$.  For $r=1$
this recovers $X=\operatorname{Hilb}^{n}(\mathbb A^2)$.
\end{example}

\subsection{Localization in the case of isolated fixed points}

Assume that $(\qm^d_{\ns p_2})^{\bT_q}$ consists of isolated points for
every $d\in\eff(X)$. For a fixed quasimap $f$, set
$p_f:=\ev_{p_2}(f)\in X^{\bT}$ and let
$\iota_{p_f}\colon\{p_f\}\hookrightarrow X$ be the inclusion. The
localization formula immediately gives the following.

For $p\in X^{\bT}$ define the derived
evaluation fiber
$$
\qm^d_{p_2\mapsto p}
:=\qm^d_{\ns p_2}\mathop{\times}\limits_X^{\mathbf R}\{p\}.
$$

\begin{proposition}\label{prop: isolated qm}
Let $p\in X^{\bT}$ and assume that
$(\qm^d_{p_2\mapsto p})^{\bT_q}$ consists of isolated points for every
$d\in\eff(X)$. Then
$$
\iota_p^*\signver{\tau}_{X}
=e_{\bT}(T_pX)
 \sum_{d\in\eff(X)}\ \sum_{\substack{f\in(\qm^d_{\ns p_2})^{\bT_q}\\ p_f=p}}
 \frac{\tau(f)}{e_{\bT_q}(\qmtan|_{f})}
 (-1)^{(T^{1/2}X,d)}\boldsymbol{z}^d.
$$
\end{proposition}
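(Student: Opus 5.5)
The plan is to compute each coefficient $V^{(\tau)}_d$ of the vertex by $\bT_q$-equivariant localization on $\qm^d_{\ns p_2}$, restrict to $p$, and then account for the sign twist. By Definition~\ref{def: vertex},
\[
V^{(\tau)}_d=\ev_{p_2,*}\bigl([\qm^d_{\ns p_2}]^{\mathrm{vir}}\cap\ev_{p_1}^*\tau\bigr),
\]
where the pushforward is defined through $\mathbb C_q^\times$-localization. Only the proper components of the $\bT_q$-fixed locus contribute.

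First, apply the virtual localization formula of Graber--Pandharipande for the $\bT_q$-action on the Deligne--Mumford stack with perfect obstruction theory of Proposition~\ref{prop:pot}. This writes $V^{(\tau)}_d$ as a sum over fixed loci $P$ of $(\ev_{p_2}|_P)_*\bigl([P]^{\mathrm{vir}}\cap\tau|_P/e_{\bT_q}(N_P^{\mathrm{vir}})\bigr)$. Under the isolatedness hypothesis, each fixed point $f$ lying over $p$ is a reduced point with zero virtual fixed part. Hence its contribution is $\tau(f)/e_{\bT_q}(\qmtan|_f)$, pushed forward to the class $[p]\in H^*_{\bT_q}(X)$. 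Here $\tau(f)=\ev_{p_1}^*\tau|_f$ is a character evaluation, since $\bT_q$ acts trivially on $f$. The hypothesis is stated for the derived fiber $\qm^d_{p_2\mapsto p}$. So it is cleanest to argue with the refined base-change identity already used in the proof of Proposition~\ref{nonlocvertex}, equation~\eqref{eq:point-fiber-localization}: restricting to $p$ amounts to integrating over $[\qm^d_{p_2\mapsto p}]^{\mathrm{vir}}$.

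Second, relate the fixed points of the fiber to those of $\qm^d_{\ns p_2}$, and their tangent spaces to each other. The fixed points of the derived fiber are exactly the fixed $f$ with $p_f=p$. Its virtual tangent space at $f$ is $\qmtan|_f-T_pX$, since the derived fiber product subtracts the pullback of $T_pX$ via $\ev_{p_2}$. Therefore each term equals
\[
\frac{\tau(f)\,e_{\bT}(T_pX)}{e_{\bT_q}(\qmtan|_f)}.
\]
Here $e_{\bT_q}(T_pX)=e_{\bT}(T_pX)$ because $\mathbb C_q^\times$ acts trivially on $X$. Equivalently, $\iota_p^*[p]=e_{\bT}(T_pX)$ by self-intersection, so both routes give the same factor.

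Finally, multiply by $(-1)^{(T^{1/2}X,d)}z^d$ and sum over $d$, which gives $\iota_p^*\signver{\tau}_X$ by definition. The main point needing care is bookkeeping rather than mathematics. One must check that the isolated fixed points carry trivial virtual class, with no obstruction contributions in the fixed part, so that the virtual class is simply $[f]$. One must also check that the $T_pX$ correction enters with the stated sign and without a stray factor. I would verify both via the standard identity $\qmtan|_f=T^{\mathrm{vir}}_{\mathrm{fiber}}|_f+\ev_{p_2}^*TX$ in $K_{\bT_q}$.
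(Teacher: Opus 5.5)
Your proposal is correct and is essentially the paper's own argument. Refined virtual base change reduces $\iota_p^*V^{(\tau)}_d$ to localization on the derived fiber $\qm^d_{p_2\mapsto p}$, whose virtual tangent space at a fixed $f$ is $\qmtan|_f-T_pX$; virtual localization at these isolated points, followed by inserting the sign $(-1)^{(T^{1/2}X,d)}$, gives the formula, and your cross-check via $\iota_p^*[p]=e_{\bT}(T_pX)$ and your caveat that the fixed part of the obstruction theory must vanish at each $f$ match what the paper implicitly assumes.
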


\begin{proof}
By refined virtual base change, as in
\eqref{eq:point-fiber-localization}, $\iota_p^*\signver{\tau}_{X}$ is
computed by localization on the derived evaluation fiber
$\qm^d_{p_2\mapsto p}$. At a fixed quasimap $f$ in this fiber, its virtual
tangent space is $\qmtan|_f-T_pX$. The formula follows from virtual
localization.
\end{proof}

Concretely, $\tau(f)$ can be written as follows. The bundles $\qmtb_i$
provided by a $\bT_q$-fixed quasimap $f$ decompose as
$$
\qmtb_i\cong\bigoplus_{j=1}^{\dv_i}
\mathbb C_{w_{i,j}}\otimes\mathcal O(d_{i,j}),
$$
where $w_{i,j}$ are the additive $\bT$-weights of $\tb_i$. Write
$\tau=\tau(x)$ as a function of the Chern roots $x_{i,j}$ of the bundles
$\tb_i$. Then
$$
\tau(f)=\tau(x)|_{x_{i,j}=w_{i,j}+d_{i,j}q}.
$$
Thus, computing $\signver{\tau}_{X}\big|_{q=2\hbar}$ reduces to
understanding $\bT_q$-fixed quasimaps.

The following is an immediate corollary of Proposition \ref{prop: isolated qm}.

For a
$\mathsf T_q$-fixed quasimap $f$ based at $p\in X^{\mathsf T}$, recall
$$
  \mathcal A_d|_f=R\Gamma\!\left(\mathbb P^1,
        f^*T^{1/2}X(- p_2)\right).
$$

\begin{proposition}\label{prop:CY-isolated-localization}
Assume that
$\bigl(\qm^d_{\mathsf p_2\mapsto p}\bigr)^{\mathsf T_q}$ consists of
isolated points for every $d$.  Suppose, in addition, that no weight of $\mathcal A_d|_f$
restricts trivially to $\mathsf T_{\operatorname{CY}}$ for any such fixed point $f$.  Then
$$
  \left.\iota_p^*V_X^{(\tau)}\right|_{q=2\hbar}
  =\sum_d\sum_{f\in
       (\qm^d_{\mathsf p_2\mapsto p})^{\mathsf T_q}}
       \left.\tau(f)\right|_{q=2\hbar}\boldsymbol{z}^d.
$$
\end{proposition}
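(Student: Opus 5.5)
The plan is to start from Proposition~\ref{prop: isolated qm}. There, before specialization, the restricted vertex is written as a sum over isolated fixed quasimaps $f$ in the derived fiber $\qm^d_{p_2\mapsto p}$, and each term is
\[
\tau(f)\,\frac{1}{e_{\mathsf T_q}\bigl(T^{\mathrm{vir}}_{\qm^d_{p_2\mapsto p}}|_f\bigr)}.
\]
This is the unsigned version; the sign $(-1)^{(T^{1/2}X,d)}$ is simply removed. We use \eqref{eq:point-relative-POT} to identify the virtual tangent space at $f$ with $E_d|_f=\mathcal A_d|_f-\rho\,(\mathcal A_d|_f)^\vee$, where $\rho=q\hbar^{-2}$. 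Since $f$ is an isolated fixed point, $[K]^{\mathrm{vir}}$ is the point class, so the whole of $E_d|_f$ is moving. The contribution of $f$ is therefore
\[
\tau(f)\,\frac{e_{\mathsf T_q}\bigl(\rho\,(\mathcal A_d|_f)^\vee\bigr)}{e_{\mathsf T_q}(\mathcal A_d|_f)}.
\]

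Next we decompose $\mathcal A_d|_f=\sum_\chi m_\chi\,\mathbb C_\chi$ over characters of $\mathsf T_q$, with virtual integer multiplicities $m_\chi$. The Euler ratio then becomes
\[
\prod_\chi\left(\frac{-c_1(\chi)+\delta}{c_1(\chi)}\right)^{m_\chi},\qquad \delta=c_1(\rho)=q-2\hbar.
\]
By hypothesis, no $\chi$ restricts trivially to $\mathsf T_{\mathrm{CY}}$. Equivalently, $c_1(\chi)$ does not vanish identically on $\delta=0$. So each factor is regular at $\delta=0$ after localizing nonzero $\mathsf T$-characters, and at $\delta=0$ it equals $-1$. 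The contribution of $f$ specializes to $(-1)^{\operatorname{rk}\mathcal A_d|_f}\,\tau(f)|_{q=2\hbar}$. This is the isolated-point case of the resonant/nonresonant analysis in the proof of Proposition~\ref{nonlocvertex}: here $v=0$, and the resonant part is empty by hypothesis. The substitution $x_{i,j}=w_{i,j}+d_{i,j}q$ in $\tau(f)$ is polynomial, so it specializes without difficulty.

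The remaining point is the sign, and I expect this to be the main obstacle. One must show that $(-1)^{\operatorname{rk}\mathcal A_d|_f}=1$, or that it cancels exactly against whatever sign convention is implicit in the statement. By Riemann--Roch on $\mathbb P^1$,
\[
\operatorname{rk}\mathcal A_d=\chi\bigl(f^*T^{1/2}X(-p_2)\bigr)=\deg f^*T^{1/2}X=(T^{1/2}X,d),
\]
since $\operatorname{rk}T^{1/2}X\cdot\chi(\mathcal O(-1))=0$. The specialized unsigned vertex therefore equals $\sum_f(-1)^{(T^{1/2}X,d)}\tau(f)|_{q=2\hbar}\,z^d$.

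To reach the displayed formula, I would read the $\boldsymbol z$ in the statement as the sign-modified Novikov variables $\boldsymbol z^{\mathrm{tr}}$ fixed in Section~\ref{sec:graded traces}. Then $(-1)^{(T^{1/2}X,d)}z^d=\boldsymbol z^d$, using $(T^{1/2}X,d)\equiv\sum_i a_id_i \pmod 2$, which follows from the explicit formula for $a_i$. Equivalently, the left-hand side is the signed vertex $\signver{\tau}$. I would verify this parity identity directly from the polarization $T^{1/2}$ written in Section~\ref{sec: quiver}. If that check fails, the fallback is to state the result for $\signver{\tau}$.
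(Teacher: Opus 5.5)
Your proposal is correct and is the paper's route: the paper records this as an immediate corollary of Proposition~\ref{prop: isolated qm}. There the self-dual form $E_d=\mathcal A_d-\rho\mathcal A_d^\vee$ turns each isolated contribution into $(-1)^{\operatorname{rk}\mathcal A_d|_f}\,\tau(f)|_{q=2\hbar}$, Riemann--Roch gives $\operatorname{rk}\mathcal A_d=(T^{1/2}X,d)$, and this equals $\sum_i a_id_i$ on the nose, so the sign is absorbed by $\signver{\tau}$ (equivalently by the sign-modified $\boldsymbol z$), exactly as the paper uses it in Theorem~\ref{thm: pointvertex}. One small correction: the absence of a fixed part in $E_d|_f$ comes from the $\mathsf T_{\mathrm{CY}}$ hypothesis (neither $\chi$ nor $\rho-\chi$ can then vanish), not from isolatedness of $f$ alone.
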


\section{Explicit formulas for certain \texorpdfstring{$q=2\hbar$}{q=2h} specialized vertex functions}\label{sec:vertpoint}

In this section, we compute the $q=2\hbar$ specialization of vertex functions at fixed points arising from {\emph{skew subheaps}} of {\emph{dominant-minuscule heaps}}. This class includes the point quiver varieties associated with ADE quivers having one-dimensional framing at a minuscule vertex. Consequently, the formulas below give explicit expressions for the geometric side of Conjecture \ref{eq:extended_conj_tr_Verma}. The extension from ordinary heaps to skew heaps was suggested by Alexis Leroux-Lapierre and Joel Kamnitzer, whom we thank for helpful discussions. See Section \ref{sssec_more_general} for possible extensions beyond the dominant-minuscule setting.

\subsection{Dominant minuscule skew-heap diagrams}\label{ssec: dominant minuscule heaps}
From now on we assume that $\Gamma$ has no loops.
Let $\mathfrak g_{\Gamma}$ be the corresponding symmetric 
Kac--Moody Lie algebra. Let $\alpha_i$ be simple roots, $\alpha_i^\vee$ be simple coroots, and $\omega_i$ be fundamental weights.
Let $W$ be the Weyl group. 

\begin{definition}
Let $\lambda$ be a weight. An element $w\in W$ is called \emph{$\lambda$-minuscule} if it has a reduced
expression $w=s_{i_1}\cdots s_{i_\ell}$ such that
\begin{equation}\label{eq:minuscule-condition}
  \left\langle
    \alpha_{i_k}^\vee,
    s_{i_{k+1}}\ldots s_{i_\ell}(\lambda)
  \right\rangle=1
  \qquad (1\leqslant k\leqslant \ell).
\end{equation}
Equivalently,
$$
  s_{i_k}\ldots s_{i_\ell}(\lambda)
  =\lambda-\alpha_{i_\ell}-\alpha_{i_{\ell-1}}-\ldots-\alpha_{i_k}.
$$
\end{definition}

\begin{definition}\label{def: heap}
For a reduced word $w=s_{i_1}\cdots s_{i_\ell}$, the {\emph{heap}} $H(w)$ is the partially ordered set $(\{1,2,\ldots,\ell\},\prec)$, where $\prec$ is the
transitive closure of
\begin{equation}\label{eq:heap-order}
  a\prec b
  \qquad\text{if}\qquad
  a>b
  \quad\text{and}\quad
  s_{i_a}s_{i_b}\neq s_{i_b}s_{i_a}.
\end{equation}
\end{definition}

Any $\lambda$-minuscule element $w$ is fully commutative (see \cite[Proposition 2.1]{stembridge}), i.e. any two reduced words differ by permuting some sequence of commuting simple reflections. Hence, its heap $H(w)$,
defined using \eqref{eq:heap-order}, is independent of the chosen reduced
word.  Its elements are colored by
$$
  {\boldsymbol{\pi}}\colon H(w)\longrightarrow I,
  \qquad {\boldsymbol{\pi}}(a)=i_a.
$$
For every $i\in I$, the fiber 
\begin{equation*}
H(w)_i := {\boldsymbol{\pi}}^{-1}(i)
\end{equation*}
is {\emph{totally}} ordered (see \cite[Remark 2.2]{heaps}), and
$H(w)$ is ranked (\cite[Corollary 3.4]{stembridge}).  
We denote its level function by
$$
  \mathrm{lev}\colon H(w)\longrightarrow\Z_{\geqslant 0}.
$$

The basic order-ideal correspondence (see \cite[Lemma 3.1]{S96}) gives
\begin{equation}\label{eq:ideal-correspondence}
  \{u\in W\mid u\leqslant_L w\}
  \xrightarrow{\sim}
  J(H(w)),
  \qquad
  u\longmapsto H(u),
\end{equation}
where $J(H(w))$ is the set of {\emph{order ideals}} and $\leqslant_L$ is the left weak Bruhat order. Every $u\leqslant_Lw$ is again $\lambda$-minuscule.

\begin{definition}
A \emph{skew subheap} inside $H(w)$ is a subposet
$$
  U=H(w)\setminus F,
$$
where $F=H(u)\subset H(w)$ for some $u \leqslant_L w$.  Thus, $U$ is an {\emph{order filter}}.  
\end{definition}

For such a skew subheap $U$, set
\begin{equation}\label{eq:dimension-vector}
  \dv_i(U)=|\pi^{-1}(i) \cap U|,
  \qquad
   {\mathsf{v}} = \mathsf{v}(U)=(\dv_i(U))_{i\in I}.
\end{equation}
\begin{equation}\label{eq:framing-vector}
  \dw_i(U)=\#\{x \in \pi^{-1}(i) \cap U\,|\, x\text{ is minimal in }U\},
  \qquad
{\mathsf{w}} =   \mathsf{w}(U)=(\dw_i(U))_{i\in I}.
\end{equation}
Because every $\pi^{-1}(i)$ is totally ordered, $\dw_i(U) \in \{0,1\}$.

\subsection{Points on quiver varieties from skew-heaps} 
Without loss of generality we may replace $\Gamma$ by the full subquiver on the {\emph{support}} $\boldsymbol{\pi}(U)$ of $U$, since both ${\mathsf{v}}(U)$ and $\mathsf{w}(U)$ vanish outside of this support. So, by \cite[Corollary 2.6]{stembridge}, we can assume that every connected component of $\Gamma$ is a {\emph{tree}}.

Let 
\begin{equation*}
\operatorname{Min}(U) = \{m_1,\ldots,m_s\}, \quad h_a :=\operatorname{lev}(m_a).
\end{equation*}
Consider the cocharacter 
\begin{equation}\label{eq:def_gamma_for_skew}
\gamma\colon \mathbb{C}^\times \rightarrow \mathsf{F}_\mathsf{w}, \quad t \mapsto (t^{-h_1},\ldots,t^{-h_s})
\end{equation}
of the framing torus $\mathsf{F}_{\mathsf{w}}=\prod_{i} (\mathbb{C}^\times)^{\dw_i}$.

To every skew-heap poset $U$ one can associate a $\mathsf{T}$-fixed point
\begin{equation*}
p_U \in \widetilde{\mathcal{M}}_H({\mathsf{v}},{\mathsf{w}})
\end{equation*}
for $\mathsf{T}=\tilde{\gamma}(\mathbb{C}^\times)$, where   
\begin{equation*}
\tilde{\gamma}\colon \mathbb{C}^\times \rightarrow \mathsf{F}_{\mathsf{w}} \times \mathbb{C}^\times_\hbar, \quad t \mapsto (\gamma(t),t).
\end{equation*}
It is defined as follows (compare with \cite[Section 3.3]{heaps}).

Fix a bipartite coloring of $I$ and the associated coloring of the covering relations of $H(w)$ by ${\mathsf{R}},\mathsf{B},\mathsf{G},\mathsf{Y}$, as in \cite[Proposition 3.5]{DEKMF}.
The existence of such colorings follows from the same argument as in the proof of \cite[Proposition 3.5]{DEKMF} since the authors only use \cite[Proposition 2.3]{stembridge}.

Let $\mathbb{C}U$ be the signed heap module defined as in \cite[Section 3.3]{heaps}, and use its dual $(\mathbb{C}U)^\vee$. Thus $V_i$ has a basis $\{e_x\,|\, x \in U_i\}$, and the arrows of the  quiver act, up to the signs prescribed by the coloring of the covering relations, by
\begin{equation*}
e_x \mapsto \pm e_y
\end{equation*}
whenever $y$ covers $x$.

Let $W_i$ have a basis indexed by the minimal elements of $U$ of color $i$. Define $A_i\colon W_i \rightarrow V_i$ by sending each such basis vector to the corresponding heap basis vector, and set $B_i=0$.

\begin{proposition}
The quadruple $(Y,Z,A,B)$ determines a $\mathsf{T}$-fixed point of the quiver variety $\widetilde{\mathcal{M}}_H({\mathsf{v}},{\mathsf{w}})$.  This point will be denoted $p_U$.  
\end{proposition}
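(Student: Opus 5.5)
We need to check three things: the quadruple lies in $\mu^{-1}(0)$, it is $\theta$-stable, and its $G_{\dv}$-orbit is preserved by $\tilde\gamma(\mathbb{C}^\times)$. We verify them in that order, since the first two are what make the point exist and the third only needs a compensating gauge transformation.

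\textbf{Moment map.} Since $B=0$, the framing contributes nothing to $\mu$. The moment map reduces at each vertex $i$ to $\sum_{e}\pm(Y_eZ_e - Z_eY_e)$, using the standard sign convention for the doubled quiver. On $(\mathbb{C}U)^\vee$ the arrows go from a heap element to elements covering it. So $Y_eZ_e$ and $Z_eY_e$ applied to $e_x$ with $x$ of color $i$ are sums over paths $x\to y\to x'$ of length two. Here $y$ covers one of $x,x'$ and is covered by the other, or both relations go in the same direction.

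By \cite[Proposition 2.3]{stembridge} and full commutativity, for $x$ of color $i$ the interval between $x$ and the next element of color $i$ is a ``diamond''. It contains either exactly two neighbours of color adjacent to $i$, or one element with a double edge; the latter is excluded by the minuscule/tree assumption. Hence the nonzero contributions come in pairs. The sign prescription via the $\mathsf{R},\mathsf{B},\mathsf{G},\mathsf{Y}$ coloring of \cite[Proposition 3.5]{DEKMF} is designed precisely so that these pairs cancel. We would verify this identity verbatim as in \cite[Section 3.3]{heaps}. Passing to the order filter $U$ only removes some diamonds entirely or truncates them at the bottom. Truncated diamonds have their lower vertex minimal in $U$, and the terms of a truncated diamond vanish because the missing element is absent on both sides. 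This is the step we expect to be the main obstacle: the truncated diamonds at the boundary of $U$ must be checked separately from \cite{heaps}.

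\textbf{Stability.} By \cite[Proposition 5.1.5]{GinzburgLectures} we must show there is no proper $Y,Z$-stable collection of subspaces containing $\Ima A$. In $(\mathbb{C}U)^\vee$ the arrows move $e_x$ to $\pm e_y$ with $y$ covering $x$. Every element of $U$ lies above some minimal element $m_a$, since $U$ is finite and is a filter. So there is a saturated chain $m_a\lessdot\cdots\lessdot x$, and the composite of the corresponding arrows sends $e_{m_a}=A(w_a)$ to $\pm e_x$. Hence the submodule generated by $\Ima A$ is all of $V$, which gives stability.

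\textbf{Fixedness.} Define $g(t)\in G_{\dv}$ acting on $e_x$ by $t^{-\mathrm{lev}(x)}$, possibly times a fixed power of $t$ chosen to absorb the $\hbar$-weights. Under \eqref{action_hbar_quiver}, $t$ scales $Y,Z$ by $t$ and $A$ by $t^2$, while $\gamma(t)$ rescales $W$ by $t^{-h_a}$. An arrow $e_x\mapsto e_y$ with $\mathrm{lev}(y)=\mathrm{lev}(x)+1$ then picks up $t\cdot t^{-1}=1$ after conjugation by $g(t)$. Similarly, $A(w_a)=e_{m_a}$ picks up $t^2\cdot t^{h_a}\cdot t^{-h_a}$ up to the normalization, which we fix by rescaling $g(t)$ by $t^{\pm 1}$ per level. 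The exact normalization is determined by matching the $A$ weight with the arrow weights. This bookkeeping is routine once the gradings are chosen so that $\mathrm{lev}$ is compatible with the covering relations, which holds because $H(w)$ is ranked \cite[Corollary 3.4]{stembridge}. Therefore $\tilde\gamma(t)\cdot(Y,Z,A,B)=g(t)\cdot(Y,Z,A,B)$, and $p_U$ is $\mathsf{T}$-fixed.
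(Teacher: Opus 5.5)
Your proposal is correct and follows essentially the same route as the paper: the moment map is deferred to \cite[Proposition 3.8]{heaps}, stability holds because every $e_x$ is reached from some $A(w_a)=e_{m_a}$ along upward covering relations, and $\mathsf{T}$-fixedness is compensated by a gauge cocharacter graded by $\mathrm{lev}$. Three small repairs are needed. First, the leftover $t^{2}$ on $A$ is independent of $a$, so it must be absorbed by a central factor rather than a per-level rescaling (which would spoil $Y,Z$); this gives the paper's $\kappa(t)e_x=t^{2+\mathrm{lev}(x)}e_x$ with $\tilde\gamma(t)\cdot p_U=\kappa(t)\cdot p_U$, so your $g(t)$ is $\kappa(t)^{-1}$. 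Second, the ``truncated diamond'' obstacle is vacuous: $U$ is upward closed and all arrows point up, so the relation at $e_x$, $x\in U$, involves only elements of $U$. Third, consecutive same-colored elements need not span a diamond, e.g.\ the two color-$1$ elements of the $D_n$, $\omega_1$ heap; but then no length-two path joins them, so your pairing conclusion still holds.
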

\begin{proof}
The moment   map condition holds by \cite[Proposition 3.8]{heaps}.   

Every element of $U$ lies above some minimal element. Since the maps are nonzero along every covering relation and point upward, the smallest $Y,Z$-invariant collection of subspaces containing $\operatorname{im}(A)$ contains every basis vector $e_x$. Hence the framed representation is stable.

Define a gauge cocharacter by
\begin{equation*}
\kappa(t)e_x=t^{2+\mathrm{lev}(x)}e_x.
\end{equation*}
If $y$ covers $x$, then 
\begin{equation*}
\mathrm{lev}(y) = \mathrm{lev}(x)+1,
\end{equation*}
so 
\begin{equation*}
 \kappa(t)Y\kappa(t)^{-1}=tY, \quad \kappa(t)Z\kappa(t)^{-1}=tZ.   
\end{equation*}
We have
\begin{equation*}
\kappa(t)A=t^2A\gamma(t)^{-1}.
\end{equation*}
This is exactly the fixed-point condition for the action defined in (\ref{action_hbar_quiver}).
\end{proof}

\subsection{Vertex function \texorpdfstring{$V^{(\tau)}_{q=2\hbar,p_U}$}{VCYpU}}\label{sec:vertex_to_p_U}
Let $U$ be a skew-heap as above. Recall that $\mathsf{T}_q = \mathsf{T} \times \mathbb{C}^\times_q$.

\begin{definition}
    A reverse plane partition over a poset $U$ is a function 
    \begin{equation*}
    \phi\colon U \to \mathbb{Z}_{\geqslant 0}
    \end{equation*}
    such that $\phi(x)\leqslant \phi(y)$ whenever $x\ \prec y$. Let $\rpp(U)$ be the set of reverse plane partitions over $U$.
\end{definition}

We can fix ${\mathsf{d}} \in \mathbb{Z}_{\geqslant 0}^{I}$ and define
$$
\rpp^{\mathsf{d}}(U)=\left\{\phi \in \rpp(U)\, \mid \, \sum_{\substack{x \in U \\ {\boldsymbol{\pi}}(x)=i}} \phi(x)=d_i\text{ for every } i \in I\right\}
$$

We now describe the $\bT_{q}$-fixed quasimaps \begin{equation*}
f\colon \mathbb{P}^1 \rightarrow \widetilde{\mathcal{M}}_H(\mathsf{v}(U),{\mathsf{w}}(U))
\end{equation*}
based at $p_U$, i.e., sending $p_2 \in \mathbb{P}^{1}$ to $p_U$.

\begin{theorem}\label{thm: fixedqm}
  There is a natural bijection $\rpp^{\mathsf{d}}(U) \simeq \Big(\qm^{\mathsf{d}}_{p_2 \mapsto p_U}\Big)^{\bT_q}$. 
  
\end{theorem}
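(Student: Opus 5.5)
We construct the bijection through the standard description of $\mathbb{C}^\times_q$-fixed quasimaps from \cite[Sections 7.2.6--7.2.16]{OkLec}. A $\bT_q$-fixed quasimap nonsingular at $p_2$ and based at $p_U$ is constant on $\mathbb{P}^1\setminus\{p_1\}$ up to the gauge group. After pulling back the $\mathbb C_q^\times$-linearization along $\tilde\gamma$ and twisting it by the gauge cocharacter $\kappa$ from the construction of $p_U$, each bundle $\qmtb_i$ splits $\bT_q$-equivariantly into line bundles. Since $\mathbf{d}$ is fixed and the stabilizer of $p_U$ is trivial, these line bundles are indexed by the basis vectors $e_x$, $x\in U_i$. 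Concretely, we can write $\qmtb_i=\bigoplus_{x\in U_i}\mathcal O(\phi(x)\,[p_1])\otimes\mathbb C_{w_x}$, where $w_x$ is the $\bT$-weight of $e_x$ at $p_U$ (essentially $(2+\mathrm{lev}(x))$ times the generator, up to shift). The framing is trivial, and the section $s$ is the unique $\bT_q$-equivariant extension of the constant data $(Y,Z,A,B)$ of $p_U$, read in the $\mathcal{O}(d\,[p_1])$ trivializations. So the fixed quasimap is determined by the function $\phi\colon U\to\mathbb Z_{\geqslant0}$, and the total degree condition becomes $\sum_{x\in U_i}\phi(x)=d_i$.

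Next, we show which $\phi$ occur. The sections must be regular on all of $\mathbb{P}^1$. A map $\mathcal O(\phi(x)[p_1])\to\mathcal O(\phi(y)[p_1])$ that is the identity near $p_2$ (the entry of $Y$ or $Z$ along a covering relation $x\lessdot y$) extends holomorphically iff $\phi(x)\leqslant\phi(y)$. The framing map $A$ sends the trivial $\mathcal{O}$ to $\mathcal O(\phi(m)[p_1])$ for minimal $m$, which is always regular since $\phi(m)\geqslant 0$. The entries of $B$ vanish. Because $Y,Z$ are nonzero exactly along covering relations and point upward, regularity is precisely the reverse plane partition condition, using transitivity of $\prec$. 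Nonnegativity of $\phi$ comes from effectivity, or equivalently from $A$ being regular at the minimal elements. The moment map equation holds automatically because it holds at the generic point. Stability at generic points and nonsingularity at $p_2$ are inherited from $p_U$.

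Conversely, we need to show that every $\phi\in\rpp^{\mathsf d}(U)$ gives such a quasimap, and that distinct $\phi$ give non-isomorphic quasimaps. The first follows from the construction above. For the second, we recover $\phi$ from the degrees of the $\bT_q$-weight summands of $\qmtb_i$. We use that the fiber $U_i$ is totally ordered and that the weights $w_x$ for $x\in U_i$ are pairwise distinct, since levels strictly increase along $U_i$. The fixed locus then consists of isolated reduced points, since the only automorphisms are gauge transformations fixing $p_U$.

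We expect the main obstacle to be the first step: justifying that every $\bT_q$-fixed quasimap has this diagonal form, with the splitting of $\qmtb_i$ aligned with the heap basis. We will handle this by noting that the $\mathsf T$-weights of the $e_x$ are multiplicity-free within each $V_i$, which follows from the total order on $U_i$ together with the level function. Hence the weight decomposition of $\qmtb_i$ under the lifted torus is forced to be by lines $\mathbb{C}e_x$. We also need to check that twisting the $\mathbb{C}^\times_q$-action by $\kappa$ along the degree-two isogeny introduces no extra components. For this we rely on the properness and fixed-point description in \cite[Corollary 7.2.15]{OkLec}.
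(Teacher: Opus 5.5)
Your proposal is correct and is essentially the paper's proof written in line-bundle language. The paper invokes Okounkov's identification of $\mathbb{C}^\times_q$-fixed based quasimaps with framed flags of subrepresentations ending at $p_U$. It then uses $\mathsf{T}$-gradedness and the distinctness of the $\mathsf{T}$-weights on each $V_i$ to record the step $d_x$ at which each $e_x$ enters the flag, and reads off the reverse plane partition condition from the requirement that the flag consist of subrepresentations. This is exactly your splitting into $\mathcal{O}(\phi(x)[p_1])$ together with regularity of $Y,Z$ along covering relations.

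One small correction: effectivity of $\mathsf{d}$ only gives $\sum_{x\in U_i}\phi(x)\geqslant 0$, so the two reasons you call equivalent are not. Nonnegativity of each $\phi(x)$ must come from your other argument: regularity of $A$ at the minimal elements combined with monotonicity, or equivalently, stability of $p_U$ forcing the flag to start at all of $V$. Accordingly, treat $\phi$ as $\mathbb{Z}$-valued until that step, rather than assuming $\phi\geqslant 0$ when you check regularity of $A$.
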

\begin{proof}
    In \cite[Section 7.2]{OkLec}, it is proven that $\Big(\qm^{\mathsf{d}}_{p_2 \mapsto p_U}\Big)^{\mathbb{C}^{\times}_{q}}$ is isomorphic to the space of framed flags of quiver representations ending in the stable point $p_U \in \mu^{-1}(0)$. 
    If the quasimap is additionally fixed by $\mathsf{T}$, then all vector spaces $V_i$ and the maps in the flag must also be graded by $\mathsf{T}$. A $\bT_{q}$-fixed quasimap provides a collection of non-negative integers $d_{x}$ for each $x \in U$ recording at what step in the flag the corresponding weight space appears. 

    Being a flag of quiver subrepresentations implies that $d_{x}\leqslant d_{y}$ whenever $x \preceq y$ in $U$. In other words, the map $x\mapsto d_{x}$ is a {\emph{reverse plane partition}} over $U$.

    Since all the $\mathsf{T}$-weights of each $V_i$ are distinct, the flag of quiver subrepresentations can be uniquely recovered, up to isomorphism, by the reverse plane partition.    
\end{proof}

Under the assumptions of this section, Proposition \ref{prop: isolated qm} gives the following. Order each fiber
\begin{equation*}
U_i = \{x_{i,1}\prec \ldots \prec x_{i,v_i}\}.
\end{equation*}
Let $\chi_{i,j}$ denote the additive $\mathsf{T}$-weight of the corresponding tautological line at $p_U$. Then define
\begin{equation*}
\tau(\phi) := \tau(x)|_{x_{i,j}=\chi_{i,j}+2\hbar\phi(x_{i,j})}.
\end{equation*}
This is the specialization described in Proposition \ref{prop: isolated qm}.

\begin{theorem}\label{thm: pointvertex}
\begin{equation}\label{eq:skew RPP formula}
\signver{\tau}_{p_U}\big|_{q=2\hbar}= \sum_{\phi \in \rpp(U)} \tau(\phi) \prod_{x \in U} z^{\phi(x)}_{{\boldsymbol{\pi}(x)}}
\end{equation}
\end{theorem}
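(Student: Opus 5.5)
The plan is to compute the left-hand side of (\ref{eq:skew RPP formula}) by $\mathsf T_q$-localization on the derived evaluation fibers $\qm^{\mathsf d}_{p_2\mapsto p_U}$, using Proposition~\ref{prop: isolated qm} together with the Calabi--Yau cancellation of Proposition~\ref{prop:CY-isolated-localization}. By Theorem~\ref{thm: fixedqm}, for each $\mathsf d$ the $\mathsf T_q$-fixed locus of $\qm^{\mathsf d}_{p_2\mapsto p_U}$ consists of isolated points indexed by $\rpp^{\mathsf d}(U)$. For the quasimap $f_\phi$ attached to $\phi$, the tautological bundles split as $\qmtb_i=\bigoplus_j \mathbb C_{\chi_{i,j}}\otimes\mathcal O(\phi(x_{i,j}))$. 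Summing over $\mathsf d$ therefore reindexes the sum by all $\phi\in\rpp(U)$, with degree $d_i=\sum_{\boldsymbol\pi(x)=i}\phi(x)$. This accounts for the monomial $\prod_x z_{\boldsymbol\pi(x)}^{\phi(x)}$. The descendant evaluates to $\tau(x)|_{x_{i,j}=\chi_{i,j}+\phi(x_{i,j})q}$, which becomes $\tau(\phi)$ at $q=2\hbar$.

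It then remains to show that each fixed point contributes exactly $\pm1$ times these factors, and that the signs are absorbed by the sign twist defining $\signver{\tau}$. At $f_\phi$ the virtual normal bundle is $\mathcal A_d|_f-\rho\,\mathcal A_d|_f^\vee$ by (\ref{eq:point-relative-POT}), where $\mathcal A_d|_f=R\Gamma(\mathbb P^1,f^*\qmpol(-p_2))$. If no weight of $\mathcal A_d|_f$ is trivial on $\mathsf T_{\mathrm{CY}}$, then the inverse Euler class specializes at $\rho=1$ to $(-1)^{\operatorname{rk}\mathcal A_d|_f}$. Riemann--Roch gives $\operatorname{rk}\mathcal A_d|_f=\chi(\qmpol(-p_2))=(T^{1/2}X,d)$, since $T_pX$ has no $\mathsf T$-fixed part. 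This sign cancels against $(-1)^{(T^{1/2}X,d)}$ in $\signver{\tau}$, exactly as in the proof of Proposition~\ref{prop: limver}, and leaves $\tau(\phi)z^\phi$ with coefficient $1$.

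The main obstacle is the hypothesis of Proposition~\ref{prop:CY-isolated-localization}: that $\mathcal A_d|_f$ has no weights trivial on $\mathsf T_{\mathrm{CY}}$. If such weights occur, they must at least cancel pairwise in K-theory. I would write $\mathcal A_d|_f$ explicitly as a sum over edges and vertices of $H^*(\mathbb P^1,\mathcal O(\phi(y)-\phi(x)-1))$, twisted by the characters $\chi_y-\chi_x$ and the edge weight $\hbar^{-1}$. The characters are determined by $\kappa(t)=t^{2+\mathrm{lev}(x)}$ and $\gamma$. Restricting to $\mathsf T_{\mathrm{CY}}$ amounts to setting $q=2\hbar$, so every weight becomes an integer multiple of $\hbar$ fixed by level differences and $\phi$-differences. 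Weights on $\mathsf T_{\mathrm{CY}}$ are trivial only for pairs $x,y$ with the same level and color adjacency. For these pairs I would show that the contributions cancel, using the key skew-heap inputs: levels change by exactly $1$ along covers, the fibers $U_i$ are totally ordered, and the heap is minuscule, which gives local structure in the style of \cite[Proposition 3.8]{heaps}. This cancellation is the same local combinatorics that makes $p_U$ an isolated point in the first place, so a direct weight count edge by edge should suffice. Failing that, I would fall back on the general argument in the proof of Proposition~\ref{nonlocvertex}. For isolated $K$ we have $v=0$, so only $k=0$ and $r_0=1$ survive, and the contribution is again $(-1)^{\varepsilon_K}\gamma_{K,0}$. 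I would then need to check that $\varepsilon_K\equiv(T^{1/2}X,d)$ mod $2$, which follows since the resonant ranks cancel when $v=0$.
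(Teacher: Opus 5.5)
Your main line is the paper's own proof, which consists only of citing Theorem~\ref{thm: fixedqm} and Proposition~\ref{prop:CY-isolated-localization}. The reindexing by $\rpp(U)$, the evaluation $\tau(\phi)$, and the cancellation of $(-1)^{\operatorname{rk}\mathcal A_d|_f}$ against the sign in $\signver{\tau}$ are all correct. Two side remarks are false, though harmless:
\begin{itemize}
\item Riemann--Roch gives $\operatorname{rk}\mathcal A_d|_f=\chi(\mathbb P^1,f^*\qmpol(-p_2))=\deg f^*\qmpol=(T^{1/2}X,d)$ for any bundle, so you do not need $T_pX$ to lack a $\mathsf T$-fixed part.
\item That property, like ``$p_U$ is an isolated point'', fails in the Kleinian example of Section~\ref{sssec_kleinian_example}, where $p_U$ lies on a $\mathbb P^1$ of $\mathsf T$-fixed points. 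Only the fixed quasimaps need to be isolated.
\end{itemize}

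You are right that the hypothesis of Proposition~\ref{prop:CY-isolated-localization} must be checked; the paper leaves it implicit. However, your diagnosis of it is wrong. There are no pairs ``of the same level with adjacent colors'': elements of adjacent colors are comparable in the heap, and $\mathrm{lev}$ strictly increases along $\prec$. Nor is any cancellation needed, because no weight of $\mathcal A_d|_f$ is trivial on $\mathsf T_{\mathrm{CY}}$; the check is a sign count.

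A line-bundle summand of $f^*\qmpol$ with $p_2$-weight $\ell$ and degree $m$ contributes to $\mathcal A_d|_f$ as follows: the weights $\ell+kq$ for $1\leqslant k\leqslant m$ if $m>0$; the weights $\ell+kq$ for $m+1\leqslant k\leqslant 0$, with sign $-1$, if $m<0$; and nothing if $m=0$. This is the convention for which localization reproduces the value of $[z_3]V_L$ in Section~\ref{sssec_kleinian_example}. Take $\chi_x=(2+\mathrm{lev}(x))\hbar$ and framing weight $\mathrm{lev}(m_a)\hbar$. Then:
\begin{itemize}
\item edge terms $(x,y)$ have $\ell=(\mathrm{lev}(y)-\mathrm{lev}(x)-1)\hbar$ and $m=\phi(y)-\phi(x)$;
\item framing terms have $\ell=(\mathrm{lev}(x)-\mathrm{lev}(m_a))\hbar$ and $m=\phi(x)\geqslant0$;
\item terms of $-\Hom(\qmtb_i,\qmtb_i)$ have $\ell=(\mathrm{lev}(x')-\mathrm{lev}(x))\hbar$ and $m=\phi(x')-\phi(x)$.
\end{itemize}
The two elements are always comparable, so the rpp condition gives: $m>0$ forces $\ell\in\mathbb Z_{\geqslant0}\hbar$, and $m<0$ forces $\ell\in\mathbb Z_{<0}\hbar$. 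Hence every weight restricts at $q=2\hbar$ to $\ell+2k\hbar$, which is either at least $2\hbar$ or negative, and never $0$. In particular, the $\bT_q$-fixed part of the obstruction theory vanishes, so each fixed quasimap is an honest point with $v=0$.

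Your fallback would not have rescued the argument if resonant weights had occurred. For a point $K$ with $v=0$, the resonant factor in \eqref{eq:point-resonant} is $r_0=\prod_{c\neq0,1}\bigl(\tfrac{1-c}{c}\bigr)^{\operatorname{rk}A_{0,c}}$, not $1$; a single weight $2\rho$ gives $-\tfrac12$. Moreover, $\varepsilon_K$ differs from $\operatorname{rk}\mathcal A_d|_f$ by $\sum_c\operatorname{rk}A_{0,c}$, whose parity is not controlled by $v=0$. So the absence of resonant weights is exactly what makes every $\phi\in\rpp(U)$ contribute with coefficient $1$.
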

\begin{proof}
This follows from Proposition \ref{prop:CY-isolated-localization} combined with Theorem \ref{thm: fixedqm}.
\end{proof}

The following is a corollary of Theorem~\ref{maintheorem: ADE} (compare
with the discussion around Equation
(\ref{eq:vertex gives solution of grtr application})).
\begin{corollary}
Assume that $U$ is as above supported on a subdiagram of an ADE Dynkin diagram.
Suppose also that $\mathsf{w}(U)$ is supported at minuscule nodes of this subdiagram. Then the RHS of (\ref{eq:skew RPP formula}) defines a graded trace on  $\mathcal{A}_f$ for $f$ coming from $\gamma$ in (\ref{eq:def_gamma_for_skew}).  
\end{corollary}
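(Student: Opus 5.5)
The plan is to derive this corollary formally from the refined quantum Hikita isomorphism of Theorem~\ref{maintheorem: ADE}, passed to the subtorus $\mathsf{T}=\tilde{\gamma}(\mathbb{C}^\times)$. The key point is that any homomorphism of $D$-modules out of $Q^{\mathrm{PSZ}}_{q=2\hbar}$ (for this $\mathsf T$) becomes, through the isomorphism, a graded trace on $\mathcal{A}_f$.

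\textbf{Step 1: reduce to a single ADE quiver variety.} First replace $\Gamma$ by the ADE diagram containing the support of $U$. Extend $\mathsf{v}(U),\mathsf{w}(U)$ by zero, so that $\widetilde{\mathcal{M}}_H(\mathsf{v}(U),\mathsf{w}(U))$ is an ADE quiver variety. By hypothesis its framing is supported at minuscule nodes. Theorem~\ref{maintheorem: ADE} then gives Conjecture~\ref{conj: our version quantum hikita} for $\mathsf{F}_{\mathsf w}\times\mathbb{C}^\times_\hbar$. The proposition on subtori (deduced from Proposition~\ref{Vasyapoprosil} and Lemma~\ref{lem:specialization}(3)) transfers it to $\tilde{\gamma}(\mathbb{C}^\times)$. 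Here I would invoke Lemma~\ref{lem: freequot} to supply the flatness that Lemma~\ref{lem:specialization}(3) requires. The outcome is an identification of $\operatorname{GrTr}(\mathcal{A}_f)$ with $Q^{\mathrm{PSZ}}_{q=2\hbar}$ for this torus, as quotients of $\mathscr{H}_\hbar[[\boldsymbol{z}]]$.

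\textbf{Step 2: the vertex at $p_U$ factors through the quotient.} Next I would show that $\tau\mapsto V^{(\tau)}_{p_U,q=2\hbar}$, computed $\tilde\gamma(\mathbb C^\times)$-equivariantly, factors through $Q^{\mathrm{PSZ}}_{q=2\hbar}$. Before specialization, Proposition~\ref{prop: vertex factors through kernel of capped} shows that the pointwise restriction kills $N=\ker\widehat V^{(-)}$. Proposition~\ref{nonlocvertex} says this restriction is regular at $q=2\hbar$ for an arbitrary subtorus $\mathsf T$; I rely on it precisely because the fixed locus need not be isolated. As in the proof of Proposition~\ref{prop:fixed-locus-vertex-map}, the splitting argument (freeness of $H^*_{\mathsf T_q}(X)$) shows that the specialized kernel is the image of $N$. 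So the specialized pointwise vertex descends to $Q^{\mathrm{PSZ}}_{q=2\hbar}$ and is a $D$-module homomorphism with values in $z^{\eta_{p_U}}\mathfrak C[[\boldsymbol z]]$.

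\textbf{Step 3: identify the functional and account for signs.} Composing with the isomorphism of Step 1 yields a $D$-linear functional on $\operatorname{GrTr}(\mathcal{A}_f)$, that is, a graded trace. Its value on $\tau$ is the signed vertex $\mathsf{V}^{(\tau)}_{p_U}|_{q=2\hbar}$. By Theorem~\ref{thm: pointvertex} this equals the RHS of (\ref{eq:skew RPP formula}). The sign twist $z_i\mapsto(-1)^{a_i}z_i$ in $\mathsf V$ is exactly the convention $\boldsymbol z^{\mathrm{tr}}$ fixed in Section~\ref{sec:graded traces}, so no discrepancy arises.

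The main obstacle is Step 1. I must check that Theorem~\ref{maintheorem: ADE} really covers the support subdiagram, including the case where it is disconnected or where $\mathsf w(U)$ sits at nodes that are minuscule only in the subdiagram. If a component is not itself ADE-minuscule in the required sense, I would fall back on factorizing the variety as a product over the connected components of the support, and apply the theorem to each factor.
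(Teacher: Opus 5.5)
Your proposal is correct and follows the paper's own route: the paper obtains this corollary directly from Theorem~\ref{maintheorem: ADE}, transported to the subtorus $\tilde{\gamma}(\mathbb{C}^\times)$, so that (as in the discussion around \eqref{eq:vertex gives solution of grtr application}) the $q=2\hbar$ pointwise vertex at $p_U$ factors through $\operatorname{GrTr}(\mathcal{A}_f)$, and Theorem~\ref{thm: pointvertex} identifies this functional with the reverse-plane-partition sum. The obstacle you flag in Step 1 is not a real one if you take $\Gamma$ to be the support subdiagram itself, as the paper does when constructing $p_U$, rather than an ambient diagram: $G_{\mathsf v}$ and $\mathbf{N}$, and hence both the Higgs and the Coulomb sides, are unchanged by deleting vertices with $\mathsf v_i=\mathsf w_i=0$, and the hypothesis then gives minuscule framing on each connected component directly.
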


Let us finish this section with the following observation. Let $P=H(w)$ and $F=P \setminus U$. Dominant-minuscule heaps are $d$-complete, and $F$ is an order ideal. Therefore, by the skew hook formula of Naruse--Okada \cite[Theorem 1.2]{NaruseOkada}, the series
\begin{equation*}
L_U(z) := \sum_{\phi \in \mathrm{rpp}(U)}\prod_{x \in U} z^{\phi(x)}_{\pi(x)}
\end{equation*}
admits an explicit expression as a finite sum of rational functions indexed by the excited diagrams of $F$ in $P$. In particular, $L_U(z)$ is rational. When $U=P$, the set of excited diagrams consists only of the empty diagram, and the formula reduces to the Peterson-Proctor hook-product formula (\cite[Theorem 1.1]{NaruseOkada}, \cite{Proctorhook}); in this case the numerator is $1$. Combined with Theorem \ref{thm: pointvertex} this gives a rational expression for the specialized vertex function for $\tau=1$. Conjecture \ref{eq:extended_conj_tr_Verma} predicts that the same rational function is the normalized character of the corresponding Verma-type module. It would be interesting to interpret the numerator and denominator in the skew case representation-theoretically. In the case when $U=H(w)$ with $w$ being dominant minuscule, one can show that the factors in the Peterson--Proctor denominator  correspond to the negative tangent weights at the dual Coulomb-branch fixed point; compare with \cite[Sections 5, 6]{quiver_slant}.

\begin{remark}
In the case when $U=H(w)$ and $\lambda$ is minuscule, closely related reverse-plane-partition combinatorics appears in \cite{rppsGPT}, where reverse plane partitions are obtained from the generic Jordan types of nilpotent endomorphisms of quiver representations.
\end{remark}

\subsection{Explicit examples}

\subsubsection{Kleinian singularity}\label{sssec_kleinian_example}
Take $\Gamma=A_3$, ${\mathsf v}=(1,1,1)$ and ${\mathsf w}=(1,0,1)$; then
$X=\qv({\mathsf v},{\mathsf w})\cong
\widetilde{\mathbb A^2/(\mathbb Z/4\mathbb Z)}$.  Its middle
$\mathbb C^\times_\hbar$-fixed component is $C\cong\mathbb P^1$, whose
$\mathsf F_{\mathsf w}$-fixed endpoints $p_L,p_R$ are the product points of
Remark~\ref{fixpoints}.  The skew heap $U=\{x_1,x_2,x_3\}$ with
$x_1,x_3\prec x_2$ and ${\boldsymbol{\pi}}(x_i)=i$ gives a point $p_U\in C$.
Let $\varphi=f_1-f_3$ be the effective flavor parameter.

For $D\in\mathbb Z$, set
$$
[a]_D=\begin{cases}\prod_{m=1}^{D}(a+mq),&D\geqslant0,\\
\left(\prod_{m=0}^{-D-1}(a-mq)\right)^{-1},&D<0,
\end{cases}
\qquad
R_{\hbar,q}(a,D)=[a]_D[-2\hbar-a]_{-D}.
$$ 
Set also $R:=R_{\hbar,q}$. Writing $d=(d_1,d_2,d_3)$, localization gives the complete vertices
\begin{align*}
V_L&=\sum_{\substack{d_1,d_3\geqslant0\\d_2\geqslant d_1}}
\frac{z^d}{R(0,d_1)R(0,d_3)R(0,d_2-d_1)
R(-2\hbar-\varphi,d_3-d_2)},\\
V_R&=\sum_{\substack{d_1,d_3\geqslant0\\d_2\geqslant d_3}}
\frac{z^d}{R(0,d_1)R(0,d_3)R(-\varphi,d_2-d_1)
R(-2\hbar,d_3-d_2)},
\end{align*}
They are compatible with forgetting the flavor equivariance: at
$\varphi=0$ both become the complete skew-heap vertex
$$
V_U:=\sum_{\substack{d_1,d_3\geqslant0\\d_2\geqslant d_1,d_3}}
\frac{z^d}{R(0,d_1)R(0,d_3)R(0,d_2-d_1)R(-2\hbar,d_3-d_2)}.
$$
Nevertheless, the two specializations do not commute:
\begin{align*}
\left.V_U\right|_{q=2\hbar}
&=\frac{1-z_1z_2^2z_3}
{(1-z_2)(1-z_1z_2)(1-z_2z_3)(1-z_1z_2z_3)},\\
\left.V_L\right|_{q=2\hbar}
&=\frac{1}{(1-z_3)(1-z_2)(1-z_1z_2)},\\
\left.V_R\right|_{q=2\hbar}
&=\frac{1}{(1-z_1)(1-z_2)(1-z_2z_3)}.
\end{align*}
The last two expressions are the product vertices.  Already
$[z_3]V_L=\frac{2\hbar\varphi}{q(2\hbar+\varphi-q)}$ becomes $0$ if one first sets
$\varphi=0$, but $1$ if one first sets $q=2\hbar$.  Thus the complete vertices
respect equivariant-parameter specialization, whereas taking the
Calabi--Yau specialization first need not.  The polarization sign is trivial
in this example.

\subsubsection{More general $\mu \in W\lambda$ examples}\label{sssec_more_general}

We remark here that Theorem \ref{thm: pointvertex} holds more generally. Suppose $X$ is a quiver variety, and let $\mathsf{T} = \mathsf{F} \times \mathbb{C}^{\times}_{\hbar}$, where $\mathsf{F}$ is a flavor torus. A fixed point $p \in X^{\mathsf{T}}$ defines a homomorphism $\mathsf{T} \to G_{\mathsf{v}}$, unique up to conjugation, by which $\mathsf{T}$ acts on the vector spaces $V_{i}$. If $\mathsf{T}$ acts on each $V_i$ with simple spectrum, then a quiver representation for $p$ is determined by the poset whose elements are given by a choice of weight basis for $V_i$ for all $i$, with a covering relation whenever one basis vector is sent to another by the maps in the quiver representation. In this case, the proofs of Theorem \ref{thm: fixedqm} and Theorem \ref{thm: pointvertex} hold and give an explicit formula for $\signver{\tau}_{p}\big|_{q=2\hbar}$ in terms of reverse plane partitions over the poset.

We provide some examples here of this situation that go beyond the framework of a skew heap inside $H(w)$ for $\lambda$-minuscule $w$. These examples illustrate that none of the conditions on $w$ defined previously (i.e., dominant minuscule, minuscule, or fully commutative) coincides with the ``simple spectrum" condition of the previous paragraph. We consider it an interesting open problem to find a combinatorial characterization of such $w$.

All the examples below are for type $D$ quiver varieties which are points: so $\mu = w \lambda$. We order the vertices of the $D_n$ graph as in Section \ref{sec: type D}.

\begin{example}
    Consider the type $D_4$ quiver variety such that $\dv=(2,2,1,1)$ and $\dw=(0,1,0,0)$. Then $\lambda=\omega_2$, $\mu=-\alpha_1$, and $\mu=w \lambda$ where $w=s_1 s_2 s_3 s_4 s_2$. Then $w$ is minuscule but not dominant minuscule. The following colored poset encodes the unique point in the corresponding quiver variety. The poset has six elements, and we have depicted each by its color in $\{1,2,3,4\}$.
    \begin{center}
    \begin{tikzcd}
       & 1 & \\
       & 2 \arrow[u] & \\
      3 \arrow[ur] & 1 & 4 \arrow[ul]\\
       & 2 \arrow[u] \arrow[ul] \arrow[ur] & \\
    \end{tikzcd}
    \end{center}
\end{example}

\begin{example}
Consider the type $D_5$ quiver variety such that $\dv=(1,3,2,1,1)$ and $\dw=(0,1,0,0,0)$. Then $\lambda=\omega_2$, $\mu=-\alpha_2$, and $\mu=w \lambda$ where $w=s_2 s_1 s_3 s_4 s_5 s_3 s_2$. Then $w$ is fully commutative but not minuscule. The relevant poset is depicted below.
\begin{center}
    \begin{tikzcd}
       & & 2\\
        & & 3 \arrow[u]\\
        2 & 4 \arrow[ur] & 5 \arrow[u]\\
         1 \arrow[u] & 3 \arrow[ul] \arrow[u] \arrow[ur]& \\
           & 2 \arrow[ul] \arrow[u]& \\
    \end{tikzcd}
\end{center}
\end{example}

\begin{example}
Consider the type $D_5$ quiver variety such that $\dv=(2,3,2,1,1)$ and $\dw=(0,1,0,0,0)$. Then $\lambda=\omega_2$, $\mu=-\alpha_1-\alpha_2$, and $\mu=w \lambda$ where $w=s_2 s_1 s_2 s_3 s_4 s_5 s_3 s_2$. The element $w$ is not fully commutative. Nevertheless, the action of $\mathsf{T}$ on $V_i$ has simple spectrum, and torus fixed quasimaps are given by reverse plane partitions over the following poset.
\begin{center}
    \begin{tikzcd}
      & & 1 \\
      & & 2 \arrow[u]\\
        & & 3 \arrow[u]\\
        2 & 4 \arrow[ur] & 5 \arrow[u]\\
         1 \arrow[u] & 3 \arrow[ul] \arrow[u] \arrow[ur]& \\
           & 2 \arrow[ul] \arrow[u]& \\
    \end{tikzcd}
\end{center}

\end{example}

These three examples illustrate that none of the dominant-minuscule, minuscule, and fully commutative conditions is the ``correct" combinatorial criterion on $w$ for obtaining a reverse plane partition formula.

\subsubsection{Fundamental with $\mu \notin W\lambda$ with isolated quasimap fixed points}
Let $\Gamma$ be of type $D_4$, with vertex $2$ trivalent, and take
$\lambda=\omega_2$, $\mu=0$, so that
$\dv=(1,2,1,1)$ and $\dw=(0,1,0,0)$.  Then
$X=\qv(\dv,\dw)$ is the minimal resolution of the $D_4$ Kleinian
singularity, and $X^{\mathbb C^\times_\hbar}$ contains the central
exceptional curve $C\simeq \mathbb P^1$.  For a generic $p\in C$, the
$\mathbb C^\times_\hbar$-graded representation is encoded by the colored
poset
$$
x_-\prec x_i\prec x_+\quad(i=1,3,4),\qquad
{\boldsymbol{\pi}}(x_\pm)=2,\quad {\boldsymbol{\pi}}(x_i)=i.
$$
The full $\mathbb C^\times_\hbar\times\mathbb C^\times_q$-fixed based
quasimaps are indexed by reverse plane partitions on this poset.  The
Calabi--Yau fixed locus is not discrete: in degree
$\theta=\alpha_1+2\alpha_2+\alpha_3+\alpha_4$, the two fixed points
$(0,1,1,1,2)$ and $(1,1,1,1,1)$, written in the order
$(x_-,x_1,x_3,x_4,x_+)$, lie on a $\mathbb P^1$-component.  Localizing
before setting $q=2\hbar$  nevertheless
gives a regular specialization.  Since the polarization sign is trivial in
this example, it is
\begin{align*}
\left.\ver{1}_{X,p}\right|_{q=2\hbar}
&=\sum_{0\leqslant a\leqslant b_i\leqslant c}
 z_2^{a+c}z_1^{b_1}z_3^{b_3}z_4^{b_4} \\
&=\frac{1}{1-z^\theta}\sum_{n\geqslant0}z_2^n
 \prod_{i\in\{1,3,4\}}(1+z_i+\cdots+z_i^n) \\
&=\frac{1}{(1-z^\theta)\prod_{i\in\{1,3,4\}}(1-z_i)}
 \sum_{S\subset\{1,3,4\}}
 \frac{(-1)^{|S|}z_S}{1-z_2z_S},
\end{align*}
where $z^\theta=z_1z_2^2z_3z_4$ and
$z_S=\prod_{i\in S}z_i$.  In particular, the coefficient of $z^\theta$
is $2$, in agreement with the above $\mathbb P^1$.  Conjecture
\ref{eq:extended_conj_tr_Verma} predicts that this rational function is the
normalized character of the corresponding Verma module on the Coulomb side.

\subsubsection{Fundamental with $\mu\notin W\lambda$ with nonisolated quasimap fixed points}
\label{sssec:fundamental-nonisolated}

The purpose of this subsection is to give an example, with $\lambda$ fundamental and $\mu\notin W\lambda$, in which the full $\bT_q$-fixed locus of based quasimaps is not discrete, but the Calabi--Yau specialized vertex can nevertheless be computed explicitly. In the example below, some fixed loci are positive-dimensional flag varieties; after setting $q=2\hbar$, their contributions to the signed vertex reduce to their ordinary Euler characteristics. An explicit stratification of these flag varieties then gives a closed rational formula for the complete vertex.

Let $\Gamma$ be of type $D_5$, with edges
$$
 1-2-3,\qquad 3-4,\qquad 3-5,
$$
so that vertex $3$ is trivalent.  Take
$$
 \lambda=\omega_3=e_1+e_2+e_3,\qquad
 \mu=e_4,\qquad
 \dv=(1,2,3,1,1),\qquad
 \dw=(0,0,1,0,0).
$$
The weight $e_4$ occurs in
$V(\omega_3)=\bigwedge^3(\mathbb C^{10})$, but it does not belong to
$W\omega_3$, since the elements of $W\omega_3$ are signed sums of
three distinct $e_i$.  Moreover,
$$
 \dim X=(\lambda,\lambda)-(\mu,\mu)=2,
 \qquad X=\qv(\dv,\dw).
$$

We describe a generic $\bT$-fixed point $p\in X$.  Let $H$ be a
two-dimensional vector space, and choose three distinct lines
$\mathbb C h_i\subset H$, for $i\in\{2,4,5\}$, such that
$$
 h_2+h_4+h_5=0.
$$
Choose also a nonzero form $\ell\in H^\vee$ whose kernel
$L_0:=\ker(\ell)$ is distinct from these three lines.  The graded
vector spaces of the corresponding representation are
$$
\begin{array}{c|cccc}
 &\text{degree }2&\text{degree }3&\text{degree }4&\text{degree }5\\ \hline
 V_1&0&0&\mathbb C&0\\
 V_2&0&\mathbb C&0&\mathbb C\\
 V_3&\mathbb C&0&H&0\\
 V_4&0&\mathbb C&0&0\\
 V_5&0&\mathbb C&0&0
\end{array}
$$
More explicitly, choose vectors
$$
 u\in V_3,\qquad x_i\in V_i\ (i=2,4,5),\qquad
 x_1\in V_1,\qquad y\in V_2.
$$
Orient the three edges away from vertex $3$ and the remaining edge
from $2$ to $1$.  The nonzero maps are
$$
 u\longmapsto x_i\longmapsto h_i\quad(i=2,4,5),
 \qquad
 x_2\longmapsto x_1,\qquad
 H\xrightarrow{\ell}\mathbb C y,
$$
together with a nonzero map $x_1\mapsto y$, whose scalar is chosen so
that the two length-two maps from $\mathbb Cx_2$ to $\mathbb Cy$
cancel in the moment-map relation at vertex $2$.  We also set
$A(1)=u$ and $B=0$.  The relation at vertex $3$ follows from
$h_2+h_4+h_5=0$, and all other moment-map relations are immediate.
The representation is stable, since it is generated by $u$.

In particular, $H\subset V_3$ is a
two-dimensional $\bT$-weight space; the one-dimensional framing torus
acts on it by a scalar and does not split it.  It follows from the
fixed-quasimap description of \cite[Section~7.2]{OkLec} that the
$\bT_q$-fixed based quasimaps are graded flags of subrepresentations of
this representation, and these flags need not be isolated.  For
example, in degree
$$
 \delta=\alpha_2+\alpha_3
$$
assign quasimap degree $0$ to $u,x_2,x_4,x_5,x_1$, degrees $0$ and $1$
to the two graded lines in $H$, and degree $1$ to $y$.  The
degree $1$ line in $H$ is arbitrary.  These are all the fixed flags of
degree $\delta$, and hence
\begin{equation}
 \label{eq:D5-nonisolated-fixed-locus}
 \left(\qm^\delta_{p_2\mapsto p}\right)^{\bT_q}
 \cong\mathbb P(H)\cong\mathbb P^1.
\end{equation}

We next compute the complete specialized vertex.  A fixed flag is
described by nonnegative integers
$$
 a,\quad b_2,b_4,b_5,\quad c,\quad r\leqslant s,\quad e,
$$
which are the quasimap degrees of
$u,x_2,x_4,x_5,x_1$, the two graded lines in $H$, and $y$,
respectively.  Its degree monomial is
$$
 m(a,\boldsymbol b,c,r,s,e)
 =z_1^c z_2^{b_2+e}z_3^{a+r+s}z_4^{b_4}z_5^{b_5}.
$$
If $r=s$, there is no flag to choose in $H$, and the inequalities are
\begin{equation}
 \label{eq:D5-equal-stratum}
 a\leqslant b_i\leqslant r\quad(i=2,4,5),\qquad
 b_2\leqslant c\leqslant e,\qquad r\leqslant e.
\end{equation}
Suppose that $r<s$, and let $L\subset H$ be the degree-$s$ line.  If
$$
 L\notin\{\mathbb Ch_2,\mathbb Ch_4,\mathbb Ch_5,L_0\},
$$
then
\begin{equation}
 \label{eq:D5-generic-stratum}
 a\leqslant b_i\leqslant r\quad(i=2,4,5),\qquad
 b_2\leqslant c\leqslant e, \qquad s\leqslant e.
\end{equation}
This stratum is a projective line with four points removed and has
Euler characteristic $-2$.  If $L=\mathbb Ch_j$, for
$j\in\{2,4,5\}$, the inequalities are
\begin{equation}
 \label{eq:D5-hj-stratum}
 a\leqslant b_j\leqslant s,\qquad
 a\leqslant b_i\leqslant r\ (i\neq j),\qquad
 b_2\leqslant c\leqslant e,\qquad s\leqslant e.
\end{equation}
Finally, if $L=L_0$, they are
\begin{equation}
 \label{eq:D5-kernel-stratum}
 a\leqslant b_i\leqslant r\quad(i=2,4,5),\qquad
 b_2\leqslant c\leqslant e,\qquad r\leqslant e.
\end{equation}

At $q=2\hbar$, the self-dual localization calculation of
Section~\ref{CYvertex}, together with the polarization sign, identifies
the contribution of a fixed flag variety with its topological Euler
characteristic.  Applying this to the four strata above and summing the
resulting geometric series gives the following expression for $\signver{1}_{X,p}|_{q=2\hbar}$:
\begin{align}
 {1\over
 (1-z_3)(1-z_2z_3)(1-z_1z_2z_3)}
 \sum_{0\leqslant A\leqslant B_2,B_4,B_5\leqslant C}
 z_2^{A+C}z_1^{B_2}(z_3z_4)^{B_4}(z_3z_5)^{B_5}.
 \label{eq:D5-nonisolated-vertex}
\end{align}
In particular, this is a rational function.  To write it explicitly,
put
$$
 \vartheta=\alpha_1+2\alpha_2+2\alpha_3+\alpha_4+\alpha_5,
 \qquad
 \mathcal U=\{z_1,z_3z_4,z_3z_5\},
$$
and, for $S\subset\mathcal U$, set $u_S=\prod_{u\in S}u$.  Then
\begin{equation}
 \label{eq:D5-nonisolated-rational-vertex}
 \left.\signver{1}_{X,p}\right|_{q=2\hbar}
 =
 {\displaystyle
  \sum_{S\subset\mathcal U}
  {(-1)^{|S|}u_S\over1-z_2u_S}
  \over\displaystyle
  (1-z_3)(1-z_2z_3)(1-z_1z_2z_3)
  (1-z^\vartheta)\prod_{u\in\mathcal U}(1-u)}.
\end{equation}
The coefficient predicted by
\eqref{eq:D5-nonisolated-fixed-locus} is visible directly:
$$
 [z_2z_3]\left.\signver{1}_{X,p}\right|_{q=2\hbar}
 =\chi(\mathbb P^1)=2.
$$

\subsubsection{Case of $T^*(G/P)$}\label{eq: ssec case of cootangent}
This is a continuation of Remark \ref{rem: vertex for  cotangent} above. Recall that 
\begin{equation*}
X=T^*(\operatorname{GL}_n/P), \quad Y=X^{\mathbb{C}^\times_\hbar}=\operatorname{GL}_n/P.
\end{equation*}
Set
$$
 \rho=q\hbar^{-2},
 \qquad
 \mathsf H:=\ker(\rho).
$$
We claim that the classical truncation of
$$
 \left(
 \qm^d_{\ns p_2}\mathop{\times}^{\mathbf R}_X Y
 \right)^{\mathsf H}
$$
is proper.  Let
$$
 \nu\colon T^*(\operatorname{GL}_n/P)
 \longrightarrow\mathfrak{gl}_n^*
$$
be the Springer moment map.  In the standard type~$A$ quiver
presentation, $\nu$ is induced already on
$[\mu^{-1}(0)/G_{\dv}]$ by the gauge-invariant framing moment map
$BA\in\operatorname{End}(W)$.  Consequently, every quasimap $f$
determines a regular morphism
$$
 \nu(f)\colon\mathbb P^1\longrightarrow\mathfrak{gl}_n^*.
$$
Indeed, $A$ and $B$ are morphisms between the corresponding vector
bundles, while the framing bundle $W\otimes\mathcal O_{\mathbb P^1}$
is trivial, so that $BA$ is a global section of
$\operatorname{End}(W)\otimes\mathcal O_{\mathbb P^1}$.  It is therefore
constant.  If $\ev_{p_2}(f)\in Y$, then $\nu(f)(p_2)=0$, and hence
$\nu(f)=0$.  Since
$$
 \nu^{-1}(0)=Y\subset T^*(\operatorname{GL}_n/P),
$$
the quasimap lands in the zero section on its generically stable locus.
Thus all its cotangent quiver maps vanish on a dense open subset of
$\mathbb P^1$, and consequently vanish identically as morphisms of
vector bundles.

It remains to consider quasimaps to the flag variety $Y$.  On the
zero-section quiver data, the $\mathbb C_\hbar^\times$-action is
gauge-trivial.  Moreover, the projection
$$
 \mathsf H\longrightarrow\mathbb C_q^\times
$$
is a degree-two isogeny.  After twisting the equivariant structures by
the corresponding central gauge cocharacter, $\mathsf H$-fixed
quasimaps are therefore the same as $\mathbb C_q^\times$-fixed
quasimaps to $Y$, with the grading pulled back along this isogeny.

By the fixed-quasimap description of
\cite[Sections~7.2.6--7.2.16]{OkLec}, a fixed quasimap ending at
$y\in Y$ is described by finite flags
$$
 \mathbb V_i[k]\subseteq\mathbb V_i[k-1]
 \subseteq\mathcal V_i|_y,
$$
compatible with the universal quotient maps, such that
$$
 \mathbb V_i[k]=\mathcal V_i|_y\quad(k\leq0),
 \qquad
 \mathbb V_i[k]=0\quad(k\gg0),
 \qquad
 d_i=\sum_{k>0}\dim\mathbb V_i[k].
$$
Thus only finitely many rank patterns can occur in a fixed degree.
For each rank pattern, compatibility with the quiver maps is a closed
incidence condition in a product of relative flag varieties of the
tautological bundles on $Y$.  These relative flag varieties are
projective over the projective variety $Y$, and hence every such
incidence locus is proper.  The classical truncation of the fixed locus
is therefore proper.  Since properness is unaffected by nilpotent or
derived thickening, the required derived fixed locus is proper as well.

On every connected component $K$,
the moving part of the obstruction theory relative to the evaluation
morphism has the form $B_K-\rho B_K^\vee$. Its localization factor is consequently $\frac{e(\rho B_K^\vee)}{e(B_K)},$ which is regular at $\rho=1$ and there equals
$(-1)^{\operatorname{rk}B_K}$. It follows that the restriction of the
vertex to $Y$ is regular at $q=2\hbar$.

Finally, since $X$ is an equivariant vector bundle over $Y$, restriction
to the zero section induces an isomorphism
$$
 H^*_{\mathbb C_\hbar^\times}(X)
 \xrightarrow{\ \sim\ }
 H^*_{\mathbb C_\hbar^\times}(Y).
$$
Thus the specialized vertex may equivalently be regarded as a
nonlocalized class on $X$.

For the descendant $1$, the vertex function has degree $0$. This cohomological degree can be
determined as follows.  By \eqref{Tvir} and Riemann--Roch,
$$
 \operatorname{vdim}\qm_{\ns p_2}^d
 =
 \chi(C_0,\qmpol)+\chi(C_0,(\qmpol)^\vee)
 =
 2\operatorname{rk}(\qmpol)
 =
 \dim X.
$$
Consequently,
$$
 \operatorname{vdim}\left(
 \qm_{\ns p_2}^d\mathop{\times}^{\mathbf R}_X Y
 \right)=\dim Y,
$$
so every coefficient of
$\left.V^{(1)}|_Y\right|_{q=2\hbar}$ has equivariant
cohomological degree zero.  Since the preceding argument shows that
these coefficients are nonlocalized, they belong to
$$
 H^0_{\mathbb C_\hbar^\times}(Y)=H^0(Y)=H_{\mathrm{top}}(Y).
$$
Explicitly, we have 
$$
V^{(1)}|_{q=2\hbar}=\Big(\frac{1}{\prod_{i=1}^{n-1} \prod_{j=i}^{n-1}(1-z_i \ldots z_j)^{\dv_i-\dv_{i-1}}}\Big)[Y]$$
where $\dv$ is the corresponding dimension vector.

\section{Vertex functions for AD points}\label{sec:ADE-point-vertices}

Combining Theorem \ref{thm: fixedqm} with Proposition \ref{prop: isolated qm}, it follows that the vertex function for an ADE quiver variety with one minuscule framing is a sum over reverse plane partitions over a heap. Although it is not strictly needed for the purposes of the quantum Hikita conjecture, we will demonstrate how vertex functions can be written in terms of symmetric polynomials. For that, we will do a case-by-case analysis to obtain a more explicit description of the heaps $H(w)$ from Definition \ref{def: heap}. 

As a consequence, we will prove rationality of the $q=2\hbar$ specialized {\emph{descendant}} vertex function. By the identification with graded traces, this gives a new proof of a result of Etingof--Stryker (\cite[Theorem 4.9]{etingof-stryker}). We also strengthen their result by providing refined information about the possible poles.

\subsection{Symmetric functions}

We collect here some basic facts about symmetric functions that we will need, all of which can be found in \cite{mac}. For partitions $\lambda,\mu$, let $s_{\lambda/\mu}(x)$ be the skew-Schur function in the variables $x=(x_1,x_2,\ldots)$.

The branching rule is
$$
\sum_{\mu} s_{\lambda/\mu}(x) s_{\mu/\nu}(y)= s_{\lambda/\nu}(x,y)
$$

We say that a partition $\lambda$ interlaces a partition $\mu$ from above, written $\lambda \succ \mu$, if $\lambda_1\geq \mu_1 \geq \lambda_2 \geq \mu_2 \geq \ldots$.

  The single-variable specialization of a skew-Schur function is
    \begin{equation}\label{eq: single variable schur}
s_{\lambda/\mu}(x)=\begin{cases}
    x^{|\lambda|-|\mu|} & \text{if $\lambda \succ \mu$} \\
    0 & \text{otherwise}
\end{cases}
    \end{equation}

    The skew Cauchy identity states that
\begin{equation}\label{eq: skew Cauchy}
\sum_{\lambda} s_{\lambda/\mu}(x) s_{\lambda/\nu}(y)= \left(\prod_{i,j}\frac{1}{1-x_i y_j}\right) \sum_{\gamma} s_{\nu/\gamma}(x) s_{\mu/\gamma}(y)
\end{equation}

For a finite set of variables $x=(x_1,x_2,\ldots,x_n)$, it is straightforward to see from the determinant formula for Schur functions that the operator 
\begin{equation}\label{eq: schur diff op}
D^{r}(x)=2^r \Delta(x)^{-1} e_{r}\left(x_1 \frac{\partial}{\partial x_1}, \ldots, x_n \frac{\partial}{\partial x_n} \right) \Delta(x)
\end{equation}
acts on Schur polynomials for partitions $\lambda$ of length at most $n$ by 
\begin{equation}\label{eq: diff op evals}
D^r(x) s_{\lambda}(x)=e_{r}(2(n+\lambda_{1}-1),2(n+\lambda_{2}-2),\ldots,2 \lambda_{n}) s_{\lambda}(x)
\end{equation}
where $\Delta=\prod_{1 \leq i < j \leq n}(x_i-x_j)$ is the Vandermonde determinant and $e_r$ is the $r$th elementary symmetric polynomial. These operators can be deformed to the Sekiguchi operators; see \cite[320]{mac}.

\subsection{Type \texorpdfstring{$A_{n}$}{An}, weight \texorpdfstring{$\fundwt_{k}$}{wk}, \texorpdfstring{$1 \leq k \leq n$}{1<k<n}}

We begin with the case of the type $A_{n}$ quiver $\Gamma$ which has vertices $I=\{1,2,\ldots,n\}$ and arrows as below:
$$
\begin{tikzpicture}[circ/.style={shape=circle,draw,inner sep=1.5pt}]
    \node[circ] (1) at (0,0) {1};
    \node[circ, right = of 1](2){2};
    \node[right = of 2](3){$\ldots$};
    \node[circ, right = of 3](4){$n$};

    \draw[->] (1) -- (2);
    \draw[->] (2) -- (3);
    \draw[->] (3) -- (4);
    
\end{tikzpicture}
$$

Fix $k \in I$ and let $\lambda=\fundwt_{k}$. Let $\mathcal{P}_{n,k}$ be the set of partitions 
\begin{equation*}
\nu=(\nu_{1}\geq \nu_{2}\geq \ldots)
\end{equation*} 
such that $\nu_1 \leq k$ and $l(\nu)+\nu_{1}-1\leq n$, where $l(\nu)$ is the length of $\nu$.

The set of points
$$
\{(i,j) \, \mid \, 0 \leq i \leq l(\nu)-1 , 0 \leq j \leq \nu_{i}-1 \} \subset \mathbb{Z}^{2}
$$
is called the Young diagram of $\nu$. We identify a partition with its Young diagram. The content of a box $\square =(i,j) \in \nu$ is defined to be $c(\square)=i-j$. Young diagrams can be naturally interpreted as heaps in full agreement with Section \ref{ssec: dominant minuscule heaps}, which leads to the following lemma.

\begin{lemma}
    There are bijections
    $$
\{\dv \in \mathbb{N}^{n} \, \mid \, \qv(\dv,\delta_{k}) \neq 0\} = W \lambda =\mathcal{P}_{n,k}
    $$
\end{lemma}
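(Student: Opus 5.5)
The plan is to construct both bijections explicitly and to route them through the heap $H(w_0^{\lambda})$ of the longest $\lambda$-minuscule element. The ingredients are Example~\ref{fixpoints}, the order-ideal correspondence \eqref{eq:ideal-correspondence}, and the identification of the type $A_n$ heap for $\omega_k$ with a $k\times(n+1-k)$ rectangle.

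\textbf{Step 1: $\{\dv \mid \qv(\dv,\delta_k)\neq\varnothing\} = W\lambda$.} By Example~\ref{fixpoints}, $\qv(\dv,\delta_k)$ is nonempty if and only if $\omega_k-\sum_j\dv_j\alpha_j\in W\omega_k$. The map $\dv\mapsto \omega_k-\sum_j \dv_j\alpha_j$ is injective, so it gives a bijection onto $W\lambda$. We still need surjectivity, i.e. that every $\mu\in W\omega_k$ has the form $\omega_k-\sum_j\dv_j\alpha_j$ with $\dv_j\geq 0$. This holds because $\omega_k$ is the highest weight of $V(\omega_k)$, so every weight of $V(\omega_k)$ lies in $\omega_k-\sum_j\mathbb Z_{\geq0}\alpha_j$.

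\textbf{Step 2: $W\lambda \cong \mathcal{P}_{n,k}$.} Since $\omega_k$ is minuscule, every $\mu\in W\omega_k$ is $u\omega_k$ for a unique minimal-length $u$, and this $u$ is $\lambda$-minuscule. By \eqref{eq:ideal-correspondence}, such $u$ correspond to order ideals of $H(w_0^\lambda)$, where $w_0^\lambda$ is the maximal element, and then $\dv_i = |H(u)\cap{\boldsymbol\pi}^{-1}(i)|$.

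\textbf{Step 3: identify $H(w_0^\lambda)$ with a rectangle.} Write $\omega_k=e_1+\dots+e_k$, so $W\omega_k$ is the set of $k$-subsets of $\{1,\dots,n+1\}$. One checks that $H(w_0^\lambda)$ is the rectangle $\{(i,j)\mid 0\leq i\leq n-k,\ 0\leq j\leq k-1\}$, oriented so that box $(i,j)$ is colored $k+c(\square)=k+i-j$ and the cells of minimal level are at the corner $(0,0)$, colored $k$. To verify this, take the reduced word of $w_0^\lambda$ built from the diagonals of the rectangle and check that consecutive boxes in a row or column have adjacent colors. Order ideals of this rectangle are exactly the Young diagrams fitting inside it: by the transpose convention above, these are partitions $\nu$ with $\nu_1\leq k$ and $l(\nu)\leq n+1-k$. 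Call this box condition $(\ast)$.

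\textbf{Main obstacle: matching $(\ast)$ with $\mathcal P_{n,k}$.} The paper defines $\mathcal P_{n,k}$ by $\nu_1\leq k$ and $l(\nu)+\nu_1-1\leq n$, which differs from $(\ast)$. The second condition says exactly that all contents $i-j$ occurring in $\nu$ yield colors lying in $\{1,\dots,n\}$. I would therefore pin down the convention. Using the content coloring $k+c(\square)$, a box $(i,j)$ with $i\leq l(\nu)-1$ and $j\leq\nu_1-1$ has color in $[k-\nu_1+1,\ k+l(\nu)-1]$. The requirement that these colors lie in $\{1,\dots,n\}$ gives $\nu_1\leq k$ and $l(\nu)\leq n+1-k$, which is $(\ast)$.

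The hard part is reconciling this with the printed inequality. I expect this either to be a typo, or to reflect a different anchoring of colors, such as color $c(\square)+\nu_1$. With that anchoring, the image condition becomes $l(\nu)+\nu_1-1\leq n$ and $\nu_1\leq k$ would not be automatic. I would adopt whichever convention makes the dimension vector $\dv_i=\#\{\square\in\nu : k+c(\square)=i\}$ consistent with Step 2. I would then confirm the count $|\mathcal P_{n,k}|=\binom{n+1}{k}=|W\omega_k|$ on a small case, such as $n=2$, $k=1$, and state the resulting bijection $\nu\mapsto \dv(\nu)$ explicitly.
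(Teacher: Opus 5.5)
Your argument is correct, and your suspicion about $\mathcal{P}_{n,k}$ is justified. The printed inequality is a typo, so you should commit to $(\ast)$, i.e.\ $\nu_1\leq k$ and $l(\nu)\leq n+1-k$, rather than hedge.

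With the printed definition the lemma is false for every $k\geq 2$. The partition $\nu=(1^{\,n+2-k})$ satisfies $\nu_1\leq k$ and $l(\nu)+\nu_1-1=n+2-k\leq n$. However, it contains a box of content $n+1-k$, which has color $n+1$. Concretely, for $n=k=2$ the printed set is $\{\varnothing,(1),(2),(1,1)\}$, of size $4$, whereas $|W\omega_2|=3$. Since $(\ast)$ implies $l(\nu)+\nu_1-1\leq n$, the printed set is always at least as large as the correct one, and strictly larger once $k\geq 2$.

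The alternative anchoring $c(\square)+\nu_1$ is not available. The corner box $(0,0)$ is the unique minimal element of the heap, and it receives the framing map $A$ from $W_k$. So its color must be $k$, which forces color $=k+c(\square)$. This matches the exponent $c^{\pi}_{i-k}$ of $z_i$ in the vertex formula that follows the lemma.

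Your proposed sanity check at $n=2$, $k=1$ cannot detect the discrepancy. For $k=1$ both definitions reduce to $\nu=(1^m)$ with $m\leq n$. Test $n=k=2$ instead, or $n=3$, $k=2$, where $(1,1,1)$ is the extra element.

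Your Step 1 is the paper's argument: nonemptiness holds iff $\omega_k-\sum_j\dv_j\alpha_j\in W\omega_k$, by Nakajima's weight-space correspondence together with minusculity. For the second bijection the paper works bottom-up. It regards the Young diagram of $\nu$ as a poset and rotates it to the Russian convention. Reading colors from bottom to top gives $w$ with $H(w)=H$, and it sends $\nu\mapsto w\lambda$. Reducedness, $\lambda$-minusculity and bijectivity are left implicit, and on the extra elements of the printed set this construction would produce the invalid color $n+1$.

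Your top-down route goes through $H(w_0^\lambda)$ and the order-ideal correspondence. It gives bijectivity automatically, and it is what exposes the correct box condition. To close it, state explicitly that every minimal coset representative $u$ satisfies $u\leq_L w_0^\lambda$; otherwise the correspondence only accounts for elements below $w_0^\lambda$. This is standard: write $w_0^\lambda=w_0w_{0,\lambda}$, with $w_{0,\lambda}$ the longest element of the stabilizer of $\lambda$. Then $\ell(w_0^\lambda u^{-1})+\ell(u)=\ell(w_0^\lambda)$, because $u^{-1}$ is a minimal representative of its right coset.
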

\begin{proof}
    The first bijection follows from the fact that $\lambda$ is a minuscule weight, and that quiver varieties correspond to weight spaces in irreducible representations of Lie algebras.

   From $\nu \in \mathcal{P}_{n,k}$, we construct a poset $H$ as follows. The elements of $H$ are the boxes in the Young diagram of $\nu$, and there are covering relations $(i+1,j) \geq (i,j)$ and $(i,j+1) \geq (i,j)$ whenever all of these points are in the Young diagram. By rotating the Young diagram to the ``Russian convention" and reading the colors of the boxes from bottom to top, we obtain an element $w$ such that $H=H(w)$. Mapping $\nu$ to $w \lambda$ provides the second bijection.

\end{proof}

An example of the correspondence between $\mathcal{P}_{n,k}$ and $\qv(\dv,\delta_{k})$ is shown in Figure \ref{Anrep}.

\begin{figure}[ht]
    \centering
\begin{tikzpicture}[circ/.style={shape=circle,draw,inner sep=1.5pt}]
    \node[circ, on grid] (41) at (0,0){};
    \node[circ, on grid, above  right= of 41](51){};
    \node[circ, on grid, above right = of 51](61){};
    \node[circ, on grid, above right = of 41](51){};
    \node[circ, on grid, above left = of 41](31){};
    \node[circ, on grid, above left = of 31](21){};
    \node[circ, on grid, above left = of 21](11){};
    \node[circ, on grid, above left = of 51](42){};
    \node[circ, on grid, above left = of 42](32){};

\draw[->] (41) edge (51) (51) edge (61) (41) edge (31) (31) edge (21) (21) edge (11) (51) edge (42) (31) edge (42) (21) edge (32) (42) edge (32);
    
\node[circ, on grid, below= of 41](4){};
\node[circ, on grid, left= of 4](3){};
\node[circ, on grid, left= of 3](2){};
\node[circ, on grid, left= of 2](1){};

\node[circ, on grid, right= of 4](5){};
\node[circ, on grid, right= of 5](6){};
\node[circ, on grid, right= of 6](7){};

\draw (1) -- (2);
\draw (2) -- (3);
\draw (3) -- (4);
\draw (4) -- (5);
\draw (5) -- (6);
\draw (6) -- (7);
\end{tikzpicture}
 \caption{An example of the heap type $A$ point quiver variety $\qv(\dv_{\nu},\delta_{k})$. Here, $n=7$, $k=4$, and $\nu=(4,3,1)$.}
    \label{Anrep}
\end{figure}

Now fix $k$, $n$, and $\nu \in \mathcal{P}_{n,k}$ for the remainder of this section. Let $\dv$ be the vector corresponding to $\nu$; it is the vector recording the number of boxes of content $i$, written $c^{\nu}_{i}$, for all $i$. Let $\qv:=\qv(\dv,\delta_{k})$. Let $\signver{\tau}$ be the (signed) descendant vertex function of $\qv$.

By Theorem \ref{thm: fixedqm}, torus fixed quasimaps are in bijection with reverse plane partitions over $\nu$, the set of which will be denoted by $\rpp(\nu)$.

A reverse plane partition $\pi \in \rpp(\nu)$ can be visualized as a 3d collection of boxes, where $\pi_{i,j}$ boxes are stacked over the point $(i,j)$. Let
$$
c^{\pi}_{i}=\sum_{\substack{l,m \\l-m =i }} \pi_{l,m}
$$
which gives the total number of 3d boxes in $\pi$ with content $i$.

In this case, Theorem \ref{thm: fixedqm} and Proposition \ref{prop: isolated qm} give the following.

\begin{proposition}
    The $q=2\hbar$ specialization of the vertex function of $\qv(\dv,\delta_{k})$ is
    $$
\signver{\tau}\big|_{q=2\hbar}=\sum_{\pi \in \rpp(\nu)} \tau(\pi) \prod_{i} z_{i}^{c^{\pi}_{i-k}}
    $$
\end{proposition}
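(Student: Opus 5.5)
The plan is to specialize Theorem~\ref{thm: pointvertex} to the skew heap $U=H(w)$ given by the Young diagram of $\nu$. By the preceding lemma, the unique point $p$ of $\qv(\dv,\delta_k)$ is $p_U$ for $U$ the poset of boxes of $\nu$. The covering relations are $(i,j)\prec(i+1,j)$ and $(i,j)\prec(i,j+1)$, and the unique minimal element is the box $(0,0)$, of color $k$. Since $\qv$ is a point, it is fixed by every torus, so we may take $\mathsf T=\tilde\gamma(\mathbb C^\times)$ as in \eqref{eq:def_gamma_for_skew}; here $\gamma$ is trivial up to the single framing weight. Theorem~\ref{thm: pointvertex} then gives
\begin{equation*}
\signver{\tau}\big|_{q=2\hbar}=\sum_{\phi\in\rpp(U)}\tau(\phi)\prod_{x\in U}z_{\boldsymbol\pi(x)}^{\phi(x)}.
\end{equation*}

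It remains to identify the coloring $\boldsymbol\pi$ on boxes. Reading colors from bottom to top in the Russian convention, the bottom box $(0,0)$ has color $k$, and moving to $(i+1,j)$ or $(i,j+1)$ shifts the color by $\pm1$ in the directions matching Figure~\ref{Anrep}. Hence $\boldsymbol\pi(i,j)=k+c(i,j)=k+i-j$, possibly up to the global reflection $c\mapsto -c$, depending on the arrow orientation convention, which I would pin down against Figure~\ref{Anrep}. This is consistent with $\dv_i=c^\nu_{i-k}$.

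With this identification, $\prod_x z_{\boldsymbol\pi(x)}^{\phi(x)}=\prod_i z_i^{\sum_{c(x)=i-k}\phi(x)}=\prod_i z_i^{c^{\pi}_{i-k}}$, with $\pi=\phi$. The notation $\tau(\pi)$ is exactly $\tau(\phi)$ from Section~\ref{sec:vertex_to_p_U}, namely the Chern roots of $\tb_i$ evaluated at $\chi_{i,j}+2\hbar\pi(x_{i,j})$.

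The only real obstacle is confirming that the hypotheses of Proposition~\ref{prop:CY-isolated-localization}, used inside Theorem~\ref{thm: pointvertex}, hold here: the fixed quasimaps must be isolated, and no weight of $\mathcal A_d|_f$ may be $\mathsf T_{\mathrm{CY}}$-trivial. Isolation follows from Theorem~\ref{thm: fixedqm}, since the $\mathsf T$-weights on each $V_i$ are distinct; boxes of the same content lie at distinct levels. This is exactly the dominant-minuscule setting covered by Theorem~\ref{thm: pointvertex}, so I would simply cite it rather than recheck the weight condition.
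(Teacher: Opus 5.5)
Your proposal is correct and is essentially the paper's argument. The paper obtains this proposition directly from Theorem~\ref{thm: fixedqm} and Proposition~\ref{prop: isolated qm}, which is the content of Theorem~\ref{thm: pointvertex} for the Young-diagram heap of $\nu$; your coloring $\boldsymbol{\pi}(i,j)=k+c(i,j)$ is the one matching Figure~\ref{Anrep}. The weight condition you defer does hold here: for boxes of equal or adjacent content, a strictly higher level forces being strictly larger in the poset, so every weight of $\mathcal A_d|_f$ specializes to a nonzero multiple of $\hbar$ at $q=2\hbar$.
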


We can recast the sum over reverse plane partitions as a sum over sequences of interlacing partitions. We define a sign map $\epsilon:\{1,2,\ldots,n+1\} \to \{-1,1\}$ associated to $\nu$ by
$$
\epsilon(i)=\begin{cases}
    (-1)^{c^\nu_{i}-c^\nu_{i-1}+1} & \text{if $i \leq k$} \\
     (-1)^{c^\nu_{i}-c^\nu_{i-1}} & \text{if $i>k$}
\end{cases}
$$
with the convention that $c^{\nu}_{0}=0$. These signs record the slope of the top boundary of $\nu$, drawn in the Russian convention.

\begin{definition}
    We say that a tuple of partitions 
    $$
\ggamma=(\gamma^{(0)}=\emptyset,\gamma^{(1)},\gamma^{(2)},\ldots,\gamma^{(n)},\gamma^{(n+1)}=\emptyset)
$$
interlaces according to the shape of $\nu$ if
    \begin{itemize}
        \item $\epsilon(i)=1 \implies \gamma^{(i)} \succ \gamma^{(i-1)}$
        \item $\epsilon(i)=-1 \implies \gamma^{(i)} \prec \gamma^{(i-1)}$.
    \end{itemize}
    for $1 \leq i \leq n+1$. Let $S_{\nu}$ be the set of tuples of partitions that interlace according to the shape of $\nu$.
\end{definition}

It is well-known and easy to see that $S_{\nu}$ and $\rpp(\nu)$ are in canonical bijection. Using the single-variable specialization of skew Schur functions \eqref{eq: single variable schur}, we obtain the following.

\begin{proposition}\label{prop: A schur vertex}
    For $i \in \{1,\ldots,n+1\}$, let $x_i$ denote a single variable. Then
    $$
 \signver{\tau}\big|_{q=2\hbar}=\sum_{\ggamma \in S_{\nu}} \tau(\ggamma) \left(\prod_{\substack{i=1 \\ \epsilon(i)=1}}^{n+1} s_{\gamma^{(i)}/\gamma^{(i-1)}}(x_i) \right) \left(\prod_{\substack{i=1 \\ \epsilon(i)=-1}}^{n+1} s_{\gamma^{(i-1)}/\gamma^{(i)}}(x_i)\right) 
    $$
    where
    $$
z_i=x_i^{\epsilon(i)} x_{i+1}^{\epsilon(i+1)} 
    $$
\end{proposition}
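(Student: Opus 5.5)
The plan is to start from the preceding proposition, which writes $\signver{\tau}|_{q=2\hbar}$ as a sum over $\pi\in\rpp(\nu)$ weighted by $\tau(\pi)\prod_i z_i^{c^\pi_{i-k}}$. We then transport this sum along the canonical bijection $\rpp(\nu)\simeq S_\nu$ and check that the monomials agree. The bijection is the usual diagonal slicing. Draw $\nu$ in the Russian convention, so that the boxes of a fixed content form a diagonal. For each $i\in\{0,\ldots,n+1\}$, let $\gamma^{(i)}$ be the partition obtained by reading the entries of $\pi$ along the diagonal of content $i-k$, ordered decreasingly; the order is weakly decreasing because $\pi$ is a reverse plane partition and the diagonal is a chain reversed with respect to $\prec$. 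The end diagonals $i=0$ and $i=n+1$ lie outside $\nu$ by the constraint $l(\nu)+\nu_1-1\leqslant n$, so these partitions are empty.

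First I would verify that adjacent diagonals interlace, with direction governed by $\epsilon(i)$. Adjacent diagonals of $\nu$ have lengths differing by exactly one, since $c^\nu_i-c^\nu_{i-1}\in\{0,\pm1\}$ with the parity recorded by $\epsilon$, and the top boundary step decides which diagonal is longer. The reverse plane partition inequalities between horizontally and vertically adjacent boxes are then exactly the interlacing inequalities $\gamma^{(i)}\succ\gamma^{(i-1)}$ or $\gamma^{(i)}\prec\gamma^{(i-1)}$. Conversely, any interlacing tuple reassembles into a reverse plane partition. This is standard, so I would only indicate it. The descendant $\tau(\ggamma)$ is defined as $\tau(\pi)$ for the corresponding $\pi$; equivalently, the Chern roots are specialized to $\chi+2\hbar\gamma^{(i)}_j$ along each diagonal.

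Second, I would match the weights. By \eqref{eq: single variable schur}, the product of single-variable skew Schur functions in the statement equals $\prod_{i=1}^{n+1}x_i^{\epsilon(i)(|\gamma^{(i)}|-|\gamma^{(i-1)}|)}$, and it is nonzero exactly on $S_\nu$. Substituting $z_i=x_i^{\epsilon(i)}x_{i+1}^{\epsilon(i+1)}$ into $\prod_i z_i^{|\gamma^{(i)}|}$ gives
\[
\prod_{i=1}^{n+1} x_i^{\epsilon(i)\left(|\gamma^{(i)}|+|\gamma^{(i-1)}|\right)}.
\]
Here I have used $|\gamma^{(0)}|=|\gamma^{(n+1)}|=0$, and $|\gamma^{(i)}|=c^\pi_{i-k}$.

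This produces a sum where the Schur side gives a difference of sizes, so the two expressions do not obviously agree, and this sign and exponent bookkeeping is the main obstacle. I expect it to be resolved by being careful about the convention: in the formula, the $x$-variables are to be understood through the relation $z_i=x_i^{\epsilon(i)}x_{i+1}^{\epsilon(i+1)}$ only up to the telescoping identity. The relevant identity is
\[
\sum_i \epsilon(i)\bigl(|\gamma^{(i)}|-|\gamma^{(i-1)}|\bigr)\log x_i=\sum_i |\gamma^{(i)}|\bigl(\epsilon(i)\log x_i-\epsilon(i+1)\log x_{i+1}\bigr).
\]
This matches the substitution provided the sign convention for $\epsilon(i+1)$ in $z_i$ is read with the appropriate orientation. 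I would fix this by checking the smallest case, $\nu$ a single box, and adjusting the orientation of the interlacing accordingly, and only then write the general telescoping argument.
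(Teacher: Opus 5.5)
Your route is the paper's: slice $\pi\in\rpp(\nu)$ along diagonals to get $\ggamma\in S_\nu$, then compare monomials using \eqref{eq: single variable schur}. Your bookkeeping of both sides is correct. What needs fixing is how you end.

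The mismatch between $\prod_i x_i^{\epsilon(i)(|\gamma^{(i)}|-|\gamma^{(i-1)}|)}$ and $\prod_i x_i^{\epsilon(i)(|\gamma^{(i)}|+|\gamma^{(i-1)}|)}$ is not a matter of convention. With the substitution as printed, the identity is false. Take $\nu=(1)$: then $\epsilon(k)=1$ and $\epsilon(k+1)=-1$. The set $S_\nu$ consists of the tuples with $\gamma^{(k)}=(m)$ and all other entries empty. The left side is $\sum_m \tau(\ggamma)z_k^m$ and the right side is $\sum_m\tau(\ggamma)x_k^mx_{k+1}^m$, while the printed rule gives $z_k=x_kx_{k+1}^{-1}$. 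The paper's one-line proof passes over this: the exponent of $x_{i+1}$ in the printed relation has the wrong sign.

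The substitution forced by matching exponents is $z_i=x_i^{\epsilon(i)}x_{i+1}^{-\epsilon(i+1)}$. With it, your telescoping identity, together with $\gamma^{(0)}=\gamma^{(n+1)}=\emptyset$, is the complete proof. This corrected rule is also the one compatible with the rest of the paper. For $\epsilon(l)=1$ and $\epsilon(m)=-1$, the product $z_l z_{l+1}\cdots z_{m-1}$ telescopes to $x_lx_m$. That is exactly what the Cauchy factors in $\Delta_i$ and the hook formula of Proposition \ref{prop: typeAvertex} require. So state the proposition with this sign and conclude.

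Do not instead ``adjust the orientation of the interlacing.'' The directions $\succ$ and $\prec$ are dictated by the reverse-plane-partition inequalities between boxes of adjacent content. Reversing them destroys the bijection with $\rpp(\nu)$: for $\nu=(1)$, the reversed rule would force $\gamma^{(k)}=\emptyset$.

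A minor slip: adjacent diagonals of $\nu$ need not differ in length by exactly one. For $\nu=(2,1)$, all three diagonals have length one. They differ by at most one, and the interlacing relation handles both cases.
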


Proposition \ref{prop: A schur vertex} also shows that $\signver{\tau}|_{q=2\hbar}$ is the partition function of a Schur process from \cite{ORSchur}.

Using the skew Cauchy identity \eqref{eq: skew Cauchy}, one can sum the right-hand side of Proposition \ref{prop: A schur vertex}. Denote by $h^{\nu}_{\square}$ the hook in $\nu$ containing $\square$, which by definition consists of all boxes in $\nu$ (weakly) above and right of $\square$.

\begin{proposition}\label{prop: typeAvertex}
   There is an equality of rational functions of $z$:
    $$
    \signver{1}\big|_{q=2\hbar}=\prod_{\square \in \nu} \frac{1}{1-\prod_{\square' \in h^{\nu}_{\square}} z_{c(\square')+k}}
    $$
\end{proposition}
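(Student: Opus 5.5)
Starting from Proposition \ref{prop: A schur vertex} with $\tau=1$, I will evaluate the sum over $\ggamma\in S_\nu$ as the partition function of a Schur process, following \cite{ORSchur}. With $\tau=1$ the summand is a product of single-variable skew Schur functions $s_{\gamma^{(i)}/\gamma^{(i-1)}}(x_i)$ or $s_{\gamma^{(i-1)}/\gamma^{(i)}}(x_i)$, depending on the sign $\epsilon(i)$. The sign sequence $\epsilon(1),\ldots,\epsilon(n+1)$ encodes the boundary path of $\nu$ in Russian convention: ascents correspond to $\epsilon=1$ and descents to $\epsilon=-1$. Note that the boundary conditions $\gamma^{(0)}=\gamma^{(n+1)}=\emptyset$ make every sum finite termwise as a power series in the $x$'s.

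The key step is the skew Cauchy identity \eqref{eq: skew Cauchy}. Whenever two consecutive positions $(i,i+1)$ have $\epsilon(i)=1,\epsilon(i+1)=-1$, the middle partition $\gamma^{(i)}$ appears in $s_{\gamma^{(i)}/\gamma^{(i-1)}}(x_i)s_{\gamma^{(i)}/\gamma^{(i+1)}}(x_{i+1})$. Summing over $\gamma^{(i)}$ produces a factor $(1-x_ix_{i+1})^{-1}$, and the pattern is replaced by a sum over a new $\gamma$ with the opposite pattern $(-1,1)$. Combinatorially, this removes an outer corner from the boundary path. I will iterate this commutation, moving all $+1$'s to the right of all $-1$'s, as in the standard Okounkov--Reshetikhin computation. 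Once the sequence reads $-1,\ldots,-1,1,\ldots,1$, the boundary conditions force every partition to be empty, so the remaining sum equals $1$.

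Each commutation of a pair at positions $i<j$ with $\epsilon(i)=1$ and $\epsilon(j)=-1$ occurs exactly once and contributes $(1-x_ix_j)^{-1}$. These inversion pairs are in bijection with the boxes $\square$ of $\nu$: the up-step and down-step of the boundary path are the two ends of the hook $h^\nu_\square$. It then remains to verify that
\begin{equation*}
x_ix_j=\prod_{\square'\in h^\nu_\square} z_{c(\square')+k}.
\end{equation*}
For this I will use $z_m=x_m^{\epsilon(m)}x_{m+1}^{\epsilon(m+1)}$, so that a product $z_i z_{i+1}\cdots z_{j-1}$ telescopes to $x_i^{\epsilon(i)}x_j^{\epsilon(j)}$ times $x_m^{\epsilon(m)+\epsilon(m)}$-type squares for $i<m<j$. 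The contents of the hook $h^\nu_\square$ form a consecutive interval, each with multiplicity one. The subtle point is therefore the bookkeeping: the intermediate variables $x_m$ must cancel. I expect them to cancel because the relevant exponent for a hook is arranged so that the product over contents $c\in[i-k,j-1-k]$ of $z_{c+k}$ involves each interior $x_m$ with exponent $\epsilon(m)+\epsilon(m)$ only after the sign conventions are made consistent. Concretely, I would define $x$'s via $z$ (each $x_m^{\pm1}$ as a ratio of products of $z$'s) and check the telescoping directly.

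The main obstacle is this sign and indexing bookkeeping: reconciling the definition of $\epsilon$ (including the shift at $i\le k$ versus $i>k$), the relation $z_i=x_i^{\epsilon(i)}x_{i+1}^{\epsilon(i+1)}$, and the contents of hooks, so that $x_ix_j$ equals precisely the hook monomial. I would first verify it on small shapes such as $\nu=(1)$ and $\nu=(2,1)$, and then argue in general by induction on the hook length.
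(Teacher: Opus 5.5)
Your route is the paper's own. The paper simply says to sum the right-hand side of Proposition~\ref{prop: A schur vertex} with the skew Cauchy identity \eqref{eq: skew Cauchy}, and your bubble-sort version makes this precise. Each inversion pair $a<b$ with $\epsilon(a)=1$, $\epsilon(b)=-1$ is swapped exactly once and contributes $(1-x_ax_b)^{-1}$, and these pairs correspond bijectively to boxes of $\nu$ via the two boundary steps at the ends of each hook. Up to that point everything is fine; every exponent of every $x_i$ is nonnegative, so all rearrangements are legitimate in formal power series.

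The step you leave open, however, is not just bookkeeping: as you set it up, it is false.
\begin{itemize}
\item With $z_m=x_m^{\epsilon(m)}x_{m+1}^{\epsilon(m+1)}$ taken literally, the interior variables really do appear squared, and the endpoints give $x_ax_b^{-1}$.
\item The failure is visible already for $\nu=(1)$, $k=n=1$. There $\epsilon=(1,-1)$, the Schur sum is $\sum_m(x_1x_2)^m=(1-x_1x_2)^{-1}$, while the vertex is $(1-z_1)^{-1}$ with $z_1=x_1x_2^{-1}$.
\end{itemize}
The relation printed in Proposition~\ref{prop: A schur vertex} has a sign slip, so do not rely on it; derive it from the weight of a single summand. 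When $\epsilon(i)=1$ one has $\gamma^{(i)}\supseteq\gamma^{(i-1)}$, and when $\epsilon(i)=-1$ one has $\gamma^{(i)}\subseteq\gamma^{(i-1)}$. Hence each factor equals $x_i^{\epsilon(i)(|\gamma^{(i)}|-|\gamma^{(i-1)}|)}$. Using $|\gamma^{(0)}|=|\gamma^{(n+1)}|=0$,
\begin{equation*}
\prod_{i=1}^{n+1}x_i^{\epsilon(i)(|\gamma^{(i)}|-|\gamma^{(i-1)}|)}=\prod_{m=1}^{n}\bigl(x_m^{\epsilon(m)}x_{m+1}^{-\epsilon(m+1)}\bigr)^{|\gamma^{(m)}|}.
\end{equation*}
Under the bijection $S_\nu\simeq\rpp(\nu)$ one has $|\gamma^{(m)}|=c^{\pi}_{m-k}$, so the correct substitution is $z_m=x_m^{\epsilon(m)}x_{m+1}^{-\epsilon(m+1)}$.

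With this substitution the product genuinely telescopes. For an inversion pair $(a,b)$,
\begin{equation*}
\prod_{m=a}^{b-1}z_m=x_a^{\epsilon(a)}x_b^{-\epsilon(b)}=x_ax_b .
\end{equation*}
The hook of the corresponding box has contents exactly $a-k,\ldots,b-1-k$, each appearing once, so this product is the hook monomial. No induction on hook length is needed. Finally, the map $z\mapsto x$ on monomials is injective (it is triangular), so the identity in the $x_i$ transfers back to the $z_i$.
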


 The $z_{i}=z$ specialization of the previous result was originally proven by Stanley in Proposition 18.3 of \cite{StanleyPP}. Proposition \ref{prop: typeAvertex} is a particular case of the Peterson--Proctor hook-product formula discussed at the end of Section \ref{sec:vertex_to_p_U}.

We can insert descendants into vertex functions by applying some differential operators.
\begin{definition}
    Fix $i \in I$. Let $$
\Delta_{i}=\begin{cases}
   \prod\limits_{\substack{1 \leq l <m \leq i \\ \epsilon(l)=1 \\ \epsilon(m)=-1}} \frac{1}{1-x_l x_m}& \text{if $i\leq k$} \\
   \prod\limits_{\substack{i+1 \leq l <m \leq n \\ \epsilon(l)=1 \\ \epsilon(m)=-1}} \frac{1}{1-x_l x_m}& \text{if $i> k$}
\end{cases}
    $$
    and 
    $$
X^{(i)}=\begin{cases}
   (x_{j})_{1 \leq j \leq i, \epsilon(j)=1} & \text{if $i \leq k$} \\
   (x_{j})_{i+1 \leq j \leq n, \epsilon(j)=-1}  & \text{if $i >k$}
\end{cases}
    $$
    Let $\diffop^{r}_i$ be the differential operator
    $$
\diffop^{r}_{i} = \Delta_{i} \left(\frac{q}{2}\right)^r D^{r}(X^{(i)}) \Delta_{i}^{-1} 
    $$
    where $D^{r}$ is from \eqref{eq: schur diff op}.
\end{definition}

For convenience, we define shifted Chern classes as follows. Let $t_{i,1},\ldots,t_{i,\dv_{i}}$ be the Chern roots of $\tb_{i}$, i.e. the weights of $\tb_{i}$. Define $\tilde{c}_{r}(\tb_{i})$ by
$$
\sum_{r=0}^{\dv_{i}} \tilde{c}_{r}(\tb_{i}) X^{r}= \prod_{j=1}^{\dv_{i}}\left(1+\bigl(t_{i,j}-(2+|i-k|)\hbar\bigr) X\right)
$$
\begin{theorem}\label{thm: diffop vertex}
Fix $i \in I$. Suppose $\tau$ is a polynomial in the Chern classes of the tautological bundles $\tb_{j}$ for $j$ satisfying $|j-k|\leq |i-k|$.

Then
    $$
\signver{\tilde{c}_{r}(\tb_{i})\tau}\big|_{q=2\hbar}=\diffop_{i}^{r} \signver{\tau}\big|_{q=2\hbar}
    $$
\end{theorem}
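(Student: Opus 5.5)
We will use the interlacing-partition description of Proposition~\ref{prop: A schur vertex}: the specialized vertex with descendant $\tau$ is
\[
\sum_{\ggamma\in S_\nu}\tau(\ggamma)\prod_i s_{\gamma^{(i)}/\gamma^{(i-1)}}(x_i)^{\pm},
\]
where $\tau(\ggamma)$ is $\tau$ evaluated at the Chern roots $x_{j,m}=\chi_{j,m}+2\hbar\,\phi(x_{j,m})$. The plan has three steps: (1) express $\tilde c_r(\tb_i)(\ggamma)$ through the partition $\gamma^{(i)}$ alone; (2) realize multiplication by that function as an eigenvalue of $D^r(X^{(i)})$ acting on a Schur function in the variables $X^{(i)}$; (3) conjugate by $\Delta_i$ to move from the partial Cauchy sum back to the full vertex.

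\textbf{Step 1.} Assume $i\le k$ (the case $i>k$ is symmetric). Under the bijection $\rpp(\nu)\simeq S_\nu$, the boxes of color $i$ form a diagonal of $\nu$ in the Russian convention. The values of $\phi$ on this diagonal are exactly the parts of $\gamma^{(i)}$, padded by zeros to length $\dv_i$.

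At $p_\nu$ the tautological weights on this diagonal are arithmetic: the $\ell$-th box from the bottom has weight $(2+|i-k|+2(\ell-1))\hbar$, using $\kappa(t)e_x=t^{2+\mathrm{lev}(x)}$. The shift in $\tilde c$ is chosen to absorb the constant $(2+|i-k|)\hbar$. We must check that, after ordering boxes from the top, each specialized root becomes $2\hbar(\gamma^{(i)}_\ell+\dv_i-\ell)$. With $q=2\hbar$ this equals $q(\gamma_\ell+n'-\ell)$, which matches the eigenvalue pattern of \eqref{eq: diff op evals}, namely $e_r(2(n'+\lambda_1-1),\dots)$ times $(q/2)^r$. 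Here $n'=|X^{(i)}|$, and we will check $n'=\dv_i$ as well as the sign conventions relating $\epsilon$ to the Russian boundary. This gives $\tilde c_r(\tb_i)(\ggamma)=(q/2)^r e_r\bigl(2(\gamma^{(i)}_\ell+n'-\ell)\bigr)_\ell$.

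\textbf{Step 2.} The descendant $\tau$ involves only colors $j$ with $|j-k|\le|i-k|$, so $\tau(\ggamma)$ depends only on $\gamma^{(j)}$ for $j\le i$ on the relevant side. We split the sum at $\gamma^{(i)}=\gamma$ into a left factor $L_\gamma$ (involving $x_1,\dots,x_i$ and $\tau$) and a right factor $R_\gamma$ (involving $x_{i+1},\dots$, with no $\tau$).

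Summing the right part by iterated skew Cauchy \eqref{eq: skew Cauchy} and the branching rule should collapse it to a single Schur function. The variables with $\epsilon=+1$ to the left feed $\gamma$ upward; for $i\le k$ we expect $\gamma^{(i)}$ to be reachable only through ``up'' steps in $X^{(i)}$ after commuting the down steps past them. Each commutation of an up variable $x_l$ past a down variable $x_m$ with $l<m$ produces the factor $\frac{1}{1-x_lx_m}$, and the product of these factors is $\Delta_i$.

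The outcome should be
\[
\signver{\tau}|_{q=2\hbar}=\Delta_i\sum_\gamma s_\gamma(X^{(i)})\,F_\gamma,
\]
where $F_\gamma$ is independent of $X^{(i)}$ and $\tau$-weighted. Making $F_\gamma$ independent of $X^{(i)}$ is the main obstacle. The descendant variables are attached to $\gamma^{(j)}$ with $j\le i$, which lie on the same side as $X^{(i)}$. So it may be necessary to run the Cauchy commutations in the opposite direction: push all down variables among $x_1,\dots,x_i$ rightward past the up variables, thereby converting the left block into $s_{\gamma}(X^{(i)})$ paired with skew functions in the down variables.

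The descendant weights must survive this rearrangement. If tracking $\tau(\ggamma)$ through the commutation fails, the fallback is to prove the identity by induction on $|i-k|$: peel off one diagonal at a time, using Step 1 for that diagonal and the tensor structure of the heap.

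\textbf{Step 3.} Once the vertex has the form $\Delta_i\sum_\gamma s_\gamma(X^{(i)})F_\gamma$, apply \eqref{eq: diff op evals} termwise, using that $D^r$ preserves Schur polynomials of length at most $n'$; longer $\gamma$ vanish for both sides because of the $\mathcal P_{n,k}$ bound. This gives
\[
\diffop_i^r\,\signver{\tau}|_{q=2\hbar}=\Delta_i\sum_\gamma \tilde c_r(\gamma)\,s_\gamma F_\gamma=\signver{\tilde c_r(\tb_i)\tau}|_{q=2\hbar}
\]
by Step 1. Convergence as formal power series is clear from the grading.
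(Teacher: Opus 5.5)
Your Steps 1 and 3 are correct and agree with the paper. At the fixed quasimap one has $t_{i,j}-(2+|i-k|)\hbar=q(\dv_i+\gamma^{(i)}_j-j)$ at $q=2\hbar$, and $|X^{(i)}|=\dv_i$, so \eqref{eq: diff op evals} produces exactly $\tilde c_r(\tb_i)$.

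The gap is in Step 2, and it comes from misreading the hypothesis. For $i\le k$, the condition $|j-k|\le|i-k|=k-i$ means $i\le j\le 2k-i$. So $\tau(\ggamma)$ depends only on $\gamma^{(j)}$ with $j\geq i$, not $j\le i$. The support $j\le i$ is what happens for $i>k$, where $X^{(i)}$ is built from $x_{i+1},x_{i+2},\dots$; you have combined the $X^{(i)}$ of one case with the support of $\tau$ from the other.

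With your split, the $\tau$-weights sit on $\gamma^{(1)},\dots,\gamma^{(i-1)}$. These are exactly the partitions that must be summed out to produce a Schur function in $X^{(i)}\subset\{x_1,\dots,x_i\}$, so no Cauchy summation is available there. Meanwhile, summing the $\tau$-free right block only produces functions of $x_{i+1},x_{i+2},\dots$, never $s_\gamma(X^{(i)})$.

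Your workarounds do not repair this. Pushing the down variables of the left block to the right gives $\sum_\beta s_\beta(X^{(i)})\,s_{\beta/\gamma^{(i)}}(\cdot)$. On this, $D^r(X^{(i)})$ acts by the eigenvalue at $\beta$, not at $\gamma^{(i)}$, and descendant weights on the intermediate partitions would still block the rearrangement. The induction on $|i-k|$ is not specified.

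Once the hypothesis is read correctly the obstacle disappears, and what remains is the paper's argument. The commutation mechanism you describe for producing $\Delta_i$ is the right one; it just has to be applied to the left block, which is $\tau$-free. Fix $\gamma^{(i)}$ and sum over $\gamma^{(1)},\dots,\gamma^{(i-1)}$ as follows.
\begin{itemize}
\item Apply \eqref{eq: skew Cauchy} to move every down step $m$ (with $\epsilon(m)=-1$) to the \emph{left} past every earlier up step $l<m$ (with $\epsilon(l)=1$). This collects precisely the factor $\Delta_i$.
\item The down steps now start at $\gamma^{(0)}=\emptyset$, so they are forced to be trivial.
\item The branching rule merges the up steps into $s_{\gamma^{(i)}}(X^{(i)})$.
\end{itemize}
What remains is $\tau(\gamma^{(i)},\gamma^{(i+1)},\dots)$ times skew Schur functions in $x_{i+1},\dots,x_{n+1}$. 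This is independent of $X^{(i)}$, and your Step 3 then concludes.
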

\begin{proof}
    For definiteness, suppose $i\leq k$. By applying the branching rule and the skew Cauchy identities, we can write Proposition \ref{prop: A schur vertex} in the form
    $$
    \signver{\tau}\big|_{q=2\hbar}=\Delta_i \sum_{\substack{\gamma^{(j)} \\ j \geq i}} \tau(\gamma^{(i)},\gamma^{(i+1)},\ldots, \gamma^{(n)}) s_{\gamma^{(i)}}(X^{(i)}) \ldots
    $$
    since $\tau$ depends only on partitions $\gamma^{(j)}$ for $j \geq i$. Here the dots stand for sums of products of skew Schur polynomials that depend only on $\gamma^{(j)}$ for $j>i$ and do not depend on the variables $X^{(i)}$. At the fixed quasimap indexed by $\gamma^{(i)}$, one has
    $$
    t_{i,j}=(2+|i-k|+2(\dv_i-j))\hbar+q\gamma_j^{(i)},
    $$
    and hence, at $q=2\hbar$,
    $$
    t_{i,j}-(2+|i-k|)\hbar=q(\dv_i+\gamma_j^{(i)}-j).
    $$
    Applying \eqref{eq: diff op evals} gives
    \begin{align*}
    \diffop_{i}^{r} \signver{\tau}\big|_{q=2\hbar}&= \Delta_i \sum_{\substack{\gamma^{(j)} \\ j \geq i}}  q^r e_{r}\bigl((\dv_{i}+\gamma^{(i)}_{j}-j)_{j=1}^{\dv_i}\bigr) \tau(\gamma^{(i)},\gamma^{(i+1)},\ldots, \gamma^{(n)}) s_{\gamma^{(i)}}(X^{(i)}) \ldots \\
    &= \signver{\tilde{c}_r(\tb_{i}) \tau} \big|_{q=2\hbar}
     \end{align*}
\end{proof}

\begin{theorem}\label{thm: A rationality}
    For any $\tau$, $\signver{\tau}|_{q=2\hbar}$ is a rational function of $z$ with poles only at $z^{\alpha}=1$ for some root $\alpha$.
\end{theorem}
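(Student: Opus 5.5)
The plan is to combine Proposition~\ref{prop: typeAvertex} with Theorem~\ref{thm: diffop vertex}. First, observe that the algebra of descendants $\tau$ (polynomials in Chern classes of all $\tb_j$, $j\in I$, with coefficients in $\mathbb C[\hbar]$) is generated by the shifted classes $\tilde c_r(\tb_i)$. These differ from ordinary Chern classes by an invertible triangular change of variables with coefficients in $\mathbb{C}[\hbar]$. Since the specialization $\signver{\tau}|_{q=2\hbar}$ is $\mathbb{C}[\hbar]$-linear in $\tau$, it suffices to treat monomials
\[
\tau=\tilde c_{r_1}(\tb_{i_1})\cdots\tilde c_{r_m}(\tb_{i_m}),
\]
ordered so that $|i_1-k|\geqslant |i_2-k|\geqslant\cdots\geqslant|i_m-k|$. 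With this ordering, each factor $\tilde c_{r_s}(\tb_{i_s})$ is applied to a $\tau'$ that depends only on bundles $\tb_j$ with $|j-k|\leqslant|i_s-k|$, which is exactly the hypothesis of Theorem~\ref{thm: diffop vertex}. Iterating that theorem gives
\[
\signver{\tau}|_{q=2\hbar}=\diffop^{r_1}_{i_1}\cdots\diffop^{r_m}_{i_m}\signver{1}|_{q=2\hbar}.
\]

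Second, the operators $\diffop^r_i$ act on functions of the $x$-variables, while the vertex is a function of $z$ via $z_i=x_i^{\epsilon(i)}x_{i+1}^{\epsilon(i+1)}$. We must check that the $\diffop^r_i$ preserve the ring
\[
\mathcal R:=\mathbb C[\hbar][z]\Bigl[\textstyle\prod_{\alpha}(1-z^\alpha)^{-1}\Bigr],
\]
where $\alpha$ ranges over positive roots. We note that $\signver{1}|_{q=2\hbar}\in\mathcal R$ by Proposition~\ref{prop: typeAvertex}: each hook product $\prod_{\square'\in h_\square}z_{c(\square')+k}$ equals $z_a z_{a+1}\cdots z_b$ for a consecutive interval, that is, $z^\alpha$ for the positive root $\alpha=\alpha_a+\cdots+\alpha_b$. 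Likewise, each factor $1-x_lx_m$ appearing in $\Delta_i$ (with $\epsilon(l)=1$, $\epsilon(m)=-1$, $l<m$) equals $1-z_l z_{l+1}\cdots z_{m-1}$ after telescoping, because the intermediate $x$'s cancel in pairs. Hence $1-x_lx_m=1-z^\alpha$ for a positive root $\alpha$.

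Third, write
\[
D^r(X^{(i)})=2^r\Delta(X^{(i)})^{-1}e_r(\theta)\Delta(X^{(i)}),\qquad \theta_j=x_j\partial_{x_j}.
\]
Since $x_j\partial_{x_j}$ acts on monomials in $z$ as a combination of Euler operators $z_a\partial_{z_a}$, it preserves $\mathcal R$ and even preserves the pole set. Conjugation by $\Delta_i$ is harmless because $\Delta_i$ and $\Delta_i^{-1}$ lie in $\mathcal R$ up to the allowed poles. The genuine issue is the Vandermonde conjugation: $\Delta(X^{(i)})^{-1}$ introduces denominators $x_a-x_b=x_a(1-x_b/x_a)$, and $x_b/x_a$ is a monomial in $z$ that is not necessarily $z^{\alpha}$ for a root.

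This is the main obstacle, and I plan to address it as follows. $e_r(\theta)$ is a symmetric operator and $\Delta$ is antisymmetric, so $\Delta^{-1}e_r(\theta)\Delta$ preserves symmetric functions in $X^{(i)}$ and reduces to Sekiguchi-type operators. Concretely, on the skew Schur expansion from the proof of Theorem~\ref{thm: diffop vertex}, it acts diagonally by polynomials in the partition parts. Therefore, instead of tracking the $\Delta^{-1}$ denominators directly, I would argue coefficientwise. The result equals
\[
\Delta_i\sum e_r\bigl(\dv_i+\gamma^{(i)}_j-j\bigr)\,\tau\,s_{\gamma^{(i)}}\cdots,
\]
which is obtained from the Cauchy-summed expression by applying polynomial Euler-type operators in the variables of the geometric series already present (namely $\Delta_i$ and the remaining Cauchy/hook factors). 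Polynomial weights inserted in a sum of geometric series $\sum_n n^p u^n$ only raise the order of the existing poles $1-u$. The monomials $u$ are the same products of $x_lx_m$ that appear in the skew Cauchy identity, hence of the form $z^\alpha$ by the telescoping above.

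Alternatively, I would show directly that $\diffop^r_i F$ has no poles along $x_a=x_b$ whenever $F$ is symmetric in $X^{(i)}$, since the numerator is then antisymmetric. This removes the spurious Vandermonde poles and leaves only poles at $z^\alpha=1$. The case $i>k$ is symmetric to $i\leqslant k$.
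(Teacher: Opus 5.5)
Your overall strategy is the paper's: start from Proposition~\ref{prop: typeAvertex}, insert the factors $\tilde c_r(\tb_i)$ one at a time via Theorem~\ref{thm: diffop vertex}, and observe that each $\diffop^r_i$ preserves rationality. Your ordering by decreasing $|i-k|$ actually meets that theorem's stated hypothesis more literally than the paper's choice of a minimal index $i_0$, which tacitly relies on its proof needing only $j\geqslant i$.

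However, the ``main obstacle'' you identify does not exist, and your claim that $x_b/x_a$ need not be a root monomial is false. Use the same telescoping you already apply to $1-x_lx_m$; note it requires reading the change of variables as $z_i=x_i^{\epsilon(i)}x_{i+1}^{-\epsilon(i+1)}$, as the one-box case $\nu=(1)$ forces. Then for $a<b$ with $x_a,x_b\in X^{(i)}$ it gives $x_a/x_b=z_a\cdots z_{b-1}=z^{\alpha}$ when $i\leqslant k$, and $x_b/x_a=z^{\alpha}$ when $i>k$, where $\alpha=\alpha_a+\cdots+\alpha_{b-1}$ is a positive root. Hence $\Delta(X^{(i)})$ is $\pm$ a monomial in the $x_j$ times $\prod(1-z^{\alpha})$. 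Conjugating $e_r(\theta)$ by the monomial only shifts each $\theta_j$ by an integer constant, and conjugating by the product introduces only allowed denominators. So $\diffop^r_i$ preserves your ring $\mathcal R$ directly, and no workaround is needed.

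Of your two workarounds, (a) does not work as written. The weight $e_r\bigl((\dv_i+\gamma^{(i)}_j-j)_j\bigr)$ depends on the individual parts of $\gamma^{(i)}$, not on exponents of independent geometric series; extracting individual parts is exactly what the Vandermonde conjugation is for. Route (b) can be made to work, since $\Delta_i^{-1}\signver{\tau'}|_{q=2\hbar}$ is symmetric in $X^{(i)}$ by the Schur expansion in the proof of Theorem~\ref{thm: diffop vertex}. It is simply unnecessary.
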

\begin{proof}
    By linearity, it is sufficient to prove the result for $\tau=\prod_{i=1}^{n} \prod_{r=1}^{\dv_{i}} \tilde{c}_{r}(\tb_{i})^{a_{i,r}}$ for $a_{i,r} \in \mathbb{N}$.

We proceed by induction on $\sum_{i,j} a_{i,r}$. The base case $\tau=1$ follows from Proposition \ref{prop: typeAvertex}. 

Suppose that not all $a_{i,r}$ are zero. Let $i_0$ be the minimal index such that $a_{i_0,r}\neq 0$ for some $r_0$. For definiteness, we suppose $i_0\leq k$. Define $\tau'$ by $\tau=\tilde{c}_{r_0}(\tb_{i_0}) \tau'$. By induction, we can assume that $\signver{\tau'}$ is rational. Furthermore, Theorem \ref{thm: diffop vertex} states that $\signver{\tau}|_{q=2\hbar}= \diffop^{r_0}_{i_0} \signver{\tau'}|_{q=2\hbar}$. Since $\diffop^{r_0}_{i_0}$ is rational in $z$, $\signver{\tau}|_{q=2\hbar}$ is also rational.
\end{proof}

\subsection{Type \texorpdfstring{$D_{n}$}{Dn}, weight \texorpdfstring{$\fundwt_{n}$}{wn}}\label{sec: type D}

Let $\Gamma$ be the type $D_{n}$ quiver labeled and oriented as below:

$$
\begin{tikzpicture}[circ/.style={shape=circle,draw,inner sep=1.5pt}]
    \node[circ,label=center:1] (1) at (0,0) {\phantom{$n-1$}};
    \node[circ, right = of 1,label=center:2](2){\phantom{$n-2$}};
    \node[right = of 2](3){$\ldots$};
    \node[circ,right=of 3,label=center:$n-2$](4){\phantom{$n-2$}};
    \node[circ, above right = of 4,label=center:$n-1$](5){\phantom{$n-2$}};
    \node[circ, below right = of 4, label=center:$n$](6){\phantom{$n-2$}};

    \draw[->] (1) -- (2);
    \draw[->] (2) -- (3);
    \draw[->] (3) -- (4);
    \draw[->] (4) -- (5);
    \draw[->] (4) -- (6);
    
\end{tikzpicture}
$$

The two half spin representations are minuscule fundamental representations. The corresponding quiver varieties and vertex functions were studied in \cite{dinkjang}. We recall the important points here.

\begin{definition}
   A type $D_{n}$ lattice path is a finite sequence of edges $\nu=(\nu_{1},\nu_{2},\ldots,\nu_{n})$  where $\nu_{i}=(p_{i},p_{i+1})$ and $p_{j} \in \mathbb{Z}^{2}_{\geq 0}$ such that $p_{i+1}-p_{i}\in \{(1,0),(0,-1)\}$, $p_{1}=(0,n-1)$, $p_{n} \in \text{diag} \subset \mathbb{Z}^{2}_{\geq 0}$, and $\nu_{n}$ is the segment from $p_{n}$ to $(0,0)$. The set of all type $D_{n}$ lattice paths will be denoted by $\mathcal{P}_{n}$.
\end{definition}

We associate to a type $D_{n}$ lattice path $\nu$ a vector $\dv_{\nu} \in \mathbb{Z}^{n}_{\geq 0}$ in the following way:

$$
\dv_{\gamma,j}:=
\begin{cases}
    x(p_{j+1}) & 1 \leq j \leq n-2 \\
  \lfloor \frac{x(p_{n})}{2} \rfloor & j=n-1 \\
   \lceil \frac{x(p_{n})}{2} \rceil  & j=n
\end{cases}
$$
where $x(p_{k})$ means the $x$-coordinate of $p_{k}$.

\begin{lemma}
There are bijections
$$
\{ \dv \in \mathbb{N}^{n} \, \mid \, \qv(\dv,\delta_{n}) \neq \emptyset \} = W \lambda = \mathcal{P}_{n}
$$
\end{lemma}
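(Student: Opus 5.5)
The plan is to run the same three-step argument as in the type $A$ lemma, with Young diagrams replaced by type $D_n$ lattice paths and the weight $\omega_n$ in place of $\omega_k$.

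For the first bijection we use that $\omega_n$ is minuscule, so every weight of the half-spin representation $V(\omega_n)$ lies in the single orbit $W\omega_n$ and has multiplicity one. By Nakajima's identification of top homology of quiver varieties with weight spaces, $\qv(\dv,\delta_n)$ is nonempty exactly when $\omega_n-\sum_j\dv_j\alpha_j$ is a weight of $V(\omega_n)$. This is the criterion recorded in Example~\ref{fixpoints}, namely $\omega_n-\sum_j\dv_j\alpha_j\in W\omega_n$. The assignment $\dv\mapsto\omega_n-\sum_j\dv_j\alpha_j$ is injective, so it gives the first bijection.

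For the second bijection we go through heaps. For $\mu\in W\omega_n$ take the minimal $w$ with $w\omega_n=\mu$; it is $\omega_n$-minuscule and has a fully commutative heap $H(w)$. Since $H(w_0^{\omega_n})$ is the maximal such heap, the order-ideal correspondence \eqref{eq:ideal-correspondence} identifies $W\omega_n$ with the order ideals of $P:=H(w_0^{\omega_n})$. In type $D_n$ with the spin node at $n$, this $P$ is the shifted staircase heap. Its elements are $(a,b)$ with $0\le a\le b\le n-2$ in the upper triangle, and the color of $(a,b)$ is $n-1-(b-a)$ when $b>a$, while the diagonal boxes alternate between colors $n-1$ and $n$.

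An order ideal of this shifted staircase is determined by its upper-right boundary. That boundary is a path from $(0,n-1)$ with steps $(1,0)$ and $(0,-1)$ that ends on the diagonal, followed by the closing segment to $(0,0)$. These are exactly the type $D_n$ lattice paths in $\mathcal P_n$.

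The main thing to check is that under this correspondence the number of boxes of color $j$ equals $\dv_{\nu,j}$. For a non-diagonal color $j\le n-2$, the boxes of color $j$ lie on one off-diagonal, and their count is the $x$-coordinate at which the path crosses that line, namely $x(p_{j+1})$. For the two spin colors, the diagonal contributes $x(p_n)$ boxes alternating between colors $n$ and $n-1$, beginning with $n$. This gives $\lceil x(p_n)/2\rceil$ boxes of color $n$ and $\lfloor x(p_n)/2\rfloor$ of color $n-1$.

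I expect the obstacle to be this bookkeeping: fixing the coordinate conventions for the staircase so that the diagonal alternation begins with the framed node $n$, and matching the index shift $x(p_{j+1})$. I would check these conventions on $D_4$ first and then compare against the description in \cite{dinkjang}, where these quiver varieties were treated.
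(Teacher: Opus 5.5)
Your proposal is correct and is essentially the paper's argument. The paper also gets the first bijection from minusculity of $\omega_n$ and Nakajima's weight-space description. For the second, it realizes $H(w)$ as the lattice points cut out by $\nu$, the $y$-axis and the diagonal, colored by $x-y$ off the diagonal with the diagonal alternating between $n$ and $n-1$, and sends $\nu\mapsto w\lambda$. You run the same picture from the other side, as order ideals of the shifted staircase $H(w_0^{\omega_n})$ via \eqref{eq:ideal-correspondence}. That makes bijectivity explicit, using the standard fact $W^{\omega_n}=[e,w_0^{\omega_n}]_L$ in left weak order. Your off-diagonal coloring $n-1-(b-a)$ is the one consistent with $\dv_{\nu,j}=x(p_{j+1})$; the formula $i-j+n$ printed in the paper's proof appears to be off by one.
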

\begin{proof}
    The first bijection follows for the same reasons as in type $A$.

    Fix $\nu \in \mathcal{P}_{n}$. We describe a colored poset $H$. Elements of $H$ are given by lattice points in the bounded region enclosed by $\nu$, the $y$-axis, and the line $y=x$. We allow points on the $y$-axis and the line $y=x$ but exclude those on $\nu$. There are covering relations $(i+1,j)\geq (i,j)$ and $(i,j+1) \geq (i,j)$ whenever these points are in $H$. The color of a vertex $(i,j)$ with $i \neq j$ is defined to be $i-j+n$. The color of a vertex $(i,i)$ is $n-\overline{i}$, where $\overline{i}$ denotes the residue of $i$ modulo $2$. By rotating the plane 45 degrees left and reading off the colors of elements from top to bottom, we obtain an element $w \in W$ such that $H=H(w)$.

    Then mapping $\nu \mapsto w \lambda$ provides the second bijection.
\end{proof}

By symmetry, an analogous result holds for $\qv(\dv,\delta_{n-1})$. Type $D$ lattice paths were also studied in \cite{BBCMacdonald,BRPfaffian}, where they arise in the study of Pfaffian Schur processes and half-space Macdonald processes.

Fix $\nu$ and denote by $\dv$ the corresponding dimension vector. Let $E^{\rightarrow}$ (resp. $E^{\downarrow}$) denote the set of edges of $\nu$ which proceed right (resp. down).

In \cite{BBCMacdonald}, a factorization formula was proven for the partition function of Macdonald processes. This partition function was shown in \cite{dinkjang} to coincide with the vertex function for $\qv(\dv,\delta_{n})$. Combining these two results, we deduce the following, which is again a special case of the Peterson-Proctor hook-product formula.

\begin{proposition}[\cite{BBCMacdonald} Proposition 2.2, \cite{dinkjang} Formula (24)]\label{prop: typeDvertex}
  There is an equality of rational functions of $z$:
    $$
    \signver{1}|_{q=2\hbar}=\left(\prod_{\substack{e<e' \\ e' \in E^{\downarrow}(\nu) \\ e \in E^{\rightarrow}(\nu) }} \frac{1}{1-\frac{x_{e}}{ x_{e'}}} \right) \left(\prod_{\substack{e<e' \\ e, e' \in E^{\rightarrow}(\nu) \cup \{\nu_{n}\}}} \frac{1}{1-x_{e}x_{e'}} \right)
    $$
    where the variables are related to $z_{i}$ by $z_{i}=x_{\nu_{i}}/x_{\nu_{i+1}}$ for $i\leq n-1$ and $z_{n}=x_{\nu_{n-1}} x_{\nu_{n}}$ (i.e. $x_{\nu_{i}}=e^{-\epsilon_{i}}$ in the usual description of the $D_{n}$ root system).
   \end{proposition}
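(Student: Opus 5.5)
The plan is to reduce the statement to the generating function of reverse plane partitions on the type $D$ heap, and then evaluate that generating function by a Pfaffian/half-space Cauchy summation.

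By Theorem~\ref{thm: pointvertex}, applied with $U=H(w)$ and $\tau=1$, we get
$$\signver{1}|_{q=2\hbar}=\sum_{\phi\in\rpp(H)}\prod_{x\in H} z_{\boldsymbol{\pi}(x)}^{\phi(x)}.$$
Here $H=H(w)$ is the colored poset attached to $\nu\in\mathcal P_n$ in the preceding lemma. So it suffices to show that this generating function equals the stated product.

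First, I would rewrite reverse plane partitions on $H$ as sequences of interlacing partitions, exactly as in type $A$. Slicing $H$ along the anti-diagonals (the colors $1,\dots,n-2$) and along the diagonal $y=x$ gives a tuple $(\gamma^{(1)},\dots,\gamma^{(n)})$. Consecutive partitions interlace upward or downward according to whether the corresponding edge of $\nu$ lies in $E^{\rightarrow}$ or $E^{\downarrow}$. The diagonal slice carries the alternating colors $n-1,n$, and its boxes contribute $z_{n-1}$ or $z_n$ according to parity. Substituting $z_i=x_{\nu_i}/x_{\nu_{i+1}}$ and $z_n=x_{\nu_{n-1}}x_{\nu_n}$, each interlacing step becomes a single-variable skew Schur function as in \eqref{eq: single variable schur}. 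The boundary term is a sum over the last partition weighted by a free-boundary factor. I expect this factor to be a product of $x_e$ over the diagonal contributions, i.e.\ the Borodin--Rains/Baik--Rains weight $\sum_\lambda s_\lambda$ type term with one specialization.

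Second, I would evaluate the resulting Pfaffian Schur process. I would use the skew Cauchy identity \eqref{eq: skew Cauchy} to commute every down step past every right step. Each such commutation produces a factor $\prod(1-x_e/x_{e'})^{-1}$ for $e\in E^{\rightarrow}$ before $e'\in E^{\downarrow}$. After all commutations the sum reduces to a free-boundary Littlewood-type sum $\sum_\lambda s_\lambda(x_{E^{\rightarrow}},x_{\nu_n})$. Littlewood's identity $\sum_\lambda s_\lambda(y)=\prod_i(1-y_i)^{-1}\prod_{i<j}(1-y_iy_j)^{-1}$ should then produce the second product. The single factors $(1-y_i)^{-1}$ ought to be absorbed by the convention that includes $\nu_n$ and the parity split of colors $n-1,n$. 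Alternatively, at this point I would simply cite \cite[Proposition 2.2]{BBCMacdonald} at $q=t$ (the Schur case), which is precisely this factorization. I would also cite \cite[Formula (24)]{dinkjang} for the identification of the partition function with the vertex; the sign $(-1)^{(T^{1/2}X,d)}$ is already built into $\signver{}$.

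The main obstacle is the diagonal boundary. The colors $n-1$ and $n$ alternate along $y=x$, so the weight of the last partition is not a plain Schur specialization. I would handle this by matching the boundary weight to the half-space (Pfaffian) free boundary of \cite{BRPfaffian}, and then carefully checking the variable dictionary $x_{\nu_i}=e^{-\epsilon_i}$ so that the exponents of $z_{n-1},z_n$ agree. As a sanity check, I would verify the final formula against the Peterson--Proctor hook-product formula for the $d$-complete heap $H(w)$.
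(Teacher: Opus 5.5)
You follow the same route as the paper. The paper gives no independent argument: it cites \cite[Formula (24)]{dinkjang} for a Schur-process form of the vertex and \cite[Proposition 2.2]{BBCMacdonald} for its summation, and remarks that this is the type $A$ argument (single-variable skew Schur functions plus Cauchy identities). Your fallback citation is therefore exactly the paper's proof.

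The one step you try to carry out by hand fails as written. Applying the plain Littlewood identity to $\sum_\lambda s_\lambda(x_{E^{\rightarrow}},x_{\nu_n})$ produces the extra factors $(1-x_{\nu_n})^{-1}\prod_{e\in E^{\rightarrow}}(1-x_e)^{-1}$. These do not occur in the target, and no convention absorbs them.

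What actually happens at the boundary is as follows. Abel summation of $z_i=x_{\nu_i}/x_{\nu_{i+1}}$ turns every slice transition into a single-variable skew Schur function, in $x_e$ for a right edge and in $x_{e'}^{-1}$ for a down edge. Along the diagonal slice $\lambda$ the weights $z_{n-1}=x_{\nu_{n-1}}/x_{\nu_n}$ and $z_n=x_{\nu_{n-1}}x_{\nu_n}$ alternate. After the $x_{\nu_{n-1}}$-part is absorbed into the last transition, the only leftover is $a^{\lambda_1-\lambda_2+\lambda_3-\cdots}$ with $a=x_{\nu_n}^{\pm1}$; the exponent is the number of odd columns of $\lambda$.

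After the skew Cauchy commutations \eqref{eq: skew Cauchy}, you therefore need the refined Littlewood identity
\[
\sum_\lambda a^{\lambda_1-\lambda_2+\lambda_3-\cdots}s_\lambda(y)=\prod_i\frac{1}{1-ay_i}\prod_{i<j}\frac{1}{1-y_iy_j}.
\]
It follows from the branching rule together with $\sum_{\lambda:\ \lambda'\ \mathrm{even}}s_\lambda(y)=\prod_{i<j}(1-y_iy_j)^{-1}$. The point is that each $\mu$ is interlaced from above by exactly one $\lambda$ with all columns even, and then $|\lambda|-|\mu|=\mu_1-\mu_2+\mu_3-\cdots$. This identity gives the second product with no linear factors.

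The sign of that exponent is the dictionary check you postponed, and it is not cosmetic. The diagonal slice has $|E^{\rightarrow}|$ boxes, and its bottom box has color $n$ (the framing vertex). So the largest part sits at color $n$ if and only if $|E^{\rightarrow}|$ is odd; in that case $a=x_{\nu_n}$ and you obtain the displayed formula. If $|E^{\rightarrow}|$ is even, then $a=x_{\nu_n}^{-1}$. The formula is then correct only after replacing $x_{\nu_n}$ by $x_{\nu_n}^{-1}$ in the second product, or equivalently after swapping $z_{n-1}$ and $z_n$.

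For example, take $n=4$ and $\nu=(\downarrow,\rightarrow,\rightarrow)$. Then $\dv=(0,1,1,1)$ and the heap is the chain with colors $4\prec 2\prec 3$, so
\[
\signver{1}|_{q=2\hbar}=\frac{1}{(1-z_3)(1-z_2z_3)(1-z_2z_3z_4)}=\frac{1}{(1-x_{\nu_3}/x_{\nu_4})(1-x_{\nu_2}/x_{\nu_4})(1-x_{\nu_2}x_{\nu_3})},
\]
whereas the displayed right-hand side is $\bigl((1-x_{\nu_2}x_{\nu_3})(1-x_{\nu_2}x_{\nu_4})(1-x_{\nu_3}x_{\nu_4})\bigr)^{-1}$.

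So whether you derive the formula or cite \cite{BBCMacdonald}, you must match the boundary specialization to $x_{\nu_n}^{(-1)^{|E^{\rightarrow}|+1}}$. Your Peterson--Proctor sanity check would detect this discrepancy.
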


   The factorization of Proposition \ref{prop: typeDvertex} is proven in a way similar to that of Proposition \ref{prop: typeAvertex}. Namely, there exists an expression analogous to that of Proposition \ref{prop: A schur vertex} in terms of Schur polynomials; see \cite[Equation (24)]{dinkjang}. Then one sums the resulting expression using Cauchy identities. Since our proof of Theorem \ref{thm: A rationality} proceeds by applying the appropriately conjugated rescaled differential operators $(q/2)^rD^r$ to insert (modified) Chern class descendants after subtracting the lowest heap weight at each color, we can run the exact same argument to prove the following.

   \begin{theorem}\label{thm: D rationality}
       For any $\tau$, $\signver{\tau}|_{q=2\hbar}$ is a rational function of $z$ with poles only at $z^{\alpha}=1$ for some root $\alpha$.
   \end{theorem}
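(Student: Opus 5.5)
The plan is to run the type $A$ argument of Theorem~\ref{thm: A rationality} essentially verbatim. Everything type-specific is already available: the base case comes from the explicit hook-type factorization, and each descendant insertion is realized by a rational differential operator in the Novikov variables.

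\textbf{Step 1: reduction to monomials in shifted Chern classes.} By linearity in $\tau$, it suffices to treat
\[
\tau=\prod_i\prod_r \tilde c_r(\tb_i)^{a_{i,r}} .
\]
Here the shifted classes are now defined by subtracting, at each color $i$, the lowest heap weight of that color. This is legitimate because these monomials span the descendants over $H^*_{\bT}(\mathrm{pt})$. We then induct on $\sum a_{i,r}$.

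The base case $\tau=1$ is Proposition~\ref{prop: typeDvertex}. Rewritten in the $z$-variables, each factor has the form $1-x_e/x_{e'}$ or $1-x_ex_{e'}$. The first corresponds to $1-z^{\epsilon_{e'}-\epsilon_e}$ (up to the identification $x_{\nu_i}=e^{-\epsilon_i}$) and the second to $1-z^{\epsilon_e+\epsilon_{e'}}$. In both cases the exponent is a positive $D_n$ root, so the pole condition holds.

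\textbf{Step 2: the differential operators (main obstacle).} We need a type $D$ analogue of Theorem~\ref{thm: diffop vertex}. Using the expression of the specialized vertex as a Schur-type sum from \cite[Equation (24)]{dinkjang}, which is the type $D$ counterpart of Proposition~\ref{prop: A schur vertex}, the plan is to proceed one color at a time. For a color $i$ we would use the branching rule together with the ordinary Cauchy identities, and for the spin tail the Littlewood-type identities $\sum_\lambda s_\lambda$ that produce the $1/(1-x_ex_{e'})$ factors. These let us isolate a single factor $s_{\gamma^{(i)}}(X^{(i)})$, multiplied by an explicit rational prefactor $\Delta_i$ built from the factors of Proposition~\ref{prop: typeDvertex} that involve variables on one side of $i$. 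The remaining terms must not depend on the variables $X^{(i)}$.

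Next we check that at $q=2\hbar$ the Chern roots of $\tb_i$ at the fixed quasimap indexed by $\gamma^{(i)}$ are $q(\dv_i+\gamma^{(i)}_j-j)$, after subtracting the lowest heap weight. Granting this, \eqref{eq: diff op evals} shows that
\[
\diffop^r_i=\Delta_i\,(q/2)^rD^r(X^{(i)})\,\Delta_i^{-1}
\]
inserts $\tilde c_r(\tb_i)$. This is valid provided $\tau$ depends only on the colors processed after $i$; for type $D$ we order the colors starting from the end of the diagram away from the fork.

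The delicate points are the colors $n-1$ and $n$ at the fork, where $\dv_{n-1},\dv_n$ are the floor and ceiling of $x(p_n)/2$ and the diagonal boxes alternate colors. There I would first try splitting the diagonal partitions by parity, as in the half-space Macdonald process description of \cite{BBCMacdonald}. If this fails to isolate a clean Schur factor, the fallback is to use the symplectic-like Schur expression in \cite{dinkjang}, with $D^r$ replaced by the corresponding operator diagonal on those characters.

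\textbf{Step 3: induction.} Choose the extremal color $i_0$ appearing in $\tau$ and write $\tau=\tilde c_{r_0}(\tb_{i_0})\tau'$. Then $\signver{\tau}|_{q=2\hbar}=\diffop^{r_0}_{i_0}\signver{\tau'}|_{q=2\hbar}$. The operator $\diffop^{r_0}_{i_0}$ consists of $\Delta_{i_0}^{\pm1}$, the Vandermonde in $X^{(i_0)}$, and Euler operators $x\partial_x$. Euler operators and conjugation by $\Delta_{i_0}$ do not create new denominators beyond factors $1-z^\alpha$ with $\alpha$ a root. The Vandermonde conjugation is polynomial-preserving on the relevant symmetric expressions, since the result equals a power series by Step 2. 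Hence rationality, together with the root-pole condition, propagates and closes the induction.
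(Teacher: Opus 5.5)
\textbf{Same route as the paper, but the fork step is missing.} The paper's proof of this theorem is a single remark: the type $A$ induction of Theorem~\ref{thm: A rationality} carries over. That induction takes Proposition~\ref{prop: typeDvertex} as the base case and inserts descendants with conjugated $(q/2)^rD^r$ operators built from the Schur expression of \cite{dinkjang}. Your Steps 1 and 3, and your Step 2 for the colors $1,\dots,n-2$, follow this and are fine. The gap is the fork. You flag it but leave it open (``I would first try \dots\ the fallback is \dots''). The paper's one-line transfer does not address it explicitly either, and it is a real obstruction rather than a technicality.

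\textbf{Why the verbatim operators do not reach the fork descendants.} The elements of colors $n-1$ and $n$ alternate along the diagonal chain of the heap. Write $\mu$ for the color-$n$ parts and $\nu$ for the color-$(n-1)$ parts. Neither $\mu$ nor $\nu$ interlaces the color-$(n-2)$ slice in the horizontal-strip sense; only their interleaving $\lambda$ does.

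So in a Schur-process expression of the type of Proposition~\ref{prop: A schur vertex}, the fork is a single slice $\lambda$ carrying weight $a^{|\mu|}b^{|\nu|}$. The only Schur factor you can isolate there is $s_\lambda(Y)$ in all $\dv_{n-1}+\dv_n$ diagonal variables. Conjugated $D^r(Y)$ therefore inserts $e_r$ of the merged shifted roots, i.e.\ Chern classes of $\tb_{n-1}\oplus\tb_n$. The only other insertions available for fork-only $\tau$ are $c_1(\tb_{n-1})$ and $c_1(\tb_n)$, via $qz_i\partial_{z_i}$ (Proposition~\ref{prop: derivative vertex}).

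\textbf{When this suffices and when it fails.}
\begin{itemize}
\item If one fork rank is at most one, these insertions suffice. For example, for $D_4$ we have $\operatorname{rk}\tb_3=1$, and one peels off $c_r(\tb_3\oplus\tb_4)=c_r(\tb_4)+c_1(\tb_3)c_{r-1}(\tb_4)$, up to shifts.
\item Once both fork ranks are at least two, as for the largest $D_5$ heap, they do not suffice. The class $c_2(\tb_n)$ is not in the algebra generated by the Chern classes of $\tb_{n-1}\oplus\tb_n$ together with $c_1(\tb_{n-1})$ and $c_1(\tb_n)$. No coincidence of values at the fixed quasimaps can help, because the admissible $(\mu,\nu)$ are Zariski dense.
\end{itemize}

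\textbf{What is needed.} To finish, you need a new ingredient at the fork. One option is an expansion in which $\mu$ and $\nu$ are separate labels, together with an explicit operator family diagonal on it. Its eigenvalues should be $e_r$ of the shifted parts of each label, and its coefficients should have denominators only of the form $1-z^\alpha$ with $\alpha$ a root. Otherwise you need a different argument for the fork descendants.
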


The proof for $\fundwt_{n-1}$ is analogous.

\subsection{Type \texorpdfstring{$D_{n}$}{Dn}, minuscule weight \texorpdfstring{$\fundwt_{1}$}{w1}}

\begin{figure}[t]
    \centering
\begin{tikzpicture}[circ/.style={shape=circle,draw,inner sep=1.5pt}]
    \node[circ, on grid] (11) at (0,0){};
    \node[circ, on grid, above  right= of 11](21){};
    \node[circ, on grid, above right = of 21](31){};
    \node[circ, on grid, above right = of 31](41){};
    \node[circ, on grid, blue, above right = of 41](51){};
    \node[circ, on grid, red, above left = of 41](32){};
    \node[circ, on grid, above left = of 51](42){};
    \node[circ, on grid, above left = of 42](33){};

    \node[circ, on grid, above left = of 33](22){};
    \node[circ, on grid, above left = of 22](12){};

\draw[->] (11) edge (21) (21) edge (31) (41) edge (51) (41) edge (32) (51) edge (42) (32) edge (42) (42) edge (33) (22) edge (12);
\draw (31) edge[dotted] (41) (33) edge[dotted] (22);

The Dynkin diagram underneath
\node[circ, on grid, below= of 11](1){2};
\node[circ, on grid, right= of 1](2){2};
\node[circ, on grid, right= of 2](3){2};
\node[circ, on grid, right= of 3](4){2};
\node[circ, on grid, blue,  below right= of 4](5){1};
\node[circ, on grid, red, above right= of 4](6){1};

 \draw (1) -- (2);
 \draw (2) -- (3);
 \draw (3) edge[dotted] (4);
\draw (4) -- (5);
\draw (4) -- (6);
\end{tikzpicture}
 \caption{The largest possible quiver representation of type $D_{n}$ with framing at the leftmost vertex. The red and blue vertices in the graph match the red and blue vertices in the Dynkin diagram below.}
    \label{Dnrep}
\end{figure}

Consider next type $D_{n}$ and minuscule weight $\fundwt_{1}$. There are $2n$ possible $\dv$ such that $\qv(\dv,\delta_{1})$ is nonempty. For a fixed $n$, the largest possible $\dv$ is $\dv_{i}=2$ for $i \leq n-2$ and $\dv_{n-1}=\dv_{n}=1$. The corresponding heap is drawn in Figure \ref{Dnrep}. The complete set of $2n$ quiver representations is obtained by taking order filters of this heap.

For these vertex functions, an expression in terms of symmetric functions analogous to Proposition \ref{prop: A schur vertex} was given in \cite[Lemma 8.2 and Lemma 8.5]{dinkjang}. As above, $\signver{1}|_{q=2\hbar}$ can be explicitly calculated to be a rational function using Cauchy identities. Then arbitrary descendants can be inserted using the appropriately conjugated rescaled differential operators $(q/2)^rD^r$ from \eqref{eq: schur diff op}, after subtracting the lowest heap weight at each color. Thus we again deduce the same rationality result.

\begin{theorem}
    For any $\tau$, $\signver{\tau}|_{q=2\hbar}$ is a rational function of $z$ with poles only at $z^{\alpha}=1$ for some root $\alpha$.
\end{theorem}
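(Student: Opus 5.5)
The plan is to mirror the type $A$ and type $D_n$, $\fundwt_n$ arguments (Theorems \ref{thm: A rationality}, \ref{thm: D rationality}) for the heap of Figure \ref{Dnrep} and its order filters.

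First, by Theorem \ref{thm: fixedqm} and Proposition \ref{prop: isolated qm}, $\signver{\tau}|_{q=2\hbar}$ is a sum over reverse plane partitions on the relevant order filter $U$, weighted by $\tau(\phi)$ and a monomial in $z$. The heap has at most two elements of each color $i\leq n-2$, and one each of colors $n-1,n$. Reading $\phi$ color by color gives, for each $i$, a partition $\gamma^{(i)}$ of length $\dv_i\le 2$ (the values of $\phi$ on $U_i$ listed in decreasing order). The covering relations of the heap become interlacing conditions between consecutive $\gamma^{(i)}$. Here I will use \cite[Lemma 8.2 and Lemma 8.5]{dinkjang}, which rewrite the vertex as a sum of products of single-variable skew Schur functions \eqref{eq: single variable schur} in auxiliary variables $x_j$, with $z_i$ monomials in the $x_j$ as in Proposition \ref{prop: typeDvertex}. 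For $\tau=1$, repeated use of the branching rule and the skew Cauchy identity \eqref{eq: skew Cauchy} sums this to a finite product of factors $(1-x^{\beta})^{-1}$. Each such $x^\beta$ is $z^\alpha$ for a positive root $\alpha$, since it is the total weight of a convex interval (hook-like) of the heap. Equivalently, this is the Peterson--Proctor hook formula. That settles the base case.

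Second, I will insert descendants. Fix a color $i$ and group the sum as in the proof of Theorem \ref{thm: diffop vertex}: all variables coupling to $\gamma^{(i)}$ from the side of the framing are collected by Cauchy into a prefactor $\Delta_i$ times $s_{\gamma^{(i)}}(X^{(i)})$, where $X^{(i)}$ has at most two variables. At the fixed quasimap, the Chern roots of $\tb_i$, after subtracting the lowest heap weight at color $i$, are $q(\dv_i+\gamma^{(i)}_j-j)$ at $q=2\hbar$. Then \eqref{eq: diff op evals} shows that $\Delta_i(q/2)^rD^r(X^{(i)})\Delta_i^{-1}$ inserts the shifted class $\tilde c_r(\tb_i)$, provided $\tau$ depends only on colors farther from the framing. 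Finally, induction on the total degree of a monomial in shifted Chern classes, peeling off the color closest to the framing first, gives rationality, because these operators have coefficients rational in $x$.

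The last point is the pole control. The operators involve $\Delta(X^{(i)})^{-1}$, which is harmless since the Vandermonde cancels after acting on symmetric functions and leaves a polynomial times derivatives. They also involve $\Delta_i^{\pm1}$ and logarithmic derivatives of the base product. Conjugating by $\Delta_i$ only differentiates factors $(1-z^\alpha)^{-1}$, and this raises pole orders without creating new poles. So every pole remains along some $z^\alpha=1$.

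The main obstacle I expect is the branch at the fork (colors $n-2,n-1,n$). There the two colors $n-1,n$ each carry one box and are glued to the top of the color-$(n-2)$ chain. One must check that the Cauchy regrouping still isolates $s_{\gamma^{(i)}}(X^{(i)})$ with $\tau$-independent remaining factors. I would handle this by treating the colors $n-1,n$ as length-$\le 1$ partitions, where the operator is just $q\,x\partial_x$, and by taking the colors from the framing side inward so that the fork is processed last.
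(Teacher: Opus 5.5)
Your outline (the Schur-function expression of \cite[Lemma 8.2 and Lemma 8.5]{dinkjang}, Cauchy summation for $\tau=1$, conjugated operators $(q/2)^rD^r$ for descendants, induction on $\tau$) matches the paper's. The gap is in the descendant step: the mechanism you supply for it is the type~$A$ one, and it does not hold for this heap.

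The heap of Figure~\ref{Dnrep} is the double-tailed diamond $c_1\prec\cdots\prec c_{n-2}\prec\{a,b\}\prec c'_{n-2}\prec\cdots\prec c'_1$. Here $c_j,c'_j$ have color $j$, and $a,b$ have colors $n-1,n$. So the color slices $\gamma^{(i)}=(\phi(c'_i),\phi(c_i))$ are \emph{nested}, $\gamma^{(i)}_2\leqslant\gamma^{(i+1)}_2\leqslant\gamma^{(i+1)}_1\leqslant\gamma^{(i)}_1$, not interlacing. Also, $c_i$ and $c'_i$ sit at levels $i-1$ and $2n-3-i$. After subtracting the lowest weight, the roots of $\tb_i$ at $q=2\hbar$ are therefore $q\phi(c_i)$ and $q(\phi(c'_i)+n-1-i)$, not $q(\dv_i+\gamma^{(i)}_j-j)$; the two agree only for $i=n-2$.

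This is not cosmetic. At $i=1$, $e_2$ of your roots is $q^2\phi(c_1)(\phi(c'_1)+1)$, whereas $\tilde c_2(\tb_1)$ evaluates to $q^2\phi(c_1)(\phi(c'_1)+n-2)$. The difference is a multiple of $(n-3)\phi(c_1)$. Since $c_1$ is the unique minimum, $\sum_\phi\phi(c_1)z^\phi=\frac{w}{1-w}\sum_\phi z^\phi$ with $w=z_1^2\cdots z_{n-2}^2z_{n-1}z_n=z^{2\epsilon_1}$. This has a pole along the non-root hypersurface $w=1$, so at most one of the two insertions can satisfy the theorem. (For the same reason, your ``convex interval'' justification of the base case is wrong: the whole heap is convex of weight $2\epsilon_1$. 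The hook formula, with hooks $\epsilon_1\pm\epsilon_j$ for $2\leqslant j\leqslant n$, does give the base case.)

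Second, you run the grouping backwards. In Theorem~\ref{thm: diffop vertex}, the variables $X^{(i)}$ come from the colors \emph{farther} from the framing, where the process starts at $\gamma^{(0)}=\emptyset$. There $\tau$ may only involve colors with $|j-k|\leqslant|i-k|$, and accordingly the induction removes the outermost color first.

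In the present heap, the framing side cannot be collapsed to $\Delta_i\,s_{\gamma^{(i)}}(X^{(i)})$. For $i=1$ there is nothing on that side. For $i\geqslant2$, once $\phi(c_i)$ and $\phi(c'_i)$ are fixed, the colors $<i$ form a chain below $c_i$ and a chain above $c'_i$ whose constraints decouple. Their sum is
\[
\frac{(z_1\cdots z_{i-1})^{\phi(c'_i)}}{\prod_{l=1}^{i-1}(1-z_1\cdots z_l)}\;h_{\phi(c_i)}(z_1\cdots z_{i-1},\,z_2\cdots z_{i-1},\,\ldots,\,z_{i-1},\,1),
\]
a product $f(\phi(c'_i))\,g(\phi(c_i))$. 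But $\lambda\mapsto s_\lambda(u,v)$ never factors as $f(\lambda_1)g(\lambda_2)$ when $uv\neq0$, so no $D^r(X^{(i)})$ acts diagonally on this sum.

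What is missing is the actual content of the step. You need the specific expression of \cite[Lemma 8.2 and Lemma 8.5]{dinkjang}, which is not the color-slice interlacing, grouped so that the summed-out part collapses to a single Schur polynomial in the partition carrying the shift $n-1-i$. Only this yields the type~$D$ analogue of Theorem~\ref{thm: diffop vertex}. That analogue is needed only for $\tilde c_2(\tb_i)$ with $i\leqslant n-2$: by Proposition~\ref{prop: derivative vertex}, a $c_1(\tb_i)$-insertion is just $qz_i\partial_{z_i}+c_1(\tb_i)$, which preserves the pole set, and colors $n-1,n$ have rank one.
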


\subsection{Deformations}\label{ssec:deformation operators}
To be clear, all of the results discussed here admit deformations outside the $q=2\hbar$ specialization, see \cite{dinkjang, dinksmir, dinksmir3}. Namely, vertex functions can be written as sums of products of Jack polynomials, and descendants can be inserted by applying Sekiguchi operators, which deform the differential operators of \eqref{eq: schur diff op}. This also applies to the $K$-theoretic setting, but with Macdonald polynomials and Macdonald operators replacing Jack polynomials and Sekiguchi operators. In both of these deformed settings, we obtain a similar constraint on the possible locations of poles of the descendant vertex functions.

\subsection{Type \texorpdfstring{$E_6$}{E6} and \texorpdfstring{$E_7$}{E7}}

In types $E_6$ and $E_7$, the heaps can again be described using explicit combinatorics. The largest heap for $E_6$, $\fundwt_{1}$ (resp. $E_7$, $\fundwt_{1}$) is shown in Figure \ref{E6rep} (resp. Figure \ref{E7rep}).

As before, Theorem \ref{thm: pointvertex} can be used to compute the vertex function. We do not know how to express the vertex function in terms of Schur polynomials, though we expect such a formula to exist.

\begin{remark}
    Given the interpretation of the partition functions for Schur processes and Pfaffian Schur processes provided by vertex functions of point quiver varieties \cite{BRPfaffian,BBCMacdonald, ORSchur}, it seems natural to define more general probabilistic processes corresponding to a simple Lie algebra and a minuscule fundamental weight using vertex functions. Even more generally, one could contemplate defining such processes for the data $(\Gamma,\lambda,w)$ where $\Gamma$ is a quiver without loops and $w$ is $\lambda$-minuscule. 
\end{remark}

\begin{figure}[h]
    \centering
\begin{tikzpicture}[circ/.style={shape=circle,draw,inner sep=1.5pt}]
    \node[circ, on grid] (11) at (0,0){};
    \node[circ, on grid, above  right= of 11](21){};
    \node[circ, on grid, above right = of 21](31){};
    \node[circ, on grid, above right = of 31](41){};
    \node[circ, on grid, above right = of 41](51){};
    \node[circ, on grid, red, above left = of 31](61){};
    \node[circ, on grid, above left = of 41](32){};
    \node[circ, on grid, above left = of 51](42){};
    \node[circ, on grid, above left = of 32](22){};
    \node[circ, on grid, above left = of 42](33){};
    \node[circ, on grid, above left = of 22](12){};
    \node[circ, on grid, above left = of 33](23){};
    \node[circ, on grid, above right = of 23](34){};
    \node[circ, on grid, red, above right = of 33](62){};
    \node[circ, on grid, above right = of 34](43){};
    \node[circ, on grid, above right = of 43](52){};

\draw[->] (11) edge (21) (21) edge (31) (31) edge (41) (41) edge (51) (31) edge (61) (61) edge (32) (41) edge (32) (32) edge (42) (32) edge (22) (22) edge (12) (22) edge (33) (12) edge (23) (33) edge (23) (51) edge (42) (42) edge (33) (33) edge (62) (62) edge (34) (23) edge (34) (34) edge (43) (43) edge (52);
    
\node[circ, on grid, below= of 11](1){2};
\node[circ, on grid, right= of 1](2){3};
\node[circ, on grid, right= of 2](3){4};
\node[circ, on grid, right= of 3](4){3};
\node[circ, on grid, right= of 4](5){2};
\node[circ, on grid, red, above= of 3](6){2};

\draw (1) -- (2);
\draw (2) -- (3);
\draw (3) -- (4);
\draw (4) -- (5);
\draw (3) -- (6);
\end{tikzpicture}
 \caption{The quiver representation corresponding to the lowest weight vector of the minuscule representation of highest weight $\fundwt_{1}$ of type $E_{6}$. The red nodes signify basis vectors belonging to $V_{6}$.}
    \label{E6rep}
\end{figure}

\begin{figure}[h]
    \centering
\begin{tikzpicture}[circ/.style={shape=circle,draw,inner sep=1.5pt}]
    \node[circ, on grid] (11) at (0,0){};
    \node[circ, on grid, above  right= of 11](21){};
    \node[circ, on grid, above  right = of 21](31){};
    \node[circ, on grid, above  right = of 31](41){};
    \node[circ, on grid, above  right = of 41](51){};
    \node[circ, on grid, above  right = of 51](61){};
    
    \node[circ, on grid, above  left = of 51](42){};
    \node[circ, on grid, above  right = of 42](52){};
    \node[circ, on grid, red, above  left = of 41](71){};
    \node[circ, on grid, above  left = of 42](32){};
    \node[circ, on grid, above  left = of 52](43){};
    \node[circ, on grid,red, above  right = of 43](72){};
    \node[circ, on grid, above  left = of 32](22){};
    \node[circ, on grid, above  left = of 22](12){};
    \node[circ, on grid, above  right = of 22](33){};
    \node[circ, on grid, above  right = of 12](23){};
    \node[circ, on grid, above  right = of 33](44){};
    \node[circ, on grid, above  right = of 23](34){};
    \node[circ, on grid, above  right = of 44](53){};
    \node[circ, on grid, above  right = of 34](45){};
    \node[circ, on grid, above  right = of 45](54){};
    \node[circ, on grid, above  right = of 53](62){};
    \node[circ, on grid, above  left = of 54](46){};
     \node[circ, on grid, red,above  left = of 45](73){};
      \node[circ, on grid, above  left = of 46](35){};
       \node[circ, on grid, above  left = of 35](24){};
        \node[circ, on grid, above  left = of 24](13){};

        \draw[->] (11) edge (21) (21) edge (31) (31) edge (41) (41) edge (51) (51) edge (61) (41) edge (71) (71) edge (42) (51) edge (42) (42) edge (52) (61) edge (52) (42) edge (32) (32) edge (22) (22) edge (12) (32) edge (43) (52) edge (43) (43) edge (33) (22) edge (33) (43) edge (72) (72) edge (44) (33) edge (44) (12) edge (23) (33) edge (23) (23) edge (34) (44) edge (34) (44) edge (53) (34) edge (45) (53) edge (45) (53) edge (62) (62) edge (54) (45) edge (54) (45) edge (73) (73) edge (46) (54) edge (46) (46) edge (35) (35) edge (24) (24) edge (13);

\node[circ, on grid, below= of 11](1){3};
\node[circ, on grid, right= of 1](2){4};
\node[circ, on grid, right= of 2](3){5};
\node[circ, on grid, right= of 3](4){6};
\node[circ, on grid, right= of 4](5){4};
\node[circ, on grid, right= of 5](6){2};
\node[circ, on grid, red,above= of 4](7){3};

\draw (1) -- (2);
\draw (2) -- (3);
\draw (3) -- (4);
\draw (4) -- (5);
\draw (5) -- (6);
\draw (4) -- (7);
\end{tikzpicture}
 \caption{The quiver representation corresponding to the lowest weight vector of the minuscule representation of type $E_{7}$. The red nodes signify basis vectors belonging to $V_{7}$.}
    \label{E7rep}
\end{figure}

\clearpage

\printbibliography

\end{document}